\theoremstyle{thmstyleone}
\newtheorem{theorem}{Theorem}[section]
\newtheorem{proposition}[theorem]{Proposition}
\newtheorem{lemma}[theorem]{Lemma}
\newtheorem{corollary}[theorem]{Corollary}
\theoremstyle{thmstyletwo}
\newtheorem{remark}[theorem]{Remark}
\theoremstyle{thmstylethree}
\newtheorem{definition}[theorem]{Definition}
\theoremstyle{plain}
\newcommand{\thistheoremname}{}
\newtheorem*{genericthm*}{\thistheoremname}
\newenvironment{namedthm*}[1]
  {\renewcommand{\thistheoremname}{#1}
   \begin{genericthm*}}
  {\end{genericthm*}}
\newcommand{\iid}{\textnormal{i.i.d.}}
\newcommand\End{\operatorname{End}}
\newcommand\Img{\operatorname{Im}}
\newcommand\dis{\operatorname{dis}}
\newcommand{\Homl}[1]{\operatorname{\mathnormal{H}_{\mathnormal{#1}}}}
\newcommand{\nnegR}{\mathbb{R}_{\scalebox{0.65}{$\geqslant\hspace{-2pt}0$}}}
\newcommand{\euler}{\mathsf{e}}
\newcommand\dt[1]{\dot{#1}}
\newcommand{\bfone}{\mathbf{1}}
\newcommand{\Corr}{\mathcal{C}}
\newcommand{\bfZ}{\mathbf{Z}}
\newcommand{\bfG}{\mathbf{G}}
\newcommand{\bfH}{\mathbf{H}}
\newcommand{\bfS}{\mathbf{S}}
\newcommand{\bfMo}{\mathbf{M}_{0}}
\newcommand{\bfP}{\mathbf{P}\hspace{-0.3ex}}
\newcommand{\bfgma}{\mathbf{\Gamma}}
\newcommand{\bfDel}{\mathbf{\Delta}}
\DeclarePairedDelimiterX\setc[2]{\{}{\}}{\,#1 \;\delimsize\vert\; #2\,}
\begin{document}

\title[Curse of dimensionality on persistence diagrams]{Curse of dimensionality on persistence diagrams}

\author[2,3]{\fnm{Yasuaki} \sur{Hiraoka}}\email{hiraoka.yasuaki.6z@kyoto-u.ac.jp}
\equalcont{These authors contributed equally to this work.}

\author[2]{\fnm{Yusuke} \sur{Imoto}}\email{imoto.yusuke.4e@kyoto-u.ac.jp}
\equalcont{These authors contributed equally to this work.}

\author[3]{\fnm{Shu} \sur{Kanazawa}}\email{kanazawa.shu.2w@kyoto-u.ac.jp}
\equalcont{These authors contributed equally to this work.}

\author*[1]{\fnm{Enhao} \sur{Liu}}\email{liu.enhao.b93@kyoto-u.jp}

\affil*[1]{\orgdiv{Department of Mathematics}, \orgname{Kyoto University}, \orgaddress{\street{Kitashirakawa Oiwake-cho, Sakyo-ku}, \city{Kyoto}, \postcode{6068502}, \country{Japan}}}

\affil[2]{\orgdiv{Institute for the Advanced Study of Human Biology}, \orgname{Kyoto University}, \orgaddress{\street{Yoshida-Konoe-cho, Sakyo-ku}, \city{Kyoto}, \postcode{6068501}, \country{Japan}}}

\affil[3]{\orgdiv{Kyoto University Institute for Advanced Study}, \orgname{Kyoto University}, \orgaddress{\street{Yoshida Ushinomiya-cho, Sakyo-ku}, \city{Kyoto}, \postcode{6068501}, \country{Japan}}}

\abstract{The stability of persistent homology has led to wide applications of the persistence diagram as a trusted topological descriptor in the presence of noise. However, with the increasing demand for high-dimension and low-sample-size data processing in modern science, it is questionable whether persistence diagrams retain their reliability in the presence of high-dimensional noise. This work aims to study the reliability of persistence diagrams in the high-dimension low-sample-size data setting. By analyzing the asymptotic behavior of persistence diagrams for high-dimensional random data, we show that persistence diagrams are no longer reliable descriptors of low-sample-size data under high-dimensional noise perturbations. We refer to this loss of reliability of persistence diagrams in such data settings as the \textit{curse of dimensionality on persistence diagrams}. Next, we investigate the possibility of using normalized principal component analysis as a method for reducing the dimensionality of the high-dimensional observed data to resolve the curse of dimensionality. We show that this method can mitigate the curse of dimensionality on persistence diagrams. Our results shed some new light on the challenges of processing high-dimension low-sample-size data by persistence diagrams and provide a starting point for future research in this area.}

\keywords{Persistence diagram $\cdot$ Curse of dimensionality $\cdot$ High-dimension low-sample-size data $\cdot$ Normalized principal component analysis $\cdot$ Minimum eigengap $\cdot$ Weingarten calculus}

\pacs[MSC Classification]{62R40, 55N31, 60B15, 60B20}

\maketitle

\section{Introduction}\label{sec1}
\subsection{Background}\label{Background and notation}
Topological data analysis (TDA) has arisen in recent decades, combining the topological method with data analysis \cite{MR2476414}. It is now widely applied in many fields, such as medical science \cite{loughrey2021topology}, \cite{ellis2019feasibility}, \cite{bukkuri2021applications}, and materials science \cite{hiraoka2016hierarchical}, \cite{torshin2020topological}.

Let $P$ be a finite set of points in the $d$-dimensional Euclidean space $\mathbb{R}^{d}$, equipped with the usual Euclidean metric. The finite set $P$ is termed as a point cloud in the literature. An abstract simplicial complex $\mathcal{K}$ with vertices in $P$ is a finite collection of subsets of $P$ such that $\sigma\in \mathcal{K}$ and $\tau\subseteq\sigma$ implies $\tau\in \mathcal{K}$. A filtration of $P$ is a strictly increasing sequence of abstract simplicial complexes with vertices in $P$: 
\begin{equation*}
\mathcal{K}_{0}\subsetneqq\mathcal{K}_{1}\subsetneqq\cdots \subsetneqq \mathcal{K}_{t}.
\end{equation*}
Let $0\leq r_{0}<r_{1}<\cdots<r_{t}$ be a strictly increasing sequence of real parameters. In persistent homology, two filtrations are commonly considered. One is called the \v{C}ech filtration, in which the abstract simplicial complex $\mathcal{K}_{t}$ is a collection of subsets of $P$ such that the closed balls with radius $r_{t}$ centered at points in each subset have a non-empty intersection:
\begin{equation*}
\mathcal{K}_{t} = \setc*{\sigma \subseteq P}{\bigcap_{x\in \sigma }B_{x}\left( r_{t}\right) \neq \emptyset}.
\end{equation*}
Here $B_{x}\left( r_{t}\right)$ denotes the closed ball with radius $r_{t}$ centered at $x$. Another one is called the Vietoris-Rips filtration (Rips filtration for short), in which the abstract simplicial complex $\mathcal{K}_{t}$ is defined as:
\begin{equation*}
\mathcal{K}_{t} = \setc*{\sigma \subseteq P}{B_{x}\left( r_{t}\right)\cap B_{y}\left( r_{t}\right)\neq \emptyset\ \text{for all}\ x,y\in \sigma}.
\end{equation*}
It is worth pointing out that computing the Rips filtration is more efficient than the \v{C}ech filtration because only pairwise intersections need to be checked in the Rips filtration case.

By taking the $N$-th homology with the field $\mathbb{F}$ coefficient, the filtration provides a sequence of $\mathbb{F}$-vector spaces connected by linear maps that are induced from inclusions:
\begin{equation*}
\Homl{N}\left(\mathcal{K}_{0};\mathbb{F}\right)\rightarrow\Homl{N}\left(\mathcal{K}_{1};\mathbb{F}\right)\rightarrow \cdots \rightarrow\Homl{N}\left(\mathcal{K}_{t};\mathbb{F}\right).
\end{equation*}
Such a sequence is then called the $N$-th persistent homology. This idea goes back as far as \cite{MR1949898}. Persistent homology records topological features of the point cloud, encoded and visualized by utilizing the persistence diagram.

Roughly speaking, the $N$-th persistence diagram collects all the $N$-th homological classes (these classes will be called the generators later) that are born and persist in the filtration. It is defined as a multi-set of points in the extended real plane $\bar{\mathbb{R}}^{2}=(\mathbb{R}\cup\{\pm\infty\})^{2}$. The point $\omega = (r_{i},r_{j})$ along with its multiplicity $\mu_{ij}^{N}$ ($i<j$) in the persistence diagram indicates that there are $\mu_{ij}^{N}$ generators that are born at $\Homl{N}\left(\mathcal{K}_{i};\mathbb{F}\right)$ but die at $\Homl{N}\left(\mathcal{K}_{j};\mathbb{F}\right)$. For a detailed introduction, we refer the reader to \cite{edelsbrunner2010computational}. In the literature, the pair $\omega = (r_{i},r_{j})$ is called a birth-death pair in the $N$-th persistence diagram; the first coordinate $r_{i}$ is called the birth time, denoted by $b_{\omega}$; the second coordinate $r_{j}$ is called the death time, denoted by $d_{\omega}$; their difference $(r_{j}-r_{i})$ characterizes the lifetime of the generator $\omega$ and thus this quantity is called the persistence of $\omega$, denoted by ${\rm pers}(\omega)$.

In practical data analysis, the true data we aim to analyze will be referred to as the original point cloud, and it is typically latent. However, technical noises during data collection may result in perturbations to the original point cloud. The data we actually get or observe will be referred to as the observed point cloud. The stability of persistent homology states that if the original data change slightly, the resultant persistence diagrams will not change too much \cite{MR2279866}. This property ensures that the persistence diagram obtained from the observed point cloud is trustworthy if the noise perturbs the original point cloud slightly. Therefore, we can use the persistence diagram obtained from the observed point cloud as a descriptor to capture the original topological features.

To avoid the inconvenience of considering generators that never die, we shall consider the reduced homology throughout this paper. Thus, the persistence diagram is defined as a multi-set on $\nnegR^{2}$, where $\nnegR = \setc*{x\in \mathbb{R}}{x\geq 0}$. In what follows, we denote the $N$-th persistence diagram of $P$ by ${\rm D}_{N}(P)$. For clarity, we will use $\Bar{{\rm D}}_{N}(P)$ to denote the disjoint union of ${\rm D}_{N}(P)$ and all points on the diagonal $\setc*{(x,x)}{x \in \nnegR}$, counted with infinite multiplicity. We set the dimension $N$ of the persistence diagram to be the non-negative integer less than or equal to the cardinality of $P$ minus $2$ to avoid the triviality throughout this paper. Moreover, if the dimension $N$ is invisible in the notation of the persistence diagram later, we would then refer to every non-trivial $N$ without specifying.

\subsection{Motivation}\label{motivation}
In recent decades, analyzing data with high dimensionality and low sample size has become a challenging problem in statistics. This type of data, called high-dimension low-sample-size (HDLSS) data, is prevalent in biomedical science \cite{HaoKimKimKang2018, MahmudFuHuangMasud2019, imoto2022resolution}. As the name suggests, the dimension $d$ is significantly larger than the sample size $n$. In molecular biology, for example, single-cell sequencing data contain thousands of single cells and over ten thousand genes or epigenomic features, corresponding to the sample size $n$ and the dimension $d$, respectively. Such genomic data are classified as HDLSS data \cite{imoto2022resolution}. Recently, \cite{Flores-Bautista2023.07.28.551057} applied the persistent homology to the single-cell RNA-sequencing data to understand cell differentiation.

The limitations of traditional multivariate statistical analysis in ensuring consistency in the statistical inference of HDLSS data have been well documented \cite{MR2568176}. The inconsistencies in inference resulting from HDLSS data are referred to as the ``curse of dimensionality". To address this challenge, high-dimensional statistics emerged from the ground as an important branch of statistical data science, with the aim of mitigating or even eliminating the curse of dimensionality.

Statistical analysis suffers the curse of dimensionality, which has raised questions about the reliability of using persistent homology in analyzing HDLSS data. In addition, the potential benefits of using persistent homology in practical single-cell sequencing data analysis are too great to be ignored. Consequently, we are highly motivated to explore the feasibility of using persistent homology in such a situation.

\subsection{Related work}
\cite{damrich2023persistent} investigated the applicability of persistent homology for high-dimensional data. They obtained an experimental observation of the disappearance of the ground-truth hole in the persistence diagram using the Euclidean distance in the Rips filtration for high-dimensional data and concluded that the standard persistent homology fails to detect the holes correctly. Their idea of evaluating the efficiency of detecting true holes in the persistent homology procedure, called the hole detection score, is to estimate the relative gap between the $m$-th and $(m+1)$-th most persistent features in the persistence diagram. To get a higher hole detection score even in the high-dimensional data context, they suggest using spectral distances, such as effective resistance and diffusion distance, rather than the conventional Euclidean distance for persistent homology.

\subsection{Settings}\label{settings}
In application, the HDLSS setting reflects the significant increase in the number of features rather than the number of samples with the recent rapid development of data correction. For example in biology, single-cell RNA sequencing detects RNA counts for over $20{,}000$ genes \cite{imoto2022resolution}. Furthermore, recent single-cell sequencing, called single-cell multiomics, additionally extracts epigenome features like chromatin accessibility with over one hundred thousand dimensions \cite{Lee2020}. However, many genome and epigenome features, such as housekeeping genes, maintain cell status by preserving constant values. Namely, such features do not contribute to constructing the topology of the original point cloud, but they contain noises in the observed point cloud due to the random sampling process of molecules. Therefore, we can consider a fixed lower dimension as the number of genome and epigenome features except for the abovementioned features by centralizing the original point cloud.

Taking into account these situations and for well-fitting the practical application, we let $P = \{x_{1},x_{2},\ldots,x_{n}\}$ be the original point cloud in $\mathbb{R}^{d}$, where $x_{i}=(x_{1i},\ldots,x_{si},0,\ldots,0)^{T}$ for $i = 1, 2, \ldots, n$. Here $n$ is called the sample size ($3< n< d$), $d$ is called the dimension, and $s$ is called the essential dimension, representing the number of genome and epigenome features except for the constant-preserving features. The essential dimension $s$ is set to be independent of $d$ and less than $n$. 

Let $E=\{e_{1},e_{2},\ldots,e_{n}\}$ be the noise point cloud in $\mathbb{R}^{d}$, and suppose $e_{1},e_{2},\ldots,e_{n}$ are independent and identically distributed (\iid\ for short) random vectors following a continuous distribution with mean zero and covariance matrix $\nu\cdot I_{d}$, where $\nu>0$ and $I_{d}$ denotes the $d\times d$ identity matrix. We further assume that the coordinates of every point in $E$ are \iid\ as well. In this work, ``continuous" means the absolute continuity of the probability distribution with respect to the Lebesgue measure.

Let $P'=\{x_{1}',x_{2}',\ldots,x_{n}'\}$ be the observed point cloud, defined as the sum of $P$ and $E$, i.e., $x_{i}'=x^{}_{i}+e^{}_{i}$ for $i = 1, 2, \ldots, n$. The terminology HDLSS used in this paper means we consider the asymptotic behavior in the setting where $d$ tends to infinity but $n$ is fixed. It is important to investigate the difference between the persistence diagrams of the original point cloud $P$ and observed point cloud $P'$ in the asymptotic sense to assess the reliability of the persistence diagram in this context.

From now on, persistence diagrams obtained from the original and the observed point clouds will be called the original and the observed persistence diagrams, respectively, to shorten the names.

\subsection{Our contributions}\label{contributions}
The aim of this paper is to theoretically study whether the persistence diagram is still a reliable topological descriptor of the HDLSS data. To this end, we measure the difference between the original and the observed persistence diagrams by utilizing the bottleneck distance (denoted by $d_{B}$), and the Hausdorff distance (denoted by $d_{H}$). For details about $d_{B}$ and $d_{H}$, the reader may refer to Section~\ref{Stability}.

\begin{namedthm*}{Main Result A}[Theorems~\ref{thm1b}, \ref{db will not converge to 0 in probability}, \ref{dHunboundedinprobability}]
Let $P$ and $P'$ be the original and the observed point clouds in $\mathbb{R}^{d}$, respectively. Then for the Rips filtration,
\begin{align*}
  d_{B}\left(\bar{{\rm D}}_{N}\left(P\right), \bar{{\rm D}}_{N}\left( P'\right) \right) = 
    \begin{cases}
        \frac{\sqrt{2\nu d}}{4}+O_{\mathbb{P}}(1),& \mbox{if $N=0$},\\
         O_{\mathbb{P}}(1), & \mbox{if $N>0$},
    \end{cases}
\end{align*}
holds as $d\rightarrow \infty$. Here and subsequently, $O_{\mathbb{P}}$ denotes the asymptotic notation (see Section~\ref{Geo}). Moreover, suppose that the $N$-th persistence diagram ${\rm D}_{N}\left(P\right)\neq \emptyset$. Then $d_{B}\left(\bar{{\rm D}}_{N}\left(P\right), \bar{{\rm D}}_{N}\left( P'\right) \right)$ does not converge to zero in probability as $d\rightarrow \infty$, and $d_{H}\left({\rm D}_{N}\left(P\right), {\rm D}_{N}\left( P'\right) \right)$ is eventually unbounded in probability (see Definition~\ref{def0b}) as $d\rightarrow \infty$.
\end{namedthm*}

Main Result A shows that the observed persistence diagram moves far away from the original one (due to the unbounded Hausdorff distance), within a bounded region from the diagonal (due to the bounded bottleneck distance) as $d$ tends to infinity. See Fig.~\ref{fig:PDnoises} for an illustration.

\begin{figure}
    \centering
    \includegraphics[width=119mm]{"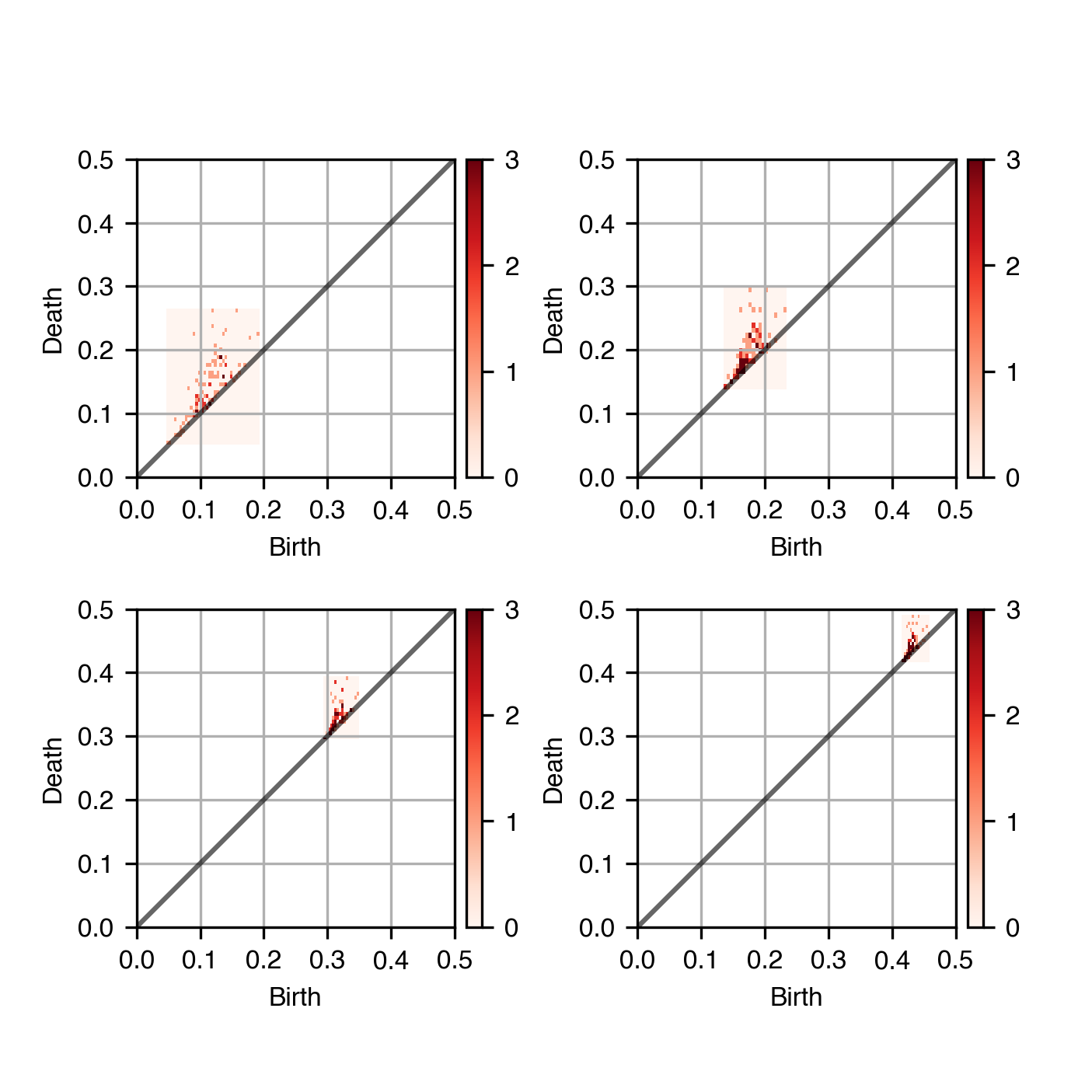"}
    \caption{Numerical results for comparison between the $1$st original persistence diagram (upper left) and the $1$st observed persistence diagrams in different dimensions (upper right: $d=1000$; lower left: $d=5000$; lower right: $d=10{,}000$). Different colors indicate different multiplicities of generators in persistence diagrams.}
    \label{fig:PDnoises}
\end{figure}

\begin{namedthm*}{Main Result B}[Theorems~\ref{thm1d}, \ref{cech strong inconsistency}]
Let $P$ and $P'$ be the original and the observed point clouds in $\mathbb{R}^{d}$, respectively. Then for the \v{C}ech filtration,
\begin{align*}
  d_{B}\left(\bar{{\rm D}}\left(P\right), \bar{{\rm D}}\left( P'\right) \right) = O_{\mathbb{P}}(\sqrt{d})
\end{align*}
holds as $d\rightarrow \infty$. Moreover, $d_{B}\left(\bar{{\rm D}}\left(P\right), \bar{{\rm D}}\left( P'\right) \right)$ is eventually unbounded in probability as $d\rightarrow \infty$.
\end{namedthm*}

Main Result B exhibits a different behavior from Main Result A, claiming that the observed persistence diagram moves far away from the original one and deviates from the diagonal as $d$ tends to infinity. Both Main Results A and B show inconsistencies in the observed persistence diagram when the dimension $d$ is large enough. The inconsistency phenomenon is called the curse of dimensionality on persistence diagrams. We note that the inconsistency demonstrated by Main Result B is more severe than that of Main Result A. For this reason, we further propose a classification based on similarity in Section~\ref{subsec4}.

We are thus led to consider ways to mitigate the curse of dimensionality. Our attempt made here is a dimensionality reduction technique called the normalized principal component analysis, a variation of principal component analysis (PCA for short). Briefly speaking, we will apply this technique to the observed point cloud $P'$ by projecting it onto the principal subspace spanned by the first $s$ principal components, with the use of normalized principal component scores as the projected coordinates. The newly obtained point cloud after applying the normalized PCA to $P'$ is called the compressed point cloud and denoted by $\hat{P}$.

Our last result is proved for the Rips filtration, under the assumption that $e_{1},e_{2},\ldots,e_{n}$ are \iid\ random vectors following the standard Gaussian distribution $\mathcal{N}_{d}(0, I_{d})$.

\begin{namedthm*}{Main Result C}[Theorems~\ref{bottleneckdistofreducedPD}, \ref{hausdorffdistofreducedPD}]
Let $P$ and $P'$ be the original and the observed point clouds in $\mathbb{R}^{d}$, respectively. Let $\hat{P}$ denote the compressed point cloud of $P'$ by normalized PCA. Then for the Rips filtration,
$$d_{B}(\bar{{\rm D}}(P), \bar{{\rm D}}(\hat{P})) = O_{\mathbb{P}}(1)$$
holds as $d\rightarrow \infty$. Moreover, suppose that ${\rm D}_{N}(P)$ is non-empty and that ${\rm D}_{N}(\hat{P})$ is non-empty almost surely. Then
$$d_{H}({\rm D}_{N}(P), {\rm D}_{N}( \hat{P})) = O_{\mathbb{P}}(1)$$
holds as $d\rightarrow \infty$.
\end{namedthm*}

We prove Main Result C by firstly showing that the persistence diagrams of compressed point clouds $\hat{P}$ and $\hat{E}$ are getting close to each other with high probability as $d$ tends to infinity (Theorem~\ref{convergeresult}). This fact makes us turn to compare the persistence diagrams of $P$ and $\hat{E}$. As a side result during the proof, we partially solve the eigengap problem of the real Wishart distributed matrix in random matrix theory under the HDLSS framework (Theorems~\ref{corethm}, \ref{maintheorem02}, \ref{maintheorem01}). We proceed with the proof by evaluating the first and the second moments of squared pairwise distances of $\hat{E}$ (Propositions~\ref{1st and 2nd moments of pairwise distance}, \ref{asymptomatic behavior of pairwise distance of reduced noise}). Combining these results, we are able to prove Main Result C.

Comparing Main Result A with Main Result C, it is evident that the use of normalized PCA can mitigate the curse of dimensionality on persistence diagrams because the Hausdorff distance becomes eventually bounded in probability even $d$ tends to infinity. See Fig.~\ref{fig:redPD} for an illustration.

\begin{figure}
    \centering
    \includegraphics[width=119mm]{"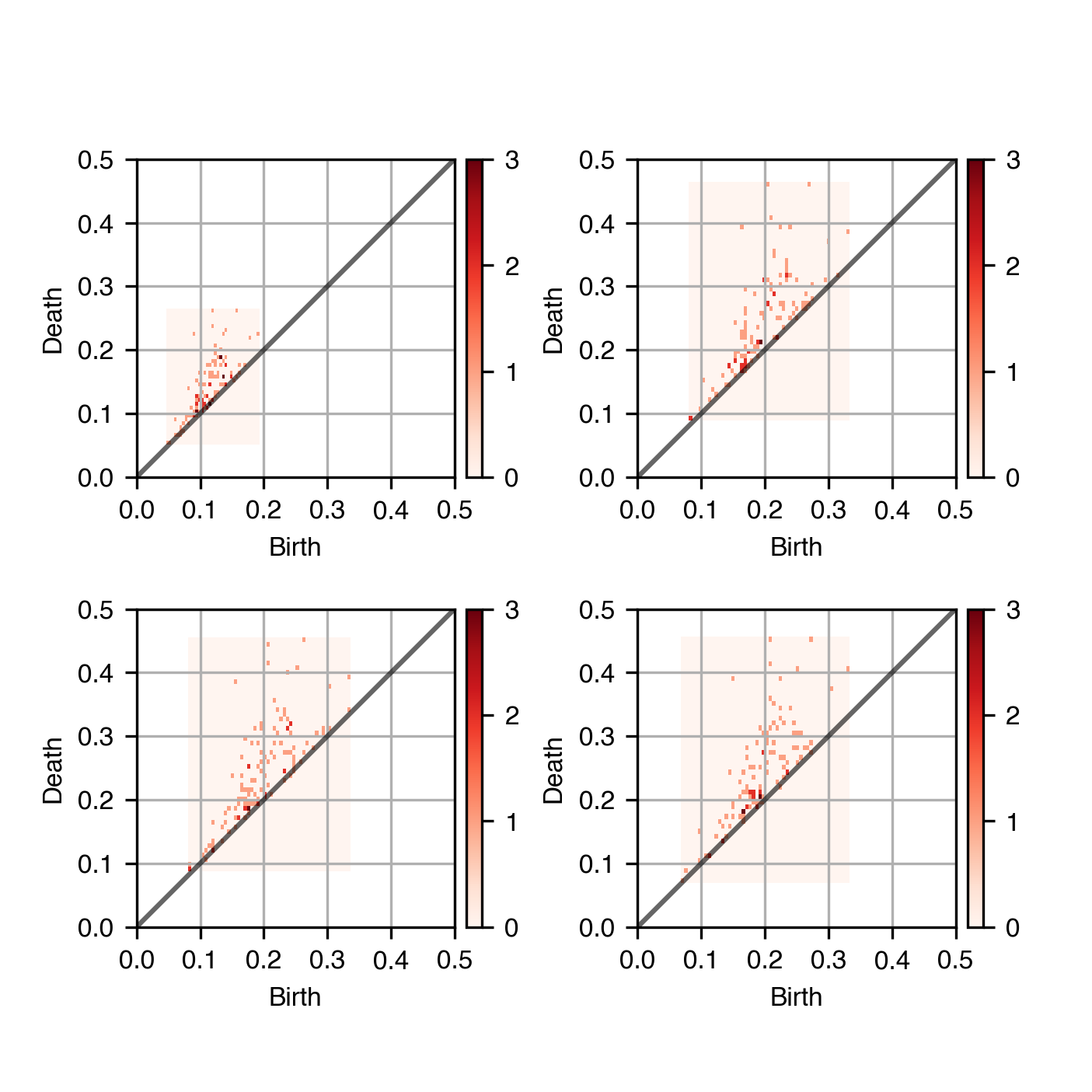"}
    \caption{Numerical results for comparison between the $1$st original persistence diagram (upper left) and the $1$st compressed persistence diagrams in different dimensions (upper right: $d=1000$; lower left: $d=5000$; lower right: $d=10{,}000$). Different colors indicate different multiplicities of generators in persistence diagrams.}
    \label{fig:redPD}
\end{figure}

\subsection{Organization}\label{Organization}
In Section~\ref{preliminaries}, we review the stability of persistent homology for geometric complexes and then introduce the geometric representation theorem of high-dimensional random vectors. In Section~\ref{sec3}, we analyze the asymptotic behavior of the observed persistence diagram (Theorem~\ref{thm0a} for the Rips filtration case and Theorem~\ref{thm0b} for the \v{C}ech filtration case). Based on this analysis, we will show that the observed persistence diagram is inconsistent with the original one as the dimension $d$ tends to infinity (Theorems~\ref{thm1b}, \ref{db will not converge to 0 in probability}, \ref{dHunboundedinprobability} for the Rips filtration case and Theorems~\ref{thm1d}, \ref{cech strong inconsistency} for the \v{C}ech filtration case). In Section~\ref{sec4}, we examine the use of normalized PCA to the observed point cloud to mitigate the curse of dimensionality on persistence diagrams. By proving the asymptotically infinite divergence of the minimum eigengap of real Wishart matrices (Theorems~\ref{corethm}, \ref{maintheorem01}), we demonstrate that the PCA result is mainly dominated by noise (Theorems~\ref{convergeresult}, \ref{PCAPDlimit}). Through detailed computations utilizing Weingarten calculus, we conclude that normalized PCA can improve the consistency level (Theorems~\ref{bottleneckdistofreducedPD}, \ref{hausdorffdistofreducedPD}). Finally, we present unsolved problems and future directions for research in Section~\ref{sec9}.   

\section{Preliminaries}\label{preliminaries}
Throughout the paper, the sets of non-negative integers and positive integers will be denoted by $\mathbb{N}$ and $\mathbb{N}_{+}$, respectively. Let us denote by $\mathbb{R}^{m\times n}$ the set of real $m\times n$ matrices, and $[n]$ the set of $n$ consecutive integers $\{1,2,\ldots,n\}$ for $n\in \mathbb{N}_{+}$. We will drop the subscript of $\left\|\cdot\right\|_{2}$ and write $\left\|\cdot\right\|$ instead to denote the $2$-norm of vectors for simplicity.

\subsection{Stability of persistent homology for geometric complexes}\label{Stability}

Stability is a crucial property of persistent homology. It guarantees that the persistence diagram is stable: small changes in the input data imply only small changes in the diagram. The study of stability can be traced back to \cite{MR2279866}. In this work, we will mainly use the stability theorem for geometric complexes. For the convenience of the reader, we repeat the relevant material from \cite{chazal2014persistence} without proofs, thus making our exposition self-contained.

We begin this subsection by introducing two metrics on persistence diagrams.

\begin{definition}\label{bottleneck}
\normalfont Let $P$ and $P'$ be two point clouds. The bottleneck distance between $\Bar{{\rm D}}(P)$ and $\Bar{{\rm D}}(P')$ is defined by
\begin{equation*}
    d_{B}(\Bar{{\rm D}}(P),\Bar{{\rm D}}(P'))=\inf _{\eta :\bar{{\rm D}}\left( P\right) \rightarrow \bar{{\rm D}}\left( P'\right) }\sup _{x\in \bar{{\rm D}}\left( P\right)}\left\| x-\eta \left( x\right) \right\| _{\infty },
\end{equation*}
where $\eta$ ranges over all bijections from $\Bar{{\rm D}}(P)$ to $\Bar{{\rm D}}(P')$, and $\left\| \cdot\right\| _{\infty}$ denotes the infinity norm. 
\end{definition}

The Hausdorff distance is a common metric defined on subsets of any metric space. Let $A$ and $B$ be two non-empty subsets of a metric space $(M,\mathcal{D})$. The Hausdorff distance between $A$ and $B$ is given by
\begin{equation*}
    d_{H}\left(A,B \right) =\max \left\{ \sup _{a\in  A }\inf _{b\in B} \mathcal{D}(a,b),\sup _{b\in B}\inf _{a\in  A}\mathcal{D}(a,b)\right\}.
\end{equation*}
We further define that $d_{H}\left(A,\emptyset\right)=d_{H}\left(\emptyset,A\right)=+\infty$ if $A$ is non-empty, and $d_{H}\left(\emptyset,\emptyset\right)=0$. 

In this paper, when referring to the Hausdorff distance between persistence diagrams, the ambient metric space $(M,\mathcal{D})$ is $\nnegR^{2}$ with the metric induced by the infinity norm $\left\| \cdot\right\| _{\infty}$. Hence, the Hausdorff distance between persistence diagrams is defined below.

\begin{definition}\label{Hausdorff}
\normalfont Let $P$ and $P'$ be two point clouds. The Hausdorff distance between $\Bar{{\rm D}}(P)$ and $\Bar{{\rm D}}(P')$ is defined by
\begin{equation*}
    d_{H}\left( \bar{{\rm D}}\left( P\right) ,\bar{{\rm D}}\left( P'\right) \right) =\max \left\{ \sup _{x\in \bar{{\rm D}}\left( P\right) }\inf _{y\in \bar{{\rm D}}\left( P'\right)}\left\| x-y\right\| _{\infty },\sup _{y\in \bar{{\rm D}}\left( P'\right)}\inf _{x\in \bar{{\rm D}}\left( P\right)}\left\| x-y\right\| _{\infty }\right\}.
\end{equation*}
\end{definition}

Here we would like to draw the reader's attention to the distinction between these two metrics on persistence diagrams. Since the bottleneck distance is a matching distance, it is sensitive to multiplicities. On the other hand, from the definition of the Hausdorff distance, it is clear that the Hausdorff distance is insensitive to multiplicities and only depends on the position of the point in the extended real plane. 

As another easy observation, the Hausdorff distance between persistence diagrams is bounded from above by the bottleneck distance.

\begin{proposition}\label{relofdB&dH}
Let $P$ and $P'$ be two point clouds. Then
\begin{equation*}
    d_{H}\left( \bar{{\rm D}}\left( P\right) ,\bar{{\rm D}}\left( P'\right) \right)\leq d_{B}\left( \bar{{\rm D}}\left( P\right) ,\bar{{\rm D}}\left( P'\right) \right).
\end{equation*}
\end{proposition}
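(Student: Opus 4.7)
The plan is to exploit the fact that the bottleneck distance is an infimum over bijections, while the Hausdorff distance only requires each point of one diagram to be close to \emph{some} point of the other (not necessarily in a bijective way). Any bijection therefore provides an upper bound on the Hausdorff distance, and taking the infimum will yield the inequality.

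Concretely, I would proceed as follows. Fix an arbitrary bijection $\eta \colon \bar{{\rm D}}(P)\to \bar{{\rm D}}(P')$. For every $x\in \bar{{\rm D}}(P)$, the point $\eta(x)$ is one particular element of $\bar{{\rm D}}(P')$, hence
\begin{equation*}
\inf_{y\in \bar{{\rm D}}(P')}\|x-y\|_{\infty}\;\leq\;\|x-\eta(x)\|_{\infty}\;\leq\;\sup_{x'\in \bar{{\rm D}}(P)}\|x'-\eta(x')\|_{\infty}.
\end{equation*}
Taking the supremum over $x\in \bar{{\rm D}}(P)$ on the left gives $\sup_{x}\inf_{y}\|x-y\|_{\infty}\leq \sup_{x}\|x-\eta(x)\|_{\infty}$. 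Symmetrically, applying the same argument to the bijection $\eta^{-1}\colon \bar{{\rm D}}(P')\to \bar{{\rm D}}(P)$ (and noting that $\|\eta^{-1}(y)-y\|_{\infty}=\|y-\eta^{-1}(y)\|_{\infty}$ runs through the same set of values as $\|x-\eta(x)\|_{\infty}$), we obtain $\sup_{y}\inf_{x}\|x-y\|_{\infty}\leq \sup_{x}\|x-\eta(x)\|_{\infty}$ as well. Combining these two bounds according to Definition~\ref{Hausdorff} yields
\begin{equation*}
d_{H}\bigl(\bar{{\rm D}}(P),\bar{{\rm D}}(P')\bigr)\;\leq\;\sup_{x\in \bar{{\rm D}}(P)}\|x-\eta(x)\|_{\infty}.
\end{equation*}
Finally, taking the infimum over all bijections $\eta$ delivers exactly $d_{B}(\bar{{\rm D}}(P),\bar{{\rm D}}(P'))$ on the right-hand side, which is the claimed inequality.

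There is essentially no serious obstacle; the only mild subtlety is that $\bar{{\rm D}}(P)$ and $\bar{{\rm D}}(P')$ both include the diagonal with infinite multiplicity, so the existence of a bijection $\eta$ is guaranteed and the argument above applies without modification. If the bottleneck distance is infinite, the inequality is vacuous; if it is finite, any near-optimal bijection furnishes the bound and then one passes to the infimum.
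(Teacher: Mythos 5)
Your argument is correct and is essentially the same as the paper's: bound each one-sided Hausdorff supremum by $\sup_{x}\|x-\eta(x)\|_{\infty}$ for an arbitrary bijection $\eta$ (using $\eta^{-1}$ for the symmetric direction), then pass to the infimum over bijections. The only difference is the order in which the supremum over points and the infimum over bijections are taken, which is immaterial.
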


\begin{proof}[Proof.\nopunct]
For every bijection $\eta:\bar{{\rm D}}\left( P\right)\rightarrow \bar{{\rm D}}\left( P'\right)$, we have
\begin{equation*}
    \inf _{y\in \bar{{\rm D}}\left( P'\right) }\left\| x-y\right\| _{\infty }\leq \| x-\eta \left( x\right) \| _{\infty }\leq \sup _{x\in \bar{{\rm D}}\left( P\right) }\left\| x-\eta \left( x\right) \right\| _{\infty }.
\end{equation*}
Taking the infimum of $\eta$ yields
\begin{equation*}
    \inf _{y\in \bar{{\rm D}}\left( P'\right) }\left\| x-y\right\| _{\infty }\leq \inf _{\eta :\bar{{\rm D}}\left( P\right) \rightarrow \bar{{\rm D}}\left( P'\right) }\sup _{x\in \bar{{\rm D}}\left( P\right) }\left\| x-\eta \left( x\right) \right\| _{\infty }.
\end{equation*}
Taking the supremum of $x$ yields
\begin{equation*}
    \sup _{x\in \bar{{\rm D}}(P)}\inf _{y\in \bar{{\rm D}}\left( P'\right) }\left\| x-y\right\| _{\infty }\leq \inf _{\eta :\bar{{\rm D}}\left( P\right) \rightarrow \bar{{\rm D}}\left( P'\right) }\sup _{x\in \bar{{\rm D}}\left( P\right) }\left\| x-\eta \left( x\right) \right\| _{\infty }.
\end{equation*}
By the symmetry, it is similar to show that
\begin{equation*}
    \sup _{y\in \bar{{\rm D}}(P')}\inf _{x\in \bar{{\rm D}}\left( P\right) }\left\| x-y\right\| _{\infty }\leq \inf _{\eta :\bar{{\rm D}}\left( P\right) \rightarrow \bar{{\rm D}}\left( P'\right) }\sup _{x\in \bar{{\rm D}}\left( P\right) }\left\| x-\eta \left( x\right) \right\| _{\infty }.
\end{equation*}
Therefore, $d_{H}\left( \bar{{\rm D}}\left( P\right) ,\bar{{\rm D}}\left( P'\right) \right)\leq d_{B}\left( \bar{{\rm D}}\left( P\right) ,\bar{{\rm D}}\left( P'\right) \right)$.
\end{proof}

In order to state the stability theorem, it is also necessary to introduce how to measure the distance between point clouds. One such measure is the Hausdorff distance, as introduced above. When referring to the Hausdorff distance between point clouds, the ambient metric space $(M,\mathcal{D})$ is the Euclidean space $\mathbb{R}^{d}$ with the Euclidean metric. Another measure to be introduced next is the Gromov-Hausdorff distance.

\begin{definition}\label{def0e}
\normalfont Let $X$ and $Y$ be metric spaces. The Gromov-Hausdorff distance $d_{GH}$ is defined to be the infimum of all numbers $d_{H}(\phi(X),\psi(Y))$ for all metric spaces $M$ and all isometric embeddings $\phi: X \rightarrow M$ and $\psi: Y \rightarrow M$.
\end{definition}

To compute the Gromov-Hausdorff distance, it is necessary to construct a new metric space $M$ from the definition. However, this process is not easy even in simple cases. It would be more convenient to compute the Gromov-Hausdorff distance by comparing the distances within $X$ and $Y$.

\begin{definition}\label{def0c}
\normalfont Let $X$ and $Y$ be two sets. A multivalued map $\Corr: X\rightrightarrows Y$ is defined as a subset of $X\times Y$, projecting surjectively onto $X$ through the canonical projection $\pi_{X}: X\times Y\rightarrow X$. Moreover, a multivalued map $\Corr: X\rightrightarrows Y$ is called a correspondence if the canonical projection $\Corr \rightarrow Y$ is surjective. 
\end{definition}

\begin{definition}\label{def0d}
\normalfont Let $(X, d_X)$ and $(Y, d_Y)$ be metric spaces. The distortion $\dis(\Corr)$ of a correspondence $\Corr: X\rightrightarrows Y$ is defined by
\begin{align}
    \dis\left( \Corr\right) =\sup \setc*{\lvert d_{X}\left( x,x'\right) -d_{Y}\left( y,y'\right) \rvert}{\left( x,y\right) ,\left( x',y'\right) \in \Corr}. \label{defofdistortion}
\end{align}
\end{definition}

\cite{burago2001course} reformulated the definition of the Gromov-Hausdorff distance by using distortion:
\begin{align}
    d_{GH}\left( X,Y\right) =\dfrac{1}{2}\inf_{\Corr: X\rightrightarrows Y} \dis\left( \Corr\right).\label{GromovHausdorffdistdef}
\end{align}

Now, we are equipped with all the necessary materials to present the stability theorem for geometric complexes. 

\begin{theorem}[\cite{chazal2014persistence}]\label{lem0b}
Let $P$ and $P'$ be two point clouds in a Euclidean space. Then
\begin{align}
    d_{B}\left(\bar{{\rm D}}(P), \bar{{\rm D}}(P')\right) \leq 2d_{GH}(P,P')\label{stabilityforRipsfiltration}
\end{align}
holds for the Rips filtration, and
\begin{align}
    d_{B}\left(\bar{{\rm D}}(P), \bar{{\rm D}}(P')\right) \leq d_{H}(P,P')
\end{align}
holds for the \v{C}ech filtration.
\end{theorem}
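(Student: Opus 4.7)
The plan is to reduce both bounds to the algebraic stability theorem for persistence modules, which asserts that if two persistence modules are $\epsilon$-interleaved then their persistence diagrams lie within bottleneck distance $\epsilon$. Concretely, for each inequality I will build an explicit interleaving between the Rips (resp.\ \v{C}ech) persistent homology modules of $P$ and $P'$ whose level is controlled by $d_{GH}(P,P')$ (resp.\ $d_H(P,P')$), and then invoke algebraic stability.

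For the Rips case, I would start from any correspondence $\Corr\colon P\rightrightarrows P'$ of distortion $\alpha=\dis(\Corr)$. Selecting a map $\phi\colon P\to P'$ with $(x,\phi(x))\in\Corr$ for all $x$, and symmetrically $\psi\colon P'\to P$, the distortion bound guarantees that pairwise Euclidean distances between matched points are preserved up to the additive error $\alpha$. Hence $\phi$ sends every simplex of the Rips complex of $P$ at parameter $r$ into a simplex of the Rips complex of $P'$ at a shifted parameter (the exact shift being a constant multiple of $\alpha$, with the constant absorbing the ball-versus-diameter convention), and symmetrically for $\psi$. A routine contiguity check then shows that $\psi\circ\phi$ is simplicially contiguous to the canonical inclusion into the Rips complex of $P$ at the twice-shifted parameter, and $\phi\circ\psi$ symmetrically, so the induced maps on homology assemble into an interleaving at level a constant multiple of $\alpha$. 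Algebraic stability yields $d_B\lesssim\alpha$; taking the infimum over $\Corr$ and using the identity $\inf_{\Corr}\dis(\Corr)=2d_{GH}(P,P')$ from~(\ref{GromovHausdorffdistdef}) then gives the claimed bound.

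For the \v{C}ech case the same template applies, now with a clean shift of $\delta$. Given any $\delta\geq d_H(P,P')$, I would pick $\phi\colon P\to P'$ and $\psi\colon P'\to P$ with $\|x-\phi(x)\|\leq\delta$ and $\|y-\psi(y)\|\leq\delta$ pointwise. If $\sigma=\{x_i\}$ is a simplex of the \v{C}ech complex of $P$ at radius $r$, any common witness $z\in\bigcap_i B_{x_i}(r)$ automatically lies in $\bigcap_i B_{\phi(x_i)}(r+\delta)$ by the triangle inequality, so $\phi(\sigma)$ is a simplex of the \v{C}ech complex of $P'$ at radius $r+\delta$; the same holds for $\psi$. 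The analogous contiguity argument then produces a $\delta$-interleaving of persistent homology modules; algebraic stability yields $d_B\leq\delta$, and minimizing over admissible $\delta$ gives $d_B\leq d_H(P,P')$.

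The hard part will be verifying that the argument is independent of the arbitrary choices made in defining $\phi$ and $\psi$: different admissible selections give different simplicial maps, and their induced maps on persistent homology must coincide so that the interleaving is well-defined and functorial in the filtration parameter. This is settled by a contiguity/chain-homotopy argument — any two valid selections differ by simplicial maps that are contiguous at the appropriately shifted parameter, hence chain-homotopic, hence equal after passing to homology — which is precisely the bridge from the geometric distortion/Hausdorff estimates to the algebraic interleaving formalism required by algebraic stability.
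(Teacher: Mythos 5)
The paper does not actually prove this theorem---it is quoted from \cite{chazal2014persistence} without proof---and your proposal is essentially the argument of that cited source: a correspondence (resp.\ a pair of Hausdorff-close selections) induces simplicial maps between Rips (resp.\ ambient \v{C}ech) complexes at shifted scale parameters, contiguity shows the induced maps on homology are independent of the selections and that the round trips agree with the inclusion maps, and algebraic stability converts the resulting interleaving into the bottleneck bound, after which one takes the infimum over correspondences via $\inf_{\Corr}\dis(\Corr)=2d_{GH}(P,P')$, respectively over $\delta\geq d_{H}(P,P')$. Your outline is correct as it stands; the one point left vague, the exact shift constant in the Rips case, is harmless because with the paper's radius convention a distortion-$\alpha$ correspondence yields an $(\alpha/2)$-interleaving, which gives a bound at least as strong as the stated $d_{B}\leq 2d_{GH}(P,P')$.
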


\begin{remark}
\normalfont In fact, there are two different types of \v{C}ech complex introduced in \cite{chazal2014persistence}, namely, the intrinsic \v{C}ech complex and the ambient \v{C}ech complex. In our work, the \v{C}ech complex means an ambient \v{C}ech complex ${\rm \check{C}ech}(L,\mathbb{R}^{d}; a)$, where $L$ is a point cloud in $\mathbb{R}^{d}$ and $a$ is the radius parameter.
\end{remark}

\subsection{Geometric representation of high-dimensional random vectors}\label{Geo}
We start to explain the small \emph{o} and the big \emph{O} notations for stochastic sequences. In what follows, $\mathbb{P}$ denotes the probability measure, $\mathbb{E}$ and $\mathbb{V}$ denote the expected value and variance, respectively.

\begin{definition}\label{def0a}
\normalfont A sequence of random variables $\{X_{m}\}_{m\in \mathbb{N}_{+}}$ is said to converge to zero in probability, written $X_{m}=o_{\mathbb{P}}(1)$, if for every $\varepsilon>0$, $\lim_{m\rightarrow \infty }\mathbb{P}\left( \lvert X_{m}\rvert \leq \varepsilon \right) =1$.
\end{definition}

\begin{remark}
\normalfont If $\{b_{m}\}_{m\in \mathbb{N}_{+}}$ is another sequence of random variables, $X_{m}=o_{\mathbb{P}}(b_{m})$ means $X_{m}=b_{m}Y_{m}$ for some $Y_{m}=o_{\mathbb{P}}(1)$.
\end{remark}

To motivate the definition of the big \emph{O} notation, it is useful to restate the above definition in the following way. We say $X_{m}=o_{\mathbb{P}}(1)$ if and only if for every positive $\varepsilon$ and $\eta$, there exists an integer $M_{(\varepsilon,\eta)}\in \mathbb{N}_{+}$ such that $\mathbb{P}\left( \lvert  X_{m}\rvert \leq \eta \right) \geq 1-\varepsilon$ holds for every $m>M_{(\varepsilon,\eta)}$.

\begin{definition}\label{def0b}
\normalfont A sequence of random variables $\{X_{m}\}_{m\in \mathbb{N}_{+}}$ is said to be eventually bounded in probability, written $X_{m}=O_{\mathbb{P}}(1)$, if for every $\varepsilon>0$, there exist a constant $C_{(\varepsilon)}$ and a positive integer $M_{(\varepsilon)}$ such that $\mathbb{P}\left( \lvert  X_{m}\rvert \leq C_{(\varepsilon)} \right) \geq 1-\varepsilon$ holds for every $m>M_{(\varepsilon)}$.
\end{definition}

\begin{remark}
\normalfont If $\{b_{m}\}_{m\in \mathbb{N}_{+}}$ is another sequence of random variables, $X_{m}=O_{\mathbb{P}}(b_{m})$ means $X_{m}=b_{m}Y_{m}$ for some $Y_{m}=O_{\mathbb{P}}(1)$.
\end{remark}

We now provide some elementary properties of asymptotic notations that can be derived easily from the definitions.

\begin{proposition}[\cite{MR1652247}]\label{prop0a}
The notations $o_{\mathbb{P}}$ and $O_{\mathbb{P}}$ satisfy the following rules of calculus.
\begin{itemize}
  \item [1)] 
  $o_{\mathbb{P}}(1)\pm o_{\mathbb{P}}(1)=o_{\mathbb{P}}(1)$ and $o_{\mathbb{P}}(1)\cdot o_{\mathbb{P}}(1)=o_{\mathbb{P}}(1)$.      
  \item [2)]
  $O_{\mathbb{P}}(1)\pm O_{\mathbb{P}}(1)=O_{\mathbb{P}}(1)$ and $O_{\mathbb{P}}(1)\cdot O_{\mathbb{P}}(1)=O_{\mathbb{P}}(1)$.
  \item [3)]
  $o_{\mathbb{P}}(1)\pm O_{\mathbb{P}}(1)=O_{\mathbb{P}}(1)$ and $o_{\mathbb{P}}(1)\cdot O_{\mathbb{P}}(1)=o_{\mathbb{P}}(1)$.
  \item [4)]
  $\left( 1+o_{\mathbb{P}}\left( 1\right) \right) ^{-1}=O_{\mathbb{P}}\left( 1\right)$.
  \item [5)]
  For every $\gamma>0$, $O_{\mathbb{P}}(m^{-\gamma})=o_{\mathbb{P}}(1)$ as $m\rightarrow \infty$.
\end{itemize}
\end{proposition}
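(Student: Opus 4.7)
The plan is to derive each of the five statements directly from Definitions~\ref{def0a} and~\ref{def0b}, using the union bound as the single workhorse. Throughout, I would fix arbitrary sequences $X_m,Y_m$ satisfying the hypotheses implicit on the left-hand side and verify the defining inequality on the right. Two elementary set inclusions drive most of the argument: $\{|X_m\pm Y_m|>\eta\}\subseteq\{|X_m|>\eta/2\}\cup\{|Y_m|>\eta/2\}$ for sums, and $\{|X_mY_m|>\eta\}\subseteq\{|X_m|>\sqrt{\eta}\}\cup\{|Y_m|>\sqrt{\eta}\}$ for products.

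Statements (1) and (2) follow by splitting an arbitrary tolerance $\varepsilon>0$ as $\varepsilon/2+\varepsilon/2$ between the two sequences and invoking the defining property on each piece. In (1) the bound $\mathbb{P}(|X_m\pm Y_m|>\eta)\to 0$ is immediate, and similarly for products. In (2) the output constant is $C_1+C_2$ for sums and $C_1C_2$ for products, and the threshold index is $\max(M_1,M_2)$. Statement (3) is then a hybrid: the observation that every $o_\mathbb{P}(1)$ sequence is automatically $O_\mathbb{P}(1)$ (apply Definition~\ref{def0a} with $\eta=1$ and read off $C_{(\varepsilon)}=1$ in Definition~\ref{def0b}) reduces the sum case to (2); for the product I would freeze a constant $C$ with $\mathbb{P}(|Y_m|\leq C)\geq 1-\varepsilon/2$ eventually and apply the $o_\mathbb{P}$ hypothesis on $X_m$ with threshold $\eta/C$, which gives $|X_mY_m|\leq\eta$ on an event of probability at least $1-\varepsilon$.

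For (4), the key remark is that on the high-probability event $\{|X_m|\leq 1/2\}$ the reverse triangle inequality gives $|1+X_m|\geq 1/2$ and hence $|(1+X_m)^{-1}|\leq 2$, so $C_{(\varepsilon)}=2$ works in Definition~\ref{def0b}. For (5), writing $X_m=m^{-\gamma}Y_m$ with $Y_m=O_\mathbb{P}(1)$, I would fix $C$ with $\mathbb{P}(|Y_m|\leq C)\geq 1-\varepsilon$ for $m$ large and observe that $m^{-\gamma}C<\eta$ once $m>(C/\eta)^{1/\gamma}$. There is no substantive obstacle anywhere: the proposition is a bookkeeping exercise in quantifiers, and the only mild care required is to choose the $\varepsilon/2$ splits consistently so that tolerances compound to the ambient $\varepsilon$. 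I would expect the final write-up to be short, perhaps verifying (1), (2), (4), (5) in full and leaving (3) as a one-line combination of the preceding items.
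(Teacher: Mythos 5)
Your argument is correct: the union-bound inclusions for sums and products, the observation that $o_{\mathbb{P}}(1)$ implies $O_{\mathbb{P}}(1)$, the event $\{\lvert X_m\rvert\leq 1/2\}$ giving the constant $2$ in item 4), and the threshold $m>(C/\eta)^{1/\gamma}$ in item 5) are exactly the standard quantifier bookkeeping, and no step fails. Note, however, that the paper itself offers no proof of this proposition --- it is stated with a citation to the reference \cite{MR1652247} and the remark that the rules ``can be derived easily from the definitions'' --- so your write-up is precisely the routine verification the authors delegate to that reference.
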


\begin{remark}
\normalfont The equal signs in these equalities indicate that the left-hand side implies the right-hand side.
\end{remark}

\begin{proposition}\label{prop0b}
  Let $\{X_{m}\}_{m\in \mathbb{N}_{+}}$ and $\{Y_{m}\}_{m\in \mathbb{N}_{+}}$ be two sequences of non-negative random variables. Suppose that $X_{m}\leq Y_{m}$ holds for every $m\in \mathbb{N}_{+}$. Then as $m\rightarrow \infty$,
  \begin{itemize}
      \item [1)]
       $Y_{m}=o_{\mathbb{P}}(1)$ implies $X_{m}=o_{\mathbb{P}}(1)$.
      \item [2)]
       $Y_{m}=O_{\mathbb{P}}(1)$ implies $X_{m}=O_{\mathbb{P}}(1)$.
  \end{itemize}
\end{proposition}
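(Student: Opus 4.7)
The plan is to reduce both statements to a single, one-line observation about event inclusions. Since $X_{m}$ and $Y_{m}$ are non-negative, we have $\lvert X_{m}\rvert=X_{m}$ and $\lvert Y_{m}\rvert=Y_{m}$, and the hypothesis $X_{m}\leq Y_{m}$ gives the deterministic inclusion
\begin{equation*}
    \{\lvert Y_{m}\rvert \leq t\}\subseteq \{\lvert X_{m}\rvert \leq t\}
\end{equation*}
for every threshold $t\geq 0$ and every $m\in\mathbb{N}_{+}$. Monotonicity of $\mathbb{P}$ then yields $\mathbb{P}(\lvert X_{m}\rvert\leq t)\geq \mathbb{P}(\lvert Y_{m}\rvert\leq t)$.

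First I would prove part 1). Given any $\varepsilon>0$, I apply the inclusion with $t=\varepsilon$ and invoke $Y_{m}=o_{\mathbb{P}}(1)$ to conclude $\mathbb{P}(\lvert X_{m}\rvert\leq \varepsilon)\geq \mathbb{P}(\lvert Y_{m}\rvert\leq \varepsilon)\to 1$ as $m\to\infty$. This matches Definition~\ref{def0a} for $X_{m}$, so $X_{m}=o_{\mathbb{P}}(1)$.

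Next I would handle part 2) by the same device, but using the reformulation of $O_{\mathbb{P}}(1)$ directly from Definition~\ref{def0b}. Given $\varepsilon>0$, hypothesis $Y_{m}=O_{\mathbb{P}}(1)$ furnishes a constant $C_{(\varepsilon)}$ and an integer $M_{(\varepsilon)}$ such that $\mathbb{P}(\lvert Y_{m}\rvert\leq C_{(\varepsilon)})\geq 1-\varepsilon$ for all $m>M_{(\varepsilon)}$. Applying the inclusion with $t=C_{(\varepsilon)}$ gives $\mathbb{P}(\lvert X_{m}\rvert\leq C_{(\varepsilon)})\geq 1-\varepsilon$ for the same constant and the same threshold index, which is exactly the bound required to conclude $X_{m}=O_{\mathbb{P}}(1)$; no new constants need to be chosen.

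There is no genuine obstacle in this proposition: once the non-negativity of both sequences is used to remove the absolute values, everything reduces to monotonicity of probability on nested events. The only point worth emphasizing is that the very same threshold ($\varepsilon$ in part 1, $C_{(\varepsilon)}$ in part 2) that witnesses the asymptotic control for $Y_{m}$ witnesses it for $X_{m}$, so the statement is in fact a ``free'' comparison principle and no quantitative work is required.
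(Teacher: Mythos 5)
Your proof is correct: since $0\leq X_{m}\leq Y_{m}$, the event inclusion $\{Y_{m}\leq t\}\subseteq\{X_{m}\leq t\}$ and monotonicity of $\mathbb{P}$ immediately transfer both the $o_{\mathbb{P}}(1)$ and the $O_{\mathbb{P}}(1)$ bounds (with the same $\varepsilon$, $C_{(\varepsilon)}$, and $M_{(\varepsilon)}$) from $Y_{m}$ to $X_{m}$. The paper states this proposition without proof precisely because it follows directly from Definitions~\ref{def0a} and~\ref{def0b} in this way, and your argument is exactly the intended elementary one.
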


\begin{proposition}\label{prop0c}
  Let $\{X_{m}\}_{m\in \mathbb{N}_{+}}$ be a sequence of random variables, and $f:\mathbb{N}_{+}\rightarrow \mathbb{R}$ a real function. Then as $m\rightarrow \infty$,
  \begin{itemize}
      \item [1)]
       $X_{m}=f(m)+o_{\mathbb{P}}(1)$ implies $\lvert  X_{m}\rvert=\lvert  f(m)\rvert+o_{\mathbb{P}}(1)$.
      \item [2)]
       $X_{m}=f(m)+O_{\mathbb{P}}(1)$ implies $\lvert  X_{m}\rvert=\lvert  f(m)\rvert+O_{\mathbb{P}}(1)$.
  \end{itemize}
\end{proposition}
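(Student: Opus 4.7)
The plan is to reduce both statements to the reverse triangle inequality and then invoke Proposition~\ref{prop0b}. Define $Y_{m}:=X_{m}-f(m)$. By hypothesis, $Y_{m}=o_{\mathbb{P}}(1)$ in part~1) and $Y_{m}=O_{\mathbb{P}}(1)$ in part~2), which in each case means $|Y_{m}|$ falls into the same class (directly from Definitions~\ref{def0a} and~\ref{def0b}, since the defining events $\{|Y_{m}|\leq \varepsilon\}$ and $\{|Y_{m}|\leq C_{(\varepsilon)}\}$ are unchanged by taking absolute values).

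Next I would introduce $Z_{m}:=|X_{m}|-|f(m)|$ and apply the reverse triangle inequality to obtain
\begin{equation*}
|Z_{m}|=\bigl\lvert |X_{m}|-|f(m)|\bigr\rvert \leq |X_{m}-f(m)|=|Y_{m}|.
\end{equation*}
Both $|Z_{m}|$ and $|Y_{m}|$ are non-negative random variables, so Proposition~\ref{prop0b} applies: $|Y_{m}|=o_{\mathbb{P}}(1)$ implies $|Z_{m}|=o_{\mathbb{P}}(1)$, and $|Y_{m}|=O_{\mathbb{P}}(1)$ implies $|Z_{m}|=O_{\mathbb{P}}(1)$. Finally, since $|Z_{m}|$ and $Z_{m}$ satisfy the same defining events in Definitions~\ref{def0a} and~\ref{def0b} up to absolute values, we conclude $Z_{m}=o_{\mathbb{P}}(1)$ in part~1) and $Z_{m}=O_{\mathbb{P}}(1)$ in part~2), which is exactly the required conclusion $|X_{m}|=|f(m)|+o_{\mathbb{P}}(1)$ or $|X_{m}|=|f(m)|+O_{\mathbb{P}}(1)$.

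There is essentially no obstacle here: the statement is a direct corollary of the reverse triangle inequality together with the monotonicity rule of Proposition~\ref{prop0b}. The only minor bookkeeping point is to verify that the asymptotic class of $Y_{m}$ coincides with that of $|Y_{m}|$ (and likewise for $Z_{m}$ and $|Z_{m}|$), but this is immediate from the definitions, which are framed in terms of $\lvert \cdot\rvert$. The proof fits in a few lines and needs no additional machinery beyond what has already been established in this subsection.
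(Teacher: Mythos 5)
Your argument is correct: the reverse triangle inequality gives $\bigl\lvert\,\lvert X_{m}\rvert-\lvert f(m)\rvert\,\bigr\rvert\leq\lvert X_{m}-f(m)\rvert$, and Proposition~\ref{prop0b} (together with the observation, recorded in Remark~\ref{Gtsdc}, that $Y_{m}$ and $\lvert Y_{m}\rvert$ lie in the same asymptotic class) yields both parts. The paper states this proposition without proof, as one of the elementary properties derivable directly from the definitions, and your few-line verification is precisely the intended argument.
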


\begin{remark}\label{Gtsdc}
\normalfont If $f$ is zero, then $X_{m}=o_{\mathbb{P}}(1)$ if and only if $\lvert  X_{m}\rvert=o_{\mathbb{P}}(1)$, and $X_{m}=O_{\mathbb{P}}(1)$ if and only if $\lvert  X_{m}\rvert=O_{\mathbb{P}}(1)$.
\end{remark}

\begin{proposition}\label{prop0d}
Let $\{X_{m}^{1}\}_{m\in \mathbb{N}_{+}}, \ldots, \{X_{m}^{n}\}_{m\in \mathbb{N}_{+}}$
be $n$ sequences of random variables and $f:\mathbb{N}_{+}\rightarrow \mathbb{R}$ a real function. Set $Y_{m} = \max_{i\in [n]} X_{m}^{i}$ and $Z_{m} =~\min_{i\in [n]}X_{m}^{i}$.
  \begin{itemize}
      \item [1)]
       If $X_{m}^{i}=f(m)+o_{\mathbb{P}}(1)$ for all $i\in [n]$, then $Y_{m}=f(m)+o_{\mathbb{P}}(1)$ and $Z_{m}=f(m)+o_{\mathbb{P}}(1)$ as $m\rightarrow \infty$.
      \item [2)]
       If $X_{m}^{i}=f(m)+O_{\mathbb{P}}(1)$ for all $i\in [n]$, then $Y_{m}=f(m)+O_{\mathbb{P}}(1)$ and $Z_{m}=f(m)+O_{\mathbb{P}}(1)$ as $m\rightarrow \infty$.
  \end{itemize}
\end{proposition}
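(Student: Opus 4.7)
The plan is to reduce both statements to an elementary union-bound argument applied to the centered sequences $W_m^i := X_m^i - f(m)$. By hypothesis each $W_m^i$ is $o_{\mathbb{P}}(1)$ in part 1) or $O_{\mathbb{P}}(1)$ in part 2), and since taking max/min commutes with subtracting a common constant, one has $Y_m - f(m) = \max_{i\in[n]} W_m^i$ and $Z_m - f(m) = \min_{i\in[n]} W_m^i$. The elementary observation that both $|\max_i a_i|$ and $|\min_i a_i|$ are bounded above by $\max_i |a_i|$, which holds because the max and the min are each attained at some index, reduces the problem to showing that $\max_{i\in[n]} |W_m^i|$ inherits the appropriate asymptotic order from the individual $|W_m^i|$.

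For part 1), I would fix arbitrary $\varepsilon, \eta > 0$. The defining property of $o_{\mathbb{P}}(1)$, restated after Definition~\ref{def0a} in the excerpt, supplies integers $M_i$ with $\mathbb{P}(|W_m^i| > \eta) < \varepsilon/n$ for every $m > M_i$. A union bound then yields
\[
\mathbb{P}\Bigl(\max_{i\in[n]} |W_m^i| > \eta\Bigr) \le \sum_{i=1}^{n} \mathbb{P}(|W_m^i| > \eta) < \varepsilon
\]
for every $m > \max_i M_i$, which proves $\max_i |W_m^i| = o_{\mathbb{P}}(1)$. Combined with the preceding reduction, this gives $Y_m - f(m) = o_{\mathbb{P}}(1)$ and $Z_m - f(m) = o_{\mathbb{P}}(1)$, as required.

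Part 2) is handled analogously: for a given $\varepsilon > 0$, Definition~\ref{def0b} applied with threshold $\varepsilon/n$ provides constants $C_i$ and integers $M_i$ such that $\mathbb{P}(|W_m^i| \le C_i) \ge 1 - \varepsilon/n$ for every $m > M_i$. Setting $C := \max_i C_i$ and $M := \max_i M_i$ and applying a union bound on the complementary events yields $\mathbb{P}(\max_i |W_m^i| \le C) \ge 1 - \varepsilon$ for $m > M$, so $\max_i |W_m^i| = O_{\mathbb{P}}(1)$, and the reduction delivers the analogous bound for $Y_m - f(m)$ and $Z_m - f(m)$. There is no real obstacle in this argument; the only care needed is to split the probability budget as $\varepsilon/n$ across the $n$ indices so that the union bound absorbs the factor of $n$, which is harmless since $n$ is fixed.
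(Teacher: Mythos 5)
Your proof is correct: the reduction to the centered sequences, the bound $\lvert\max_i a_i\rvert,\lvert\min_i a_i\rvert\leq\max_i\lvert a_i\rvert$, and the union bound with budget $\varepsilon/n$ together give exactly what is needed under Definitions~\ref{def0a} and~\ref{def0b}. The paper states Proposition~\ref{prop0d} without proof as an elementary consequence of the definitions, and your argument is precisely the standard one it implicitly relies on, so there is nothing to reconcile.
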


With the use of the small \emph{o} and the big \emph{O} notations, we present the original geometric representation theorem of high-dimensional random vectors and generalize it for our needs.

\begin{theorem}[\cite{hall2005geometric}]\label{lem0a}
Let $z\in \mathbb{R}^{d}$ be a real random vector following the Gaussian distribution $\mathcal{N}_{d}(0,I_{d})$. Here and subsequently, $I_{d}$ denotes the $d\times d$ identity matrix. Then
\begin{equation}
    \left\| z\right\| =\sqrt{d}+O_{\mathbb{P}}\left( 1\right)\label{1.1}
\end{equation}
holds as $d\rightarrow \infty$. Let $w\in \mathbb{R}^{d}$ be another random vector that is identically distributed and independent of $z$. Then
\begin{align}
    \left\| z-w\right\| &=\sqrt{2d}+O_{\mathbb{P}}\left( 1\right)\label{1.2}
\end{align}
and
\begin{align}
    {\rm Angle}\left( z,w\right) &=\dfrac{\pi }{2}+O_{\mathbb{P}}\left( d^{-1/2}\right)\label{1.3}
\end{align}
hold as $d\rightarrow \infty$. Here ${\rm Angle}\left( z,w\right)$ denotes the angle, in measured radians at the origin, between vectors $z$ and $w$.
\end{theorem}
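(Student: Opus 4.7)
The plan is to reduce each of the three assertions to a concentration statement about a sum of independent random variables and then translate that into the desired $O_{\mathbb{P}}$ form via Taylor expansion. Throughout I will rely on the fact that for any random variable $S$ with finite variance, Chebyshev's inequality gives $S-\mathbb{E}(S) = O_{\mathbb{P}}(\sqrt{\mathbb{V}(S)})$, together with the calculus rules in Proposition~\ref{prop0a}.

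For \eqref{1.1}, I would start from $\|z\|^{2}=\sum_{i=1}^{d}z_{i}^{2}$ with $z_{i}\sim\mathcal{N}(0,1)$ i.i.d., so $\|z\|^{2}\sim\chi^{2}_{d}$ with $\mathbb{E}\|z\|^{2}=d$ and $\mathbb{V}\|z\|^{2}=2d$. Chebyshev then gives $\|z\|^{2}=d+O_{\mathbb{P}}(\sqrt{d})$, equivalently $\|z\|^{2}/d=1+O_{\mathbb{P}}(d^{-1/2})$. Taking the square root and applying the expansion $\sqrt{1+u}=1+\tfrac{1}{2}u+O(u^{2})$ (valid for small $u$, with Proposition~\ref{prop0a}(5) ensuring the remainder is negligible), I obtain
\begin{equation*}
\|z\|=\sqrt{d}\bigl(1+O_{\mathbb{P}}(d^{-1/2})\bigr)^{1/2}=\sqrt{d}+O_{\mathbb{P}}(1),
\end{equation*}
which is \eqref{1.1}. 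For \eqref{1.2}, by independence $z-w\sim \mathcal{N}_{d}(0,2I_{d})$, so $\|z-w\|^{2}/2\sim\chi^{2}_{d}$; the same argument applied to $\|z-w\|^{2}=2d+O_{\mathbb{P}}(\sqrt{d})$ yields $\|z-w\|=\sqrt{2d}+O_{\mathbb{P}}(1)$.

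For the angle estimate \eqref{1.3}, I would write $\cos\mathrm{Angle}(z,w)=\langle z,w\rangle/(\|z\|\,\|w\|)$ and control numerator and denominator separately. The inner product $\langle z,w\rangle=\sum_{i=1}^{d}z_{i}w_{i}$ is a sum of $d$ independent variables with mean $0$ and variance $\mathbb{E}(z_{i}^{2})\mathbb{E}(w_{i}^{2})=1$, so $\mathbb{V}\langle z,w\rangle=d$ and hence $\langle z,w\rangle=O_{\mathbb{P}}(\sqrt{d})$ by Chebyshev. From \eqref{1.1} applied to $z$ and $w$, the denominator satisfies $\|z\|\,\|w\|=d+O_{\mathbb{P}}(\sqrt{d})$, so by Proposition~\ref{prop0a}(4),
\begin{equation*}
\cos\mathrm{Angle}(z,w)=\frac{O_{\mathbb{P}}(\sqrt{d})}{d\bigl(1+O_{\mathbb{P}}(d^{-1/2})\bigr)}=O_{\mathbb{P}}(d^{-1/2}).
\end{equation*}
Finally I would use $\arccos(u)=\tfrac{\pi}{2}-u+O(u^{3})$ for $u$ near $0$ to conclude $\mathrm{Angle}(z,w)=\tfrac{\pi}{2}+O_{\mathbb{P}}(d^{-1/2})$.

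The statement is essentially a quantitative law of large numbers, so there is no deep obstacle; the main care lies in justifying that the Taylor remainders for $\sqrt{1+u}$ and $\arccos(u)$ can be absorbed into the stated $O_{\mathbb{P}}$ terms. That step is handled by Proposition~\ref{prop0a}(5), which ensures $O_{\mathbb{P}}(d^{-\gamma})=o_{\mathbb{P}}(1)$ for any $\gamma>0$, so the quadratic correction to $\sqrt{1+O_{\mathbb{P}}(d^{-1/2})}$ contributes only $O_{\mathbb{P}}(d^{-1})$, and the cubic correction to $\arccos$ contributes only $O_{\mathbb{P}}(d^{-3/2})$, both of which are dominated by the leading error terms.
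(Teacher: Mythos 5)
Your proposal is correct. Note that the paper itself does not reprove Theorem~\ref{lem0a} (it is quoted from the cited source); what it does prove is the generalization Theorem~\ref{YataGRT}, and your argument for \eqref{1.1} and \eqref{1.2} is essentially the same as that proof: Chebyshev applied to the squared norm to get $\|z\|^{2}=d+O_{\mathbb{P}}(\sqrt{d})$, then passage to the norm itself (the paper does this implicitly via $\bigl|\|z\|-\sqrt{d}\bigr|=\bigl|\|z\|^{2}-d\bigr|/(\|z\|+\sqrt{d})$, which avoids your Taylor-remainder discussion, but your expansion is fine since $u=\|z\|^{2}/d-1\geq-1$ and is $o_{\mathbb{P}}(1)$). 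The one genuine difference is the angle statement: the paper derives its inner-product bound \eqref{innerproduct} from the two norm estimates via the polarization identity $\langle z,w\rangle=\tfrac{1}{2}\bigl(\|z\|^{2}+\|w\|^{2}-\|z-w\|^{2}\bigr)$, whereas you bound $\langle z,w\rangle$ directly by computing $\mathbb{V}\langle z,w\rangle=d$ and applying Chebyshev, then pass from the cosine to the angle by an $\arccos$ expansion (a step the paper never needs, since Theorem~\ref{YataGRT} states only the cosine form). Both routes are elementary and valid; the polarization route recycles the already-proved norm estimates and so extends verbatim to non-Gaussian coordinates, while your direct variance computation is slightly shorter in the Gaussian case. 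Your handling of the $\arccos$ remainder is acceptable because the event $\{|\cos\mathrm{Angle}(z,w)|\leq 1/2\}$ has probability tending to one, on which the cubic remainder is uniformly controlled.
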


Theorem \ref{lem0a} characterizes the geometric behavior of the high-dimensional random vectors that are drawn independently from a Gaussian distribution. As the dimension $d$ increases to infinity, these $d$-dimensional random vectors tend to concentrate on a sphere centered at the origin with the radius $\sqrt{d}$, and the distance and angle between two distinct vectors tend to $\sqrt{2d}$ and $\pi/2$, respectively. However, the assumption that the random vector follows the Gaussian distribution is overly restrictive; it is necessary to extend the Gaussian distribution in this geometric representation theorem to more general probability distributions.

\begin{theorem}\label{YataGRT}
Let $z=(z_{1},z_{2},\ldots,z_{d})^{T} \in \mathbb{R}^{d}$. Suppose that $z_{1},z_{2},\ldots,z_{d}$ are \iid\ random variables with the expected value $\epsilon$ and the variance $\nu$, and ${\mathbb{V}}[(z_{1}-\epsilon) ^{2}]\neq 0$. Then
\begin{align}
    &\left\| z-\mu \right\| =\sqrt{\nu d}+O_{\mathbb{P}}\left( 1\right)\label{distancetomean1}
\end{align}
holds as $d\rightarrow \infty$, where $\mu=(\epsilon,\epsilon,\ldots ,\epsilon)^{T}\in \mathbb{R} ^{d}$. Let $w=(w_{1},w_{2},\ldots,w_{d})^{T}$ be another random vector that is identically distributed and independent of $z$. Then
\begin{align}
    & \left\| z-w\right\| =\sqrt{2\nu d}+O_{\mathbb{P}}\left( 1\right)\label{pairwisedistance1}
\end{align}
and
\begin{align}
    & \dfrac{\langle  z-\mu, w-\mu\rangle }{\left\| z-\mu\right\| \cdot \left\| w-\mu\right\| } =O_{\mathbb{P}}\left( d^{-1/2}\right)\label{innerproduct}
\end{align}
hold as $d\rightarrow \infty$. Here and subsequently, $\langle \cdot, \cdot\rangle$ denotes the scalar product. 
\end{theorem}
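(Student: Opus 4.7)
The plan is to reduce each of the three assertions to a Chebyshev-type concentration statement for a sum of \iid\ random variables, and then pass from the squared quantities to their square roots via a Taylor expansion carried out inside the $O_{\mathbb{P}}$ calculus of Proposition~\ref{prop0a}. First I would fix the centring $X_{i}:=z_{i}-\epsilon$, which are \iid\ with mean $0$ and variance $\nu$, and whose squares $X_{i}^{2}$ have mean $\nu$ and finite non-zero variance by the standing hypothesis $\mathbb{V}[(z_{1}-\epsilon)^{2}]\neq 0$.

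To prove \eqref{distancetomean1}, I would apply Chebyshev's inequality to $\|z-\mu\|^{2}=\sum_{i=1}^{d}X_{i}^{2}$, whose expectation is $\nu d$ and whose variance is $d\cdot\mathbb{V}[X_{1}^{2}]$. This yields $\|z-\mu\|^{2}=\nu d+O_{\mathbb{P}}(\sqrt{d})$. Writing this as $\|z-\mu\|=\sqrt{\nu d}\cdot\sqrt{1+O_{\mathbb{P}}(d^{-1/2})}$ and expanding the square root using $\sqrt{1+t}-1=O(t)$ for small $t$, the prefactor $\sqrt{d}$ and the internal $O_{\mathbb{P}}(d^{-1/2})$ remainder multiply to leave a pure $O_{\mathbb{P}}(1)$ term, where I would invoke items 3)--5) of Proposition~\ref{prop0a} to formalise the bookkeeping.

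For \eqref{pairwisedistance1}, I would observe that $u_{i}:=z_{i}-w_{i}$ are \iid\ with mean $0$ and variance $2\nu$. The finiteness of $\mathbb{V}[X_{1}^{2}]$ forces $\mathbb{E}[X_{1}^{4}]<\infty$, and hence $\mathbb{E}[u_{1}^{4}]<\infty$ by the triangle inequality in $L^{4}$, so $\mathbb{V}[u_{1}^{2}]<\infty$ and the Chebyshev-plus-square-root argument above can be repeated verbatim to obtain $\|z-w\|=\sqrt{2\nu d}+O_{\mathbb{P}}(1)$. Finally, for \eqref{innerproduct}, I would apply Chebyshev to $\langle z-\mu,w-\mu\rangle=\sum_{i=1}^{d}X_{i}Y_{i}$ with $Y_{i}:=w_{i}-\epsilon$: independence of $z$ and $w$ gives mean $0$ and variance $d\nu^{2}$, so $\sum_{i=1}^{d}X_{i}Y_{i}=O_{\mathbb{P}}(\sqrt{d})$. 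Combining this numerator bound with the denominator estimates $\|z-\mu\|^{-1},\,\|w-\mu\|^{-1}=O_{\mathbb{P}}(d^{-1/2})$ (extracted from \eqref{distancetomean1} via item 4) of Proposition~\ref{prop0a}) then yields the required $O_{\mathbb{P}}(d^{-1/2})$ bound on the normalised inner product.

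The main obstacle is not conceptual but accounting: one must consistently track the orders of the $O_{\mathbb{P}}$ remainders through a square root and through a reciprocal, and verify at each step that the finite-variance hypothesis on $(z_{1}-\epsilon)^{2}$ is exactly what is needed to apply Chebyshev in the right place. No sharper probabilistic machinery is required, so the argument extends uniformly from the Gaussian case of Theorem~\ref{lem0a} to every distribution satisfying the stated hypotheses.
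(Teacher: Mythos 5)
Your proposal is correct, and for \eqref{distancetomean1} and \eqref{pairwisedistance1} it is essentially the paper's argument: Chebyshev applied to $\|z-\mu\|^{2}=\sum_{i}(z_{i}-\epsilon)^{2}$ with variance $d\,\mathbb{V}[(z_{1}-\epsilon)^{2}]$, then passage from the squared norm to the norm (your explicit Taylor/square-root bookkeeping is just a more detailed version of the paper's one-line deduction), and for the pairwise distance the observation that the coordinates of $z-w$ are \iid\ with mean $0$ and variance $2\nu$; your remark that $\mathbb{V}[(z_{1}-\epsilon)^{2}]<\infty$ forces $\mathbb{E}[(z_{1}-w_{1})^{4}]<\infty$ is in fact slightly more careful than the paper, which invokes \eqref{distancetomean1} for $z-w$ without comment. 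Where you genuinely diverge is \eqref{innerproduct}: you run a fresh Chebyshev bound on the cross term $\sum_{i}(z_{i}-\epsilon)(w_{i}-\epsilon)$, using that these products are \iid\ with mean $0$ and variance $\nu^{2}$, and then invert the denominator via $\left(1+o_{\mathbb{P}}(1)\right)^{-1}=O_{\mathbb{P}}(1)$; the paper instead uses the polarization identity $\langle z-\mu,w-\mu\rangle=\tfrac{1}{2}\left(\|z-\mu\|^{2}+\|w-\mu\|^{2}-\|z-w\|^{2}\right)$, so the numerator estimate $O_{\mathbb{P}}(\sqrt{d})$ falls out of the two results already proved with no new moment computation. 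Your route is more direct and self-contained (it needs only second moments of the products), while the paper's is more economical in that it recycles \eqref{distancetomean1} and \eqref{pairwisedistance1}; both are valid, and your denominator handling matches the paper's.
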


\begin{proof}[Proof.\nopunct]
Since $\left\| z-\mu \right\| ^{2}=\sum_{i\in [d]}( z_{i}-\epsilon) ^{2}$, we have
\begin{equation*}
    {\mathbb{E}}\left\| z-\mu \right\| ^{2}=\sum_{i\in [d]}{\mathbb{E}}\left( z_{i}-\epsilon\right) ^{2}=\sum_{i\in [d]}{\mathbb{V}}\left( z_{i}\right)=\nu d
\end{equation*}
and
\begin{equation*}
{\mathbb{V}}\left(\left\| z-\mu \right\| ^{2}\right)=\sum_{i\in [d]}{\mathbb{V}}\left( z_{i}-\epsilon\right) ^{2}=d\cdot {\mathbb{V}}\left[\left(z_{1}-\epsilon\right) ^{2}\right].
\end{equation*}
By Chebyshev's inequality, for every $\eta>0$ and $d\geq 1$,
\begin{equation*}
    \mathbb{P} \left(\bigg\lvert  \dfrac{\left\| z-\mu \right\| ^{2}}{\sqrt{d}}-\dfrac{{\mathbb{E}}\left\| z-\mu \right\| ^{2}}{\sqrt{d}}\bigg\rvert \geq \eta \right)\leq \dfrac{{\mathbb{V}}\left( \dfrac{\left\| z-\mu \right\| ^{2}}{\sqrt{d}}\right) }{\eta ^{2}}.
\end{equation*}
Combining these we obtain
\begin{equation}
\mathbb{P} \left(\bigg\lvert  \dfrac{\left\| z-\mu \right\| ^{2}}{\sqrt{d}}-\dfrac{\nu d}{\sqrt{d}}\bigg\rvert \geq \eta \right)\leq \dfrac{{\mathbb{V}}\left[\left( z_{1}-\epsilon\right) ^{2}\right]}{\eta ^{2}}.\label{PPPncvj}
\end{equation}
Let $C=\sqrt{{\mathbb{V}}[(z_{1}-\epsilon) ^{2}]}/\sqrt{\eta}$. It is clear that $C$ is positive since ${\mathbb{V}}[(z_{1}-\epsilon) ^{2}]\neq 0$. Now~\eqref{PPPncvj} beomes
$$\mathbb{P} \left(\big\lvert  \left\| z-\mu \right\| ^{2}-\nu d\big\rvert \geq C \cdot \sqrt{d} \right)\leq \eta,$$
i.e., $\left\| z-\mu \right\| ^{2} =\nu d+O_{\mathbb{P}}(\sqrt{d})$. Hence $\left\| z-\mu \right\| =\sqrt{\nu d}+O_{\mathbb{P}}\left( 1\right)$.

Because all the coordinates of $z$ and $w$ are \iid, the coordinates $z_{1}-w_{1}, z_{2}-w_{2}, \ldots, z_{d}-w_{d}$ of $z-w$ are \iid\ with expected values $0$ and variances $2\nu$. Hence~\eqref{pairwisedistance1} holds by noticing~\eqref{distancetomean1}.

Finally, we notice that
\begin{align*}
    \langle  z-\mu, w-\mu\rangle=\frac{\left\| z-\mu\right\|^{2}+\left\| w-\mu\right\|^{2}-\left\| z-w\right\|^{2}}{2}.
\end{align*}
From~\eqref{distancetomean1}, \eqref{pairwisedistance1}, we have
\begin{align*}
    \dfrac{\langle  z-\mu, w-\mu\rangle  }{\left\| z-\mu\right\| \cdot \left\| w-\mu\right\| } &=\dfrac{\left\| z-\mu\right\|^{2}+\left\| w-\mu\right\|^{2}-\left\| z-w\right\|^{2}}{2\cdot \left\| z-\mu\right\| \cdot \left\| w-\mu\right\| } \\
    &= \frac{O_{\mathbb{P}}(\sqrt{d})}{2\nu d+O_{\mathbb{P}}(\sqrt{d})}\\
    &= O_{\mathbb{P}}(d^{-1/2}). \tag*{\qedhere}
\end{align*}
\end{proof}

\section{Existence of curse of dimensionality on persistence diagrams}\label{sec3}
This section is devoted to the theoretical analysis of the asymptotic behavior of persistence diagrams. We propose that the observed persistence diagram becomes inconsistent with the original one as the dimension increases, resulting in the phenomenon called the ``curse of dimensionality on persistence diagrams". To further describe the level of inconsistency, we provide a classification of the asymptotic similarity between the original and the observed persistence diagrams at the end of this section.

\subsection{Asymptomatic behavior of the observed persistence diagram}\label{subsec2}

\subsubsection{Geometric representation of the observed point cloud}\label{grotopc}
Inspired by the previously mentioned geometric representation of high-dimensional random data, the following consequence for the observed point cloud arises in our scenario.

\begin{proposition}\label{prop1a}
Let $P'=\{x_{1}',x_{2}',\ldots,x_{n}'\}$ be the observed point cloud in $\mathbb{R}^{d}$. Then
\begin{align}
    \left\| x_{i}'\right\|=\sqrt{\nu d}+O_{\mathbb{P}}(1)\label{CFGxd}
\end{align}
holds for every $i\in [n]$ as $d\rightarrow \infty$. Furthermore,
\begin{align}
    &\left\| x_{i}'-x_{j}'\right\|=\sqrt{2\nu d}+O_{\mathbb{P}}(1)\label{obpairwisedist} 
\end{align}
and
\begin{align}
    &\frac{\langle  x_{i}', x_{j}'\rangle  }{\| x_{i}'\| \cdot \| x_{j}'\|}=O_{\mathbb{P}}\left( d^{-1/2}\right)\label{innerproductconver0}
\end{align}
hold for arbitrary distinct $i,j\in [n]$ as $d\rightarrow \infty$.
\end{proposition}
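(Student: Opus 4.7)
The plan is to reduce everything to Theorem~\ref{YataGRT} applied to the noise vectors $e_i$, exploiting the fact that the original points $x_i$ have only their first $s$ coordinates possibly nonzero, with $s$ fixed independently of $d$. Consequently, each $\|x_i\|$ and each $\|x_i-x_j\|$ is bounded by a constant that does not depend on $d$, so the $x_i$'s contribute only $O(1)$ terms on the $\sqrt{d}$-scale set by the noise.

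First I would check that Theorem~\ref{YataGRT} applies to the noise. The coordinates of $e_i$ are \iid{} with mean $0$ and variance $\nu$, drawn from a continuous distribution; the square of a continuous (non-degenerate) random variable is itself non-degenerate, so $\mathbb{V}[e_{1i}^{2}]>0$. This gives $\|e_i\| = \sqrt{\nu d}+O_{\mathbb{P}}(1)$. Applying the same theorem to $e_i-e_j$, whose coordinates are \iid{} with mean $0$ and variance $2\nu$, yields $\|e_i-e_j\| = \sqrt{2\nu d}+O_{\mathbb{P}}(1)$.

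To get~\eqref{CFGxd}, I would expand
$$\|x_i'\|^{2} = \|x_i\|^{2} + 2\langle x_i, e_i\rangle + \|e_i\|^{2}.$$
Here $\|x_i\|^{2}$ is a fixed constant; the cross term involves only the first $s$ coordinates of $e_i$ (since $x_i$ vanishes elsewhere) and has variance $\nu\|x_i\|^{2}$, so by Chebyshev's inequality it is $O_{\mathbb{P}}(1)$; and squaring the asymptotic for $\|e_i\|$ gives $\|e_i\|^{2}=\nu d+O_{\mathbb{P}}(\sqrt{d})$. Summing these, $\|x_i'\|^{2}=\nu d+O_{\mathbb{P}}(\sqrt{d})$, and then a first-order expansion of the square root around $\nu d$ (identical to the last step in the proof of Theorem~\ref{YataGRT}) gives $\|x_i'\|=\sqrt{\nu d}+O_{\mathbb{P}}(1)$. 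The same expansion applied to $x_i'-x_j'=(x_i-x_j)+(e_i-e_j)$, with $\|x_i-x_j\|^{2}$ constant and the cross term again confined to the first $s$ coordinates, proves~\eqref{obpairwisedist}.

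For~\eqref{innerproductconver0} I would use the polarization identity
$$\langle x_i', x_j'\rangle = \tfrac{1}{2}\bigl(\|x_i'\|^{2}+\|x_j'\|^{2}-\|x_i'-x_j'\|^{2}\bigr),$$
which combined with the two preceding expansions yields $\langle x_i',x_j'\rangle = O_{\mathbb{P}}(\sqrt{d})$. Dividing by $\|x_i'\|\cdot\|x_j'\| = \nu d\,(1+o_{\mathbb{P}}(1))$ and applying Proposition~\ref{prop0a}(4) produces the claimed $O_{\mathbb{P}}(d^{-1/2})$ rate. The only non-mechanical step is the square-root extraction, but this is already handled in the closing lines of Theorem~\ref{YataGRT}; the rest is routine manipulation in the $o_{\mathbb{P}}/O_{\mathbb{P}}$ calculus of Proposition~\ref{prop0a}.
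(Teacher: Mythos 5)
Your proposal is correct and follows essentially the same route as the paper's proof: decompose $x_i'=x_i+e_i$, expand the squared norms, use that $\|x_i\|^2$ and the cross terms involve only the first $s$ (fixed) coordinates so they are $O_{\mathbb{P}}(1)$, apply Theorem~\ref{YataGRT} to the noise, and finish \eqref{innerproductconver0} via the polarization identity. Your explicit verification that $\mathbb{V}[e_{1i}^2]>0$ is a small added courtesy the paper leaves implicit, but the argument is otherwise the same.
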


\begin{proof}[Proof.\nopunct]
For each $i\in [n]$,
\begin{align*}
    \left\| x_{i}'\right\|^{2} =\left\|x_{i}+e_{i}\right\|^{2}=\left\| x_{i} \right\|^{2}+\left\| e_{i} \right\|^{2}+2 x_{i}^{T} e_{i}^{}.
\end{align*}
Note that $\left\| x_{i}\right\|^{2}$ and $x_{i}^{T} e_{i}^{}$ only depend on $s$, and $s$ is independent of $d$ by our setting. Combining this observation with~\eqref{distancetomean1} yields
\begin{align*}
    \left\| x_{i}'\right\|^{2}=\left\| x_{i} \right\|^{2}+\left\| e_{i} \right\|^{2}+2 x_{i}^{T} e_{i}^{}= \nu d+O_{\mathbb{P}}(\sqrt{d})
\end{align*}
as $d\rightarrow \infty$. Hence $\left\| x_{i}'\right\|=\sqrt{\nu d}+O_{\mathbb{P}}(1)$ as $d\rightarrow \infty$.

For arbitrary distinct $i,j\in [n]$,
\begin{align*}
    \left\| x_{i}'-x_{j}'\right\|^{2}
    & =\left\| \left(x_{i}-x_{j}\right) +\left( e_{i}-e_{j}\right)\right\|^{2}\\
    & =\left\| x_{i}-x_{j} \right\|^{2}+\left\| e_{i}-e_{j} \right\|^{2}+2 (x_{i}-x_{j})^{T} (e_{i}-e_{j})^{}.
\end{align*}
Note that $\left\| x_{i}-x_{j}\right\|^2$ and $(x_{i}-x_{j})^{T} (e_{i}-e_{j})^{}$ only depend on $s$. From~\eqref{pairwisedistance1} we have
\begin{align*}
    \left\| x_{i}'-x_{j}'\right\|^{2}=\left\| x_{i}-x_{j} \right\|^{2}+\left\| e_{i}-e_{j} \right\|^{2}+2 (x_{i}-x_{j})^{T} (e_{i}-e_{j})^{}=2\nu d+O_{\mathbb{P}}(\sqrt{d})
\end{align*}
as $d\rightarrow \infty$. Hence $\left\| x_{i}'-x_{j}'\right\|=\sqrt{2\nu d}+O_{\mathbb{P}}(1)$ as $d\rightarrow \infty$.

Finally, combining~\eqref{CFGxd} with \eqref{obpairwisedist} yields
\begin{align*}
    \frac{\langle  x_{i}', x_{j}'\rangle }{\| x_{i}'\| \cdot \| x_{j}'\|}=\frac{\left\| x_{i}'\right\|^{2}+\left\| x_{j}'\right\|^{2}-\left\| x_{i}'-x_{j}'\right\|^{2}}{2\cdot \| x_{i}'\| \cdot \| x_{j}'\|}=\frac{O_{\mathbb{P}}(\sqrt{d})}{2 \big [\nu d+O_{\mathbb{P}}(\sqrt{d})\big ]}=O_{\mathbb{P}}\left(d^{-1/2}\right)
\end{align*}
as $d\rightarrow \infty$.
\end{proof}

Proposition \ref{prop1a} shows a tendency for the observed point cloud $P'$ to lie at the vertices of a regular simplex as $d$ tends to infinity. 

Using some basic knowledge of probability theory, it is not hard to obtain the following.

\begin{proposition}\label{LDwithprob0}
Let $E=\{e_{1},e_{2},\ldots,e_{n}\}$ and $P'=\{x_{1}',x_{2}',\ldots,x_{n}'\}$ be the noise and the observed point clouds in $\mathbb{R}^{d}$, respectively. Then
\begin{enumerate}[(1)]
    \item $e_{1},e_{2},\ldots,e_{n}$ are linearly independent almost surely.
    \item $x_{1}',x_{2}',\ldots,x_{n}'$ are linearly independent almost surely.
\end{enumerate}
\end{proposition}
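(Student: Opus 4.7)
The plan is to prove both statements by a common induction argument on the number of vectors, exploiting the fact that the noise $e_i$ (and hence $x_i' = x_i + e_i$) has an absolutely continuous distribution on $\mathbb{R}^{d}$, together with the hypothesis $n < d$. The central observation is that if $V \subseteq \mathbb{R}^{d}$ is a linear subspace of dimension strictly less than $d$, then $V$ has Lebesgue measure zero, and hence any $\mathbb{R}^{d}$-valued random vector with distribution absolutely continuous with respect to Lebesgue measure takes values in $V$ (or in any affine translate of $V$) with probability zero.

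For part~(1), I would proceed as follows. First observe that the distribution of each $e_i$ is absolutely continuous on $\mathbb{R}^{d}$: this is guaranteed directly by the standing assumption in Section~\ref{settings}. I would then argue by induction on $k \in [n]$ that $e_1, \ldots, e_k$ are linearly independent almost surely. For $k=1$, the singleton $\{0\}$ has Lebesgue measure zero, so $\mathbb{P}(e_1 = 0) = 0$. For the inductive step, assume $e_1, \ldots, e_{k-1}$ are almost surely linearly independent, so that their linear span $V_{k-1}$ is almost surely $(k-1)$-dimensional; in any event, $\dim V_{k-1} \le k-1 \le n-1 < d$. Condition on $e_1, \ldots, e_{k-1}$: then $V_{k-1}$ is a fixed proper subspace of $\mathbb{R}^{d}$, and $e_k$ is independent of this conditioning with an absolutely continuous distribution on $\mathbb{R}^{d}$. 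Therefore $\mathbb{P}(e_k \in V_{k-1} \mid e_1, \ldots, e_{k-1}) = 0$, and integrating gives $\mathbb{P}(e_k \in V_{k-1}) = 0$. Consequently $e_1, \ldots, e_k$ are linearly independent almost surely, completing the induction and establishing part~(1) at $k = n$.

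For part~(2), the key point is that $x_i' = x_i + e_i$ is still absolutely continuous as an $\mathbb{R}^{d}$-valued random vector: its density with respect to Lebesgue measure is a translate of the density of $e_i$ by the deterministic vector $x_i$. The same inductive scheme now applies verbatim. The base case uses that $\mathbb{P}(x_1' = 0) = \mathbb{P}(e_1 = -x_1) = 0$ since singletons have Lebesgue measure zero. For the inductive step, conditional on $x_1', \ldots, x_{k-1}'$, the linear span $V_{k-1}'$ is a fixed subspace of dimension at most $k-1 < d$, and $x_k' \in V_{k-1}'$ is equivalent to $e_k \in V_{k-1}' - x_k$, which is a proper affine subspace of $\mathbb{R}^{d}$ and hence has Lebesgue measure zero. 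Absolute continuity of $e_k$ then yields $\mathbb{P}(x_k' \in V_{k-1}' \mid x_1', \ldots, x_{k-1}') = 0$, closing the induction.

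There is no substantial obstacle: the argument reduces to the measure-theoretic fact that a proper linear (or affine) subspace of $\mathbb{R}^{d}$ is Lebesgue-null combined with the absolute-continuity assumption. The only mild care needed is in the inductive step to condition correctly and to note that the bound $k-1 < d$ (guaranteed by $n < d$) is essential, since otherwise $V_{k-1}$ could be all of $\mathbb{R}^{d}$ and the argument would fail.
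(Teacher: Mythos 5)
Your argument is correct. The paper states this proposition without proof (it is dismissed as ``basic knowledge of probability theory''), and your induction---conditioning on the earlier vectors and using that a linear or affine subspace of dimension at most $n-1<d$ is Lebesgue-null, together with the absolute continuity and independence of $e_{k}$---is precisely the standard argument the authors leave to the reader; your remark that $x_{i}'=x_{i}+e_{i}$ inherits absolute continuity by translation, and that the bound $n<d$ from the paper's setting is what keeps the spans proper, covers the only points that need care.
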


\subsubsection{Rips filtration case}\label{subsubsec2}
Based on the geometric representation of the observed point cloud, we start to show the asymptotic behavior of the observed persistence diagram case by case. We first consider the Rips filtration case in this subsection.

For the convenience of stating the results, we begin with notations. Let $Q=\{y_{1},y_{2},\ldots,y_{n}\}\subseteq \mathbb{R}^{d}$ be the vertex set of the regular simplex $\bfDel^{n-1}$ with scalar $\sqrt{\nu d}$, where $y_{i}=( 0,\ldots,\underbrace{\sqrt{\nu d}}_{i\mbox{-}\rm{th}},\ldots,0)^{T}\in \mathbb{R}^{d}$ ($i\in [n]$). In what follows, the notation $Q$ abbreviates the dependency on $d$ unless there is a specific declaration. To achieve the asymptotic behavior of the observed persistence diagram, we need to know the persistent homology of $Q$.

\begin{lemma}\label{regularrips}
Let $Q\subseteq \mathbb{R}^{d}$ be the vertex set of the scaling regular simplex $\bfDel^{n-1}$. Then the $0$th persistence diagram of $Q$ is given by
\begin{equation*}
    {\rm D}_{0}\left(Q\right)=\left\{\left(0,\sqrt{2\nu d}/2\right)\right\},
\end{equation*}
where $(0,\sqrt{2\nu d}/2)$ has multiplicity $(n-1)$. For $N\in \mathbb{N}_{+}$, the $N$-th persistence diagram of $Q$ is given by
\begin{equation*}
    {\rm D}_{N}\left(Q\right)=\emptyset.
\end{equation*}
\end{lemma}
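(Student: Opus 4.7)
The idea is that the vertex set $Q$ has all pairwise distances equal, so the Rips filtration on $Q$ consists of just two non-trivial stages, and the persistent homology can be read off directly.

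First I compute the pairwise distances. For $i \neq j$ in $[n]$,
\[
\|y_i - y_j\|^2 = (\sqrt{\nu d})^2 + (\sqrt{\nu d})^2 = 2\nu d,
\]
so $\|y_i-y_j\| = \sqrt{2\nu d}$ for every pair $i\neq j$. Hence the edge $\{y_i,y_j\}$ enters the Rips filtration exactly when $2r_t \geq \sqrt{2\nu d}$, i.e., at parameter $r = \sqrt{2\nu d}/2$, and all $\binom{n}{2}$ edges enter simultaneously.

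Next I describe the filtration. For $0\leq r < \sqrt{2\nu d}/2$ the complex consists of the $n$ isolated vertices $y_1,\dots,y_n$ (no edges can form). At $r = \sqrt{2\nu d}/2$ all edges appear at once; because the Rips complex is a flag (clique) complex on its $1$-skeleton, all higher simplices on $Q$ also appear at this parameter, so the complex becomes the full $(n-1)$-simplex $\bfDel^{n-1}$, which is contractible. For $r > \sqrt{2\nu d}/2$ the complex stays equal to $\bfDel^{n-1}$.

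Finally I read off the persistence diagrams using reduced homology. For $N=0$: the reduced $0$th homology has rank $n-1$ throughout $[0, \sqrt{2\nu d}/2)$ and collapses to $0$ at $r=\sqrt{2\nu d}/2$ (all components merge into one contractible complex). Thus $n-1$ connected components are born at $r=0$ and die simultaneously at $r=\sqrt{2\nu d}/2$, giving
\[
{\rm D}_0(Q) = \{(0,\sqrt{2\nu d}/2)\}
\]
with multiplicity $n-1$. For $N\geq 1$: when $r<\sqrt{2\nu d}/2$ the complex has no $N$-simplices (in fact no simplices beyond vertices), so $\Homl{N}=0$; when $r\geq \sqrt{2\nu d}/2$ the complex is a full simplex, which is contractible, so $\Homl{N}=0$ as well. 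Therefore no $N$-th generator is ever born, and ${\rm D}_N(Q)=\emptyset$.

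There is no real obstacle here: the statement reduces to the elementary observation that an equilateral $n$-point configuration has a two-stage Rips filtration, together with the standard reduced-homology computation of the resulting two complexes.
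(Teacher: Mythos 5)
Your proposal is correct and follows essentially the same route as the paper: compute that all pairwise distances equal $\sqrt{2\nu d}$, observe the Rips filtration has only the two non-trivial stages ($n$ isolated vertices, then the full simplex $\bfDel^{n-1}$), and read off the reduced persistent homology. Your write-up simply makes explicit the distance computation, the flag-complex property, and the reduced-homology bookkeeping that the paper leaves implicit.
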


\begin{proof}[Proof.\nopunct]
The non-trivial Rips filtration of $Q$ is
\begin{equation*}
    {\rm Rips}\left( 0\right)\subsetneqq {\rm Rips}\left( \sqrt{2\nu d}/2\right),
\end{equation*}
where ${\rm Rips}(0)$ is a simplicial complex consisting of $n$ vertices and ${\rm Rips}( \sqrt{2\nu d}/2)$ is a simplicial complex consisting of all faces of $\bfDel^{n-1}$. Then the geometric realization of ${\rm Rips}( \sqrt{2\nu d}/2)$ is homeomorphic to the $(n-1)$-dimensional closed unit ball. At the homology level, we have the sequence
\begin{equation*}
    \Homl{N}\left({\rm Rips}\left( 0\right)\right)\rightarrow \Homl{N}\left({\rm Rips}\left( \sqrt{2\nu d}/2\right)\right),
\end{equation*}
from which the conclusion follows.
\end{proof}

As the dimension $d$ tends to infinity, Proposition~\ref{prop1a} reveals a high similarity between $P'$ and $Q$. This similarity between point clouds brings the stability theorem to mind, leading to the following results.

\begin{theorem}\label{thm0a}
Let $P' \subseteq \mathbb{R}^{d}$ be the observed point cloud, and $Q\subseteq \mathbb{R}^{d}$ the vertex set of the scaling regular simplex $\bfDel^{n-1}$. Then 
\begin{align*}
  d_{B}\left(\bar{{\rm D}}\left(P'\right),\bar{{\rm D}}\left(Q\right)\right)=O_{\mathbb{P}}(1)  
\end{align*}
holds as $d\rightarrow \infty$.
\end{theorem}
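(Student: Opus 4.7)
The plan is to combine the stability theorem for the Rips filtration (Theorem~\ref{lem0b}) with the reformulated definition of the Gromov-Hausdorff distance via distortion of correspondences~\eqref{GromovHausdorffdistdef}. Since by~\eqref{stabilityforRipsfiltration} we have $d_B(\bar{{\rm D}}(P'), \bar{{\rm D}}(Q)) \leq 2 d_{GH}(P',Q)$, it suffices to produce a single correspondence $\Corr : P' \rightrightarrows Q$ whose distortion $\dis(\Corr)$ is $O_{\mathbb{P}}(1)$ as $d \to \infty$.

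The natural choice is the bijective correspondence $\Corr = \setc*{(x_i', y_i)}{i \in [n]}$, which matches the $i$-th observed point with the $i$-th vertex of the scaling regular simplex. By~\eqref{defofdistortion}, its distortion equals
\begin{equation*}
    \dis(\Corr) = \max_{i,j \in [n]} \bigl\lvert \lVert x_i' - x_j' \rVert - \lVert y_i - y_j \rVert \bigr\rvert.
\end{equation*}
For $i=j$ the summand vanishes. For $i \neq j$ we have $\lVert y_i - y_j \rVert = \sqrt{2 \nu d}$ by construction of $Q$, while Proposition~\ref{prop1a} gives $\lVert x_i' - x_j' \rVert = \sqrt{2 \nu d} + O_{\mathbb{P}}(1)$. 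Hence each individual summand is $O_{\mathbb{P}}(1)$.

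Because $n$ is fixed and therefore the maximum ranges over only finitely many sequences, Proposition~\ref{prop0d} (applied with $f \equiv 0$) yields $\dis(\Corr) = O_{\mathbb{P}}(1)$. Plugging this into~\eqref{GromovHausdorffdistdef} gives $d_{GH}(P',Q) \leq \tfrac{1}{2}\dis(\Corr) = O_{\mathbb{P}}(1)$, and the stability bound~\eqref{stabilityforRipsfiltration} then delivers the claim. Since the argument is essentially a one-line application of stability after the geometric representation is in hand, there is no real obstacle; the only point requiring care is the use of Proposition~\ref{prop0d} to pass from the pairwise bounds to a uniform bound over the (finite but $d$-independent) index set, which relies crucially on $n$ being fixed while $d \to \infty$.
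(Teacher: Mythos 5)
Your proposal is correct and follows essentially the same route as the paper: the identity correspondence $\Corr = \{(x_i', y_i)\}$, the distortion bound via Proposition~\ref{prop1a}, and the stability theorem for the Rips filtration. The only difference is that you explicitly invoke Proposition~\ref{prop0d} to pass from the pairwise $O_{\mathbb{P}}(1)$ bounds to a uniform one over the finite index set, a step the paper leaves implicit.
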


\begin{proof}[Proof.\nopunct]
Define a correspondence $\Corr'$$: P'\rightrightarrows Q$ by setting
\begin{equation*}
    \Corr' = \setc*{(x_{i}',y^{}_{i})\in P'\times Q}{i\in [n]}.
\end{equation*}
By the definition of Gromov-Hausdorff distance, $2d_{GH}(P',Q)\leq \dis(\Corr')$. Note that 
\begin{align*}
    \dis(\Corr') &= \max_{\substack{i,j\in [n] \\ i<j}}  \Big\lvert \left\| x_{i}'-x_{j}'\right\| -\left\| y_{i}-y_{j}\right\| \Big\rvert \\
    &= \max_{\substack{i,j\in [n] \\ i<j}}  \Big\lvert \left\| x_{i}'-x_{j}'\right\| - \sqrt{2\nu d} \Big\rvert.
\end{align*}
By Proposition \ref{prop1a} we have $\dis(\Corr')=O_{\mathbb{P}}(1)$. Finally, Theorem \ref{lem0b} now yields
\begin{equation*}
    d_{B}\left(\bar{{\rm D}}\left(P'\right),\bar{{\rm D}}\left(Q\right)\right)= O_{\mathbb{P}}(1)
\end{equation*}
as $d\rightarrow \infty$.
\end{proof}

The greater the persistence of the generator in the persistent homology, the more significant the holes in the data become. To better describe the asymptotic behavior of the persistence diagram ${\rm D}$ (In fact, a sequence of persistence diagrams), we introduce two quantities as follows. The maximum persistence, denoted by $\sup \setc*{{\rm pers}(\omega)}{\omega\in {\rm D}}$, represents the proximity of the region of all birth-death pairs to the diagonal. The maximum relative persistence, denoted by $\sup \setc*{{\rm pers}(\omega)/d_{\omega}}{\omega\in {\rm D}}$, represents the relative proximity. For the empty persistence diagram ${\rm D}$, we additionally define
\begin{equation*}
    \sup \setc*{{\rm pers}(\omega)}{\omega\in {\rm D}}=0\ \ \text{and} \ \ \sup \setc*{\frac{{\rm pers}(\omega)}{d_{\omega}}}{\omega\in {\rm D}}=0.
\end{equation*}
By ${\rm D}(\{\cdot\})$ we denote the persistence diagram of a single point in $\mathbb{R}^{d}$. We note here that ${\rm D}_{N}(\{\cdot\})=\emptyset$ for each $N\in\mathbb{N}$ in either Rips or \v{C}ech filtration case because the reduced homology is always considered as mentioned before. Using these notations and additional definitions, we have the following consequences of the theorem.

\begin{corollary}\label{suppersbounded}
Let $P' \subseteq \mathbb{R}^{d}$ be the observed point cloud. Then 
\begin{equation*}
    \sup \setc*{{\rm pers}(\omega)}{\omega\in {\rm D}_{N}(P')}=
    \begin{cases}
        \frac{\sqrt{2\nu d}}{2}+O_{\mathbb{P}}(1),& \mbox{if $N=0$},\\
        O_{\mathbb{P}}(1), & \mbox{if $N>0$},
    \end{cases}
\end{equation*}
holds as $d\rightarrow \infty$.
\end{corollary}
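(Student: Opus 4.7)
The plan is to extract the corollary directly from Theorem~\ref{thm0a} and Lemma~\ref{regularrips}. Set $\delta := d_B(\bar{{\rm D}}_N(P'),\bar{{\rm D}}_N(Q))$, so that $\delta = O_{\mathbb{P}}(1)$ by Theorem~\ref{thm0a}, and for a given $\varepsilon > 0$ choose a bijection $\eta: \bar{{\rm D}}_N(P') \to \bar{{\rm D}}_N(Q)$ with $\sup_\omega\|\omega-\eta(\omega)\|_\infty \le \delta + \varepsilon$. Writing $\omega = (b,d)$ and $\eta(\omega) = (b',d')$, the coordinate inequalities $|b-b'| \le \delta + \varepsilon$ and $|d-d'| \le \delta + \varepsilon$ yield the persistence comparison $|{\rm pers}(\omega) - {\rm pers}(\eta(\omega))| \le 2(\delta+\varepsilon)$, which is the only inequality the argument needs.

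For $N \ge 1$, Lemma~\ref{regularrips} shows that $\bar{{\rm D}}_N(Q)$ consists entirely of diagonal points, each of zero persistence. Hence every $\omega \in {\rm D}_N(P')$ satisfies ${\rm pers}(\omega) \le 2(\delta+\varepsilon)$. Taking the supremum over $\omega$ and letting $\varepsilon \downarrow 0$ gives $\sup_{\omega \in {\rm D}_N(P')} {\rm pers}(\omega) \le 2\delta = O_\mathbb{P}(1)$; the case ${\rm D}_N(P') = \emptyset$ is covered by the stated convention.

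For $N = 0$ I would prove matching upper and lower bounds. The upper bound repeats the previous argument with $0$ replaced by the maximum persistence $\sqrt{2\nu d}/2$ attained in $\bar{{\rm D}}_0(Q)$: every $\omega \in {\rm D}_0(P')$ satisfies ${\rm pers}(\omega) \le \sqrt{2\nu d}/2 + 2(\delta+\varepsilon)$. For the lower bound I would pull back the point $q = (0,\sqrt{2\nu d}/2) \in {\rm D}_0(Q)$ through $\eta^{-1}$: the constraint $\|\eta^{-1}(q)-q\|_\infty \le \delta+\varepsilon$ places the preimage $(b^*,d^*) := \eta^{-1}(q)$ inside the box $[-\delta-\varepsilon,\delta+\varepsilon] \times [\sqrt{2\nu d}/2 - \delta - \varepsilon,\, \sqrt{2\nu d}/2 + \delta + \varepsilon]$, so that ${\rm pers}(b^*,d^*) \ge \sqrt{2\nu d}/2 - 2(\delta+\varepsilon)$.

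The only nontrivial point, and the main (mild) obstacle, is to ensure that $(b^*,d^*)$ actually belongs to ${\rm D}_0(P')$ rather than to the diagonal part of $\bar{{\rm D}}_0(P')$, otherwise the lower bound does not control the supremum we care about. This is automatic on the event $\{\delta+\varepsilon < \sqrt{2\nu d}/4\}$, whose probability tends to $1$ as $d \to \infty$ because $\delta = O_\mathbb{P}(1)$ while $\sqrt{2\nu d} \to \infty$, and such an asymptotic high-probability restriction is exactly what Definition~\ref{def0b} allows us to absorb into an $O_\mathbb{P}(1)$ term. Taking $\varepsilon \downarrow 0$, combining the two bounds, and invoking Proposition~\ref{prop0c} to move absolute values through the error then yields the required $\sqrt{2\nu d}/2 + O_\mathbb{P}(1)$.
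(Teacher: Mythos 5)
Your proposal is correct, and it reaches the statement by a genuinely different mechanism than the paper, even though both rest on the same inputs (Theorem~\ref{thm0a} and Lemma~\ref{regularrips}). The paper's proof is a two-line argument: it uses the identity $\sup\{{\rm pers}(\omega):\omega\in {\rm D}\}=2d_{B}\bigl(\bar{{\rm D}},\bar{{\rm D}}(\{\cdot\})\bigr)$, where $\bar{{\rm D}}(\{\cdot\})$ is the diagonal-only diagram of a single point, together with the reverse triangle inequality $d_{B}\bigl(\bar{{\rm D}}(P'),\bar{{\rm D}}(Q)\bigr)\geq \bigl\lvert d_{B}\bigl(\bar{{\rm D}}(P'),\bar{{\rm D}}(\{\cdot\})\bigr)-d_{B}\bigl(\bar{{\rm D}}(Q),\bar{{\rm D}}(\{\cdot\})\bigr)\bigr\rvert$; since $\sup\{{\rm pers}\}$ over ${\rm D}_{N}(Q)$ is $\sqrt{2\nu d}/2$ for $N=0$ and $0$ for $N>0$, this yields the two-sided estimate at once, with no bijection, no $\varepsilon$, and no high-probability event. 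Your argument instead unpacks the bottleneck distance into a near-optimal matching: your upper bounds coincide with what the paper gets from one side of the triangle inequality, and your pull-back of $q=(0,\sqrt{2\nu d}/2)$ through $\eta^{-1}$ replaces the paper's lower bound. The extra step this forces on you — ruling out that $\eta^{-1}(q)$ lies on the diagonal, which you do on the event $\{\delta+\varepsilon<\sqrt{2\nu d}/4\}$ whose probability tends to one — is handled correctly, and absorbing it into the $O_{\mathbb{P}}(1)$ error is indeed licensed by Definition~\ref{def0b}. What your route buys is self-containedness: it does not presuppose the distance-to-single-point identity, which the paper invokes without proof. What the paper's route buys is brevity and the automatic simultaneity of the upper and lower bounds, avoiding the matching bookkeeping and the probabilistic side condition entirely.
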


\begin{proof}[Proof.\nopunct]
    The proof is completed by noticing that for every persistence diagram ${\rm D}$,
    $$\sup \setc*{{\rm pers}(\omega)}{\omega\in {\rm D}} = 2d_{B}\left(\bar{{\rm D}},\bar{{\rm D}}\left(\{\cdot\}\right)\right)$$ holds and utilizing the triangle inequality
    \begin{align*}
        d_{B}\left(\bar{{\rm D}}\left(P'\right),\bar{{\rm D}}\left(Q\right)\right) &\geq \left\lvert d_{B}\left(\bar{{\rm D}}\left(P'\right),\bar{{\rm D}}\left(\{\cdot\}\right)\right)-d_{B}\left(\bar{{\rm D}}\left(Q\right),\bar{{\rm D}}\left(\{\cdot\}\right)\right)\right\rvert \\
        &=\frac{1}{2}\left\lvert \sup \setc*{{\rm pers}(\omega)}{\omega\in {\rm D}(P')}-\sup \setc*{{\rm pers}(\omega)}{\omega\in {\rm D}(Q)}\right\rvert \\
        &=\begin{cases}
            \frac{1}{2}\left\lvert \sup \setc*{{\rm pers}(\omega)}{\omega\in {\rm D}_{0}(P')}-\frac{\sqrt{2\nu d}}{2}\right\rvert,\ N=0,\\
            \frac{1}{2}\sup \setc*{{\rm pers}(\omega)}{\omega\in {\rm D}_{N}(P')},\ \forall N>0.
        \end{cases} \tag*{\qedhere}
    \end{align*}
\end{proof}

\begin{corollary}\label{cor0a}
Let $P' \subseteq \mathbb{R}^{d}$ be the observed point cloud. Then
\begin{equation*}
    \sup \setc*{\frac{{\rm pers}(\omega)}{d_{\omega}}}{\omega\in {\rm D}_{N}(P')} = 
    \begin{cases}
        1,& \mbox{if $N=0$},\\
        o_{\mathbb{P}}(1), & \mbox{if $N>0$},
    \end{cases}
\end{equation*}
holds as $d\rightarrow \infty$.
\end{corollary}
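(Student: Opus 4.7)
The plan is to exploit the geometric picture from Proposition~\ref{prop1a}: as $d\to\infty$, every pairwise distance in $P'$ concentrates at $\sqrt{2\nu d}$ up to an $O_\mathbb{P}(1)$ fluctuation, so the Rips filtration of $P'$ is essentially the zero-skeleton for $r<\sqrt{2\nu d}/2-O_\mathbb{P}(1)$ and essentially the full $(n{-}1)$-simplex once $r>\sqrt{2\nu d}/2+O_\mathbb{P}(1)$. The two cases $N=0$ and $N\geq 1$ then admit separate elementary treatments.

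\emph{Case $N=0$.} Since we work throughout with reduced homology, $\tilde H_0$ of $\mathcal{K}_{0}$ already has rank $n-1$, while no edge appears at $r=0$ because all pairwise distances in $P'$ are positive almost surely. Hence every one of the $n-1$ generators of ${\rm D}_0(P')$ has birth time $b_\omega=0$ and death time $d_\omega>0$, so ${\rm pers}(\omega)/d_\omega=1$ identically and the supremum equals $1$.

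\emph{Case $N\geq 1$.} Under the Rips-filtration convention used in the paper, a simplex $\sigma\subseteq P'$ with $|\sigma|\geq 2$ enters $\mathcal{K}_{r}$ at the threshold $r_\sigma=\tfrac{1}{2}\max_{x,y\in\sigma}\|x-y\|$. Proposition~\ref{prop1a} combined with Proposition~\ref{prop0d} forces $r_\sigma=\sqrt{2\nu d}/2+O_\mathbb{P}(1)$ uniformly over the (combinatorially bounded, $d$-independent) set of such simplices. Since each generator in ${\rm D}_N(P')$ is born when some $N$-simplex is added and dies when some $(N{+}1)$-simplex is added, both $b_\omega$ and $d_\omega$ lie in an interval of the form $[\sqrt{2\nu d}/2-M,\ \sqrt{2\nu d}/2+M]$ with $M=O_\mathbb{P}(1)$. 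In particular $d_\omega=\sqrt{2\nu d}/2+O_\mathbb{P}(1)$, while Corollary~\ref{suppersbounded} already yields ${\rm pers}(\omega)=O_\mathbb{P}(1)$. Taking the supremum over the finite, $d$-independent family of $N$-generators and applying the rules in Proposition~\ref{prop0a} produces $O_\mathbb{P}(1)/\bigl(\sqrt{2\nu d}/2+O_\mathbb{P}(1)\bigr)=O_\mathbb{P}(d^{-1/2})=o_\mathbb{P}(1)$; the empty-diagram case is absorbed by the convention.

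The main subtlety is ensuring that the $O_\mathbb{P}(1)$ fluctuations in numerator and denominator do not prevent the ratio from being $o_\mathbb{P}(1)$. This is handled by the asymmetric scaling—denominator of order $\sqrt{d}$ versus numerator of order $1$—together with the fact that every supremum is taken over a fixed, $d$-independent number of quantities, so the bookkeeping ultimately reduces to a single application of Proposition~\ref{prop0a}(4).
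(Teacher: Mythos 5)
Your proof is correct and follows essentially the same route as the paper: for $N=0$ both arguments observe that every generator has $b_\omega=0$ so the ratio is identically $1$, and for $N\geq 1$ both divide the $O_\mathbb{P}(1)$ persistence bound from Corollary~\ref{suppersbounded} by a death time of order $\sqrt{2\nu d}/2$ coming from the concentration of pairwise distances in Proposition~\ref{prop1a}. The only cosmetic difference is that you pin down the full asymptotic $d_\omega=\sqrt{2\nu d}/2+O_\mathbb{P}(1)$ for each generator rather than merely bounding the denominator below by a minimum pairwise distance, which is a touch cleaner but yields the same conclusion.
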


\begin{proof}[Proof.\nopunct]
The result for $N=0$ holds trivially since the birth time $b_{\omega}$ is zero for every $\omega\in {\rm D}_{0}(P')$. For the case $N>0$. Notice that the death time of each generator in the persistence diagram is half the pairwise distance. It then follows that
\begin{align*}
    \sup \setc*{\frac{{\rm pers}(\omega)}{d_{\omega}}}{\omega\in {\rm D}_{N}(P')} & \leq  \frac{\sup \setc*{{\rm pers}(\omega)}{\omega\in {\rm D}_{N}(P')}}{\min \setc*{d_{\omega}}{\omega\in {\rm D}_{N}(P')}}\\
    & \leq \frac{\sup \setc*{{\rm pers}(\omega)}{\omega\in {\rm D}_{N}(P')}}{\min \setc*{\left\|x'_{i}-x'_{j}\right\|}{i,j\in [n]\ with\ i<j}}.
\end{align*}
By~\eqref{obpairwisedist} and Corollary~\ref{suppersbounded}, we obtain
\begin{equation*}
    \frac{\sup \setc*{{\rm pers}(\omega)}{\omega\in {\rm D}_{N}(P')}}{\min \setc*{\left\|x'_{i}-x'_{j}\right\|}{i,j\in [n]\ with\ i<j}}  = \frac{O_{\mathbb{P}}(1)}{\sqrt{2\nu d}+O_{\mathbb{P}}(1)} = o_{\mathbb{P}}(1)
\end{equation*}
as $d\rightarrow \infty$. The desired formula follows by noticing Proposition~\ref{prop0b}.
\end{proof}

\subsubsection{\v{C}ech filtration case}\label{subsubsec3}
We turn to the \v{C}ech filtration case in this subsection. The following lemma gives the persistence diagram of $Q$ using the \v{C}ech filtration.

\begin{lemma}\label{regularcech}
Let $Q\subseteq \mathbb{R}^{d}$ be the vertex set of the scaling regular simplex $\bfDel^{n-1}$. Then for every $N\in \mathbb{N}$, the $N$-th persistence diagram of $Q$ is given by
\begin{equation*}
    {\rm D}_{N}\left(Q\right)=\left\{\left(\sqrt{\frac{\nu dN}{N+1}},\sqrt{\frac{\nu d(N+1)}{N+2}}\right)\right\}
\end{equation*}
with a certain multiplicity.
\end{lemma}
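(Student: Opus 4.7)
The plan is to describe the \v{C}ech filtration of $Q$ explicitly as a nested sequence of skeleta of the full $(n-1)$-simplex on the vertex set $Q$, and then read off the persistence diagram from the homology transitions between consecutive skeleta.

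First I would determine, for each non-empty $\sigma\subseteq Q$ of cardinality $k+1$, the smallest radius at which $\sigma$ enters the ambient \v{C}ech complex. Since the vertices of $\sigma$ span a regular $k$-simplex with edge length $\sqrt{2\nu d}$, an explicit computation---placing the circumcenter at the point whose coordinates equal $\sqrt{\nu d}/(k+1)$ on the indices in $\sigma$ and $0$ elsewhere---shows that the smallest enclosing ball of $\sigma$ coincides with its circumscribed ball, with radius
\[
r_{k}\ :=\ \sqrt{\frac{\nu d\, k}{k+1}}.
\]
By the symmetry of the regular simplex this threshold depends only on the cardinality of $\sigma$, so at each parameter value $r$ with $r_{k}\leq r<r_{k+1}$ the \v{C}ech complex is exactly the $k$-skeleton $\operatorname{Sk}_{k}(\bfDel^{n-1})$ of the full simplex on $Q$; at $r\geq r_{n-1}$ it becomes $\bfDel^{n-1}$ itself, which is contractible.

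Next I would compute $\tilde{H}_{N}(\operatorname{Sk}_{k}(\bfDel^{n-1});\mathbb{F})$ for each $k$. It vanishes for $k<N$ because no $N$-chains exist. For $k\geq N+1$, every further inclusion $\operatorname{Sk}_{k}\hookrightarrow\operatorname{Sk}_{k+1}$ attaches only cells of dimension $>N+1$ and so preserves $\tilde{H}_{N}$, giving $\tilde{H}_{N}(\operatorname{Sk}_{k})\cong\tilde{H}_{N}(\bfDel^{n-1})=0$. In the remaining case $k=N$, since $\operatorname{Sk}_{N}(\bfDel^{n-1})$ has no $(N+1)$-chains but the full chain complex of $\bfDel^{n-1}$ is acyclic in positive degrees, one obtains $\tilde{H}_{N}(\operatorname{Sk}_{N}(\bfDel^{n-1});\mathbb{F})\cong\ker\partial_{N}\cong\Img\partial_{N+1}$, a nonzero vector space whose dimension depends only on $n$ and $N$.

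Combining the two steps, the only transitions affecting $\tilde{H}_{N}$ occur as the parameter crosses $r_{N}$, where every class is born simultaneously, and then $r_{N+1}$, where every class dies simultaneously (killed by the freshly attached $(N+1)$-cells, whose boundaries span the $N$-cycles of $\operatorname{Sk}_{N}(\bfDel^{n-1})$). Therefore every birth--death pair in ${\rm D}_{N}(Q)$ equals $\bigl(\sqrt{\nu d N/(N+1)},\sqrt{\nu d (N+1)/(N+2)}\bigr)$, counted with multiplicity $\dim\tilde{H}_{N}(\operatorname{Sk}_{N}(\bfDel^{n-1});\mathbb{F})$. The step needing the most care is the symmetry argument that makes the \v{C}ech filtration coincide with the skeleton filtration, together with the verification that the smallest enclosing ball of each regular sub-simplex is exactly its circumscribed ball; once these two geometric facts are in hand the topological content reduces to the standard skeleton-homology calculation for a simplex.
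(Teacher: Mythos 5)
Your proof is correct and takes essentially the same route as the paper: compute the circumradius $r_k = \sqrt{\nu d k/(k+1)}$ of each $k$-face of the regular simplex, observe that the \v{C}ech filtration is the skeleton filtration of $\bfDel^{n-1}$, and read off the diagram. You are somewhat more careful than the paper on two points the paper leaves implicit — that the minimal enclosing ball of a regular sub-simplex coincides with its circumscribed ball (so the circumradius really is the \v{C}ech filtration value), and the explicit identification $\tilde{H}_{N}(\operatorname{Sk}_{N}(\bfDel^{n-1});\mathbb{F})\cong\Img\partial_{N+1}$ from acyclicity of the full simplex — which makes your argument a touch more complete, but the underlying idea is the same.
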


\begin{proof}[Proof.\nopunct]
Let $\sigma_{q}=\{y_{1},\ldots,y_{q+1}\}$ be one of $q$-faces of $\bfDel^{n-1}$ ($0\leq q\leq n-1$). The barycenter of $\sigma_{q}$, denoted by $\mathcal{B}(\sigma_{q})$, is given by
\begin{equation*}
    \mathcal{B}(\sigma_{q})=\sum_{i\in [q+1]}\dfrac{1}{q+1}y_{i}=\Bigg(\underbrace{\dfrac{\sqrt{\nu d}}{q+1},\dfrac{\sqrt{\nu d}}{q+1},\ldots,\dfrac{\sqrt{\nu d}}{q+1}}_{q+1},0,\ldots,0\Bigg)^{T}.
\end{equation*}
The distance $r_{q}$ between $y_{1}$ and $\mathcal{B}(\sigma_{q})$ is given by
\begin{align*}
    r_{q}=\|\mathcal{B}(\sigma_{q})-y_{1}\|=\sqrt{\left(-\dfrac{q\sqrt{\nu d}}{q+1}\right)^{2}+q\left(\dfrac{\sqrt{\nu d}}{q+1}\right)^{2}}=\sqrt{\dfrac{\nu dq}{q+1}}.
\end{align*}
Then the \v{C}ech filtration of $Q$ is
\begin{equation*}
    {\rm \check{C}ech}\left( r_{0}\right)\subsetneqq {\rm \check{C}ech}\left( r_{1}\right)\subsetneqq \cdots \subsetneqq {\rm \check{C}ech}\left( r_{n-1}\right),
\end{equation*}
where ${\rm \check{C}ech}\left( r_{q}\right)$ is a subcomplex consisting of all $q$-faces of $\bfDel^{n-1}$ ($0\leq q\leq n-1$). The statement follows from this observation.
\end{proof}

In statistics, it is common to regard a dataset as a data matrix. For the point cloud $P=\left\{ x_{1},x_{2},\ldots,x_{n}\right\} \subseteq\mathbb{R}^{d}$, it can be viewed as a $d$-by-$n$ matrix by placing $x_{i}$ in the $i$-th column of this matrix, i.e., $(x_{1}\ x_{2}\ \cdots\ x_{n})\in \mathbb{R}^{d\times n}$. Conversely, for a $d$-by-$n$ matrix $(x_{1}\ x_{2}\ \cdots\ x_{n})$, it can also be viewed as a subset $\left\{ x_{1},x_{2},\ldots,x_{n}\right\}\subseteq \mathbb{R} ^{d}$ containing $n$ indexing elements. Therefore, we will use the same notation $P$ to denote both a subset of $\mathbb{R}^{d}$ containing $n$ indexing elements and a $d$-by-$n$ matrix without ambiguity later.

The following two lemmas are useful in the proof of the main theorem in this subsection.

\begin{lemma}\label{normineq}
Let $A\in \mathbb{R}^{m \times n}$ and $B\in \mathbb{R}^{n \times r}$. Then 
\begin{equation}
    \left\| AB\right\| _{F}\leq \left\| A\right\| _{2}\left\| B\right\| _{F}.\label{normineq2F}
\end{equation}
Here and subsequently, $\left\|\cdot\right\|_{2}$ and $\left\|\cdot\right\|_{F}$ denote the $2$-norm and the Frobenius norm of matrices, respectively. 
\end{lemma}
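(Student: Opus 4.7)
The plan is to exploit the column-wise structure of the Frobenius norm and reduce the matrix inequality to a statement about the operator $2$-norm acting on individual vectors. Write $B=[\,b_{1}\ b_{2}\ \cdots\ b_{r}\,]$ in terms of its columns, so that $AB=[\,Ab_{1}\ Ab_{2}\ \cdots\ Ab_{r}\,]$. Since the Frobenius norm of a matrix equals the square root of the sum of squared Euclidean norms of its columns, I would first record the two identities $\left\|AB\right\|_{F}^{2}=\sum_{j=1}^{r}\left\|Ab_{j}\right\|^{2}$ and $\left\|B\right\|_{F}^{2}=\sum_{j=1}^{r}\left\|b_{j}\right\|^{2}$.

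Next I would invoke the defining property of the induced $2$-norm, namely $\left\|Ax\right\|\leq\left\|A\right\|_{2}\left\|x\right\|$ for every $x\in\mathbb{R}^{n}$, applied to each column $b_{j}$. Squaring gives $\left\|Ab_{j}\right\|^{2}\leq\left\|A\right\|_{2}^{2}\left\|b_{j}\right\|^{2}$, and summing over $j\in[r]$ yields $\left\|AB\right\|_{F}^{2}\leq\left\|A\right\|_{2}^{2}\left\|B\right\|_{F}^{2}$. Taking square roots of both sides produces~\eqref{normineq2F}. No real obstacle arises here: the argument is a standard submultiplicativity check, and the only thing one should double-check is that the column expansion of the Frobenius norm is being used consistently on both sides.
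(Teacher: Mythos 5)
Your argument is correct and is essentially identical to the paper's own proof: both expand $B$ into columns, bound $\left\|Ab_{j}\right\|\leq\left\|A\right\|_{2}\left\|b_{j}\right\|$ for each column, and sum to recover the Frobenius norm. No changes are needed.
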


\begin{proof}[Proof.\nopunct]
    Write $B=({\rm b}_{1}\cdots {\rm b}_{r})$, where ${\rm b}_{i}$ is the $i$-th column vector of $B$ ($i\in [r]$). Then
\begin{align*}
    \left\| AB\right\| _{F} = \left\| (A{\rm b}_{1}\cdots A{\rm b}_{r})\right\| _{F} =\sqrt{\sum \limits_{i\in [r]}\left\| A{\rm b}_{i}\right\|^{2}} \leq \sqrt{ \sum \limits_{i\in [r]}\left\| A\right\|_{2}^{2}\left\| {\rm b}_{i}\right\|^{2}} =\left\| A\right\| _{2}\left\| B\right\| _{F}.
\end{align*}
This finishes the proof.
\end{proof}

We define the matrix convergence in the following sense. 

\begin{definition}
\normalfont Let $\{M_{k}\}_{k\in \mathbb{N}_{+}}$ be a sequence of real $m$-by-$n$ matrices. $M_{k}$ is said to converge to a real $m$-by-$n$ matrix $M$ as $k\rightarrow \infty$ if every $(i,j)$-entry of $M_{k}$ converges to the $(i,j)$-entry of $M$ as $k\rightarrow \infty$. This is denoted by $M_{k}\rightarrow M$ as $k\rightarrow \infty$.
\end{definition}

The convergence defined here is stronger than the convergence in the matrix norm, as highlighted by the following lemma.

\begin{lemma}\label{limitofnorm}
A sequence $\{M_{k}\}_{k\in \mathbb{N}_{+}}$ of real $m$-by-$n$ matrices. If $M_{k}\rightarrow M$ as $k\rightarrow \infty$, then $\left\| M_{k}\right\|_{F}\rightarrow \left\| M\right\|_{F}$ and $\left\| M_{k}\right\|_{2}\rightarrow \left\| M\right\|_{2}$ as $k\rightarrow \infty$.
\end{lemma}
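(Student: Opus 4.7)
The plan is to handle the two norms separately, reducing both statements to the fact that entrywise convergence implies convergence of the Frobenius norm of the difference to zero.

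For the Frobenius norm, I would note that $\|M\|_F^2 = \sum_{i\in [m]}\sum_{j\in [n]} M_{ij}^2$ is a finite sum of squares of entries. Since each entry of $M_k$ converges to the corresponding entry of $M$ by assumption, and squaring and summing finitely many terms is continuous, $\|M_k\|_F^2 \to \|M\|_F^2$ follows, hence $\|M_k\|_F \to \|M\|_F$ by continuity of the square root. Equivalently, writing out $\|M_k - M\|_F^2 = \sum_{i,j}(M_{k,ij} - M_{ij})^2$, each term tends to $0$, so the finite sum does too, giving $\|M_k - M\|_F \to 0$ as $k \to \infty$.

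For the 2-norm, I would use the standard inequality $\|A\|_2 \leq \|A\|_F$ for any real $m$-by-$n$ matrix $A$, which follows from the fact that $\|A\|_2$ equals the largest singular value while $\|A\|_F^2$ equals the sum of squares of all singular values. Applying this to $A = M_k - M$ and combining with the Frobenius argument above yields $\|M_k - M\|_2 \leq \|M_k - M\|_F \to 0$ as $k \to \infty$. The reverse triangle inequality for the 2-norm then gives
\begin{equation*}
    \bigl\lvert \|M_k\|_2 - \|M\|_2 \bigr\rvert \leq \|M_k - M\|_2 \to 0,
\end{equation*}
which establishes $\|M_k\|_2 \to \|M\|_2$.

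There is no significant obstacle here; the lemma is essentially a continuity statement, and the only non-trivial ingredient is the inequality $\|A\|_2 \leq \|A\|_F$, which is a well-known consequence of the singular value decomposition. If the authors prefer to avoid invoking the SVD, an alternative is the direct bound $\|Ax\| \leq \|A\|_F \|x\|$ obtained by applying the Cauchy--Schwarz inequality to each coordinate of $Ax$ and summing, which yields the same inequality upon taking the supremum over unit vectors $x$.
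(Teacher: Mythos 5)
Your proposal is correct and follows essentially the same route as the paper: entrywise convergence gives $\left\| M_{k}\right\|_{F}\rightarrow \left\| M\right\|_{F}$ and $\left\| M_{k}-M\right\|_{F}\rightarrow 0$, and then the reverse triangle inequality combined with $\left\| M_{k}-M\right\|_{2}\leq \left\| M_{k}-M\right\|_{F}$ yields the $2$-norm convergence. The only difference is that you spell out the justification of $\left\| A\right\|_{2}\leq \left\| A\right\|_{F}$ (via the SVD or Cauchy--Schwarz), which the paper uses without comment.
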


\begin{proof}[Proof.\nopunct]
The definition of the Frobenius norm makes it obvious that $\left\| M_{k}\right\|_{F}\rightarrow \left\| M\right\|_{F}$ and $\left\| M_{k}- M\right\|_{F}\rightarrow 0$ as $k\rightarrow \infty$. On the other hand, by the triangle inequality, we have
\begin{equation*}
    \big \lvert\left\| M_{k}\right\|_{2}-\left\| M\right\|_{2}\big \rvert\leq \left\| M_{k}- M\right\|_{2}\leq \left\| M_{k}- M\right\|_{F}.
\end{equation*}
It follows that $\big \lvert\left\| M_{k}\right\|_{2}-\left\| M\right\|_{2}\big \rvert\rightarrow 0$ as $k\rightarrow \infty$.
\end{proof}

We can now state the main theorem in the \v{C}ech filtration case.

\begin{theorem}\label{thm0b}
Let $P' \subseteq \mathbb{R}^{d}$ be the observed point cloud, and $Q\subseteq \mathbb{R}^{d}$ the vertex set of the scaling regular simplex $\bfDel^{n-1}$. Then
\begin{align*}
  d_{B}\left(\bar{{\rm D}}\left( P'\right), \bar{{\rm D}}\left(Q\right) \right)=o_{\mathbb{P}}(\sqrt{d})
\end{align*}
holds as $d\rightarrow \infty$.
\end{theorem}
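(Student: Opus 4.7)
The plan is to apply the \v{C}ech stability inequality of Theorem~\ref{lem0b}, $d_{B} \le d_{H}$, after replacing $Q$ by a carefully chosen isometric copy $\tilde{Q} \subseteq \mathbb{R}^{d}$ aligned with $P'$. This is legitimate because the ambient \v{C}ech filtration in $\mathbb{R}^{d}$ depends only on minimum-enclosing-ball radii of subsets, which are intrinsic invariants; hence any rigid-motion image of $Q$ produces the same persistence diagram. Note that the canonical embedding $y_{i} = \sqrt{\nu d}\, e_{i}$ only yields $\|x_{i}' - y_{i}\| \sim \sqrt{2\nu d}$, so a nontrivial alignment is really required to beat the trivial bound $O_{\mathbb{P}}(\sqrt{d})$.

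Center the clouds by $c_{P'} = \frac{1}{n}\sum_{i} x_{i}'$, $u_{i} = x_{i}' - c_{P'}$, and $v_{i} = y_{i} - c_{Q}$, and form the $d \times n$ matrices $U = (u_{1}\,\cdots\,u_{n})$ and $V = (v_{1}\,\cdots\,v_{n})$. From Proposition~\ref{prop1a} together with the standard polarization identities, the Gram matrix $G^{u} = U^{T}U$ satisfies $G^{u} = G^{v} + E$, where $G^{v} = \nu d(I_{n} - J_{n}/n)$ is the Gram matrix of the centered regular simplex (with $J_{n}$ the all-ones matrix) and $E$ has entries of size $O_{\mathbb{P}}(\sqrt{d})$. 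Both $G^{u}$ and $G^{v}$ annihilate $(1,\ldots,1)^{T}$ (for $G^{u}$, because $\sum u_{i} = 0$), and Proposition~\ref{LDwithprob0} ensures that $G^{u}$ has rank $n-1$ a.s.; Weyl's inequality then gives the $n-1$ positive eigenvalues of $G^{u}$ as $\lambda_{i} = \nu d + O_{\mathbb{P}}(\sqrt{d})$. Choose an orthonormal eigenbasis $B \in \mathbb{R}^{n \times (n-1)}$ of $G^{u}$ corresponding to the $\lambda_{i}$; because $G^{v}$'s non-zero spectrum is completely degenerate, the same $B$ also diagonalizes $G^{v}$. This yields compact SVDs $U = A \Sigma_{U} B^{T}$ and $V = C \Sigma_{V} B^{T}$ with $\Sigma_{U} = \operatorname{diag}(\sqrt{\lambda_{i}})$, $\Sigma_{V} = \sqrt{\nu d}\, I_{n-1}$, and $A, C \in \mathbb{R}^{d \times (n-1)}$ orthonormal. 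Extending $A$ and $C$ to elements of $O(d)$ produces $R \in O(d)$ with $RC = A$; setting $\tilde{y}_{i} = R v_{i} + c_{P'}$ yields $U - RV = A(\Sigma_{U} - \Sigma_{V}) B^{T}$.

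Two applications of Lemma~\ref{normineq} (using $\|A\|_{2} = \|B\|_{2} = 1$) give $\|U - RV\|_{F} \le \|\Sigma_{U} - \Sigma_{V}\|_{F}$, and since $|\sqrt{\lambda_{i}} - \sqrt{\nu d}| = |\lambda_{i} - \nu d|/(\sqrt{\lambda_{i}} + \sqrt{\nu d}) = O_{\mathbb{P}}(\sqrt{d})/O(\sqrt{d}) = O_{\mathbb{P}}(1)$, this is $O_{\mathbb{P}}(1)$. Since $\tilde{Q} = \{\tilde{y}_{i}\}$ is obtained from $Q$ by an isometry, $\bar{{\rm D}}(\tilde{Q}) = \bar{{\rm D}}(Q)$, and Theorem~\ref{lem0b} gives
\begin{equation*}
d_{B}(\bar{{\rm D}}(P'), \bar{{\rm D}}(Q)) = d_{B}(\bar{{\rm D}}(P'), \bar{{\rm D}}(\tilde{Q})) \le d_{H}(P', \tilde{Q}) \le \max_{i}\|x_{i}' - \tilde{y}_{i}\| \le \|U - RV\|_{F} = O_{\mathbb{P}}(1),
\end{equation*}
which is in particular $o_{\mathbb{P}}(\sqrt{d})$. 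The main obstacle is the matrix Procrustes alignment: the near-degeneracy of $G^{u}$'s top spectrum would ordinarily make its eigenvectors hard to control via Davis--Kahan, but the \emph{exact} degeneracy of $G^{v}$'s non-zero spectrum converts this difficulty into an asset, because any orthonormal basis of $G^{v}$'s non-zero eigenspace is a valid eigenbasis and can be chosen to coincide with that of $G^{u}$, bypassing eigenvector perturbation estimates entirely.
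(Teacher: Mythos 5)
Your proof is correct, and it takes a genuinely different — and in fact sharper — route than the paper. The paper does not align Gram matrices: it radially projects each $x_i'$ onto the sphere of radius $\sqrt{\nu d}$, applies Gram--Schmidt to the normalized vectors $x_i'/\|x_i'\|$, and uses the subsequence characterization of convergence in probability to conclude that the normalized cloud is within $o_{\mathbb{P}}(1)$ Hausdorff distance of the first $n$ columns of an orthogonal matrix; rescaling by $\sqrt{\nu d}$ then only yields $o_{\mathbb{P}}(\sqrt{d})$, which is why the theorem is stated at that scale and no rate is extracted. Your Procrustes-style argument is quantitative: after centering, Weyl's inequality applied to $G^{u}=G^{v}+E$ with $\|E\|_{2}=O_{\mathbb{P}}(\sqrt{d})$ gives $\lambda_i=\nu d+O_{\mathbb{P}}(\sqrt{d})$, and the key observation — the exact degeneracy of the nonzero spectrum of $G^{v}=\nu d(I_n-J_n/n)$ — lets the positive eigenbasis $B$ of $G^{u}$ (an orthonormal basis of $\mathbf{1}^{\perp}$, since $G^{u}\mathbf{1}=0$ and the rank is $n-1$ a.s.) serve simultaneously as the right singular basis of both $U$ and $V$, so no Davis--Kahan-type eigenvector control is needed; the exhibited rigid motion then gives $\max_i\|x_i'-\tilde y_i\|\le\|\Sigma_U-\Sigma_V\|_F=O_{\mathbb{P}}(1)$, hence $d_B\left(\bar{{\rm D}}(P'),\bar{{\rm D}}(Q)\right)=O_{\mathbb{P}}(1)$, strictly stronger than the stated $o_{\mathbb{P}}(\sqrt{d})$ and obtained without any subsequence argument (and fully consistent with Theorem~\ref{cech strong inconsistency}, since $\bar{{\rm D}}(Q)$ itself sits at scale $\sqrt{d}$). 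Two small presentational points: the rank-$(n-1)$ claim deserves the one-line Sylvester rank inequality argument, exactly as the paper does for $S_{{\rm D},E}$; and the justification for replacing $Q$ by $\tilde Q$ should simply be that ambient \v{C}ech diagrams are invariant under rigid motions of $\mathbb{R}^{d}$ (the paper already uses this for orthogonal maps, and translations are equally harmless) — the minimum-enclosing-ball radius is preserved by ambient isometries, though calling it an ``intrinsic'' invariant is not accurate in general.
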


\begin{proof}[Proof.\nopunct]
To address that point clouds change with respect to the dimension $d$, we add a subscript $d$ to their notations in this proof. Let us denote by $P'_{d}$ and $Q_{d}$ the observed point cloud and the vertex set of the scaling regular simplex $\bfDel^{n-1}$ with scalar $\sqrt{\nu d}$, respectively. Here and subsequently, ${\rm O}(d)$ denotes the orthogonal group in the dimension $d$.

Consider the left multiplication group action of ${\rm O}\left( d\right)$ on $\mathbb{R} ^{d}$. For each $M\in {\rm O}(d)$, the image of the action of $M$ on $Q_d$, denoted by $M\cdot Q_{d}$, is given by
\begin{align*}
    M\cdot Q_{d}=\{M y_{1}, M y_{2}, \ldots, M y_{n}\}\subseteq\mathbb{R} ^{d}. 
\end{align*}
Since the orthogonal action is a rigid motion, the shape of the point cloud is preserved. Thus, $\bar{{\rm D}}\left(Q_{d} \right)=\bar{{\rm D}}\left(M\cdot Q_{d}\right)$. Theorem~\ref{lem0b} now leads to
\begin{align*}
d_{B}\left(\bar{{\rm D}}\left( P'_{d}\right), \bar{{\rm D}}\left(Q_{d}\right) \right)\leq \inf _{M\in {\rm O}\left( d\right) }d_{H}(P'_{d},M\cdot Q_{d}).
\end{align*}
The Hausdorff distance between two point clouds with the same sample size is bounded from above by the Frobenius distance between the corresponding data matrices, i.e., for $\mathcal{X}=\left\{\mathrm{x}_{1}, \mathrm{x}_{2}, \ldots, \mathrm{x}_{n}\right\}$ and $\mathcal{Y}=\left\{ \mathrm{y}_{1}, \mathrm{y}_{2}, \ldots, \mathrm{y}_{n}\right\}$, we have
\begin{align}
d_{H}(\mathcal{X},\mathcal{Y})&= \max \left\{ \max_{i\in [n]}\min_{j\in [n]}\left\| \mathrm{x}_{i}-\mathrm{y}_{j}\right\| ,\max_{j\in [n]}\min_{i\in [n]}\left\| \mathrm{x}_{i}-\mathrm{y}_{j}\right\|\right\}\nonumber\\
&\leq \max_{i\in [n]}\left\| \mathrm{x}_{i}-\mathrm{y}_{i}\right\|\label{hausupperbound00} \\ 
&\leq \left\| \mathcal{X}-\mathcal{Y}\right\|_{F}.\label{hausupperbound}
\end{align}
Let $R_{d}=\Big\{ \frac{x'_{1}}{\| x'_{1}\| },\frac{x'_{2}}{\| x'_{2}\| },\ldots ,\frac{x'_{n}}{\| x'_{n}\| }\Big\} \subseteq \mathbb{R} ^{d}$. Since $\|x'_{i}\|\neq 0$ almost surely for each $i\in [n]$, $R_{d}$ is a set of $n$ points on the unit sphere $\mathbb{S} ^{d-1}$ almost surely. By the triangle inequality,
\begin{align}
  &d_{B}\left(\bar{{\rm D}}\left( P'_{d}\right), \bar{{\rm D}}\left(Q_{d}\right) \right) \nonumber\\
  \leq &\inf _{M\in {\rm O}\left( d\right) }d_{H}(P'_{d},M\cdot Q_{d})\nonumber\\
\leq &\inf _{M\in {\rm O}\left( d\right) }[d_{H}(P'_{d},\sqrt{\nu d}\cdot R_{d})+d_{H}(\sqrt{\nu  d}\cdot R_{d},M\cdot Q_{d})]\nonumber\\
 = & d_{H}(P'_{d},\sqrt{\nu d}\cdot R_{d})+\inf _{M\in {\rm O}\left( d\right)}d_{H}(\sqrt{\nu d}\cdot R_{d},M\cdot Q_{d}).\label{1.12}
\end{align}

From (\ref{hausupperbound00}) we have
\begin{equation*}
    d_{H}(P'_{d},\sqrt{\nu d}\cdot R_{d}) \leq \max_{i\in [n]}\left\| x'_{i}-\sqrt{\nu d}\cdot \dfrac{x'_{i}}{\| x'_{i}\| }\right\| = \max_{i\in [n]}\big\lvert \| x'_{i}\| -\sqrt{\nu d}\big\rvert.
\end{equation*}
Since $\max_{i\in [n]}\big\lvert \| x'_{i}\| -\sqrt{\nu d}\big\rvert=O_{\mathbb{P}}(1)$ as $d\rightarrow \infty$ by~\eqref{CFGxd}, it follows that
\begin{equation}
    d_{H}(P'_{d},\sqrt{\nu d}\cdot R_{d})=O_{\mathbb{P}}(1)\label{1.13}
\end{equation}
as $d\rightarrow \infty$. 

For every $d$-by-$d$ matrix $M$, we use the notation $\widetilde{M}$ to denote the corresponding $d$-by-$n$ matrix, in which column vectors are the first $n$ column vectors of $M$. Set $\widetilde{{\rm O}(d)} = \setc*{\widetilde{M}\in \mathbb{R} ^{d\times n}}{M\in {\rm O}(d)}$. Then
\begin{align}
    \inf _{M\in {\rm O}\left( d\right)}d_{H}(\sqrt{\nu d}\cdot R_{d},M\cdot Q_{d})& =\inf _{M\in {\rm O}\left( d\right)}d_{H}(\sqrt{\nu d}\cdot R_{d},\sqrt{\nu d}\cdot \widetilde{M})\nonumber\\
    & =\sqrt{\nu d}\cdot \inf _{M\in {\rm O}\left( d\right)}d_{H}(R_{d}, \widetilde{M})\nonumber\\
    & =\sqrt{\nu d}\cdot \inf _{Z\in \widetilde{{\rm O}\left( d\right)}}d_{H}(R_{d}, Z).\label{1.14}
\end{align}
By noticing~\eqref{innerproductconver0}, for every $1\leq i <j\leq n$, $\langle  \frac{x'_{i}}{\| x'_{i}\| },\frac{x'_{j}}{\| x'_{j}\| }\rangle $ converges to zero in probability as $d\rightarrow \infty$. Using Theorem~2.3.2 in \cite{DU04}, for every subsequence $\left\{ d_{k}\right\} _{k\in \mathbb{N}_{+}}$ of dimensions, there exists a further subsequence $\left\{ d_{k_{l}}\right\} _{l\in \mathbb{N}_{+}}$ of $\left\{ d_{k}\right\} _{k\in \mathbb{N}_{+}}$ such that $\langle  \frac{x'_{i}}{\| x'_{i}\| },\frac{x'_{j}}{\| x'_{j}\| }\rangle $ converges to zero almost surely as $l\rightarrow \infty$, i.e.,
\begin{equation*}
    \mathbb{P}\left(\langle  \dfrac{x'_{i}}{\| x'_{i}\| },\dfrac{x'_{j}}{\| x'_{j}\| }\rangle \rightarrow 0\ as\ l \rightarrow \infty\right)=1,\ 1\leq \forall i <\forall j\leq n.
\end{equation*}
It follows that
\begin{equation*}
    \mathbb{P}\left(\langle  \dfrac{x'_{i}}{\| x'_{i}\| },\dfrac{x'_{j}}{\| x'_{j}\| }\rangle \rightarrow 0\ as\ l \rightarrow \infty,\ 1\leq \forall i <\forall j\leq n\right)=1.
\end{equation*}
Note that the event $\big\{\langle  \frac{x'_{i}}{\| x'_{i}\| },\frac{x'_{j}}{\| x'_{j}\| }\rangle \rightarrow 0\ as\ l \rightarrow \infty,\ 1\leq \forall i <\forall j\leq n \big\}$ is equivalent to the event $\{ R_{d_{k_{l}}}^{T}R_{d_{k_{l}}}\rightarrow I_{n}\ as\ l\rightarrow \infty \}$, and the latter implies the event $\{ \| R_{d_{k_{l}}}\| _{2}\rightarrow 1\ as\ l\rightarrow \infty\}$ by Lemma~\ref{limitofnorm}. Thus, $\| R_{d_{k_{l}}}\| _{2}\rightarrow 1\ as\ l\rightarrow \infty$ almost surely. Note that $\frac{x'_{1}}{\| x'_{1}\| },\frac{x'_{2}}{\| x'_{2}\| },\ldots ,\frac{x'_{n}}{\| x'_{n}\| }$ are linearly independent almost surely by Proposition~\ref{LDwithprob0}, we apply the Schmidt orthogonalization for $\frac{x'_{1}}{\| x'_{1}\| },\frac{x'_{2}}{\| x'_{2}\| },\ldots ,\frac{x'_{n}}{\| x'_{n}\| }$. Then there exist a matrix $Z'\in \widetilde{{\rm O}(d_{k_{l}})}$ and an $n$-by-$n$ upper triangular matrix $V_{d_{k_{l}}}$ such that $Z'=R_{d_{k_{l}}}V_{d_{k_{l}}}$, and the event $\{\langle  \frac{x'_{i}}{\| x'_{i}\| },\frac{x'_{j}}{\| x'_{j}\| }\rangle \rightarrow 0\ as\ l \rightarrow \infty,\  1\leq \forall i <\forall j\leq n\}$ implies the event $\{V_{d_{k_{l}}}\rightarrow I_{n}\ as\ l \rightarrow \infty\}$ from the process of orthogonalization. Hence $V_{d_{k_{l}}}\rightarrow I_{n}\ as\ l \rightarrow \infty$ almost surely. From~\eqref{normineq2F}, \eqref{hausupperbound}, we obtain
\begin{align*}
    \inf _{Z\in \widetilde{{\rm O}\left( d_{k_{l}}\right)}}d_{H}(R_{d_{k_{l}}}, Z)& \leq \inf _{Z\in \widetilde{{\rm O}\left( d_{k_{l}}\right)}} \left\|R_{d_{k_{l}}}-Z\right\|_{F}\\
    & \leq \left\|R_{d_{k_{l}}}-Z'\right\|_{F}\\
    & = \left\|R_{d_{k_{l}}}-R_{d_{k_{l}}}V_{d_{k_{l}}}\right\|_{F}\\
    & \leq \left\|R_{d_{k_{l}}}\right\|_{2}\cdot \left\|I_{n}-V_{d_{k_{l}}}\right\|_{F}.
\end{align*}
Therefore, $\inf_{Z\in \widetilde{{\rm O}\left( d_{k_{l}}\right)}}d_{H}(R_{d_{k_{l}}}, Z)\rightarrow 0 \ as\ l\rightarrow \infty$ almost surely. Again, using Theorem~2.3.2 in \cite{DU04},
\begin{equation}
  \inf_{Z\in \widetilde{{\rm O}\left( d\right)}}d_{H}(R_{d}, Z)= o_{\mathbb{P}}(1)\label{1.15}
\end{equation}
holds as $d\rightarrow \infty$.
By (\ref{1.12}), (\ref{1.13}), (\ref{1.14}), (\ref{1.15}) and Proposition \ref{prop0b}, 
\begin{align*}
  d_{B}\left(\bar{{\rm D}}\left( P'_{d}\right), \bar{{\rm D}}\left(Q_{d}\right) \right) = o_{\mathbb{P}}(\sqrt{d})
\end{align*}
holds as $d\rightarrow \infty$.
\end{proof}

The following corollary is a fairly straightforward consequence of Theorem \ref{thm0b} by noticing the calculus of the asymptotic notations (Proposition \ref{prop0a}). We state it without proof.

\begin{corollary}\label{cor0b}
Let $P' \subseteq \mathbb{R}^{d}$ be the observed point cloud and $N\in \mathbb{N}$. Then
\begin{align*}
    \frac{1}{\sqrt{d}}\sup \setc*{{\rm pers}(\omega)}{\omega\in {\rm D}_{N}(P')} = \sqrt{\nu}\left(\sqrt{\frac{N+1}{N+2}}-\sqrt{\frac{N}{N+1}}\right)+o_{\mathbb{P}}(1)
\end{align*}
and
\begin{align*}
    \sup \setc*{\frac{{\rm pers}(\omega)}{d_{\omega}}}{\omega \in {\rm D}_{N}(P')} = O_{\mathbb{P}}(1)
\end{align*}
hold as $d\rightarrow \infty$.
\end{corollary}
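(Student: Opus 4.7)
The plan is to reduce both statements to the explicit persistence diagram of the regular simplex $Q$ given in Lemma~\ref{regularcech}, combined with the bottleneck asymptotics established in Theorem~\ref{thm0b}. From Lemma~\ref{regularcech}, every generator in ${\rm D}_{N}(Q)$ has birth time $\sqrt{\nu dN/(N+1)}$ and death time $\sqrt{\nu d(N+1)/(N+2)}$, hence
\begin{equation*}
    \sup \setc*{{\rm pers}(\omega)}{\omega\in {\rm D}_{N}(Q)} = \sqrt{\nu d}\left(\sqrt{\frac{N+1}{N+2}} - \sqrt{\frac{N}{N+1}}\right).
\end{equation*}
To carry this asymptotic over to $P'$, I would reuse the device from the proof of Corollary~\ref{suppersbounded}: since ${\rm D}(\{\cdot\}) = \emptyset$ under the reduced-homology convention, $\bar{{\rm D}}(\{\cdot\})$ is just the diagonal, and the $\ell_{\infty}$-distance from $(b_{\omega}, d_{\omega})$ to its nearest diagonal point is exactly ${\rm pers}(\omega)/2$, so $\sup\setc*{{\rm pers}(\omega)}{\omega\in {\rm D}} = 2\, d_{B}(\bar{{\rm D}}, \bar{{\rm D}}(\{\cdot\}))$. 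Combining this identity with the triangle inequality for $d_{B}$ yields
\begin{equation*}
    \left|\sup\setc*{{\rm pers}(\omega)}{\omega\in {\rm D}_{N}(P')} - \sup\setc*{{\rm pers}(\omega)}{\omega\in {\rm D}_{N}(Q)}\right| \leq 2\, d_{B}\left(\bar{{\rm D}}_{N}(P'), \bar{{\rm D}}_{N}(Q)\right),
\end{equation*}
and Theorem~\ref{thm0b} bounds the right-hand side by $o_{\mathbb{P}}(\sqrt{d})$. Substituting the explicit value for $Q$, dividing by $\sqrt{d}$, and using the calculus of Proposition~\ref{prop0a} gives the first displayed formula.

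For the second formula I would argue deterministically: for any generator $\omega$ in any persistence diagram we have $0 \leq b_{\omega} \leq d_{\omega}$, and therefore ${\rm pers}(\omega)/d_{\omega} = 1 - b_{\omega}/d_{\omega} \in [0,1]$ whenever $d_{\omega}>0$. Consequently $\sup \setc*{{\rm pers}(\omega)/d_{\omega}}{\omega \in {\rm D}_{N}(P')} \leq 1$ holds pathwise, which is \emph{a fortiori} $O_{\mathbb{P}}(1)$ in the sense of Definition~\ref{def0b}.

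No serious obstacle is anticipated, since Theorem~\ref{thm0b} already absorbs all of the probabilistic analysis. The only points requiring a little care are the reduced-homology convention, which is what makes the diagonal-distance identity exact, and the corner case of an empty ${\rm D}_{N}(P')$; in that case the supremum is zero by the convention fixed before Corollary~\ref{suppersbounded}, and the triangle-inequality estimate above still applies verbatim, so the argument goes through uniformly.
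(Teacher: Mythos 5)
Your argument is correct and fills in exactly the route the paper gestures at when it states Corollary~\ref{cor0b} without proof: Lemma~\ref{regularcech} gives the explicit persistence for $Q$, the identity $\sup\setc*{{\rm pers}(\omega)}{\omega\in {\rm D}}=2\,d_{B}(\bar{{\rm D}},\bar{{\rm D}}(\{\cdot\}))$ together with the triangle inequality transfers it to $P'$ up to the $o_{\mathbb{P}}(\sqrt{d})$ error from Theorem~\ref{thm0b}, and dividing by $\sqrt{d}$ gives the first display. Your deterministic observation that ${\rm pers}(\omega)/d_{\omega}\leq 1$ pathwise is also valid (indeed stronger than the stated $O_{\mathbb{P}}(1)$), and your handling of the empty-diagram convention is consistent with the paper's.
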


\subsection{Inconsistency between the original and the observed persistence diagrams}\label{subsec3}

After analyzing the asymptotic behavior of the observed persistence diagram, we can now estimate the difference between the original and observed persistence diagrams.

\subsubsection{Rips filtration case}\label{subsubsec4}

\begin{theorem}\label{thm1b}
Let $P$ and $P'$ be the original and the observed point clouds in $\mathbb{R}^{d}$, respectively. Then
\begin{align*}
  d_{B}\left(\bar{{\rm D}}_{N}\left(P\right), \bar{{\rm D}}_{N}\left( P'\right) \right) = 
    \begin{cases}
        \frac{\sqrt{2\nu d}}{4}+O_{\mathbb{P}}(1),& \mbox{if $N=0$},\\
         O_{\mathbb{P}}(1), & \mbox{if $N>0$},
    \end{cases}
\end{align*}
holds as $d\rightarrow \infty$.
\end{theorem}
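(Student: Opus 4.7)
The plan is to interpose the diagram of the scaling regular simplex vertex set $Q$ (from Lemma~\ref{regularrips}) via the triangle inequality. Theorem~\ref{thm0a} gives $d_{B}(\bar{{\rm D}}_{N}(P'), \bar{{\rm D}}_{N}(Q)) = O_{\mathbb{P}}(1)$, and so the forward and reverse triangle inequalities yield
\[
\big\lvert d_{B}(\bar{{\rm D}}_{N}(P), \bar{{\rm D}}_{N}(P')) - d_{B}(\bar{{\rm D}}_{N}(P), \bar{{\rm D}}_{N}(Q))\big\rvert = O_{\mathbb{P}}(1).
\]
Thus it suffices to pin down the deterministic quantity $d_{B}(\bar{{\rm D}}_{N}(P), \bar{{\rm D}}_{N}(Q))$: the theorem will follow once we show it equals $\sqrt{2\nu d}/4$ for $N=0$ (for all $d$ large enough) and is bounded in $d$ for $N>0$.

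For $N>0$, Lemma~\ref{regularrips} gives ${\rm D}_{N}(Q) = \emptyset$, so $\bar{{\rm D}}_{N}(Q)$ is the diagonal. The bottleneck distance to $\bar{{\rm D}}_{N}(P)$ is then the maximum $\ell_{\infty}$ distance from a point of ${\rm D}_{N}(P)$ to the diagonal, i.e., half the maximum persistence in ${\rm D}_{N}(P)$. Since $P$ is contained in a fixed $s$-dimensional coordinate subspace with $s$ independent of $d$, the diagram ${\rm D}_{N}(P)$ itself does not depend on $d$, so this quantity is a constant and a fortiori $O(1)$.

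For $N=0$, Lemma~\ref{regularrips} says ${\rm D}_{0}(Q)$ consists of $n-1$ copies of $(0,\sqrt{2\nu d}/2)$, while ${\rm D}_{0}(P)$ consists of $n-1$ birth-death pairs of the form $(0, a_{i})$ with every $a_{i}$ bounded by half the diameter of $P$, uniformly in $d$. Under $\ell_{\infty}$, the cost of matching $(0, a_{i})$ to a $Q$-point is $\sqrt{2\nu d}/2 - a_{i}$, the cost of matching $(0, a_{i})$ to its diagonal projection is $a_{i}/2$, and the cost of matching a $Q$-point to the diagonal is $\sqrt{2\nu d}/4$. For all $d$ large enough that $\max_{i} a_{i} < \sqrt{2\nu d}/4$, I will argue that the matching sending every off-diagonal point to the diagonal is optimal: its worst per-pair cost is $\max(\max_{i} a_{i}/2,\ \sqrt{2\nu d}/4) = \sqrt{2\nu d}/4$, whereas any bijection that pairs at least one $(0, a_{i})$ with a $Q$-point incurs a cost of at least $\sqrt{2\nu d}/2 - \max_{i} a_{i} > \sqrt{2\nu d}/4$. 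Hence $d_{B}(\bar{{\rm D}}_{0}(P), \bar{{\rm D}}_{0}(Q)) = \sqrt{2\nu d}/4$ for all sufficiently large $d$, and substitution into the triangle-inequality equation proves the claim.

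The main subtlety is the explicit matching comparison in the $N=0$ case; it requires distinguishing the three per-point costs above and exploiting the fact that the $a_{i}$ are uniformly bounded in $d$ because $P$ lives in a fixed $s$-dimensional subspace. Everything else is a clean invocation of Theorem~\ref{thm0a} and Lemma~\ref{regularrips}.
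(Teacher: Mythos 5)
Your proof is correct, but it follows a genuinely different decomposition from the paper's. You pivot on the simplex diagram: Theorem~\ref{thm0a} controls $d_{B}\left(\bar{{\rm D}}_{N}\left(P'\right),\bar{{\rm D}}_{N}\left(Q\right)\right)$, and you then evaluate the deterministic quantity $d_{B}\left(\bar{{\rm D}}_{N}\left(P\right),\bar{{\rm D}}_{N}\left(Q\right)\right)$ directly --- for $N>0$ it is half the maximum persistence of ${\rm D}_{N}(P)$, a constant since ${\rm D}_{N}(P)$ is independent of $d$, and for $N=0$ your explicit matching comparison gives the exact value $\sqrt{2\nu d}/4$ once $\max_{i}a_{i}<\sqrt{2\nu d}/4$. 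The paper instead pivots on the single-point (i.e.\ empty) diagram: it uses the identity $d_{B}\left(\bar{{\rm D}}\left(P'\right),\bar{{\rm D}}\left(\{\cdot\}\right)\right)=\frac{1}{2}\sup\{{\rm pers}(\omega):\omega\in{\rm D}(P')\}$, bounds the discrepancy between $d_{B}\left(\bar{{\rm D}}\left(P\right),\bar{{\rm D}}\left(P'\right)\right)$ and this quantity by the constant $d_{B}\left(\bar{{\rm D}}\left(P\right),\bar{{\rm D}}\left(\{\cdot\}\right)\right)$, and then invokes Corollary~\ref{suppersbounded} (itself obtained from Theorem~\ref{thm0a} by the same single-point pivot) for the asymptotics of the maximum persistence of ${\rm D}_{N}(P')$. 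The paper's route avoids any hands-on matching analysis and reuses Corollary~\ref{suppersbounded}; your route requires the three-way cost comparison in the $N=0$ case but is self-contained and yields, as a by-product, the exact equality $d_{B}\left(\bar{{\rm D}}_{0}\left(P\right),\bar{{\rm D}}_{0}\left(Q\right)\right)=\sqrt{2\nu d}/4$ for all large $d$. One small point to make explicit in your optimality claim: besides bijections that pair some $(0,a_{i})$ with an off-diagonal $Q$-point, you should note that any bijection sending every off-diagonal $Q$-point to the diagonal already has supremum cost at least $\sqrt{2\nu d}/4$ (the $\ell_{\infty}$-distance from $(0,\sqrt{2\nu d}/2)$ to the diagonal), so no bijection can beat your candidate matching; with that one line the lower bound is complete, and in any case the $O_{\mathbb{P}}(1)$ slack in the statement means the two-sided estimate $\sqrt{2\nu d}/4+O(1)$ would already suffice.
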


\begin{proof}[Proof.\nopunct]
By the triangle inequality,
\begin{align*}
    &\ \ \ \ \left\lvert d_{B}\left(\bar{{\rm D}}\left(P\right), \bar{{\rm D}}\left( P'\right) \right) - \frac{1}{2}\sup \setc*{{\rm pers}(\omega)}{\omega\in {\rm D}(P')}\right\rvert\\
    &=\left\lvert d_{B}\left(\bar{{\rm D}}\left(P\right), \bar{{\rm D}}\left( P'\right) \right)-d_{B}\left(\bar{{\rm D}}\left(P'\right), \bar{{\rm D}}\left(\{\cdot\}\right) \right)\right\rvert \leq d_{B}\left(\bar{{\rm D}}\left(P\right), \bar{{\rm D}}\left(\{\cdot\}\right) \right).
\end{align*}
We note that $d_{B}\left(\bar{{\rm D}}\left(P\right), \bar{{\rm D}}\left(\{\cdot\}\right) \right)$ is a deterministic constant. It implies that
\begin{align*}
    d_{B}\left(\bar{{\rm D}}\left(P\right), \bar{{\rm D}}\left( P'\right) \right) = \frac{1}{2}\sup \setc*{{\rm pers}(\omega)}{\omega\in {\rm D}(P')}+O_{\mathbb{P}}(1).
\end{align*}
as $d\rightarrow \infty$. The theorem follows by Corollary~\ref{suppersbounded}.
\end{proof}

The convergence to zero in probability implies eventual boundedness in probability, but the converse may not hold true generally. The following theorem demonstrates that, despite being eventually bounded in probability, the bottleneck distance between the persistence diagrams of $P$ and $P'$ does not converge to zero in probability.

\begin{theorem}\label{db will not converge to 0 in probability}
Let $P$ and $P'$ be the original and the observed point clouds in $\mathbb{R}^{d}$, respectively. Suppose that the $N$-th  persistence diagram ${\rm D}_{N}\left(P\right)\neq \emptyset$. Then $d_{B}\left(\bar{{\rm D}}_{N}\left(P\right), \bar{{\rm D}}_{N}\left( P'\right) \right)$ does not converge to zero in probability as $d\rightarrow \infty$.
\end{theorem}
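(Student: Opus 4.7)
The plan splits on the homological dimension. For $N=0$, the conclusion is immediate from Theorem~\ref{thm1b}, which gives $d_{B}(\bar{{\rm D}}_{0}(P),\bar{{\rm D}}_{0}(P')) = \sqrt{2\nu d}/4 + O_{\mathbb{P}}(1)$ and in particular diverges in probability. The substantive case is $N\ge 1$, where the strategy is to produce a deterministic lower bound of size $p/2$ that holds with probability tending to one, via a scale-mismatch argument.

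The key geometric observation is the following. Because $P$ is deterministic with coordinates supported in the first $s$ (fixed) dimensions, ${\rm D}_{N}(P)$ is a fixed finite multiset of points. On the other hand, for $N\ge 1$ any class in ${\rm D}_{N}(P')$ can be born only after the first edge of the Rips filtration has appeared, so $b_{\omega'}\ge \min_{i<j}\|x'_{i}-x'_{j}\|/2$ for every $\omega'\in{\rm D}_{N}(P')$; combining Proposition~\ref{prop1a} with Proposition~\ref{prop0d} gives $\min_{i<j}\|x'_{i}-x'_{j}\| = \sqrt{2\nu d}+O_{\mathbb{P}}(1)$, so every point of ${\rm D}_{N}(P')$ sits near the line $b=\sqrt{2\nu d}/2$, arbitrarily far above every (fixed) feature of ${\rm D}_{N}(P)$ for large $d$.

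Now I would fix $\omega_{0}\in{\rm D}_{N}(P)$ with $p := {\rm pers}(\omega_{0})>0$ and $b_{0} := b_{\omega_{0}}$, both deterministic positive constants. For any bijection $\eta:\bar{{\rm D}}_{N}(P)\to\bar{{\rm D}}_{N}(P')$, either $\eta(\omega_{0})$ lies on the diagonal, in which case $\|\omega_{0}-\eta(\omega_{0})\|_{\infty}\ge p/2$, or $\eta(\omega_{0})=\omega'_{0}\in{\rm D}_{N}(P')$, in which case $\|\omega_{0}-\eta(\omega_{0})\|_{\infty}\ge b_{\omega'_{0}}-b_{0}\ge \sqrt{2\nu d}/2-b_{0}+O_{\mathbb{P}}(1)$. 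Taking the supremum over points and the infimum over $\eta$,
\[ d_{B}(\bar{{\rm D}}_{N}(P),\bar{{\rm D}}_{N}(P')) \ \ge\ \min\!\left(\tfrac{p}{2},\ \tfrac{\sqrt{2\nu d}}{2}-b_{0}+O_{\mathbb{P}}(1)\right). \]
For $d$ large, the second argument of the minimum exceeds $p/2$ with probability tending to one, so $\mathbb{P}(d_{B}\ge p/2)\to 1$, which rules out convergence of $d_{B}$ to zero in probability.

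The only step that needs any care is the birth-time lower bound $b_{\omega'}\ge \min_{i<j}\|x'_{i}-x'_{j}\|/2$; this is really the remark that for $N\ge 1$ and scales strictly below the first pairwise-distance threshold, the Rips complex on $P'$ is a discrete vertex set with trivial reduced $N$-th homology, so no generator can be born yet. Everything else is a light packaging of the geometric representation of $P'$ recorded in Proposition~\ref{prop1a}, so I do not anticipate any serious obstacle.
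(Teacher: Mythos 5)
Your proposal is correct and is essentially the paper's argument: both fix an off-diagonal $\omega_{0}\in{\rm D}_{N}(P)$ and use $\min_{i<j}\|x_{i}'-x_{j}'\|=\sqrt{2\nu d}+O_{\mathbb{P}}(1)$ (Proposition~\ref{prop1a}) to force every point of ${\rm D}_{N}(P')$ far from $\omega_{0}$ with high probability, so the best match for $\omega_{0}$ is the diagonal at distance ${\rm pers}(\omega_{0})/2$. The only cosmetic differences are that you bound the birth coordinate of observed generators and lower-bound $d_{B}$ directly over bijections, whereas the paper bounds the death coordinate and argues by contradiction through $d_{H}\leq d_{B}$.
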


\begin{proof}[Proof.\nopunct]
    The proof for the case $N=0$ is trivial from Theorem~\ref{thm1b}. Thus, we prove for $N\in \mathbb{N}_{+}$. Suppose, contrary to our claim, that $d_{B}\left(\bar{{\rm D}}_{N}\left(P\right), \bar{{\rm D}}_{N}\left( P'\right) \right)$ converges to zero in probability as $d\rightarrow \infty$. By Proposition \ref{relofdB&dH}, $d_{H}\left(\bar{{\rm D}}_{N}\left(P\right), \bar{{\rm D}}_{N}\left( P'\right) \right)=o_{\mathbb{P}}(1)$ as $d\rightarrow \infty$. 
Since ${\rm D}_{N}\left(P\right)\neq \emptyset$, we can take $\omega_{0} = (b_{\omega_{0}},d_{\omega_{0}})\in {\rm D}_{N}\left(P\right)$. Then
\begin{align*}
    d_{H}\left(\bar{{\rm D}}_{N}\left(P\right), \bar{{\rm D}}_{N}\left( P'\right) \right)& \geq \sup _{\omega\in \bar{{\rm D}}_{N}\left( P\right) }\inf _{\omega'\in \bar{{\rm D}}_{N}\left( P'\right)}\left\| \omega-\omega'\right\| _{\infty }\geq \inf _{\omega'\in \bar{{\rm D}}_{N}\left( P'\right)}\left\| \omega_{0}-\omega'\right\| _{\infty }.
\end{align*}
From~\eqref{obpairwisedist} and Proposition \ref{prop0d}, we have
\begin{equation}
    \min_{\substack{i,j\in [n] \\ i<j}}\frac{\left\| x_{i}'-x_{j}'\right\|}{2}=\frac{\sqrt{2\nu d}}{2}+O_{\mathbb{P}}(1)\label{RTECn234}
\end{equation}
as $d\rightarrow \infty$. By definition, for every $\varepsilon>0$, there exist a constant $C$ and a positive integer $D_{1}$ such that
\begin{equation*}
    \mathbb{P}\left( \Bigg\lvert \min_{\substack{i,j\in [n] \\ i<j}}\frac{\left\| x_{i}'-x_{j}'\right\|}{2}-\frac{\sqrt{2\nu d}}{2}\Bigg\rvert \leq C \right) \geq 1-\varepsilon 
\end{equation*}
holds for every $d>D_{1}$. This clearly forces 
\begin{equation*}
    \mathbb{P}\left(  \min_{\substack{i,j\in [n] \\ i<j}}\frac{\left\| x_{i}'-x_{j}'\right\|}{2} \geq \frac{\sqrt{2\nu d}}{2}-C \right) \geq 1-\varepsilon
\end{equation*}
for every $d>D_{1}$. Since $d$ tends to infinity and ${\rm pers}(\omega_{0})/2+d_{\omega_{0}}$ is a constant, there exists $D_{2}>0$ such that $\sqrt{2\nu d}/2-C\geq {\rm pers}(\omega_{0})/2+d_{\omega_{0}}$ whenever $d>D_{2}$. Letting $D'=\max\{D_{1},D_{2}\}$ we have
\begin{equation}
    \mathbb{P}\left(\min_{\substack{i,j\in [n] \\ i<j}}\frac{\left\| x_{i}'-x_{j}'\right\|}{2} \geq \frac{{\rm pers}(\omega_{0})}{2}+d_{\omega_{0}} \right) \geq 1-\varepsilon\label{estimate00}
\end{equation}
for every $d>D'$.

Regardless of whether ${\rm D}_{N}\left( P'\right)$ is empty or not, we have
\begin{align}
    \inf _{\omega'\in {\rm D}_{N}\left( P'\right)}\left\| \omega_{0}-\omega'\right\| _{\infty}&=\inf _{\omega'\in {\rm D}_{N}\left( P'\right)}\max\left\{\lvert b_{\omega'}-b_{\omega_{0}}\rvert,\lvert d_{\omega'}-d_{\omega_{0}}\rvert\right\}\nonumber\\
    &\geq \inf _{\omega'\in {\rm D}_{N}\left( P'\right)}\lvert d_{\omega'}-d_{\omega_{0}}\rvert\nonumber\\
    &= \min_{\omega'\in {\rm D}_{N}\left( P'\right)}\lvert d_{\omega'}-d_{\omega_{0}}\rvert\nonumber\\
    &\geq \min_{\substack{i,j\in [n] \\ i<j}}\Bigg\lvert \frac{\left\| x_{i}'-x_{j}'\right\|}{2}-d_{\omega_{0}}\Bigg\rvert.\label{estimate01}
\end{align}
By~\eqref{estimate00}, \eqref{estimate01},
\begin{align*}
&\ \ \ \ \mathbb{P}\left(\inf_{\omega'\in {\rm D}_{N}\left( P'\right)}\left\| \omega_{0}-\omega'\right\| _{\infty}\geq~\frac{{\rm pers}(\omega_{0})}{2}\right)\\
&\geq \mathbb{P}\left(\min_{\substack{i,j\in [n] \\ i<j}}\frac{\left\| x_{i}'-x_{j}'\right\|}{2} \geq \frac{{\rm pers}(\omega_{0})}{2}+d_{\omega_{0}}\right)\geq 1-\varepsilon
\end{align*}
holds for every $d>D'$. Since
\begin{align*}
    \inf _{\omega'\in \bar{{\rm D}}_{N}\left( P'\right)}\left\| \omega_{0}-\omega'\right\| _{\infty }=\min\left\{\frac{{\rm pers}(\omega_{0})}{2},\inf _{\omega'\in {\rm D}_{N}\left( P'\right)}\left\| \omega_{0}-\omega'\right\| _{\infty }\right\},
\end{align*}
it follows that
\begin{equation*}
    \mathbb{P}\left(\inf _{\omega'\in \bar{{\rm D}}_{N}\left( P'\right)}\left\| \omega_{0}-\omega'\right\| _{\infty }=\frac{{\rm pers}(\omega_{0})}{2} \right)\geq 1-\varepsilon
\end{equation*}
for every $d>D'$. Consequently,  
\begin{equation}
    \lim _{d\rightarrow \infty}\mathbb{P}\left(\inf _{\omega'\in \bar{{\rm D}}_{N}\left( P'\right)}\left\| \omega_{0}-\omega'\right\| _{\infty }=\frac{{\rm pers}(\omega_{0})}{2} \right) =1.\label{BEJDdi}
\end{equation}
However, since $\inf_{\omega'\in \bar{{\rm D}}_{N}\left( P'\right)}\left\| \omega_{0}-\omega'\right\| _{\infty }=o_{\mathbb{P}}(1)$ by the assumption, \eqref{BEJDdi} contradicts the definition of the small $o$ notation. Consequently, the assumption does not hold, and the proof is completed.
\end{proof}

The following conclusion can be drawn regarding their Hausdorff distance.

\begin{theorem}\label{dHunboundedinprobability}
Let $P$ and $P'$ be the original and the observed point clouds in $\mathbb{R}^{d}$, respectively. Suppose that the $N$-th  persistence diagram ${\rm D}_{N}\left(P\right)\neq \emptyset$. Then $d_{H}\left({\rm D}_{N}\left(P\right), {\rm D}_{N}\left( P'\right) \right)$ is eventually unbounded in probability as $d\rightarrow \infty$.
\end{theorem}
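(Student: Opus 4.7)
The plan is to fix any $\omega_0=(b_{\omega_0},d_{\omega_0})\in {\rm D}_N(P)$, which exists by the non-emptiness hypothesis, and to use $\omega_0$ as a probe point that forces the Hausdorff distance to be large. Directly from the definition of $d_H$, and with the convention that an infimum over an empty set equals $+\infty$,
$$d_H({\rm D}_N(P),{\rm D}_N(P'))\ \ge\ \inf_{\omega'\in {\rm D}_N(P')}\|\omega_0-\omega'\|_\infty\ \ge\ \inf_{\omega'\in {\rm D}_N(P')}|d_{\omega'}-d_{\omega_0}|.$$
If ${\rm D}_N(P')=\emptyset$ the bound is already $+\infty$, so the remaining case is ${\rm D}_N(P')\ne\emptyset$. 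Note that using the \emph{non-completed} diagrams is essential here, since diagonal points cannot be invoked to shrink the infimum.

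The key step is the observation that, in the Rips filtration, every death time $d_{\omega'}$ of a generator in ${\rm D}_N(P')$ coincides with half of some pairwise distance among points of $P'$, so
$$d_{\omega'}\ \ge\ \min_{\substack{i,j\in[n]\\ i<j}}\frac{\|x_i'-x_j'\|}{2}.$$
Proposition~\ref{prop1a} combined with Proposition~\ref{prop0d} (applied to the minimum) yields $\min_{i<j}\|x_i'-x_j'\|/2=\sqrt{2\nu d}/2+O_{\mathbb{P}}(1)$, uniformly in $\omega'$. Since $d_{\omega_0}$ is a deterministic constant independent of $d$, this gives $d_{\omega'}-d_{\omega_0}\ \ge\ \sqrt{2\nu d}/2-d_{\omega_0}+O_{\mathbb{P}}(1)$, and therefore
$$d_H({\rm D}_N(P),{\rm D}_N(P'))\ \ge\ \frac{\sqrt{2\nu d}}{2}-d_{\omega_0}+O_{\mathbb{P}}(1),$$
which diverges to $+\infty$ in probability.

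To wrap this up into the precise statement of eventual unboundedness in probability, I would mirror the probabilistic bookkeeping used in the proof of Theorem~\ref{db will not converge to 0 in probability}: given any $\varepsilon>0$ and any candidate constant $C>0$, the $O_{\mathbb{P}}(1)$ control on $\min_{i<j}\|x_i'-x_j'\|/2-\sqrt{2\nu d}/2$ produces a threshold $D(\varepsilon,C)$ beyond which the event $\{\min_{i<j}\|x_i'-x_j'\|/2\ge d_{\omega_0}+C\}$ has probability at least $1-\varepsilon$; on this event, $d_H({\rm D}_N(P),{\rm D}_N(P'))>C$ regardless of whether ${\rm D}_N(P')$ is empty. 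This contradicts $d_H=O_{\mathbb{P}}(1)$ and in fact yields the stronger conclusion that $d_H\to+\infty$ in probability. No serious obstacle is anticipated, since the argument is a cleaner variant of that used in Theorem~\ref{db will not converge to 0 in probability}; the only care points are the two conventions already mentioned (no diagonal available in the non-completed diagram, and $d_H(\emptyset,A)=+\infty$ for $A\ne\emptyset$).
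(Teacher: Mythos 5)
Your proposal is correct and follows essentially the same route as the paper's proof: the probe point $\omega_{0}\in {\rm D}_{N}(P)$, the lower bound of every death time in ${\rm D}_{N}(P')$ by $\min_{i<j}\|x_{i}'-x_{j}'\|/2$ (death times being half pairwise distances in the Rips filtration), and the $\sqrt{2\nu d}/2+O_{\mathbb{P}}(1)$ asymptotics from Proposition~\ref{prop1a}, together with the convention $d_{H}(A,\emptyset)=+\infty$. The only difference is packaging: the paper argues by contradiction via two incompatible high-probability events, while your direct bookkeeping shows $\mathbb{P}\left(d_{H}\left({\rm D}_{N}(P),{\rm D}_{N}(P')\right)>C\right)\rightarrow 1$ for every $C$, which delivers the same conclusion (indeed the slightly stronger divergence-in-probability form).
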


\begin{proof}[Proof.\nopunct]
Suppose, contrary to our claim, that $d_{H}\left({\rm D}_{N}\left(P\right), {\rm D}_{N}\left( P'\right) \right)=O_{\mathbb{P}}(1)$ as $d\rightarrow \infty$. By the definition of big $O$ notation, for $\varepsilon=1/4$, there exist constants $C_{1}, D_{1}>0$ such that
\begin{equation}
    \mathbb{P}\left(d_{H}\left({\rm D}_{N}\left(P\right), {\rm D}_{N}\left( P'\right) \right)<C_{1}\right)\geq \frac{3}{4}\label{nxakds}
\end{equation}
holds for every $d>D_{1}$. Since ${\rm D}_{N}\left(P\right)\neq \emptyset$, we can take $\omega_{0}\in {\rm D}_{N}\left(P\right)$. Then
\begin{align*}
    d_{H}\left({\rm D}_{N}\left(P\right), {\rm D}_{N}\left( P'\right) \right)\geq \sup _{\omega\in {\rm D}_{N}\left( P\right) }\inf _{\omega'\in {\rm D}_{N}\left( P'\right)}\left\| \omega-\omega'\right\| _{\infty }\geq \inf _{\omega'\in {\rm D}_{N}\left( P'\right)}\left\| \omega_{0}-\omega'\right\| _{\infty }.
\end{align*}
Combining \eqref{estimate01} with \eqref{nxakds}, we have
\begin{align}
    \mathbb{P}\left(\min_{\substack{i,j\in [n] \\ i<j}}\Bigg\lvert \frac{\left\| x_{i}'-x_{j}'\right\|}{2}-d_{\omega_{0}}\Bigg\rvert <C_{1}\right)\geq \frac{3}{4}.\label{lowerboundofeventB1}
\end{align}

On the other hand. By~\eqref{RTECn234} and Definition~\ref{def0b}, for $\varepsilon=1/4$, there exist constants $C_{2}, D_{2}>0$ such that

\begin{equation*}
    \mathbb{P}\left(  \min_{\substack{i,j\in [n] \\ i<j}}\frac{\left\| x_{i}'-x_{j}'\right\|}{2} \geq \frac{\sqrt{2\nu d}}{2}-C_{2} \right) \geq 1-\frac{1}{4}=\frac{3}{4}
\end{equation*}
holds for every $d>D_{2}$. Since $d$ tends to infinity and $2C_{1}+d_{\omega_{0}}$ is a constant, there exists $D_{3}>0$ such that $\sqrt{2\nu d}/2-C_{2}> 2C_{1}+d_{\omega_{0}}$ whenever $d>D_{3}$. Hence for every $d>\max\{D_{2},D_{3}\}$,
\begin{equation}
    \mathbb{P}\left(\min_{\substack{i,j\in [n] \\ i<j}}\frac{\left\| x_{i}'-x_{j}'\right\|}{2} \geq 2C_{1}+d_{\omega_{0}} \right) \geq \frac{3}{4}.\label{erdrg}
\end{equation}
Let us denote by $B_{1}$ the event in~\eqref{lowerboundofeventB1}, and $B_{2}$ the event in~\eqref{erdrg}. Then it is evident that $B_{1}\cap B_{2}=\emptyset$. However,
\begin{equation*}
    \mathbb{P}\left(B_{1}\cap B_{2} \right) \geq \mathbb{P}\left(B_{1} \right)+\mathbb{P}\left(B_{2} \right)-1\geq \frac{3}{4}+\frac{3}{4}-1=\frac{1}{2}
\end{equation*}
holds for every $d>\max\{D_{1},D_{2},D_{3}\}$, contradicting $B_{1}\cap B_{2}=\emptyset$. Therefore, $d_{H}\left({\rm D}_{N}\left(P\right), {\rm D}_{N}\left( P'\right) \right)$ is eventually unbounded in probability as $d\rightarrow \infty$.
\end{proof}

To validate our conclusions, we conduct a numerical experiment using HomCloud (Version~3.2.1), developed by \cite{obayashi2022HomCloud}. The original point cloud $P$ is generated by uniformly sampling $500$ points from a closed 2-cube $[-1,1]^2$. We generate random noises following the Gaussian distribution $\mathcal{N}_{d}(0, \nu I_{d})$ in which $\nu=1\times 10^{-5}$ for various dimensions ($d=1000,5000$, and $10{,}000$). The $1$st persistence diagrams of $P$ and $P'$ are shown in Fig.~\ref{fig:PDnoises}.

\subsubsection{\v{C}ech filtration case}\label{subsubsec5}
\begin{theorem}\label{thm1d}
Let $P$ and $P'$ be the original and the observed point clouds in $\mathbb{R}^{d}$, respectively. Then
\begin{align*}
  d_{B}\left(\bar{{\rm D}}_{N}\left(P\right), \bar{{\rm D}}_{N}\left( P'\right) \right) = O_{\mathbb{P}}(\sqrt{d})
\end{align*}
holds for every $N\in \mathbb{N}$ less than or equal to $(n-2)$ as $d\rightarrow \infty$.
\end{theorem}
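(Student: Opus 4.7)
The plan is to obtain the bound by a direct application of the \v{C}ech-filtration stability theorem, which for the \v{C}ech complex controls the bottleneck distance by the Hausdorff distance between the point clouds themselves, sidestepping the more delicate Gromov--Hausdorff analysis that was required for Theorem~\ref{thm0b}.

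First, I would invoke the second inequality of Theorem~\ref{lem0b}:
\begin{equation*}
    d_{B}\left(\bar{{\rm D}}_{N}(P), \bar{{\rm D}}_{N}(P')\right) \leq d_{H}(P, P').
\end{equation*}
Since $P$ and $P'$ both have cardinality $n$, I would then match $x_{i}$ with $x'_{i} = x_{i} + e_{i}$ and use the elementary bound analogous to \eqref{hausupperbound00}:
\begin{equation*}
    d_{H}(P, P') \leq \max_{i\in [n]}\left\| x_{i} - x'_{i}\right\| = \max_{i\in [n]}\left\| e_{i}\right\|.
\end{equation*}
This reduces the whole problem to controlling the maximum noise norm.

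Second, I would apply Theorem~\ref{YataGRT} to each $e_{i}$ with mean vector $\mu = 0$ and common coordinate variance $\nu$; the hypothesis $\mathbb{V}[(z_{1}-\epsilon)^{2}] \neq 0$ is guaranteed by the continuity of the noise distribution. This gives $\left\| e_{i}\right\| = \sqrt{\nu d} + O_{\mathbb{P}}(1)$ for every fixed $i \in [n]$. Using the maximum rule of Proposition~\ref{prop0d} (applied to the finite collection of $n$ identically distributed norms), one obtains
\begin{equation*}
    \max_{i\in [n]}\left\| e_{i}\right\| = \sqrt{\nu d} + O_{\mathbb{P}}(1).
\end{equation*}
Dividing by $\sqrt{d}$ and combining with Proposition~\ref{prop0a}(5) shows this quantity is $O_{\mathbb{P}}(\sqrt{d})$, which chained with the previous two bounds yields the claim.

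There is no serious obstacle in this argument: the \v{C}ech stability inequality absorbs almost all the work, and the geometric representation theorem supplies the probabilistic size of $\left\| e_{i}\right\|$ directly. The only point worth double-checking is that the statement is written for every $N \in \mathbb{N}$ with $N \leq n-2$, which is exactly the non-triviality range declared in Section~\ref{Background and notation}, so no case analysis on $N$ is needed. The fact that the $\sqrt{d}$ rate here cannot be improved, and in fact is achieved with positive probability, is precisely the content of the companion Theorem~\ref{cech strong inconsistency}, so this upper bound is sharp in order.
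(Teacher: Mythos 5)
Your argument is correct, but it follows a genuinely different and more economical route than the paper. The paper proves Theorem~\ref{thm1d} by a triangle inequality through the scaled regular simplex $Q$: it bounds $d_{B}\left(\bar{{\rm D}}_{N}\left(P\right), \bar{{\rm D}}_{N}\left( P'\right) \right)$ by $d_{B}\left(\bar{{\rm D}}_{N}\left(P\right), \bar{{\rm D}}_{N}\left( Q\right) \right)+d_{B}\left(\bar{{\rm D}}_{N}\left(Q\right), \bar{{\rm D}}_{N}\left( P'\right) \right)$, controls the first term by $O_{\mathbb{P}}(\sqrt{d})$ using Lemma~\ref{regularcech} (the diagram of $Q$ is a single point of order $\sqrt{d}$), and the second by $o_{\mathbb{P}}(\sqrt{d})$ using Theorem~\ref{thm0b}, whose proof needs the subsequence and Schmidt-orthogonalization argument. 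You instead apply the \v{C}ech stability inequality of Theorem~\ref{lem0b} directly, bound $d_{H}(P,P')$ by $\max_{i\in[n]}\|e_{i}\|$ via the identity-matching estimate analogous to \eqref{hausupperbound00}, and conclude with Theorem~\ref{YataGRT} and Proposition~\ref{prop0d} that this maximum is $\sqrt{\nu d}+O_{\mathbb{P}}(1)=O_{\mathbb{P}}(\sqrt{d})$; every step is valid, and this avoids Lemma~\ref{regularcech} and Theorem~\ref{thm0b} entirely. What the paper's heavier route buys is the finer asymptotic information needed elsewhere (Corollary~\ref{cor0b} and Theorem~\ref{cech strong inconsistency}); what yours buys is brevity for the bare upper bound. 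Two small caveats: your appeal to continuity for ${\mathbb{V}}[(z_{1}-\epsilon)^{2}]\neq 0$ tacitly also assumes finiteness of the fourth coordinate moment, but the paper makes the same tacit assumption when it applies Theorem~\ref{YataGRT} to the noise in Proposition~\ref{prop1a}, so this is consistent with the paper's framework; and your closing remark that sharpness of the $\sqrt{d}$ order follows from Theorem~\ref{cech strong inconsistency} overstates slightly, since that theorem only gives eventual unboundedness in probability, the actual $\sqrt{d}$-order lower bound being extracted from its proof together with Theorem~\ref{thm0b} --- but this remark is not part of the proof of the statement and does not affect its correctness.
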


\begin{proof}[Proof.\nopunct]
By the triangle inequality,
\begin{equation*}
    d_{B}\left(\bar{{\rm D}}_{N}\left(P\right), \bar{{\rm D}}_{N}\left( P'\right) \right)\leq d_{B}\left(\bar{{\rm D}}_{N}\left(P\right), \bar{{\rm D}}_{N}\left( Q\right) \right)+d_{B}\left(\bar{{\rm D}}_{N}\left(Q\right), \bar{{\rm D}}_{N}\left( P'\right) \right).
\end{equation*}
By Lemma~\ref{regularcech}, there is only one pair $\big(\sqrt{\nu dN}/\sqrt{N+1},\sqrt{\nu d(N+1)}/\sqrt{N+2}\big)$ with a certain multiplicity in ${\rm D}_{N}(Q)$, indicating that $d_{B}\left(\bar{{\rm D}}_{N}\left(P\right), \bar{{\rm D}}_{N}\left( Q\right) \right)=O_{\mathbb{P}}(\sqrt{d})$ as $d\rightarrow \infty$. Since $d_{B}(\bar{{\rm D}}_{N}\left(P'\right), \bar{{\rm D}}_{N}\left( Q\right))$\ $=o_{\mathbb{P}}(\sqrt{d})$ by Theorem~\ref{thm0b}, it follows that the right-hand side of the above triangle inequality is $O_{\mathbb{P}}(\sqrt{d})$. Consequently, $d_{B}\left(\bar{{\rm D}}_{N}\left(P\right), \bar{{\rm D}}_{N}\left( P'\right) \right) = O_{\mathbb{P}}(\sqrt{d})$ as $d\rightarrow \infty$.
\end{proof}

Observed from Theorems~\ref{thm0b}, \ref{thm1d} and Corollary \ref{cor0b}, we shall conclude that in the \v{C}ech filtration case, the bottleneck distance is eventually unbounded in probability.

\begin{theorem}\label{cech strong inconsistency}
Let $P$ and $P'$ be the original and the observed point clouds in $\mathbb{R}^{d}$, respectively. Then for every $N\in \mathbb{N}$, $d_{B}\left(\bar{{\rm D}}_{N}\left(P\right), \bar{{\rm D}}_{N}\left( P'\right) \right)$ is eventually unbounded in probability as $d\rightarrow \infty$.
\end{theorem}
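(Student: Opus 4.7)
The plan is to argue by contradiction using the regular-simplex reference point cloud $Q$ introduced before Lemma~\ref{regularcech}, together with the stochastic upper bound from Theorem~\ref{thm0b}. Suppose, for the sake of contradiction, that $d_{B}(\bar{{\rm D}}_{N}(P), \bar{{\rm D}}_{N}(P'))$ is eventually bounded in probability as $d\to\infty$, i.e.\ $d_{B}(\bar{{\rm D}}_{N}(P), \bar{{\rm D}}_{N}(P')) = O_{\mathbb{P}}(1)$.

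First, I would establish a deterministic lower bound on $d_{B}(\bar{{\rm D}}_{N}(P), \bar{{\rm D}}_{N}(Q))$. Because the intrinsic geometry of $P$ lives in the first $s$ coordinates and is independent of $d$, the quantity $\kappa_{P} := d_{B}(\bar{{\rm D}}_{N}(P), \bar{{\rm D}}_{N}(\{\cdot\})) = \tfrac{1}{2}\sup\{{\rm pers}(\omega) : \omega \in {\rm D}_{N}(P)\}$ is a finite constant not depending on $d$. By Lemma~\ref{regularcech}, for every non-trivial $N\leq n-2$ the diagram ${\rm D}_{N}(Q)$ contains the single point $\omega_{*} = \bigl(\sqrt{\nu dN/(N+1)},\sqrt{\nu d(N+1)/(N+2)}\bigr)$ with positive multiplicity, hence $d_{B}(\bar{{\rm D}}_{N}(Q), \bar{{\rm D}}_{N}(\{\cdot\})) = c_{N}\sqrt{d}$ with
\begin{equation*}
c_{N} = \tfrac{\sqrt{\nu}}{2}\Bigl(\sqrt{(N+1)/(N+2)} - \sqrt{N/(N+1)}\Bigr) > 0.
\end{equation*}
The reverse triangle inequality applied to the three diagrams $\bar{{\rm D}}_{N}(P), \bar{{\rm D}}_{N}(Q), \bar{{\rm D}}_{N}(\{\cdot\})$ then yields $d_{B}(\bar{{\rm D}}_{N}(P), \bar{{\rm D}}_{N}(Q)) \geq c_{N}\sqrt{d} - \kappa_{P}$, a bound that grows like $\sqrt{d}$.

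Next I would chain this with the forward triangle inequality
\begin{equation*}
c_{N}\sqrt{d} - \kappa_{P} \;\leq\; d_{B}(\bar{{\rm D}}_{N}(P), \bar{{\rm D}}_{N}(Q)) \;\leq\; d_{B}(\bar{{\rm D}}_{N}(P), \bar{{\rm D}}_{N}(P')) + d_{B}(\bar{{\rm D}}_{N}(P'), \bar{{\rm D}}_{N}(Q)).
\end{equation*}
By Theorem~\ref{thm0b} the last summand is $o_{\mathbb{P}}(\sqrt{d})$; by the contradiction hypothesis the first summand is $O_{\mathbb{P}}(1)$, hence also $o_{\mathbb{P}}(\sqrt{d})$ via Proposition~\ref{prop0a}(5) and the product rule of Proposition~\ref{prop0a}. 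Dividing the whole chain by $\sqrt{d}$, the left-hand side tends deterministically to the positive constant $c_{N}$, while the right-hand side is $o_{\mathbb{P}}(1)$, which contradicts the definition of $o_{\mathbb{P}}(1)$ once $\varepsilon < c_{N}/2$ is chosen.

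This is a short triangle-inequality chase; no new probabilistic estimates are required, since both the deterministic lower bound on $d_{B}(\bar{{\rm D}}_{N}(P), \bar{{\rm D}}_{N}(Q))$ and the stochastic upper bound on $d_{B}(\bar{{\rm D}}_{N}(P'), \bar{{\rm D}}_{N}(Q))$ are already in hand. The only mildly delicate point is ensuring that $\omega_{*}$ appears with strictly positive multiplicity in ${\rm D}_{N}(Q)$ for the relevant range $0\leq N\leq n-2$, which is immediate from the skeletal description of the \v{C}ech filtration of $\bfDel^{n-1}$ used in the proof of Lemma~\ref{regularcech} (the $N$-skeleton of $\bfDel^{n-1}$ is homotopy equivalent to a non-trivial wedge of $N$-spheres).
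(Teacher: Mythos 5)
Your argument is correct, and it reaches the contradiction by the same overall skeleton as the paper (assume $d_{B}(\bar{{\rm D}}_{N}(P),\bar{{\rm D}}_{N}(P'))=O_{\mathbb{P}}(1)$, route the comparison through the simplex diagram $\bar{{\rm D}}_{N}(Q)$, and use Theorem~\ref{thm0b} to conclude $d_{B}(\bar{{\rm D}}_{N}(P),\bar{{\rm D}}_{N}(Q))=o_{\mathbb{P}}(\sqrt{d})$), but you obtain the competing lower bound on $d_{B}(\bar{{\rm D}}_{N}(P),\bar{{\rm D}}_{N}(Q))$ by a different mechanism. The paper bounds it below by the Hausdorff distance and then by $\inf_{\omega\in\bar{{\rm D}}_{N}(P)}\|\omega-q_{0}\|_{\infty}$ for the single off-diagonal point $q_{0}=(c_{1}\sqrt{d},c_{2}\sqrt{d})$, which forces a case split on whether ${\rm D}_{N}(P)$ is empty and a small limit computation giving the constant $(c_{2}-c_{1})/2$. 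You instead use the identity $d_{B}(\bar{{\rm D}},\bar{{\rm D}}(\{\cdot\}))=\tfrac{1}{2}\sup\{{\rm pers}(\omega)\}$ (the device already used in Corollary~\ref{suppersbounded} and Theorem~\ref{thm1b}) together with the reverse triangle inequality, getting $d_{B}(\bar{{\rm D}}_{N}(P),\bar{{\rm D}}_{N}(Q))\geq c_{N}\sqrt{d}-\kappa_{P}$ with $c_{N}$ equal to the paper's $(c_{2}-c_{1})/2$; since $P$ lives in the fixed first $s$ coordinates, $\kappa_{P}$ is indeed a $d$-independent constant. Your route is slightly cleaner in that it needs no case analysis on the emptiness of ${\rm D}_{N}(P)$ and treats all $N$ uniformly, whereas the paper's argument is more explicit about which point of ${\rm D}_{N}(Q)$ drives the divergence; both hinge on the positive multiplicity of the single point in ${\rm D}_{N}(Q)$ for $0\leq N\leq n-2$, which you correctly flag and which follows from the skeletal \v{C}ech filtration in Lemma~\ref{regularcech}.
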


\begin{proof}[Proof.\nopunct]
Suppose, contrary to our claim, that $d_{B}\left(\bar{{\rm D}}_{N}\left(P\right), \bar{{\rm D}}_{N}\left( P'\right) \right)$ is eventually bounded in probability as $d\rightarrow \infty$. By the triangle inequality, 
\begin{equation*}
    d_{B}\left(\bar{{\rm D}}_{N}\left(P\right), \bar{{\rm D}}_{N}\left( Q\right) \right)\leq d_{B}\left(\bar{{\rm D}}_{N}\left(P\right), \bar{{\rm D}}_{N}\left( P'\right) \right)+d_{B}\left(\bar{{\rm D}}_{N}\left(P'\right), \bar{{\rm D}}_{N}\left( Q\right) \right).
\end{equation*}
By Theorem~\ref{thm0b} we have $d_{B}(\bar{{\rm D}}_{N}\left(P'\right), \bar{{\rm D}}_{N}\left( Q\right) )=o_{\mathbb{P}}(\sqrt{d})$ as $d\rightarrow \infty$. Then $d_{B}\left(\bar{{\rm D}}_{N}\left(P\right), \bar{{\rm D}}_{N}\left( Q\right) \right)=o_{\mathbb{P}}(\sqrt{d})$ as $d\rightarrow \infty$.

On the other hand, by Lemma~\ref{regularcech}, there is only one pair $q_{0}=(c_{1} \sqrt{d},c_{2} \sqrt{d})$ with multiplicity in ${\rm D}_{N}\left( Q\right)$, where $c_{1}=\sqrt{\nu N}/\sqrt{N+1}$, $c_{2}=\sqrt{\nu (N+1)}/\sqrt{N+2}$. Then
\begin{align*}
    d_{B}\left(\bar{{\rm D}}_{N}\left(P\right), \bar{{\rm D}}_{N}\left( Q\right) \right)&\geq d_{H}\left(\bar{{\rm D}}_{N}\left(P\right), \bar{{\rm D}}_{N}\left( Q\right) \right)\\
    & \geq \sup _{q\in \bar{{\rm D}}_{N}\left(Q\right)}\inf _{\omega\in \bar{{\rm D}}_{N}\left( P\right)}\left\| \omega-q\right\| _{\infty }\\
    & \geq \inf _{\omega\in \bar{{\rm D}}_{N}\left( P\right)}\left\| \omega-q_{0}\right\| _{\infty }.
\end{align*}
If ${\rm D}_{N}\left(P\right)=\emptyset$, then $\inf_{\omega\in \bar{{\rm D}}_{N}\left( P\right)}\left\| \omega-q_{0}\right\| _{\infty }={\rm pers}(q_{0})/2=(c_{2}-c_{1})\sqrt{d}/2$. This contradicts $d_{B}\left(\bar{{\rm D}}_{N}\left(P\right), \bar{{\rm D}}_{N}\left( Q\right) \right)=o_{\mathbb{P}}(\sqrt{d})$ as $d\rightarrow \infty$. If ${\rm D}_{N}\left(P\right)\neq \emptyset$, then
\begin{align*}
    d_{B}\left(\bar{{\rm D}}_{N}\left(P\right), \bar{{\rm D}}_{N}\left( Q\right) \right)&\geq \inf _{\omega\in \bar{{\rm D}}_{N}\left( P\right)}\left\| \omega-q_{0}\right\| _{\infty }\\
    & =\min\left\{\frac{{\rm pers}(q_{0})}{2},\min _{\omega\in {\rm D}_{N}\left( P\right)}\left\| \omega-q_{0}\right\| _{\infty }\right\}.
\end{align*}
Since 
\begin{align*}
\min \limits_{\omega\in {\rm D}_{N}\left( P\right)}\left\| \omega-q_{0}\right\| _{\infty}& =\min \limits_{\omega\in {\rm D}_{N}\left( P\right)}\max \{\vert b_{\omega}-b_{q_{0}}\vert,\vert d_{\omega}-d_{q_{0}}\vert\}\\
& =\min \limits_{\omega\in {\rm D}_{N}\left( P\right)}\max \{\big\vert b_{\omega}-c_{1}\cdot \sqrt{d}\big\vert,\big\vert d_{\omega}-c_{2}\cdot \sqrt{d}\big\vert\},
\end{align*}
it follows that
\begin{align*}
\lim _{d\rightarrow \infty}\frac{\min \limits_{\omega\in {\rm D}_{N}\left( P\right)}\left\| \omega-q_{0}\right\| _{\infty}}{\sqrt{d}}& =\lim _{d\rightarrow \infty}\min \limits_{\omega\in {\rm D}_{N}\left( P\right)}\max \bigg\{\bigg\vert\frac{b_{\omega}}{\sqrt{d}}-c_{1}\bigg\vert,\bigg\vert\frac{d_{\omega}}{\sqrt{d}}-c_{2}\bigg\vert \bigg\}=c_{2}.
\end{align*}
Hence
\begin{align*}
\lim _{d\rightarrow \infty}\frac{\inf \limits_{\omega\in \bar{{\rm D}}_{N}\left( P\right)}\left\| \omega-q_{0}\right\| _{\infty }}{\sqrt{d}}& =\lim _{d\rightarrow \infty}\min \Bigg\{\frac{{\rm pers}(q_{0})}{2\sqrt{d}},\frac{\min \limits_{\omega\in {\rm D}_{N}\left( P\right)}\left\| \omega-q_{0}\right\| _{\infty}}{\sqrt{d}}\Bigg\}\\
& =\min\Big\{\frac{c_{2}-c_{1}}{2},c_{2}\Big\}\\
& =\frac{c_{2}-c_{1}}{2}>0.
\end{align*}
This also contradicts $d_{B}\left(\bar{{\rm D}}_{N}\left(P\right), \bar{{\rm D}}_{N}\left( Q\right) \right)=o_{\mathbb{P}}(\sqrt{d})$ as $d\rightarrow \infty$. Consequently, the assumption does not hold, and the proof is completed.
\end{proof}

\subsection{Classification of the similarity between the original and the observed persistence diagrams}\label{subsec4}

From the results obtained so far, it seems that the observed persistence diagram is significantly different from the original one in both filtrations. This suggests that the use of observed persistence diagrams in the HDLSS setting is not reliable. Additionally, the results in the \v{C}ech filtration case are dissimilar to those in the Rips filtration case. In this subsection, we will classify the asymptotic similarity to describe the varying levels of consistency between the original and the observed persistence diagrams.

To measure the difference between the original and observed persistence diagrams, we primarily use the bottleneck distance. However, this distance alone cannot distinguish between two cases—one where the observed persistence diagram is getting closer to the original persistence diagram and another where it is moving away from the original persistence diagram but still close to the diagonal line $\setc*{(x,x)}{x\in \nnegR}$. In both cases, the bottleneck distance would eventually be bounded. To solve this problem, we use the Hausdorff distance as a complementary measurement to differentiate between these two cases.

\begin{definition}\label{classification}
\normalfont Let $P$ and $P'$ be the original and the observed point clouds in $\mathbb{R}^{d}$, respectively. The different levels of the asymptotic similarity of persistence diagrams of $P$ and $P'$ are provided as follows.

\vspace{1ex}

\noindent \textbf{Bottleneck consistency:} $d_{B}\left(\bar{{\rm D}}_{N}\left(P\right), \bar{{\rm D}}_{N}\left( P'\right) \right)$ converges to zero in probability as $d\rightarrow \infty$.

\vspace{1ex}

\noindent \textbf{Bottleneck inconsistency:} $d_{B}\left(\bar{{\rm D}}_{N}\left(P\right), \bar{{\rm D}}_{N}\left( P'\right) \right)$ is eventually bounded but does not converge to zero in probability as $d\rightarrow \infty$. Furthermore, there are three sub-classes:
\begin{enumerate}[I.]
  \item
  $d_{H}\left({\rm D}_{N}\left(P\right), {\rm D}_{N}\left( P'\right) \right)$ converges to zero in probability as $d\rightarrow \infty$.       
  \item
  $d_{H}\left({\rm D}_{N}\left(P\right), {\rm D}_{N}\left( P'\right) \right)$ is eventually bounded but does not converge to zero in probability as $d\rightarrow \infty$. 
  \item
  $d_{H}\left({\rm D}_{N}\left(P\right), {\rm D}_{N}\left( P'\right) \right)$ is eventually unbounded in probability as $d\rightarrow \infty$.
\end{enumerate}

\microtypesetup{protrusion=true,expansion=true}
\noindent \textbf{Strong bottleneck inconsistency:} $d_{B}\left(\bar{{\rm D}}_{N}\left(P\right), \bar{{\rm D}}_{N}\left( P'\right) \right)$ is eventually unbounded in probability as $d\rightarrow \infty$.
\end{definition}

\microtypesetup{protrusion=false,expansion=false}

Based on Theorems \ref{thm1b}, \ref{db will not converge to 0 in probability}, \ref{dHunboundedinprobability} for the Rips filtration, and Theorem \ref{cech strong inconsistency} for the \v{C}ech filtration, we can now formulate the following result.

\begin{theorem}\label{classification theorem}
Let $P$ and $P'$ be the original and the observed point clouds in $\mathbb{R}^{d}$, respectively. Then for the Rips filtration, the asymptotic similarity of the $N$-th persistence diagrams of $P$ and $P'$ is at the bottleneck inconsistency level (sub-class III) for every $N>0$, and at the strong bottleneck inconsistency level for $N=0$. For the \v{C}ech filtration, the asymptotic similarity of the $N$-th persistence diagrams of $P$ and $P'$ is at the strong bottleneck inconsistency level for every $N\in \mathbb{N}$.
\end{theorem}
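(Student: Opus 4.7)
The plan is to assemble the four previously-established theorems and match them against the three-tier scheme of Definition~\ref{classification}; no new estimates are needed, so the entire argument is a bookkeeping exercise.

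For the Rips filtration with $N=0$, I would start from Theorem~\ref{thm1b}, which gives $d_{B}(\bar{{\rm D}}_{0}(P), \bar{{\rm D}}_{0}(P')) = \sqrt{2\nu d}/4 + O_{\mathbb{P}}(1)$. The plan is to argue by contradiction that this quantity cannot be $O_{\mathbb{P}}(1)$: otherwise, subtracting the stochastic remainder via the calculus in Proposition~\ref{prop0a} would force $\sqrt{2\nu d}/4 = O_{\mathbb{P}}(1)$, contradicting the deterministic divergence of that sequence. Hence $d_{B}$ is eventually unbounded in probability, which by Definition~\ref{classification} is precisely the strong bottleneck inconsistency level.

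For the Rips filtration with $N>0$, Theorem~\ref{thm1b} already gives $d_{B} = O_{\mathbb{P}}(1)$, so $d_{B}$ is eventually bounded in probability. Combining this with Theorem~\ref{db will not converge to 0 in probability}, which says $d_{B}$ does not converge to zero in probability, yields the bottleneck inconsistency level. To pin down the sub-class, I would invoke Theorem~\ref{dHunboundedinprobability} to conclude that $d_{H}({\rm D}_{N}(P), {\rm D}_{N}(P'))$ is eventually unbounded in probability, which matches sub-class~III verbatim.

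For the \v{C}ech filtration, Theorem~\ref{cech strong inconsistency} directly states that $d_{B}(\bar{{\rm D}}_{N}(P), \bar{{\rm D}}_{N}(P'))$ is eventually unbounded in probability for every $N\in \mathbb{N}$, matching the definition of strong bottleneck inconsistency. The only mild obstacle I foresee is that Theorems~\ref{db will not converge to 0 in probability} and~\ref{dHunboundedinprobability} carry the hypothesis ${\rm D}_{N}(P) \neq \emptyset$, so the Rips $N>0$ sub-case implicitly presupposes non-triviality of the original diagram; once this is acknowledged, no genuine technical issue remains and the theorem follows by reading each definition against the corresponding cited result.
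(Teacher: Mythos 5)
Your proposal is correct and matches the paper's own (implicit) proof exactly: the paper simply states the theorem after the sentence ``Based on Theorems \ref{thm1b}, \ref{db will not converge to 0 in probability}, \ref{dHunboundedinprobability} for the Rips filtration, and Theorem \ref{cech strong inconsistency} for the \v{C}ech filtration, we can now formulate the following result,'' so the intended argument is precisely the bookkeeping you carry out. Your contradiction argument for the Rips $N=0$ case (using Proposition~\ref{prop0a} to extract $\sqrt{2\nu d}/4 = O_{\mathbb{P}}(1)$ and then deriving a contradiction from deterministic divergence) is sound, and for $N>0$ the three cited theorems map one-to-one onto the three clauses of the sub-class III characterization in Definition~\ref{classification}. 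Your closing caveat is also a genuinely useful observation: Theorems~\ref{db will not converge to 0 in probability} and~\ref{dHunboundedinprobability} both hypothesize ${\rm D}_{N}(P) \neq \emptyset$, but Theorem~\ref{classification theorem} as stated does not carry this hypothesis for $N>0$. The paper's earlier convention ($N$ at most the cardinality of $P$ minus two) rules out vacuous $N$, but does not guarantee ${\rm D}_{N}(P)$ is non-empty (e.g.\ $P$ collinear for $N=1$), so the non-emptiness hypothesis is indeed implicitly assumed and should be made explicit; this is a minor omission in the paper rather than in your argument.
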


In conclusion, we highlight the existence of the curse of dimensionality on persistence diagrams in the HDLSS setting. The curse of dimensionality affects persistence diagrams, making them unreliable for use.

\section{An attempt to mitigate the curse of dimensionality}\label{sec4}

The impact of the curse of dimensionality on persistence diagrams has been discussed in the previous section. The aim of this section is to mitigate the curse of dimensionality through the use of dimension-reduction techniques. One idea is using PCA, a widely used technique for various purposes such as dimensionality reduction, data compression, feature extraction, and data visualization. We will show that applying the so-called normalized PCA to the observed point cloud can improve the consistency level. However, our numerical experiment results indicate that the curse of dimensionality on persistence diagrams cannot be completely eliminated by using this technique.

Throughout this section, we will restrict our attention to the Rips filtration. $e_{1},e_{2},\ldots,e_{n}$ in the noise point cloud $E$ are supposed to be \iid\ random vectors following the standard Gaussian distribution $\mathcal{N}_{d}(0, I_{d})$.

\subsection{Normalized principal component analysis}\label{subsec5}

PCA is defined as the orthogonal projection of data onto a lower dimensional linear space, known as the principal subspace, such that the variance of the projected data is maximized \cite{Hotelling1933AnalysisOA}. In this subsection, we introduce the concept of normalized PCA and then apply this technique to the observed point cloud $P'$ by projecting $P'$ onto the principal subspace spanned by the first $s$ principal components, where $s$ is the essential dimension of $P$. A new point cloud called the compressed point cloud is obtained. Our objective is to study the difference between the persistence diagrams of the original and the compressed point clouds to see whether the curse of dimensionality still remains.

\begin{definition}\label{samplecovmatrix}
    \normalfont Let $\bfZ=\{\mathrm{z}_{1},\mathrm{z}_{2},\ldots,\mathrm{z}_{n}\}\subseteq \mathbb{R}^{d}$ be a point cloud. The (unbiased) sample covariance matrix of $\bfZ$, denoted by $S_{\bfZ}$, is defined by
    \begin{equation}
        S_{\bfZ}=\frac{1}{n-1}\sum_{i\in [n]}\left( \mathrm{z}_{i}-\overline{\mathrm{z}}\right) \left( \mathrm{z}_{i}-\overline{\mathrm{z}}\right) ^{T},\label{eqn: sample covariance matrix formula}
    \end{equation}
    where $\overline{\mathrm{z}}=n^{-1}\sum_{i\in [n]}\mathrm{z}_{i}$ is called the sample mean of $\bfZ$.
\end{definition}

\begin{remark}
    \normalfont Once we regard the point cloud as the data matrix, \eqref{eqn: sample covariance matrix formula} can be reformulated as
    \begin{equation*}
        S_{\bfZ}=\frac{1}{n-1}(\bfZ-\overline{\bfZ})(\bfZ-\overline{\bfZ})^{T},
    \end{equation*}
    where $\overline{\bfZ}$ denotes the $d\times n$ matrix $(\underbrace{\overline{\mathrm{z}}\ \overline{\mathrm{z}}\ \cdots\ \overline{\mathrm{z}}}_{n})$.
\end{remark}

In high-dimensional statistics, the following dual sample covariance matrix is frequently used because it can reduce the computation cost in the HDLSS framework.

\begin{definition}\label{dualsamplecovmatrix}
    \normalfont Let us denote by $\bfZ$ the point cloud with $n$ samples in $\mathbb{R}^{d}$, $\overline{\mathrm{z}}$ the sample mean of $\bfZ$, and $\overline{\bfZ}$ the $d\times n$ matrix $(\underbrace{\overline{\mathrm{z}}\ \overline{\mathrm{z}}\ \cdots\ \overline{\mathrm{z}}}_{n})$. The dual sample covariance matrix of $\bfZ$, written by $S_{\mathrm{D},\bfZ}$, is defined by
    \begin{equation*}
        S_{\mathrm{D},\bfZ}=\frac{1}{n-1}(\bfZ-\overline{\bfZ})^{T}(\bfZ-\overline{\bfZ}).
    \end{equation*}
\end{definition}

We note here that the size of $S_{{\rm D}, \bfZ}$ is much smaller than that of $S_{\bfZ}$ when $d$ tends to infinity but $n$ is fixed, and they share the same eigenvalues. From now on, we set $\bfP_{n}=I_{n}-n^{-1}\bfone_{n}^{}\bfone_{n}^{T}$, where $\bfone_{n}=(\underbrace{1,1,\ldots, 1}_{n})^{T}$. The following lemma provides an alternative way of forming the sample covariance matrix and its duality.

\begin{lemma}\label{samplecov123}
    Let $\bfZ=\{\mathrm{z}_{1},\mathrm{z}_{2},\ldots,\mathrm{z}_{n}\}\subseteq \mathbb{R}^{d}$ be a point cloud. Then we have
    \begin{align*}
        S_{\bfZ}=\frac{1}{n-1}(\bfZ \bfP_{n})(\bfZ \bfP_{n})^{T}\ \ \text{and}\ \ S_{\mathrm{D},\bfZ}=\frac{1}{n-1}(\bfZ \bfP_{n})^{T}(\bfZ \bfP_{n}).
    \end{align*}
\end{lemma}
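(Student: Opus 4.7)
The plan is to reduce both identities to the single matrix identity $\bfZ \bfP_{n} = \bfZ - \overline{\bfZ}$, after which the two formulas follow immediately by substitution into Definitions~\ref{samplecovmatrix} and~\ref{dualsamplecovmatrix}. So the task is really just a direct computation in block/matrix form.

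First I would unpack $\bfP_{n}$: since $\bfP_{n} = I_{n} - n^{-1}\bfone_{n}^{}\bfone_{n}^{T}$, we have
\[
\bfZ\bfP_{n} \;=\; \bfZ - n^{-1}\bfZ\bfone_{n}^{}\bfone_{n}^{T}.
\]
Viewing $\bfZ = (\mathrm{z}_{1}\ \mathrm{z}_{2}\ \cdots\ \mathrm{z}_{n})$ as a $d\times n$ matrix, the product $\bfZ\bfone_{n}^{}$ is the column vector $\sum_{i\in [n]}\mathrm{z}_{i} = n\overline{\mathrm{z}}$. Hence $n^{-1}\bfZ\bfone_{n}^{}\bfone_{n}^{T} = \overline{\mathrm{z}}\,\bfone_{n}^{T}$, which is precisely the $d\times n$ matrix $\overline{\bfZ}$ whose every column equals $\overline{\mathrm{z}}$. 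Therefore $\bfZ\bfP_{n} = \bfZ - \overline{\bfZ}$.

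Substituting this identity into the reformulation $S_{\bfZ} = (n-1)^{-1}(\bfZ-\overline{\bfZ})(\bfZ-\overline{\bfZ})^{T}$ noted in the remark after Definition~\ref{samplecovmatrix} yields $S_{\bfZ} = (n-1)^{-1}(\bfZ\bfP_{n})(\bfZ\bfP_{n})^{T}$; substituting into Definition~\ref{dualsamplecovmatrix} analogously yields $S_{\mathrm{D},\bfZ} = (n-1)^{-1}(\bfZ\bfP_{n})^{T}(\bfZ\bfP_{n})$.

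There is no real obstacle here: the statement is essentially a change-of-notation repackaging of the centering operation, once one recognizes that the centering matrix $\bfP_{n}$ implements subtraction of the sample mean column-wise. If I wanted to polish the exposition I could additionally remark (though not needed for the proof) that $\bfP_{n}$ is the orthogonal projection onto $\bfone_{n}^{\perp}$, which explains why $\bfP_{n}^{2} = \bfP_{n}$ and $\bfP_{n}^{T} = \bfP_{n}$, making the sample covariance matrix manifestly symmetric positive semidefinite.
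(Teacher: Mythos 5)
Your proof is correct and follows essentially the same route as the paper: both reduce the claim to the identity $\bfZ\bfP_{n}=\bfZ-\overline{\bfZ}$ and then substitute into the definitions, with your version merely spelling out the computation $\bfZ\bfone_{n}=n\overline{\mathrm{z}}$ in more detail.
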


\begin{proof}[Proof.\nopunct]
    The proof is straightforward if we notice that 
    \begin{align*}
        \bfZ \bfP_{n}=\bfZ(I_{n}-n^{-1}\bfone_{n}^{}\bfone_{n}^{T})=\bfZ-\overline{\bfZ}.\tag*{\qedhere}
    \end{align*}
\end{proof}

Applying PCA to the observed point cloud $P'=\{x_{1}',x_{2}',\ldots,x_{n}'\}$ involves evaluating the sample mean $\overline{x'}$, the sample covariance matrix $S_{P'}$, and then finding eigenvectors of $S_{P'}$ corresponding to the top few eigenvalues \cite{MR2247587}. From Lemma~\ref{samplecov123}, the sample covariance matrix $S_{P'} = \left( n-1\right) ^{-1}(P'\bfP_{n})(P'\bfP_{n})^{T}$. By singular value decomposition (SVD),
\begin{equation}
    \frac{P'\bfP_{n}}{\sqrt{n-1}}=\sum_{i\in [r]}\lambda_{i}^{1/2}h_{i}^{}u_{i}^{T}.\label{SVDnsdvjk}
\end{equation}
Here $r$ is the rank of $P'\bfP_{n}$, $\lambda_{1}\geq \lambda_{2}\geq \cdots \geq \lambda_{r}$ are the first $r$ largest positive eigenvalues of $S_{P'}$, $h_{1}, h_{2}, \ldots, h_{r}$ are the corresponding orthonormal eigenvectors of $S_{P'}$, and $u_{1}, u_{2}, \ldots, u_{r}$ are the corresponding orthonormal eigenvectors of $S_{{\rm D},P'}$. 

It can be shown that $h_{1}, h_{2},\ldots, h_{r}$ are the first $r$ principal components, and principal component scores (PC scores for short) are coordinates of the projected data \cite{MR2247587}. For each $i\in[n]$, the PCA projection of $x_{i}'$ onto the first $s$ components is denoted by $\dt{x}_{i}=(\dt{x}_{1i},\dt{x}_{2i},\ldots, \dt{x}_{si})^{T}$, where $\dt{x}_{ki}=h_{k}^{T}\left( x_{i}'-\overline{x'}\right)^{}$ is the $k$-th PC score of $x_{i}'$ ($k\in [s]$). We collect all the projected points into a new point cloud $\dt{P}=\{\dt{x}_{1},\dt{x}_{2},\ldots, \dt{x}_{n}\}$. This new point cloud $\dt{P}$ is called the compressed point cloud of $P'$ by PCA. 

The normalized PCA is another dimension-reduction technique defined by normalizing the PC scores. It can be used, for example, in the data clustering of mixture models \cite{yata2020geometric}. For $i\in[n]$, the normalized PCA projection of $x_{i}'$ onto the first $s$ components is denoted by $\hat{x}_{i}=(\hat{x}_{1i},\hat{x}_{2i},\ldots, \hat{x}_{si})^{T}$, where $\hat{x}_{ki}=\lambda_{k}^{-1/2}\dt{x}_{ki}^{}=\lambda_{k}^{-1/2}h_{k}^{T}\left( x_{i}'-\overline{x'}\right)^{}$ is the $k$-th normalized PC score of $x_{i}'$ ($k\in [s]$). We collect all the projected points into a new point cloud $\hat{P}=\{\hat{x}_{1},\hat{x}_{2},\ldots, \hat{x}_{n}\}$. This new point cloud $\hat{P}$ is called the compressed point cloud of $P'$ by normalized PCA. In what follows, the persistence diagram of $\hat{P}$ will be called the compressed persistence diagram.

In high-dimensional statistics, we prefer to utilize the eigenvectors of the dual sample covariance matrix to represent the PC scores. Notice that $h_{1}, h_{2}, \ldots, h_{r}$ are orthonormal by left multiplying both sides of~\eqref{SVDnsdvjk} by the row vector $h_{k}^{T}$, we have
\begin{align*}
     \frac{h_{k}^{T}P'\bfP_{n}^{}}{\sqrt{n-1}}=\sum_{i\in [r]}\lambda_{i}^{1/2}h_{k}^{T}h_{i}^{}u_{i}^{T}=\lambda_{k}^{1/2}u_{k}^{T}.
\end{align*}
Comparing $h_{k}^{T}P'\bfP_{n}^{} =(\dt{x}_{k1},\dt{x}_{k2},\ldots, \dt{x}_{kn})$ with $u_{k}^{T}=(u_{1k},u_{2k},\ldots, u_{nk})$, we have $\dt{x}_{ki}=\sqrt{(n-1)\lambda_{k}}\cdot u_{ik}$ and $\hat{x}_{ki}=\sqrt{n-1}\cdot u_{ik}$ $(i \in [n])$. Hence, $\dt{x}_{i}=\sqrt{n-1}\cdot(\lambda_{1}^{1/2}u_{i1}^{},\lambda_{2}^{1/2}u_{i2}^{},\ldots, \lambda_{s}^{1/2}u_{is})^{T}$ and $\hat{x}_{i}=\sqrt{n-1}\cdot (u_{i1},u_{i2},\ldots, u_{is})^{T}$. 

The pairwise distance is crucial in computing persistence diagrams in the Rips filtration case. The squared distance between $\dt{x}_{i}$ and $\dt{x}_{j}$ is given by
\begin{align}
    \left\| \dt{x}_{i}-\dt{x}_{j}\right\| ^{2}=(n-1)\sum_{k\in [s]}\lambda_{k}\left(u_{ik}-u_{jk}\right) ^{2}\label{sqdisthat}
\end{align}
and the squared distance between $\hat{x}_{i}$ and $\hat{x}_{j}$ is given by
\begin{align}
    \left\| \hat{x}_{i}-\hat{x}_{j}\right\| ^{2}=(n-1)\sum_{k\in [s]}\left(u_{ik}-u_{jk}\right) ^{2}.\label{sqdisthathat}
\end{align}
It is clear from~\eqref{sqdisthat}, \eqref{sqdisthathat} that the pairwise distances are determined by the eigenvalues or the eigenvectors of $S_{{\rm D}, P'}$. Furthermore, by a simple computation,
\begin{align}
    S_{{\rm D},P'} &= \frac{1}{n-1}(P'\bfP_{n})^{T}(P'\bfP_{n})\notag\\
    & = \frac{1}{n-1}\bfP_{n}(P+E)^{T}(P+E)\bfP_{n}\notag\\
    & = S_{{\rm D},E}+\frac{1}{n-1}\bfP_{n}(P^{T}P+P^{T}E+E^{T}P)\bfP_{n}.\label{equality of dual matrices}
\end{align}
Set $\bfMo = (n-1)^{-1}\bfP_{n}(P^{T}P+P^{T}E+E^{T}P)\bfP_{n}$. Since the last $(d-s)$ rows of $P$ are zero, $\bfMo$ depends only on the first $s$ rows of $E$ and $P$. Hence the matrix $\bfMo$ can be considered as a small noise component affecting $S_{{\rm D}, E}$ as $d$ tends to infinity. Through this observation, it is reasonable to claim that the eigenvectors of $S_{{\rm D}, P'}$ and $S_{{\rm D}, E}$ are asymptotically close.

\subsection{Matrix perturbation}\label{subsec6}

As discussed above, the matrix $S_{{\rm D}, P'}$ can be considered as a perturbation of the matrix $S_{{\rm D}, E}$ by the matrix $\bfMo$. Therefore, the matrix perturbation theory is applicable here. The theory of matrix perturbation has been widely studied, and several classical results exist, including Weyl’s theorem for eigenvalues and the Davis-Kahan theorem for eigenvectors. In recent years, several advancements and generalizations have been made \cite{vu2011singular}; \cite{o2018random}; \cite{fan2018eigenvector}; \cite{eldridge2018unperturbed}; \cite{yu2015useful}. In this paper, we primarily utilize a variant of the Davis–Kahan theorem presented in \cite{yu2015useful} as a lemma.

\begin{lemma}[\cite{yu2015useful}]\label{perturbation theo}
Let $\mathbf{\Sigma}$ and $\hat{\mathbf{\Sigma}}\in \mathbb{R}^{p\times p}$ be symmetric with eigenvalues $\lambda_{1}\geq \cdots \geq \lambda_{p}$ and $\hat{\lambda}_{1}\geq \cdots \geq \hat{\lambda}_{p}$, respectively. Fix $j\in [p]$, and assume that $\min\{\lambda_{j-1}-\lambda_{j},\lambda_{j}-\lambda_{j+1}\}>0$, where we define $\lambda_{0}=\infty$ and $\lambda_{p+1}=-\infty$. Let $v$ and $\hat{v}$ be unit vectors such that $\mathbf{\Sigma} v=\lambda_{j}v$ and $\hat{\mathbf{\Sigma}} \hat{v}=\hat{\lambda}_{j}\hat{v}$. Then
\begin{equation*}
    \left\| \hat{v}-v\right\| \leq \dfrac{2^{3/2} \| \hat{\mathbf{\Sigma} }-\mathbf{\Sigma} \|_{2} }{\min \left\{ \lambda _{j-1}-\lambda _{j},\lambda_{j}-\lambda _{j+1}\right\}},
\end{equation*}
unless the scalar product $\big\langle v, \hat{v}\big\rangle\geq 0$.
\end{lemma}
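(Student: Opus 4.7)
The plan is to reduce the claim to the classical Davis--Kahan $\sin\Theta$ theorem. Concretely, I would first bound the sine of the principal angle $\theta$ between the one-dimensional subspaces spanned by $v$ and $\hat v$, and then convert this angular bound into a bound on the Euclidean distance $\|\hat v - v\|$, after choosing the sign of $\hat v$ so that $\langle v,\hat v\rangle \geq 0$ (this is the purpose of the sign caveat in the statement, since an eigenvector is only determined up to sign).

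For the angular bound, the strategy is a two-case argument on the size of the perturbation $\varepsilon := \|\hat{\mathbf{\Sigma}} - \mathbf{\Sigma}\|_2$ relative to the eigengap $\delta_j := \min\{\lambda_{j-1}-\lambda_j,\,\lambda_j-\lambda_{j+1}\}$. If $\varepsilon \geq \delta_j/\sqrt 2$, then the claimed right-hand side is already at least $2$, and the bound is trivial since $\|\hat v - v\| \leq \|\hat v\| + \|v\| = 2$. Otherwise, Weyl's inequality yields $|\hat\lambda_j - \lambda_j| \leq \varepsilon < \delta_j/\sqrt 2 < \delta_j$, so $\hat\lambda_j$ lies strictly between $\lambda_{j+1}$ and $\lambda_{j-1}$ at distance at least $\delta_j/2$ from each. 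The textbook one-eigenvalue Davis--Kahan bound then applies with effective gap $\delta_j/2$ and gives $\sin\theta \leq 2\varepsilon/\delta_j$.

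For the conversion step, I would use the identity $\|\hat v - v\|^2 = 2(1 - \langle v,\hat v\rangle)$. After the sign choice, $\langle v,\hat v\rangle = \cos\theta \in [0,1]$, so $1 - \cos\theta \leq 1 - \cos^2\theta = \sin^2\theta$, giving $\|\hat v - v\| \leq \sqrt 2\,\sin\theta$. Combining with the angular bound yields $\|\hat v - v\| \leq 2^{3/2}\varepsilon/\delta_j$. The main obstacle is the bookkeeping of constants in the case split: the threshold $\delta_j/\sqrt 2$ must be chosen so that the trivial bound $\|\hat v - v\| \leq 2$ covers the large-perturbation regime exactly, while leaving room for the Davis--Kahan bound with effective gap $\delta_j/2$ in the small-perturbation regime, so that the two branches meet at the desired multiplicative constant $2^{3/2}$ rather than a looser value.
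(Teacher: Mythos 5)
The paper gives no proof of this lemma at all---it is imported directly from \cite{yu2015useful}---so there is no internal argument to compare against; your proposal is essentially a reconstruction of the cited source's proof (a Davis--Kahan $\sin\theta$ bound with the gap expressed in the unperturbed eigenvalues, followed by the conversion $\|\hat v-v\|^{2}=2(1-\langle v,\hat v\rangle)\le 2\sin^{2}\theta$ under the sign convention). The route is the right one, and the conversion step is correct, but the case split as you wrote it contains a genuine numerical error. Writing $\varepsilon=\|\hat{\mathbf{\Sigma}}-\mathbf{\Sigma}\|_{2}$ and $\delta_{j}=\min\{\lambda_{j-1}-\lambda_{j},\lambda_{j}-\lambda_{j+1}\}$: if you only assume $\varepsilon<\delta_{j}/\sqrt{2}$, Weyl's inequality gives $\min\{\lambda_{j-1}-\hat\lambda_{j},\,\hat\lambda_{j}-\lambda_{j+1}\}\ge \delta_{j}-\varepsilon>(1-1/\sqrt{2})\,\delta_{j}\approx 0.29\,\delta_{j}$, which is strictly smaller than the effective gap $\delta_{j}/2$ you claim. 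Hence Davis--Kahan only yields $\sin\theta\le \varepsilon/(\delta_{j}-\varepsilon)$, and for $\varepsilon\in[\delta_{j}/2,\,\delta_{j}/\sqrt{2})$ this can exceed $2\varepsilon/\delta_{j}$, so your small-perturbation branch does not deliver the constant $2^{3/2}$ on that range; the two branches do not actually meet where you placed the threshold.

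The repair is routine and worth recording precisely. Split at $\varepsilon\ge\delta_{j}/2$ instead. In the large-perturbation regime, use the sign choice itself rather than the crude bound $2$: since $\langle v,\hat v\rangle\ge 0$, one has $\|\hat v-v\|=\sqrt{2(1-\langle v,\hat v\rangle)}\le\sqrt{2}\le 2^{3/2}\varepsilon/\delta_{j}$ whenever $\varepsilon\ge\delta_{j}/2$. In the small-perturbation regime $\varepsilon\le\delta_{j}/2$, Weyl gives $\min\{\lambda_{j-1}-\hat\lambda_{j},\,\hat\lambda_{j}-\lambda_{j+1}\}\ge\delta_{j}-\varepsilon\ge\delta_{j}/2$ (note the gap hypothesis also makes $\lambda_{j}$ simple, so the one-dimensional $\sin\theta$ theorem applies), whence $\sin\theta\le 2\varepsilon/\delta_{j}$, and your conversion step gives $\|\hat v-v\|\le\sqrt{2}\,\sin\theta\le 2^{3/2}\varepsilon/\delta_{j}$. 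With that adjustment the argument is complete and matches the statement as the paper uses it.
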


We claim that ${\rm rank}(S_{{\rm D},E})=n-1$ with probability $1$. Indeed,
\begin{align*}
    {\rm rank}(S_{{\rm D},E})={\rm rank}(E\bfP_{n})\leq {\rm rank}(\bfP_{n})=n-1.
\end{align*}
On the other hand, $e_{1},e_{2},\ldots,e_{n}$ are linearly independent with probability 1 by Proposition~\ref{LDwithprob0}, i.e., ${\rm rank}(E)=n$ with probability $1$. Then with probability 1,
\begin{align*}
    {\rm rank}(S_{{\rm D},E})&\geq {\rm rank}(E)+{\rm rank}(\bfP_{n})-n\\
    &= n+(n-1)-n\\
    &= n-1.
\end{align*}

Put $p=n$, $\mathbf{\Sigma}=S_{{\rm D},E}$, $\hat{\mathbf{\Sigma}}=S_{{\rm D},P'}$ in Lemma~\ref{perturbation theo}, and let $u_{1}^{E},u_{2}^{E},\ldots, u_{n-1}^{E}$ be the eigenvectors of $S_{{\rm D},E}$ corresponding to the first $(n-1)$ largest eigenvalues $\lambda_{1}^{E}\geq \cdots \geq \lambda_{n-1}^{E}$. It can be expected that $u_{k}^{E}$ and $u_{k}$ are getting close asymptotically whenever the difference $\lambda_{k-1}^{E}-\lambda _{k}^{E}$ between consecutive eigenvalues, called the eigengap, diverges to infinity in probability as $d\rightarrow \infty$ but $n$ is fixed. If this claim holds true, then we can conclude that the result of using the normalized PCA is strongly dominated by the noise part $E$.

However, proving the asymptotically infinite divergence of all the eigengaps of $S_{{\rm D}, E}$ is not a simple task. The remainder of this subsection will focus on this issue, and the core divergence theorem will be completely proved in Appendix~\ref{secA4}.

\begin{definition}\label{Wishartdistribution}
\normalfont Let $\bfG=(\bfG_{1}\ \bfG_{2}\ \cdots\ \bfG_{d})$ be an $n\times d$ matrix, where column vectors $\bfG_{1}, \bfG_{2},\ldots, \bfG_{d}$ are supposed to be \iid\ $n$-dimensional random vectors following the Gaussian distribution $\mathcal{N}_{n}(0,\Sigma)$. Set $\bfS=\bfG \bfG^{T}=\sum_{i\in [d]}\bfG_{i}^{}\bfG_{i}^{T}$. The Wishart distribution is the probability distribution of $\bfS$. We will write $\bfS\sim \mathcal{W}_{n}(\Sigma,d)$. In particular, $\bfS$ is said to follow the standard Wishart distribution if $\Sigma$ is an identity matrix.
\end{definition}

Let $\eta_{i}^{T}=(e_{i1},e_{i2},\ldots,e_{in})$ be the $i$-th row vector of $E$. A simple computation shows that ${\mathbb{E}}\eta_{i}=0$ and ${\mathbb{V}}(\eta_{i})={\mathbb{E}}\eta_{i}\eta_{i}^{T}=I_{n}$. It then follows that $\eta_{1},\ldots,\eta_{d}$ are \iid\ random vectors following the standard Gaussian distribution $\mathcal{N}_{n}(0,I_{n})$. Now, we let $\bfG=(\bfP_{n}\eta_{1}\ \bfP_{n}\eta_{2}\ \cdots\ \bfP_{n}\eta_{d})$. Then we have
\begin{align}
    S_{{\rm D},E}=\frac{1}{n-1}\bfP_{n}E^{T}E\bfP_{n}= \frac{1}{n-1}\bfG \bfG^{T}= \frac{1}{n-1}\sum_{i\in [d]}(\bfP_{n}\eta_{i})(\bfP_{n}\eta_{i})^{T}.\label{SDEformula}
\end{align}
Since $\bfP_{n}$ is non-random, $\bfP_{n}\eta_{1}, \bfP_{n}\eta_{2}, \ldots, \bfP_{n}\eta_{d}$ are \iid\ random vectors following the Gaussian distribution $\mathcal{N}_{n}(0,\bfP_{n})$. Hence the random matrix $\bfG \bfG^{T}=(n-1)S_{{\rm D},E}\sim \mathcal{W}_{n}(\bfP_{n},d)$ from Definition~\ref{Wishartdistribution}.

\begin{definition}
\normalfont Let ${\rm O}(n)$ be the orthogonal group in dimension $n$. A random matrix $Y\in {\rm O}(n)$ is said to have the Haar invariant distribution if $QY$ has the same distribution for every non-random orthogonal matrix $Q\in {\rm O}(n)$.
\end{definition}

\begin{lemma}[\cite{anderson1962introduction}]\label{Wishart}
Let $\bfS$ be an $n\times n$ random matrix following the standard Wishart distribution $\mathcal{W}_{n}(I_{n},d)$, and $\mathrm{h}_{1}, \mathrm{h}_{2}, \ldots, \mathrm{h}_{n}$ the unit eigenvectors of $\bfS$. Put $\bfH=(\mathrm{h}_{1}\ \mathrm{h}_{2}\ \cdots\ \mathrm{h}_{n})\in {\rm O}(n)$. Then $\bfH$ has the Haar invariant distribution.
\end{lemma}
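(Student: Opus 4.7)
The plan is to reduce the claim to the rotational invariance of the Gaussian data matrix underlying $\bfS$, and then transport that invariance to the spectral decomposition via a measurable section argument.

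First, realize $\bfS$ as $\bfG \bfG^{T}$ where $\bfG \in \mathbb{R}^{n \times d}$ has i.i.d.\ standard Gaussian entries (this is exactly Definition~\ref{Wishartdistribution} with $\Sigma = I_{n}$). Because the columns of $\bfG$ are i.i.d.\ $\mathcal{N}_{n}(0,I_{n})$ vectors and this distribution is rotationally invariant, for every non-random $Q\in {\rm O}(n)$ we have $Q\bfG$ equal in distribution to $\bfG$. Consequently,
\[
Q\bfS Q^{T} = (Q\bfG)(Q\bfG)^{T} \stackrel{d}{=} \bfG \bfG^{T} = \bfS,
\]
so the law of $\bfS$ on the cone of positive semidefinite matrices is invariant under orthogonal conjugation $\bfS\mapsto Q\bfS Q^{T}$.

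Second, I would transfer this invariance to $\bfH$. Assuming $d\geq n$, the matrix $\bfS$ is positive definite almost surely, and its eigenvalues are almost surely pairwise distinct: the symmetric matrices with a repeated eigenvalue form a proper algebraic subvariety (the zero set of the discriminant of the characteristic polynomial), which is a Lebesgue-null set, and the Wishart distribution is absolutely continuous on the open cone of positive definite matrices. On the complement of this null set there is a Borel-measurable map $\bfS\mapsto \bfH(\bfS)\in {\rm O}(n)$ whose columns are the unit eigenvectors ordered by decreasing eigenvalue. If $\bfS = \bfH(\bfS)\,\Lambda\,\bfH(\bfS)^{T}$ is the resulting spectral decomposition, then $(Q\bfH(\bfS))\,\Lambda\,(Q\bfH(\bfS))^{T}$ is a spectral decomposition of $Q\bfS Q^{T}$ with the same ordered spectrum, and hence $\bfH(Q\bfS Q^{T}) = Q\bfH(\bfS)\, D$ for some (random) diagonal sign matrix $D \in \{\pm 1\}^{n}$ accounting for the sign convention.

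Third, enlarge the probability space by independent signs $\epsilon_{1},\ldots,\epsilon_{n}$ uniform on $\{\pm 1\}$, and define $\bfH := \bfH(\bfS)\,\mathrm{diag}(\epsilon_{1},\ldots,\epsilon_{n})$. By construction the law of $\bfH$ is invariant under right multiplication by any fixed sign matrix, which absorbs the $D$ above: combining $Q\bfS Q^{T} \stackrel{d}{=} \bfS$ with the measurable extraction $\bfH(\cdot)$ and the sign randomization yields $Q\bfH \stackrel{d}{=} \bfH$ for every non-random $Q\in {\rm O}(n)$. This is precisely the Haar invariance property required by the definition in the paper; since the left-invariant probability measure on the compact group ${\rm O}(n)$ is unique, $\bfH$ is Haar distributed.

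The main obstacle is the sign/stabilizer ambiguity in the spectral decomposition, which prevents $\bfS\mapsto \bfH$ from being literally well-defined: unit eigenvectors are only determined up to $\pm 1$, and any deterministic sign convention (such as ``first nonzero entry positive'') fails to commute with the action $\bfS\mapsto Q\bfS Q^{T}$. Handling this cleanly --- either by the independent sign randomization above, or equivalently by first proving Haar invariance on the quotient ${\rm O}(n)/\{\pm 1\}^{n}$ and then lifting --- is the technical crux. Once it is dispatched, the rest of the argument is merely the rotation invariance of the Gaussian, and could alternatively be performed entirely at the level of densities via the standard Jacobian change of variables $\bfS = \bfH\Lambda\bfH^{T}$, whose angular factor is the (normalized) Haar measure on ${\rm O}(n)$.
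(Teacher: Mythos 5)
The paper does not actually prove this lemma: it is quoted from Anderson's textbook and used as a black box, so there is no internal proof to compare against. Your argument is a correct, self-contained derivation, and it follows the standard route that Anderson's own treatment takes: rotational invariance of the Gaussian data matrix gives $Q\bfS Q^{T}\stackrel{d}{=}\bfS$ for fixed $Q\in{\rm O}(n)$, a.s.\ simple spectrum (valid in the paper's regime $d>n$, which you correctly assume) allows a measurable eigenvector selection, equivariance of the spectral decomposition gives $\bfH(Q\bfS Q^{T})=Q\bfH(\bfS)D$ with $D$ a diagonal sign matrix, and the sign ambiguity is then dispatched. Your handling of that ambiguity is the right one, and it is also the honest reading of the lemma: as literally stated, ``the unit eigenvectors'' is not a well-defined function of $\bfS$, and a deterministic sign convention yields only Anderson's \emph{conditional} Haar invariant distribution; exact Haar invariance requires either your independent Rademacher randomization or passing to the quotient ${\rm O}(n)/\{\pm1\}^{n}$. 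Since the Weingarten-type moment computations the paper later performs (Propositions~\ref{k=2}, \ref{k=4 case1}, \ref{k=4 case2} applied in Proposition~\ref{1st and 2nd moments of pairwise distance}) are insensitive to this sign choice, your precise version fully supports the paper's use of the lemma.

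One step deserves an explicit line: when you say the randomized signs ``absorb'' $D$, note that $D=D(\bfS,Q)$ is random, so you should condition on $\bfS$: given $\bfS$, $D$ is a fixed sign matrix and $E=\mathrm{diag}(\epsilon)$ is uniform on $\{\pm1\}^{n}$ and independent of $\bfS$, hence $DE\stackrel{d}{=}E$ conditionally, so $(\bfS,DE)\stackrel{d}{=}(\bfS,E)$ and therefore $Q\bfH=Q\bfH(\bfS)E\stackrel{d}{=}Q\bfH(\bfS)DE=\bfH(Q\bfS Q^{T})E\stackrel{d}{=}\bfH(\bfS)E=\bfH$. With that sentence added the proof is complete; the final appeal to uniqueness of the Haar probability measure is fine but not even needed, since the paper's Definition of ``Haar invariant distribution'' asks only for the left-invariance property you established.
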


Recall that $\bfP_{n}=I_{n}-n^{-1}\bfone_{n}^{}\bfone_{n}^{T}$, where $\bfone_{n}=(\underbrace{1,1,\ldots, 1}_{n})^{T}$. The $n$-th largest (i.e., the smallest) eigenvalues of $\bfP_{n}$ and $S_{{\rm D},E}$ are $0$, and the corresponding unit eigenvector is $n^{-1/2}\bfone_{n}$ since $\bfP_{n}\bfone_{n}=(I_{n}-n^{-1}\bfone_{n}^{}\bfone_{n}^{T})\bfone_{n}=0$. This indicates that the unit eigenvectors corresponding to the first $(n-1)$ largest eigenvalues of $S_{{\rm D},E}$ are orthogonal to $\bfone_{n}$, i.e., $\bfone_{n}^{T}u_{k}^{E}=0$ ($k\in [n-1]$). It follows that $\bfP_{n}u_{k}^{E}=(I_{n}-n^{-1}\bfone_{n}^{}\bfone_{n}^{T})u_{k}^{E}=u_{k}^{E}$. Based on these facts, we claim that the first $(n-1)$ largest eigenvalues of $S_{{\rm D}, E}$ multiplied by the scalar $(n-1)$ are exactly eigenvalues of an $(n-1)\times (n-1)$ standard Wishart distributed matrix.

\begin{lemma}\label{eigenvalueofstandardWishart}
    Let $E\subseteq \mathbb{R}^{d}$ be the noise point cloud, $S_{{\rm D},E}$ the dual sample covariance matrix of $E$, and $\lambda_{1}^{E}\geq \cdots \geq \lambda_{n-1}^{E}\geq \lambda_{n}^{E}=0$ the ordered eigenvalues of $S_{{\rm D},E}$. Then $(n-1)\lambda_{1}^{E}, \cdots, (n-1)\lambda_{n-1}^{E}$ are eigenvalues of an $(n-1)\times (n-1)$ random matrix following the standard Wishart distribution $\mathcal{W}_{n-1}(I_{n-1},d)$.
\end{lemma}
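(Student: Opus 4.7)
The plan is to exploit the fact that the shape matrix $\bfP_{n}$ of the Wishart distribution satisfied by $(n-1)S_{\mathrm{D},E}$ is itself a rank-$(n-1)$ orthogonal projector, and then realize this Wishart as the image under an isometric embedding of a genuinely $(n-1)$-dimensional standard Wishart. Concretely, since $\bfP_{n}=I_{n}-n^{-1}\bfone_{n}^{}\bfone_{n}^{T}$ is the orthogonal projection onto $\bfone_{n}^{\perp}$, I would pick any $n\times (n-1)$ matrix $V_{1}$ whose columns form an orthonormal basis of $\bfone_{n}^{\perp}$, so that $V_{1}^{T}V_{1}=I_{n-1}$ and $\bfP_{n}=V_{1}V_{1}^{T}$.

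With this choice I would set $\tilde{\eta}_{i}:=V_{1}^{T}\eta_{i}\in\mathbb{R}^{n-1}$ for each $i\in[d]$. Because $\eta_{1},\ldots,\eta_{d}$ are \iid\ $\mathcal{N}_{n}(0,I_{n})$ and $V_{1}$ is a deterministic orthonormal $n\times (n-1)$ matrix, the vectors $\tilde{\eta}_{1},\ldots,\tilde{\eta}_{d}$ are \iid\ $\mathcal{N}_{n-1}\!\left(0,V_{1}^{T}I_{n}V_{1}\right)=\mathcal{N}_{n-1}(0,I_{n-1})$. Hence, by Definition~\ref{Wishartdistribution}, the matrix
\begin{equation*}
    \tilde{\bfS}:=\sum_{i\in[d]}\tilde{\eta}_{i}^{}\tilde{\eta}_{i}^{T}
\end{equation*}
follows the standard Wishart distribution $\mathcal{W}_{n-1}(I_{n-1},d)$.

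Next I would use $\bfP_{n}\eta_{i}=V_{1}V_{1}^{T}\eta_{i}=V_{1}\tilde{\eta}_{i}$ together with formula~\eqref{SDEformula} to rewrite
\begin{equation*}
    (n-1)\,S_{\mathrm{D},E}=\sum_{i\in[d]}(\bfP_{n}\eta_{i})(\bfP_{n}\eta_{i})^{T}=V_{1}\!\left(\sum_{i\in[d]}\tilde{\eta}_{i}^{}\tilde{\eta}_{i}^{T}\right)\!V_{1}^{T}=V_{1}\tilde{\bfS}V_{1}^{T}.
\end{equation*}
Finally I would match the spectra: whenever $\tilde{\bfS}\tilde{u}=\mu\tilde{u}$ with $\tilde{u}\neq 0$, the vector $V_{1}\tilde{u}\in\mathbb{R}^{n}$ is nonzero (since $V_{1}$ has orthonormal columns) and satisfies $V_{1}\tilde{\bfS}V_{1}^{T}(V_{1}\tilde{u})=V_{1}\tilde{\bfS}\tilde{u}=\mu V_{1}\tilde{u}$, so every eigenvalue of $\tilde{\bfS}$ is an eigenvalue of $V_{1}\tilde{\bfS}V_{1}^{T}$ with at least the same multiplicity. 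Since $V_{1}\tilde{\bfS}V_{1}^{T}$ has rank at most $n-1$, counting multiplicities forces these $(n-1)$ eigenvalues of $\tilde{\bfS}$ to coincide precisely with $(n-1)\lambda_{1}^{E},\ldots,(n-1)\lambda_{n-1}^{E}$, with the extra eigenvalue of $V_{1}\tilde{\bfS}V_{1}^{T}$ being $0=(n-1)\lambda_{n}^{E}$.

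There is no real obstacle here beyond being careful that $V_{1}$ is deterministic (so that the Gaussian law is preserved after left-multiplying by $V_{1}^{T}$) and that the spectral matching correctly accounts for multiplicities. The whole argument is essentially a rewriting of a degenerate Wishart as the lift of a non-degenerate one, and the statement follows from Definition~\ref{Wishartdistribution} and elementary linear algebra.
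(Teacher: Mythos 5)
Your proof is correct and follows essentially the same route as the paper: both choose an $n\times(n-1)$ matrix with orthonormal columns spanning $\bfone_{n}^{\perp}$ (the paper's $\bfgma_{1}$ is your $V_{1}$), observe that the projected Gaussians $V_{1}^{T}\eta_{i}$ are \iid\ $\mathcal{N}_{n-1}(0,I_{n-1})$, and identify $(n-1)S_{\mathrm{D},E}$ as the lift $V_{1}\tilde{\bfS}V_{1}^{T}$ of a standard $\mathcal{W}_{n-1}(I_{n-1},d)$ matrix. The only cosmetic difference is that the paper also records the explicit eigendecomposition (so as to conclude in the subsequent remark that $\bfgma_{1}^{T}U_{1}^{E}$ is Haar distributed), whereas you match the spectra abstractly, which suffices for the lemma as stated.
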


\begin{proof}[Proof.\nopunct]
    We note that $\bfP_{n}$ is a real symmetric matrix. The eigendecomposition of $\bfP_{n}$ is
    \begin{equation*}
        \bfP_{n} = \bfgma \Lambda_{0} \bfgma^{T}, 
    \end{equation*}
    where $\Lambda_{0} = {\rm diag}(\underbrace{1, 1, \ldots, 1}_{n-1}, 0)$ denotes the diagonal matrix whose diagonal elements are eigenvalues of $\bfP_{n}$, and $\bfgma=(\gamma_{1}\ \gamma_{2}\ \cdots\ \gamma_{n-1}\ n^{-1/2}\bfone_{n})$ is the orthogonal matrix whose column vectors are the corresponding unit eigenvectors. For the convenience, we denote the block matrix $(\gamma_{1}\ \gamma_{2}\ \cdots\ \gamma_{n-1})$ by $\bfgma_{1}$. Put $U_{1}^{E}=(u_{1}^{E}\ u_{2}^{E}\ \cdots\ u_{n-1}^{E})$ and $U^{E}=(U_{1}^{E}\ n^{-1/2}\bfone_{n})$, where $u_{1}^{E},u_{2}^{E},\ldots, u_{n-1}^{E}$ are the eigenvectors of $S_{{\rm D},E}$ corresponding to the eigenvalues $\lambda_{1}^{E}, \lambda_{2}^{E}, \cdots, \lambda_{n-1}^{E}$. Put $\Lambda_{1}^{E}={\rm diag}(\lambda_{1}^{E}, \lambda_{2}^{E}, \ldots, \lambda_{n-1}^{E})$ and $\Lambda^{E}={\rm diag}(\Lambda_{1}^{E},0)$. 
    
From~\eqref{SDEformula}, we have 
    \begin{equation*}
      \bfG \bfG^{T}=(n-1)S_{{\rm D},E}=U^{E}[(n-1)\Lambda^{E}](U^{E})^{T}.
    \end{equation*}
Then $\bfgma^{T}\bfG \bfG^{T}\bfgma=\bfgma^{T}U^{E}[(n-1)\Lambda^{E}](U^{E})^{T}\bfgma$. Noticing that
\begin{align}
\bfgma^{T}U^{E}=\begin{pmatrix}
\bfgma_{1}^{T} \\
\frac{1}{\sqrt{n}}\bfone_{n}^{T}
\end{pmatrix}(U_{1}^{E}\ \frac{1}{\sqrt{n}}\bfone_{n})=\begin{pmatrix}
\bfgma _{1}^{T}U_{1}^{E} & 0 \\
0 & 1
\end{pmatrix},\label{TDncmbxmb}
\end{align}
we have
\begin{align}
\bfgma^{T}\bfG \bfG^{T}\bfgma&=\begin{pmatrix}
\bfgma _{1}^{T}U_{1}^{E} & 0 \\
0 & 1
\end{pmatrix}
\begin{pmatrix}
(n-1)\Lambda_{1}^{E} & 0 \\
0 & 0
\end{pmatrix}
\begin{pmatrix}
(U_{1}^{E})^{T}\bfgma_{1} & 0 \\
0 & 1
\end{pmatrix}\notag\\
&=\begin{pmatrix}
\bfgma _{1}^{T}U_{1}^{E}[(n-1)\Lambda_{1}^{E}](U_{1}^{E})^{T}\bfgma_{1} & 0 \\
0 & 0
\end{pmatrix}.\label{equation3.1}
\end{align}

On the other hand, for each $k\in [d]$,
\begin{align}
    \bfgma^{T}\bfP_{n}\eta_{k}= \begin{pmatrix}
\bfgma_{1}^{T}\bfP_{n}\eta_{k} \\
0
\end{pmatrix}, \label{VGskdjjjd}
\end{align}
where $\eta_{k}^{T}$ is the $k$-th row vector of $E$. We let $v_{k}=\bfgma_{1}^{T}\bfP_{n}\eta_{k}$. Then $v_{1}, v_{2}, \ldots, v_{d}$ are \iid\ random vectors following $\mathcal{N}_{n-1}(0,I_{n-1})$ because $\bfgma_{1}^{T}\bfP_{n}$ is non-random and $\eta_{1}, \eta_{2}, \ldots, \eta_{d}$ are \iid\ random vectors following $\mathcal{N}_{n}(0,I_{n})$. Then from~\eqref{SDEformula}, \eqref{VGskdjjjd}, we have
\begin{align}
\bfgma^{T}\bfG \bfG^{T}\bfgma=\sum_{k\in [d]}(\bfgma^{T}\bfP_{n}\eta_{k})(\bfgma^{T}\bfP_{n}\eta_{k})^{T}=\sum_{k\in [d]}\begin{pmatrix}
v_{k} \\
0
\end{pmatrix}(v_{k}^{T}\ 0)=\begin{pmatrix}
\sum\limits_{k\in [d]}v_{k}v_{k}^{T} & 0 \\
0 & 0
\end{pmatrix}.\label{equation3.2}
\end{align}

By comparing \eqref{equation3.1} with \eqref{equation3.2}, we obtain
\begin{equation}
    \sum\limits_{k\in [d]}v_{k}^{}v_{k}^{T}=\bfgma_{1}^{T}U_{1}^{E}[(n-1)\Lambda_{1}^{E}](U_{1}^{E})^{T}\bfgma_{1}.\label{equation3.3}
\end{equation}
By noticing~\eqref{TDncmbxmb}, $\bfgma_{1}^{T}U_{1}^{E}$ is orthogonal since $\bfgma^{T}U^{E}$ is. Consequently, \eqref{equation3.3} is the eigendecomposition of the matrix $\sum_{k\in [d]}v_{k}^{}v_{k}^{T}$ following the standard Wishart distribution $\mathcal{W}_{n-1}(I_{n-1},d)$.
\end{proof}

\begin{remark}\label{gammaUHaardistri}
    From Lemma~\ref{Wishart} and~\eqref{equation3.3}, one can conclude that $\bfgma_{1}^{T}U_{1}^{E}$ has the Haar invariant distribution.
\end{remark}
    
Lemma~\ref{eigenvalueofstandardWishart} indicates that in order to analyze the eigengaps of $S_{{\rm D}, E}$, it suffices to study the eigengaps of the standard Wishart distributed matrix. The following core theorem shows the asymptotically infinite divergence of the minimum eigengap of the standard Wishart distributed matrix as $d\rightarrow \infty$ but $n$ is fixed.

\begin{theorem}[Asymptotically infinite divergence of eigengaps]\label{corethm} Let $W$ be an $m \times m$ ($m\geq 2$) random matrix following the standard Wishart distribution $\mathcal{W}_{m}(I_{m},d)$, and $\lambda_{1}\geq \lambda_{2}\geq \cdots \geq \lambda_{m}$ the ordered eigenvalues of $W$. Then for every constant $C>0$,
\begin{equation*}
\mathbb{P}\left(\min_{k\in [m-1]}\{\lambda_{k}-\lambda_{k+1}\}>C\right)\rightarrow 1
\end{equation*}
holds as $d\rightarrow \infty$ but $m$ is fixed.
\end{theorem}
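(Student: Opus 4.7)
The plan is to rescale $W$ so its fluctuations are of order $\sqrt{d}$, identify the distributional limit of the rescaled matrix as a matrix from the Gaussian Orthogonal Ensemble (GOE), and exploit the almost-sure simplicity of GOE spectra to conclude that the minimum eigengap of $W$ grows without bound in probability.

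First I would write $W=\sum_{i=1}^{d}g_{i}g_{i}^{T}$ with $g_{1},\ldots,g_{d}$ i.i.d.\ $\mathcal{N}_{m}(0,I_{m})$, and consider
\[
M_{d}:=\frac{W-dI_{m}}{\sqrt{d}}=\frac{1}{\sqrt{d}}\sum_{i=1}^{d}\bigl(g_{i}g_{i}^{T}-I_{m}\bigr),
\]
a sum of i.i.d.\ centered symmetric random matrices with finite second moments. The multivariate CLT applied to the vector of upper-triangular entries yields $M_{d}\Rightarrow M_{\infty}$ as $d\to\infty$ for a symmetric Gaussian random matrix $M_{\infty}$. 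A routine Wick calculation using $\mathbb{E}[g_{a}g_{b}g_{c}g_{d}]=\delta_{ab}\delta_{cd}+\delta_{ac}\delta_{bd}+\delta_{ad}\delta_{bc}$ identifies the entries of $M_{\infty}$ as mutually independent with diagonal entries $\mathcal{N}(0,2)$ and strictly upper-triangular entries $\mathcal{N}(0,1)$; i.e., $M_{\infty}$ is a GOE matrix of size $m$. Since $m$ is fixed, the ordered-eigenvalue map on symmetric matrices is continuous, so the continuous mapping theorem gives the joint convergence in distribution of $\bigl((\lambda_{k}-d)/\sqrt{d}\bigr)_{k\in[m]}$ to the ordered eigenvalues $\mu_{1}\geq\cdots\geq\mu_{m}$ of $M_{\infty}$, and in particular
\[
\delta_{d}:=\min_{k\in[m-1]}\frac{\lambda_{k}-\lambda_{k+1}}{\sqrt{d}}\ \Longrightarrow\ \delta_{\infty}:=\min_{k\in[m-1]}(\mu_{k}-\mu_{k+1}).
\]

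The crucial step is that $\delta_{\infty}>0$ almost surely, i.e.\ the GOE spectrum is almost surely simple. This follows from the classical joint eigenvalue density for GOE, which carries the Vandermonde factor $\prod_{i<j}\lvert\mu_{i}-\mu_{j}\rvert$ and hence assigns zero mass to the coincidence locus; equivalently, the discriminant $\prod_{i<j}(\mu_{i}-\mu_{j})^{2}$ is a nontrivial polynomial in the matrix entries, so its zero set has Lebesgue measure zero and thus GOE-measure zero. Granting this, for any $\eta>0$ pick a continuity point $\varepsilon>0$ of the distribution of $\delta_{\infty}$ with $\mathbb{P}(\delta_{\infty}>\varepsilon)>1-\eta/2$; the Portmanteau theorem gives $\mathbb{P}(\delta_{d}>\varepsilon)\to\mathbb{P}(\delta_{\infty}>\varepsilon)$, and for any fixed $C>0$ and all $d$ large enough that $C/\sqrt{d}<\varepsilon$,
\[
\mathbb{P}\Bigl(\min_{k\in[m-1]}(\lambda_{k}-\lambda_{k+1})>C\Bigr)=\mathbb{P}(\delta_{d}>C/\sqrt{d})\geq\mathbb{P}(\delta_{d}>\varepsilon)\to\mathbb{P}(\delta_{\infty}>\varepsilon)>1-\eta,
\]
which delivers the theorem.

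The hard part is the self-contained justification that the GOE spectrum is a.s.\ simple; everything else is a standard chain of CLT, continuous mapping, and Portmanteau. A more pedestrian alternative that sidesteps any reference to GOE theory is to use the explicit Wishart joint eigenvalue density $f(\lambda_{1},\ldots,\lambda_{m})\propto e^{-\frac{1}{2}\sum\lambda_{i}}\prod_{i}\lambda_{i}^{(d-m-1)/2}\prod_{i<j}(\lambda_{i}-\lambda_{j})$ on $\{\lambda_{1}>\cdots>\lambda_{m}>0\}$, change variables $\lambda_{k}=d+\sqrt{d}\mu_{k}$, and show directly via the transformed density that $\mathbb{P}(\min_{k}(\lambda_{k}-\lambda_{k+1})\leq C)\to 0$ as $d\to\infty$; this route keeps the argument entirely within elementary change-of-variables and tail estimates on the explicit density.
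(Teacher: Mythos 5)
Your argument is correct, and it takes a genuinely different route from the paper. You rescale $M_{d}=(W-dI_{m})/\sqrt{d}$, invoke the multivariate CLT on the upper-triangular entries to get convergence to a GOE matrix (your Wick computation of the limiting covariance is right), pass to the ordered spectrum by continuous mapping, and then use the almost-sure simplicity of the GOE spectrum (via the discriminant/null-set argument, which only needs absolute continuity of the limit law, not the GOE eigenvalue density) together with Portmanteau to conclude $\mathbb{P}\bigl(\min_{k}(\lambda_{k}-\lambda_{k+1})>C\bigr)\to 1$; each step is sound, including the care taken to pick a continuity point $\varepsilon$ of the law of $\delta_{\infty}$. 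The paper instead works directly with the explicit joint eigenvalue density of the Wishart matrix, changes variables to the trace and the consecutive gaps $t_{k}=\lambda_{k}-\lambda_{k+1}$, and bounds the resulting integral by hand using the recurrence and Legendre duplication formulas for the gamma function (with separate even/odd cases), which yields the sharper two-sided statement that the minimum gap lies in $[d^{1/2-\gamma},\,d^{1/2+\eta}]$ with probability tending to one, of which the present theorem is a corollary. What your approach buys is brevity and conceptual clarity: it pins the gap at scale exactly $\sqrt{d}$ with a nondegenerate limit law $\delta_{\infty}$ and avoids all explicit density estimates, at the cost of leaning on standard weak-convergence machinery rather than the self-contained elementary computations of the appendix; your closing remark that one could instead rescale inside the explicit Wishart density is essentially the paper's route.
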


For the full proof we refer the reader to Appendix~\ref{secA4}. In Theorem~\ref{corethm}, if we let $m=n-1$ and put $W=\sum_{k\in [d]}v_{k}^{}v_{k}^{T}$, where $v_{k}$ is defined in the proof of Lemma~\ref{eigenvalueofstandardWishart}, then the minimum eigengap of $S_{{\rm D}, E}$ diverges to infinity with high probability.

\begin{corollary}\label{absdjhwb}
    Let $E\subseteq \mathbb{R}^{d}$ be the noise point cloud, $S_{{\rm D}, E}$ the dual sample covariance matrix of $E$, and $\lambda_{1}^{E}\geq \cdots \geq \lambda_{n-1}^{E}\geq \lambda_{n}^{E}=0$ the ordered eigenvalues of $S_{{\rm D},E}$. Then
    \begin{equation*}
        \frac{1}{\min\limits_{k\in [n-1]}\{\lambda_{k}^{E}-\lambda_{k+1}^{E}\}} = o_{\mathbb{P}}(1)
    \end{equation*}
holds as $d\rightarrow \infty$.
\end{corollary}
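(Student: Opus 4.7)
The plan is to deduce the corollary by reducing it to Theorem~\ref{corethm} via Lemma~\ref{eigenvalueofstandardWishart}, and then to handle the last eigengap $\lambda_{n-1}^{E}-\lambda_{n}^{E}$ separately, since that gap is not covered by Theorem~\ref{corethm}.

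First I would translate the statement into one about a standard Wishart matrix. By Lemma~\ref{eigenvalueofstandardWishart}, setting $m=n-1$ and $W=\sum_{k\in[d]}v_{k}v_{k}^{T}\sim\mathcal{W}_{n-1}(I_{n-1},d)$, the ordered eigenvalues of $W$ are precisely $(n-1)\lambda_{1}^{E}\geq\cdots\geq(n-1)\lambda_{n-1}^{E}$. Applying Theorem~\ref{corethm} to $W$ (and rescaling the threshold by the factor $n-1$) yields, for every $C>0$,
\[
\mathbb{P}\Big(\min_{k\in[n-2]}\{\lambda_{k}^{E}-\lambda_{k+1}^{E}\}>C\Big)\longrightarrow 1
\]
as $d\to\infty$, so the first $n-2$ eigengaps of $S_{{\rm D},E}$ diverge to infinity in probability.

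Next I would treat the remaining gap $\lambda_{n-1}^{E}-\lambda_{n}^{E}=\lambda_{n-1}^{E}$. This is not covered by Theorem~\ref{corethm} because $\lambda_{n}^{E}=0$ arises from the deterministic null direction $n^{-1/2}\bfone_{n}$ of $S_{{\rm D},E}$ and does not correspond to an eigenvalue of $W$. For this I would invoke the strong law of large numbers: since $v_{1},\ldots,v_{d}$ are i.i.d.\ $\mathcal{N}_{n-1}(0,I_{n-1})$ (as in the proof of Lemma~\ref{eigenvalueofstandardWishart}), one has $W/d=d^{-1}\sum_{k\in[d]}v_{k}v_{k}^{T}\to\mathbb{E}[v_{1}v_{1}^{T}]=I_{n-1}$ almost surely. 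By Weyl's inequality the smallest eigenvalue of $W/d$ converges to $1$ a.s., i.e., $(n-1)\lambda_{n-1}^{E}/d\to 1$ a.s.\ as $d\to\infty$, so $\lambda_{n-1}^{E}$ diverges to infinity in probability.

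Finally I would combine the two ingredients by a union bound: for every $C>0$, each of the $n-1$ events $\{\lambda_{k}^{E}-\lambda_{k+1}^{E}>C\}$ has probability tending to one, hence their intersection does as well, giving $\min_{k\in[n-1]}\{\lambda_{k}^{E}-\lambda_{k+1}^{E\}}>C\}$ with probability tending to one. Thus the minimum eigengap diverges to infinity in probability, and its reciprocal is $o_{\mathbb{P}}(1)$ directly from Definition~\ref{def0a}. The main difficulty has been outsourced to Theorem~\ref{corethm}; given that result, the only non-automatic step is the separate divergence of the smallest non-zero eigenvalue of $S_{{\rm D},E}$, which the strong law of large numbers dispatches cleanly.
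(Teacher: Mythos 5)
Your proposal is correct and follows essentially the same route as the paper: it splits the minimum gap into the first $n-2$ gaps, handled via Lemma~\ref{eigenvalueofstandardWishart} together with Theorem~\ref{corethm}, and the smallest nonzero eigenvalue $\lambda_{n-1}^{E}$, then combines the events. The only (immaterial) difference is in the last step: the paper invokes Proposition~\ref{orderofeigenvalue} (eigenvalues of the standard Wishart matrix are $d+O_{\mathbb{P}}(d^{1/2})$, proved by Chebyshev), whereas you use the strong law of large numbers applied to $W/d$ together with continuity of eigenvalues, which is an equally valid justification that $\lambda_{n-1}^{E}$ diverges in probability.
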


\begin{proof}[Proof.\nopunct]
    We first note that 
    \begin{align*}
        \min_{k\in [n-1]}\{\lambda_{k}^{E}-\lambda_{k+1}^{E}\}=\min \Big \{\min_{k\in [n-2]}\{\lambda_{k}^{E}-\lambda_{k+1}^{E}\}, \lambda_{n-1}^{E} \Big \}.
    \end{align*}
    By Lemma~\ref{eigenvalueofstandardWishart} and Theorem~\ref{corethm}, for every constant $C>0$, 
    \begin{equation*}
    \mathbb{P}\left(\min_{k\in [n-2]}\{\lambda_{k}^{E}-\lambda_{k+1}^{E}\}>C\right)\rightarrow 1
    \end{equation*}
as $d\rightarrow +\infty$ but $n$ is fixed. By Proposition~\ref{orderofeigenvalue}, we have
\begin{equation*}
    \mathbb{P}\left(\lambda_{n-1}^{E}>C\right)\rightarrow 1.
    \end{equation*}
Consequently, 
    \begin{equation*}
    \mathbb{P}\left(\min_{k\in [n-1]}\{\lambda_{k}^{E}-\lambda_{k+1}^{E}\}>C\right)\rightarrow 1.
    \end{equation*}
We finally complete the proof by letting $C=1/\varepsilon$ for every $\varepsilon>0$.
\end{proof}

Now, we are able to show the asymptotic closeness of eigenvectors using the results obtained so far.

\begin{proposition}\label{eigenclose}
Let $P'$ and $E$ be the observed and the noise point clouds in $\mathbb{R}^{d}$, $S_{{\rm D}, P'}$ and $S_{{\rm D}, E}$ the dual sample covariance matrices of $P'$ and $E$ with eigenvalues $\lambda_{1}\geq \cdots \geq \lambda_{n}$ and $\lambda_{1}^{E}\geq \cdots \geq \lambda_{n}^{E}$, respectively. For every $k\in [n]$, we let $u_{k}$ and $u_{k}^{E}$ be the unit eigenvectors corresponding to $\lambda_{k}$ and $\lambda_{k}^{E}$, respectively. Then $\big\| u_{k}^{}-u_{k}^{E}\big\|=o_{\mathbb{P}}(1)$ as $d\rightarrow \infty$, unless the scalar product $\big\langle u_{k}^{}, u_{k}^{E}\big\rangle\geq 0$.
\end{proposition}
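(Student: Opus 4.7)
The plan is to invoke the Davis--Kahan-type perturbation bound (Lemma~\ref{perturbation theo}) with $\mathbf{\Sigma} = S_{{\rm D}, E}$ and $\hat{\mathbf{\Sigma}} = S_{{\rm D}, P'}$. The decomposition~\eqref{equality of dual matrices} gives $\hat{\mathbf{\Sigma}} - \mathbf{\Sigma} = \bfMo$, so after choosing signs so that $\langle u_k, u_k^E\rangle \geq 0$, the lemma yields
\begin{equation*}
\| u_k - u_k^E \| \leq \frac{2^{3/2}\, \| \bfMo \|_2}{\min\{\lambda_{k-1}^{E} - \lambda_k^{E},\; \lambda_k^{E} - \lambda_{k+1}^{E}\}} \leq \frac{2^{3/2}\, \| \bfMo \|_2}{\min_{j \in [n-1]}\{\lambda_j^{E} - \lambda_{j+1}^{E}\}},
\end{equation*}
provided the minimum eigengap is positive (which holds almost surely, since ${\rm rank}(S_{{\rm D},E}) = n-1$ almost surely and by Theorem~\ref{corethm}). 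The task then splits into bounding the numerator and controlling the denominator; the case $k = n$ is handled separately by noting $u_n = u_n^{E} = \bfone_n/\sqrt{n}$ (both are the unit eigenvector in the kernel of $\bfP_n$).

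For the denominator, Corollary~\ref{absdjhwb} already gives $1/\min_{j \in [n-1]}\{\lambda_j^{E} - \lambda_{j+1}^{E}\} = o_{\mathbb{P}}(1)$ as $d \to \infty$, so the heavy lifting is done there by way of the Wishart eigengap divergence theorem.

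For the numerator, I would exploit the fact that only the first $s$ coordinates of each $x_i$ are nonzero. Let $P_{s} \in \mathbb{R}^{s \times n}$ and $E_{s} \in \mathbb{R}^{s \times n}$ denote the submatrices formed by the first $s$ rows of $P$ and $E$, respectively. Then $P^{T}P = P_{s}^{T}P_{s}$, $P^{T}E = P_{s}^{T}E_{s}$, and $E^{T}P = E_{s}^{T}P_{s}$, so
\begin{equation*}
\bfMo = \frac{1}{n-1}\bfP_n\bigl(P_{s}^{T}P_{s} + P_{s}^{T}E_{s} + E_{s}^{T}P_{s}\bigr)\bfP_n.
\end{equation*}
Since $s$ and $n$ are both fixed, $P_{s}$ is deterministic and independent of $d$, and $E_{s}$ consists of $sn$ standard Gaussian entries whose joint distribution does not depend on $d$. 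Hence each entry of $\bfMo$ is a fixed-distribution random variable, so $\|\bfMo\|_2 = O_{\mathbb{P}}(1)$ as $d \to \infty$ (for instance, by applying Markov's inequality to the Frobenius norm and using $\|\bfMo\|_2 \leq \|\bfMo\|_F$).

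Combining the two estimates via Proposition~\ref{prop0a}, the product $O_{\mathbb{P}}(1) \cdot o_{\mathbb{P}}(1) = o_{\mathbb{P}}(1)$ yields $\|u_k - u_k^{E}\| = o_{\mathbb{P}}(1)$, as required. No new obstacle arises here: the genuinely difficult work has already been absorbed into Theorem~\ref{corethm} and its consequence Corollary~\ref{absdjhwb}, while the numerator is controlled essentially for free by the HDLSS assumption that $s$ is independent of $d$.
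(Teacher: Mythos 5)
Your proof is correct and follows essentially the same route as the paper's: apply the Yu--Wang--Samworth perturbation bound with $\mathbf{\Sigma}=S_{{\rm D},E}$ and $\hat{\mathbf{\Sigma}}=S_{{\rm D},P'}$, note that $\|\bfMo\|_2=O_{\mathbb{P}}(1)$ because $\bfMo$ only involves the first $s$ rows of $P$ and $E$ and hence has a $d$-independent distribution, invoke Corollary~\ref{absdjhwb} for the denominator, and treat $k=n$ as trivial. The only difference is that you spell out the $O_{\mathbb{P}}(1)$ bound on $\|\bfMo\|_2$ in more detail (via the Frobenius norm and Markov), whereas the paper treats it as immediate.
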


\begin{proof}[Proof.\nopunct]
We shall prove this proposition for every $k\in [n-1]$ since the case that $k=n$ is trivial. By noticing~\eqref{equality of dual matrices} and Lemma~\ref{perturbation theo}, we have
\begin{equation*}
    \left\| u_{k}^{}-u_{k}^{E}\right\| \leq \dfrac{2^{3/2} \| \bfMo \|_{2} }{\min \left\{ \lambda_{k-1}^{E}-\lambda _{k}^{E},\lambda_{k}^{E}-\lambda _{k+1}^{E}\right\}}\leq \dfrac{2^{3/2} \| \bfMo \|_{2} }{\min\limits_{k\in [n-1]}\{\lambda_{k}^{E}-\lambda_{k+1}^{E}\}}.
\end{equation*}
Since $\bfMo$ depends only on the first $s$ rows of $E$ and $P$, we have $\| \bfMo \|_{2}=O_{\mathbb{P}}(1)$ as $d\rightarrow \infty$. By Corollary~\ref{absdjhwb}, Propositions~\ref{prop0a}, \ref{prop0b}, we obtain $\big\| u_{k}^{}-u_{k}^{E}\big\|=o_{\mathbb{P}}(1)$ as $d\rightarrow \infty$.
\end{proof}

\begin{corollary}\label{coordinateclose}
For every $k\in [n]$, we let $u_{k}=(u_{1k},u_{2k},\ldots, u_{nk})^{T}$ and $u_{k}^{E}=(u_{1k}^{E},u_{2k}^{E},\ldots, u_{nk}^{E})^{T}$. Then $\big\lvert u_{ik}^{}-u_{ik}^{E}\big\rvert=o_{\mathbb{P}}(1)$ holds for every $i\in [n]$ as $d\rightarrow \infty$, unless the scalar product $\big\langle u_{k}^{}, u_{k}^{E}\big\rangle\geq 0$.
\end{corollary}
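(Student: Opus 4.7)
The plan is to deduce this coordinate-wise statement directly from Proposition~\ref{eigenclose}, which already provides the norm-wise asymptotic closeness $\bigl\| u_k - u_k^E \bigr\| = o_{\mathbb{P}}(1)$ (under the stated sign condition on the scalar product). The observation is that the absolute value of any single coordinate of a vector in $\mathbb{R}^n$ is trivially bounded above by its Euclidean norm.

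More concretely, I would fix $k\in[n]$ and $i\in[n]$, and simply note that
\[
\bigl\lvert u_{ik} - u_{ik}^E \bigr\rvert \;=\; \sqrt{\bigl(u_{ik} - u_{ik}^E\bigr)^2} \;\leq\; \sqrt{\sum_{j\in[n]} \bigl(u_{jk} - u_{jk}^E\bigr)^2} \;=\; \bigl\| u_k - u_k^E \bigr\|.
\]
This produces two non-negative sequences of random variables, with the left-hand side dominated by the right-hand side for every $d$. Then I would invoke Proposition~\ref{prop0b}, part~1, which says that if $Y_d = o_{\mathbb{P}}(1)$ and $0\le X_d\le Y_d$ then $X_d = o_{\mathbb{P}}(1)$. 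Applying it with $X_d = \bigl\lvert u_{ik} - u_{ik}^E \bigr\rvert$ and $Y_d = \bigl\| u_k - u_k^E \bigr\|$, and using that Proposition~\ref{eigenclose} supplies $Y_d = o_{\mathbb{P}}(1)$ as $d\to\infty$ (provided $\bigl\langle u_k, u_k^E \bigr\rangle\ge 0$), yields the desired conclusion.

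There is essentially no obstacle here: the corollary is a clean consequence of the vector-norm bound together with the previously established Proposition~\ref{eigenclose}. The only minor thing to be careful about is to carry through the same sign-alignment hypothesis $\bigl\langle u_k, u_k^E \bigr\rangle\ge 0$ that Proposition~\ref{eigenclose} requires, since eigenvectors are only determined up to sign and the coordinate-wise convergence clearly fails if one flips $u_k$ to $-u_k$. Once that caveat is stated, the proof is a two-line deduction.
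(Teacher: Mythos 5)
Your proof is correct and is exactly the intended argument: the paper states this corollary without proof precisely because it follows immediately from Proposition~\ref{eigenclose} via the bound $\lvert u_{ik}-u_{ik}^{E}\rvert \leq \lVert u_{k}-u_{k}^{E}\rVert$ and Proposition~\ref{prop0b}, carrying along the same sign-alignment caveat on $\langle u_{k}, u_{k}^{E}\rangle$.
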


In what follows, we shall always assume that $\big\langle u_{k}^{}, u_{k}^{E}\big\rangle\geq 0$ since we can freely choose the sign of eigenvectors. Moreover, we write $\hat{E}=\{\hat{e}_{1},\hat{e}_{2},\ldots, \hat{e}_{n}\}$ for the compressed point cloud obtained by projecting the noise point cloud $E=\{e_{1},e_{2},\ldots, e_{n}\}$ onto the principal subspace by the first $s$ components with the normalized PCA technique.

\begin{theorem}\label{convergeresult}
Let $P'$ and $E$ be the observed and the noise point clouds in $\mathbb{R}^{d}$, $\hat{P}$ and $\hat{E}$ the compressed point clouds of $P'$ and $E$ by normalized PCA, respectively. Then
\begin{align*}
  d_{B}(\bar{{\rm D}}( \hat{P}), \bar{{\rm D}}( \hat{E}) ) = o_{\mathbb{P}}(1)
\end{align*}
holds as $d\rightarrow \infty$.
\end{theorem}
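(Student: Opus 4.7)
The plan is to use the stability theorem for the Rips filtration (Theorem~\ref{lem0b}) together with the natural index-matching correspondence between $\hat{P}$ and $\hat{E}$, and then reduce everything to the coordinate-wise eigenvector estimates already obtained in Corollary~\ref{coordinateclose}. Concretely, define $\Corr:\hat{P}\rightrightarrows\hat{E}$ by $\Corr=\setc*{(\hat{x}_{i},\hat{e}_{i})}{i\in[n]}$, which is clearly a correspondence. Then Theorem~\ref{lem0b} and the reformulation~\eqref{GromovHausdorffdistdef} give
\begin{equation*}
d_{B}(\bar{{\rm D}}(\hat{P}),\bar{{\rm D}}(\hat{E}))\leq 2d_{GH}(\hat{P},\hat{E})\leq \dis(\Corr)=\max_{\substack{i,j\in [n]\\ i<j}}\bigl|\|\hat{x}_{i}-\hat{x}_{j}\|-\|\hat{e}_{i}-\hat{e}_{j}\|\bigr|.
\end{equation*}
Since the number of pairs is a fixed constant (as $n$ is fixed while $d\to\infty$), it suffices by Proposition~\ref{prop0d} to show that each single term on the right is $o_{\mathbb{P}}(1)$.

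For each fixed pair $(i,j)$, I would next compare the squared distances using~\eqref{sqdisthathat}:
\begin{equation*}
\|\hat{x}_{i}-\hat{x}_{j}\|^{2}-\|\hat{e}_{i}-\hat{e}_{j}\|^{2}=(n-1)\sum_{k\in [s]}\bigl[(u_{ik}-u_{jk})^{2}-(u_{ik}^{E}-u_{jk}^{E})^{2}\bigr].
\end{equation*}
Because $u_{k}$ and $u_{k}^{E}$ are unit vectors in $\mathbb{R}^{n}$, all of the quantities $u_{ik},u_{jk},u_{ik}^{E},u_{jk}^{E}$ are deterministically bounded by $1$ in absolute value. Hence the sums $(u_{ik}-u_{jk})+(u_{ik}^{E}-u_{jk}^{E})$ are bounded, while the differences $(u_{ik}-u_{jk})-(u_{ik}^{E}-u_{jk}^{E})=(u_{ik}-u_{ik}^{E})-(u_{jk}-u_{jk}^{E})$ are $o_{\mathbb{P}}(1)$ by Corollary~\ref{coordinateclose}. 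Factoring $(a-b)(a+b)$ coordinate-wise, together with the $o_{\mathbb{P}}$--$O_{\mathbb{P}}$ calculus of Proposition~\ref{prop0a} and the fact that $s$ and $n$ are fixed, yields
\begin{equation*}
\bigl|\|\hat{x}_{i}-\hat{x}_{j}\|^{2}-\|\hat{e}_{i}-\hat{e}_{j}\|^{2}\bigr|=o_{\mathbb{P}}(1).
\end{equation*}

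To pass from the squared distances back to the distances themselves, I would invoke the elementary inequality $|a-b|\leq\sqrt{|a^{2}-b^{2}|}$ valid for $a,b\geq 0$ (which follows from $(a-b)^{2}\leq(a-b)(a+b)=|a^{2}-b^{2}|$). Applied to $a=\|\hat{x}_{i}-\hat{x}_{j}\|$ and $b=\|\hat{e}_{i}-\hat{e}_{j}\|$, this gives $\bigl|\|\hat{x}_{i}-\hat{x}_{j}\|-\|\hat{e}_{i}-\hat{e}_{j}\|\bigr|=o_{\mathbb{P}}(1)$ for every fixed pair; taking the maximum over the finite index set via Proposition~\ref{prop0d} gives $\dis(\Corr)=o_{\mathbb{P}}(1)$, completing the proof.

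The main obstacle is really the one hidden step of passing from the squared pairwise distances (where the $o_{\mathbb{P}}(1)$ estimate is transparent from Corollary~\ref{coordinateclose}) to the actual distances; without the elementary inequality above, one would have to worry about the pairwise distance being small and dividing by $a+b$. Everything else is bookkeeping: verifying boundedness (automatic from unit-norm eigenvectors), applying the stability theorem, and taking a max over a fixed-size index set.
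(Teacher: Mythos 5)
Your proposal is correct and follows essentially the same route as the paper's proof: the index-matching correspondence $\{(\hat{x}_{i},\hat{e}_{i})\}$, the stability theorem for the Rips filtration, the inequality $\bigl|\,\|\hat{x}_{i}-\hat{x}_{j}\|-\|\hat{e}_{i}-\hat{e}_{j}\|\,\bigr|\leq\sqrt{\bigl|\,\|\hat{x}_{i}-\hat{x}_{j}\|^{2}-\|\hat{e}_{i}-\hat{e}_{j}\|^{2}\bigr|}$ to reduce to squared distances, and the coordinate-wise eigenvector estimate of Corollary~\ref{coordinateclose} combined with boundedness of unit-eigenvector entries. The only cosmetic difference is that you factor the difference of squares as $(a-b)(a+b)$ per coordinate, whereas the paper expands and bounds term by term, which is an equivalent bookkeeping choice.
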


\begin{proof}[Proof.\nopunct]
Define a correspondence $\hat{\Corr}$$:\hat{P}\rightrightarrows \hat{E}$ by setting
\begin{equation*}
    \hat{\Corr} = \setc*{(\hat{x}_{i},\hat{e}_{i})\in \hat{P}\times \hat{E}}{i\in [n]}.
\end{equation*}
Then we have
\begin{align*}
    \dis(\hat{\Corr}) = \max_{\substack{i,j\in [n] \\ i<j}}  \Big\lvert  \| \hat{x}_{i}-\hat{x}_{j}\| -\| \hat{e}_{i}-\hat{e}_{j}\| \Big\rvert \leq \max_{\substack{i,j\in [n] \\ i<j}}  \sqrt{\Big\lvert  \| \hat{x}_{i}-\hat{x}_{j}\|^{2} -\| \hat{e}_{i}-\hat{e}_{j}\|^{2} \Big\rvert }.
\end{align*}
We recall that $\|\hat{x}_{i}-\hat{x}_{j}\| ^{2}=(n-1)\sum_{k\in [s]}\left(u_{ik}-u_{jk}\right) ^{2}$ from~\eqref{sqdisthathat}. The similar consideration of applying the normalized PC scores to the noise point cloud $E$ yields $\|\hat{e}_{i}-\hat{e}_{j}\| ^{2}=(n-1)\sum_{k\in [s]}\left(u_{ik}^{E}-u_{jk}^{E}\right) ^{2}$. It follows that
    \begin{align*}
    &\ \ \ \ \frac{1}{n-1} \Big\lvert  \| \hat{x}_{i}-\hat{x}_{j}\|^{2} -\| \hat{e}_{i}-\hat{e}_{j}\|^{2} \Big\rvert = \Bigg\lvert \sum\limits_{k\in [s]}\left(u_{ik}-u_{jk}\right) ^{2}-\left(u_{ik}^{E}-u_{jk}^{E}\right) ^{2}\Bigg\rvert \\
    &=\Bigg\lvert \sum\limits_{k\in [s]}\left[u_{ik}^{2}-(u_{ik}^{E})^{2}\right]+\left[u_{jk}^{2}-(u_{jk}^{E}) ^{2}\right]+2\left(u_{ik}^{E}u_{jk}^{E}-u_{ik}u_{jk}\right)\Bigg\rvert \\
    &\leq \sum\limits_{k\in [s]}\big\lvert u_{ik}^{2}-(u_{ik}^{E})^{2}\big\rvert +\big\lvert u_{jk}^{2}-(u_{jk}^{E})^{2}\big\rvert +2\big\lvert u_{ik}^{E}\big\rvert \big\lvert u_{jk}^{E}-u_{jk}\big\rvert +2\big\lvert u_{jk}\big\rvert \big\lvert u_{ik}^{E}-u_{ik}\big\rvert \\
    &=\sum\limits_{k\in [s]}\big\lvert u_{ik}-u_{ik}^{E}\big\rvert \cdot \left(\big\lvert u_{ik}+u_{ik}^{E}\big\rvert +2\big\lvert u_{jk}\big\rvert \right)+\big\lvert u_{jk}-u_{jk}^{E}\big\rvert \cdot \left(\big\lvert u_{jk}+u_{jk}^{E}\big\rvert +2\big\lvert u_{ik}^{E}\big\rvert \right)\\
    &\leq 4\sum\limits_{k\in [s]}\big\lvert u_{ik}-u_{ik}^{E}\big\rvert +\big\lvert u_{jk}-u_{jk}^{E}\big\rvert.
    \end{align*}
For every $k\in [s]$ and $i\in [n]$, $\big\lvert u_{ik}^{}-u_{ik}^{E}\big\rvert =o_{\mathbb{P}}(1)$ holds as $d\rightarrow \infty$ by Corollary~\ref{coordinateclose}. It follows that for every $i,j\in [n]$, $\Big\lvert  \| \hat{x}_{i}-\hat{x}_{j}\|^{2} -\| \hat{e}_{i}-\hat{e}_{j}\|^{2} \Big\rvert =o_{\mathbb{P}}(1)$ holds as $d\rightarrow \infty$, implying that 
\begin{equation}\label{eqn:asymp of reduced pairwise dist}
    \Big\lvert  \| \hat{x}_{i}-\hat{x}_{j}\|-\| \hat{e}_{i}-\hat{e}_{j}\|\Big\rvert =o_{\mathbb{P}}(1)
\end{equation}
and
\begin{equation*}
    \dis(\hat{\Corr})=o_{\mathbb{P}}(1)
\end{equation*}
as $d\rightarrow \infty$ by Propositions~\ref{prop0c}, \ref{prop0d}. Consequently, by~\eqref{GromovHausdorffdistdef}, Theorem~\ref{lem0b}, and Proposition~\ref{prop0b}, we obtain
\begin{equation*}
    d_{B}(\bar{{\rm D}}( \hat{P}), \bar{{\rm D}}( \hat{E}) ) = o_{\mathbb{P}}(1)
\end{equation*}
as $d\rightarrow \infty$.
\end{proof}

\begin{theorem}\label{PCAPDlimit}
Let $P'$ and $E$ be the observed and the noise point clouds in $\mathbb{R}^{d}$, $\hat{P}$ and $\hat{E}$ the compressed point clouds of $P'$ and $E$ by normalized PCA, respectively. Then
\begin{equation*}
    d_{B}(\bar{{\rm D}}(P), \bar{{\rm D}}( \hat{P}))-d_{B}(\bar{{\rm D}}(P), \bar{{\rm D}}( \hat{E})) = o_{\mathbb{P}}(1)
\end{equation*}
holds as $d\rightarrow \infty$.
\end{theorem}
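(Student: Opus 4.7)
The plan is to deduce this theorem as a near-immediate corollary of Theorem~\ref{convergeresult}, using the reverse triangle inequality for the bottleneck distance. Recall that $d_B$ is a genuine metric on the space of persistence diagrams (with the diagonal adjoined and counted with infinite multiplicity), so for any three such diagrams $\bar{{\rm D}}(P)$, $\bar{{\rm D}}(\hat{P})$, $\bar{{\rm D}}(\hat{E})$ we have
\begin{equation*}
\bigl\lvert d_{B}(\bar{{\rm D}}(P), \bar{{\rm D}}(\hat{P})) - d_{B}(\bar{{\rm D}}(P), \bar{{\rm D}}(\hat{E})) \bigr\rvert \leq d_{B}(\bar{{\rm D}}(\hat{P}), \bar{{\rm D}}(\hat{E})).
\end{equation*}

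Given this bound, the first step is simply to invoke Theorem~\ref{convergeresult}, which already establishes that $d_{B}(\bar{{\rm D}}(\hat{P}), \bar{{\rm D}}(\hat{E})) = o_{\mathbb{P}}(1)$ as $d \rightarrow \infty$. The second step is to observe that convergence to zero in probability of the absolute value of a random sequence is equivalent to convergence to zero in probability of the sequence itself (this is Remark~\ref{Gtsdc} applied with $f\equiv 0$). Combining, the random variable $d_{B}(\bar{{\rm D}}(P), \bar{{\rm D}}(\hat{P})) - d_{B}(\bar{{\rm D}}(P), \bar{{\rm D}}(\hat{E}))$ is bounded in absolute value by an $o_{\mathbb{P}}(1)$ quantity, hence is itself $o_{\mathbb{P}}(1)$ by Proposition~\ref{prop0b}.

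There is no genuine obstacle here; the whole content of this result has been concentrated into Theorem~\ref{convergeresult}. The only mild subtlety to flag is that the triangle inequality above requires $d_B$ to be well-defined and finite between each pair of diagrams (which is guaranteed by augmenting with the diagonal), and that $o_{\mathbb{P}}(1)$ is preserved under dominance by a non-negative sequence, which is exactly what Proposition~\ref{prop0b}(1) provides. Hence the proof is a two-line application of the reverse triangle inequality followed by a reference to Theorem~\ref{convergeresult}.
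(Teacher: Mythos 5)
Your proposal is correct and matches the paper's own argument: the paper likewise bounds $\bigl\lvert d_{B}(\bar{{\rm D}}(P), \bar{{\rm D}}(\hat{P}))-d_{B}(\bar{{\rm D}}(P), \bar{{\rm D}}(\hat{E}))\bigr\rvert$ by $d_{B}(\bar{{\rm D}}(\hat{P}), \bar{{\rm D}}(\hat{E}))$ via the triangle inequality and then invokes Theorem~\ref{convergeresult} together with the asymptotic-notation propositions. No differences worth noting.
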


\begin{proof}[Proof.\nopunct]
The proof will be finished by the triangle inequality
\begin{equation*}
    \Big\lvert d_{B}(\bar{{\rm D}}(P), \bar{{\rm D}}( \hat{P}))-d_{B}(\bar{{\rm D}}(P), \bar{{\rm D}}( \hat{E}))\Big\rvert \leq d_{B}(\bar{{\rm D}}( \hat{P}), \bar{{\rm D}}( \hat{E})),
\end{equation*}
together with Propositions~\ref{prop0b}, \ref{prop0c}, Theorem~\ref{convergeresult}.
\end{proof}

Theorem~\ref{PCAPDlimit} indicates that the bottleneck distance between the persistence diagrams of the original and the compressed point clouds is mainly dominated by the noise part. In order to find out the asymptotic behavior of $d_{B}(\bar{{\rm D}}(P), \bar{{\rm D}}( \hat{P}))$, we only need to focus on the asymptotic behavior of $d_{B}(\bar{{\rm D}}(P), \bar{{\rm D}}(\hat{E}))$.

\subsection{Moments of pairwise distances}\label{subsec7}

The focus of the discussion now shifts to the study of the asymptotic behavior of $d_{B}(\bar{{\rm D}}(P), \bar{{\rm D}}(\hat{E}))$. In the Rips filtration, the pairwise distances play a crucial role in determining the filtration. These distances are random variables, and obtaining their probability distributions is extremely hard. Fortunately, there is a method called Weingarten calculus that we could utilize to compute their moments.

Weingarten calculus is a systematic method for computing integrals on groups, more precisely, Haar integrals on some compact groups. This method was first considered by theoretical physicists in the 1970s in nonabelian gauge theories and then continued to develop in the early 2000s by mathematicians in random matrix theory. A comprehensive introduction to Weingarten calculus will be provided in Appendix~\ref{secA5}.

We will follow the notations given in the proof of Lemma~\ref{eigenvalueofstandardWishart}. We denote the $(n-1)\times (n-1)$ matrix $\bfgma _{1}^{T}U_{1}^{E}$ by $g$ and let $g = (g_{ij})_{i,j\in [n-1]}$. Suppose $\bfgma_{1}=(\gamma_{ij})_{i\in[n],j\in[n-1]}$. Then $\bfgma_{1}g = (\sum_{m\in [n-1]}\gamma_{im}g_{mj})_{i\in [n], j\in [n-1]}$.

Notice that
\begin{align*}
    U^{E}&=\bfgma(\bfgma^{T}U^{E})=\left(\bfgma_{1}\ \frac{1}{\sqrt{n}}\bfone_{n}\right)\begin{pmatrix}
\bfgma _{1}^{T}U_{1}^{E} & 0 \\
0 & 1
\end{pmatrix}=\left(\bfgma_{1}g\ \  \frac{1}{\sqrt{n}}\bfone_{n}\right).
\end{align*}
Then for all $i,j\in [n]$ with $i<j$ and for every $k\in [n-1]$, we have $u_{ik}^{E}=\sum_{m\in [n-1]}\gamma_{im}g_{mk}$ and $u_{jk}^{E}=\sum_{m\in [n-1]}\gamma_{jm}g_{mk}$. Thus,
\begin{align}
    \frac{1}{n-1}\|\hat{e}_{i}-\hat{e}_{j}\|^{2} = \sum\limits_{k\in [s]}\left(u_{ik}^{E}-u_{jk}^{E}\right) ^{2} = \sum\limits_{k\in [s]}\left[\sum_{m\in [n-1]}(\gamma_{im}-\gamma_{jm})g_{mk}\right]^{2}.\label{dahsj}
\end{align}
Denote $\gamma_{im}-\gamma_{jm}$ by $C_{m}^{ij}$ in~\eqref{dahsj}. By expanding brackets, we obtain
\begin{align}
    \frac{1}{n-1}\|\hat{e}_{i}-\hat{e}_{j}\|^{2} & = \sum\limits_{k\in [s]}\left(\sum_{m\in [n-1]}C_{m}^{ij}g_{mk}\right)^{2}\notag\\
    &= \sum\limits_{k\in [s]}\sum\limits_{m\in [n-1]}\left(C_{m}^{ij}\right)^{2}g_{mk}^{2}+\sum\limits_{k\in [s]}\sum\limits_{\substack{m,t\in [n-1] \\ m\neq t}}C_{m}^{ij}C_{t}^{ij}g_{mk}g_{tk}\notag\\
    & = \sum\limits_{m\in [n-1]}\left(C_{m}^{ij}\right)^{2}\sum\limits_{k\in [s]}g_{mk}^{2}+\sum\limits_{\substack{m,t\in [n-1] \\ m\neq t}}C_{m}^{ij}C_{t}^{ij}\sum\limits_{k\in [s]}g_{mk}g_{tk}.\label{eqofpairwisedistance}
\end{align}
It is now clear from~\eqref{eqofpairwisedistance} that in order to compute the expected value of the squared pairwise distance $\|\hat{e}_{i}-\hat{e}_{j}\|^{2}$, it suffices to compute the first moments of $g_{mk}^{2}$ and $g_{mk}g_{tk}$ ($m,t\in [n]$, $m\neq t$).

\begin{proposition}\label{1st and 2nd moments of pairwise distance}
Let $E \subseteq \mathbb{R}^{d}$ be the noise point cloud, and $\hat{E}$ the compressed point cloud of $E$ by normalized PCA. Then
    \begin{equation*}
        {\mathbb{E}}\|\hat{e}_{i}-\hat{e}_{j}\|^{2}=2s\ \ \text{and}\ \ {\mathbb{V}}\|\hat{e}_{i}-\hat{e}_{j}\|^{2}=\frac{-8s^{2}+8(n-1)s}{n+1}
    \end{equation*}
hold for all $i,j\in [n]$ with $i<j$.
\end{proposition}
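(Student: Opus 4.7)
My plan is to exploit the Haar invariance of $g = \bfgma_1^T U_1^E$ on $\mathrm{O}(n-1)$ (Remark~\ref{gammaUHaardistri}) and apply Weingarten calculus to the expansion in~\eqref{eqofpairwisedistance}. The key preliminary identity, following from $\bfgma_1\bfgma_1^T = \bfP_n$, is $\sum_{m\in[n-1]}(C_m^{ij})^2 = (\bfP_n)_{ii} + (\bfP_n)_{jj} - 2(\bfP_n)_{ij} = 2$. For the first moment, I would apply the second-order orthogonal Weingarten identity ${\mathbb{E}}[g_{mk}g_{tl}] = \delta_{mt}\delta_{kl}/(n-1)$ to~\eqref{eqofpairwisedistance}: the off-diagonal terms vanish, leaving only the diagonal sum, which contributes $(n-1)^{-1}{\mathbb{E}}\|\hat{e}_i - \hat{e}_j\|^2 = \tfrac{s}{n-1}\sum_m(C_m^{ij})^2 = \tfrac{2s}{n-1}$, hence ${\mathbb{E}}\|\hat{e}_i - \hat{e}_j\|^2 = 2s$.

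For the second moment I would square~\eqref{eqofpairwisedistance} and evaluate each resulting fourth-order monomial ${\mathbb{E}}[g_{m_1k_1}g_{m_2k_2}g_{m_3k_3}g_{m_4k_4}]$ using the fourth-order orthogonal Weingarten formula on $\mathrm{O}(n-1)$---a sum over pair partitions weighted by the Weingarten function $\mathrm{Wg}^{\mathrm{O}}$; see Appendix~\ref{secA5}---and then collapse the resulting sums via $\bfgma_1\bfgma_1^T = \bfP_n$. A much cleaner shortcut is available: let $v_0 := (C_1^{ij},\ldots,C_{n-1}^{ij})^T$, which has deterministic norm $\sqrt{2}$ by the preliminary identity. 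Since $g^T$ is also Haar distributed on $\mathrm{O}(n-1)$, the random vector $b := g^T v_0/\sqrt{2}$ is uniform on the unit sphere $S^{n-2}\subset\mathbb{R}^{n-1}$, and substituting into~\eqref{eqofpairwisedistance} yields $\|\hat{e}_i - \hat{e}_j\|^2 = 2(n-1)\sum_{k\in[s]} b_k^2$. The classical sphere moment identities ${\mathbb{E}}[b_k^4] = 3/((n-1)(n+1))$ and ${\mathbb{E}}[b_k^2 b_l^2] = 1/((n-1)(n+1))$ for $k\neq l$ give ${\mathbb{E}}\bigl[\bigl(\sum_{k\in[s]} b_k^2\bigr)^2\bigr] = s(s+2)/((n-1)(n+1))$, and subtracting $({\mathbb{E}}\sum_k b_k^2)^2 = s^2/(n-1)^2$ and scaling by $4(n-1)^2$ produces ${\mathbb{V}}\|\hat{e}_i-\hat{e}_j\|^2 = 8s(n-1-s)/(n+1) = (-8s^2+8(n-1)s)/(n+1)$.

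The main obstacle is the variance computation via the direct Weingarten expansion, which requires careful bookkeeping over many fourth-order pairings and their cancellation against identities satisfied by $\bfgma_1$. The spherical-uniformity reduction circumvents this by collapsing all dependence on the unused components $s+1,\ldots,n-1$ into a single norm constraint on the projected vector.
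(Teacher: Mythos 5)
Your proposal is correct, and the value-checks go through: with $v_0=(C^{ij}_1,\ldots,C^{ij}_{n-1})^T$, the identity $\bfgma_1\bfgma_1^T=\bfP_n$ gives $\|v_0\|^2=(\bfP_n)_{ii}+(\bfP_n)_{jj}-2(\bfP_n)_{ij}=2$, the squared distance in~\eqref{dahsj} depends on $g$ only through $g^Tv_0$, and the resulting variance $4(n-1)^2\bigl[\tfrac{s(s+2)}{(n-1)(n+1)}-\tfrac{s^2}{(n-1)^2}\bigr]=\tfrac{8s(n-1-s)}{n+1}$ matches the stated formula. The first-moment step is essentially the paper's: both use the order-two Haar moment $\mathbb{E}[g_{mk}g_{tk}]=\delta_{mt}/(n-1)$ (the paper's Proposition~\ref{k=2}). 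For the variance, however, you take a genuinely different and more economical route. The paper squares~\eqref{eqofpairwisedistance}, splits it into four sums, and evaluates each with fourth-order orthogonal Weingarten formulas (Propositions~\ref{k=4 case1}, \ref{k=4 case2}, derived in Appendix~\ref{secA5} from the Gram matrix of invariant tensors), which is exactly the bookkeeping you flag as the obstacle in your first plan. Your shortcut observes that $b=g^Tv_0/\sqrt{2}$ is uniform on $S^{n-2}$ and reduces everything to the classical sphere (Beta) moments $\mathbb{E}[b_k^4]=3/((n-1)(n+1))$ and $\mathbb{E}[b_k^2b_l^2]=1/((n-1)(n+1))$; this avoids fourth-order Weingarten calculus entirely, at the cost of losing the general-purpose machinery the paper sets up (and reuses) in the appendix. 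The only point worth tightening is the phrase ``since $g^T$ is also Haar distributed'': the paper's Remark~\ref{gammaUHaardistri} gives left-invariance of the law of $g$ under deterministic orthogonal matrices, and you should note that by uniqueness of the invariant probability measure on the compact group ${\rm O}(n-1)$ this law is the Haar measure, hence bi-invariant and inversion-invariant, which is what makes $g^Tv_0$ rotation-invariant in law and therefore uniform on the sphere of radius $\sqrt{2}$. With that one-line justification added, your argument is complete.
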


\begin{proof}[Proof.\nopunct]
    From~\eqref{eqofpairwisedistance} we have
    \begin{align*}
        \frac{1}{n-1}\mathbb{E}\|\hat{e}_{i}-\hat{e}_{j}\|^{2} = \sum\limits_{m\in [n-1]}\left(C_{m}^{ij}\right)^{2}\sum\limits_{k\in [s]}\mathbb{E}g_{mk}^{2}+\sum\limits_{\substack{m,t\in [n-1] \\ m\neq t}}C_{m}^{ij}C_{t}^{ij}\sum\limits_{k\in [s]}\mathbb{E}g_{mk}g_{tk}.
    \end{align*}
    According to Remark \ref{gammaUHaardistri}, $g=\bfgma _{1}^{T}U_{1}^{E}$ has the Haar probability distribution. By Proposition \ref{k=2}, we have $\mathbb{E}g_{mk}^{2}=1/(n-1)$ and $\mathbb{E}g_{mk}g_{tk}=0$ ($m,t\in [n]$, $m\neq t$) if $N=n-1$ in Proposition~\ref{k=2}. Hence
    \begin{align}
        \frac{1}{n-1}\mathbb{E}\|\hat{e}_{i}-\hat{e}_{j}\|^{2} & = \sum\limits_{m\in [n-1]}\left(C_{m}^{ij}\right)^{2}\sum\limits_{k\in [s]}\frac{1}{n-1}=\frac{s}{n-1}\sum\limits_{m\in [n-1]}\left(\gamma_{im}-\gamma_{jm}\right)^{2}\nonumber\\
        & = \frac{s}{n-1}\left(\sum_{m\in [n-1]}\gamma _{im} ^{2}+\sum_{m\in [n-1]}\gamma _{jm} ^{2}-2\sum_{m\in [n-1]}\gamma _{im}\gamma _{jm}\right)\nonumber\\
        & = \frac{s}{n-1}\left[\left(1-\frac{1}{n}\right)+\left(1-\frac{1}{n}\right)-2\cdot \left(-\frac{1}{n}\right)\right]\nonumber\\
        & = \frac{2s}{n-1}.\label{eqn: the explicit form of sum_square_C^ij_m}
    \end{align}
    This gives ${\mathbb{E}}\|\hat{e}_{i}-\hat{e}_{j}\|^{2}=2s$. Regarding the variance of $\|\hat{e}_{i}-\hat{e}_{j}\|^{2}$, we refer the reader to Appendix~\ref{secA53} for the detailed computation. 
\end{proof}

The following is immediate from Proposition~\ref{1st and 2nd moments of pairwise distance} combining with Chebyshev's inequality.

\begin{proposition}\label{asymptomatic behavior of pairwise distance of reduced noise}
    Let $E \subseteq \mathbb{R}^{d}$ be the noise point cloud, and $\hat{E}$ the compressed point cloud of $E$ by normalized PCA. Then $\|\hat{e}_{i}-\hat{e}_{j}\| = O_{\mathbb{P}}(1)$ holds as $d\rightarrow \infty$.
\end{proposition}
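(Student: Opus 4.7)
The plan is to deduce the result directly from Proposition~\ref{1st and 2nd moments of pairwise distance} via Chebyshev's inequality, exploiting the fact that both the mean and variance of $\|\hat{e}_{i}-\hat{e}_{j}\|^{2}$ given there depend only on $s$ and $n$, which are fixed under our HDLSS regime. In particular, neither depends on $d$, so the random variable $\|\hat{e}_{i}-\hat{e}_{j}\|^{2}$ does not spread out as $d\to\infty$.

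First I would fix an arbitrary $\varepsilon>0$ and apply Chebyshev's inequality to the non-negative random variable $\|\hat{e}_{i}-\hat{e}_{j}\|^{2}$ centered at its mean $2s$, yielding
\begin{equation*}
\mathbb{P}\Bigl(\bigl\lvert \|\hat{e}_{i}-\hat{e}_{j}\|^{2}-2s\bigr\rvert \geq \eta\Bigr) \leq \frac{1}{\eta^{2}} \cdot \frac{-8s^{2}+8(n-1)s}{n+1}
\end{equation*}
for every $\eta>0$. Choosing $\eta = \eta(\varepsilon)$ large enough so that the right-hand side is at most $\varepsilon$, we conclude that $\|\hat{e}_{i}-\hat{e}_{j}\|^{2}$ lies within $[0, 2s+\eta]$ with probability at least $1-\varepsilon$, uniformly in $d$. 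Taking square roots, $\|\hat{e}_{i}-\hat{e}_{j}\|$ is bounded by $\sqrt{2s+\eta}$ with the same probability, which is exactly the definition of $O_{\mathbb{P}}(1)$ from Definition~\ref{def0b}.

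There is essentially no obstacle here: the work has already been done in Proposition~\ref{1st and 2nd moments of pairwise distance} through the Weingarten-calculus computation. The only thing worth noting in the write-up is that the bound obtained is in fact uniform in $d$ (it does not require any ``eventually'' qualifier), because both moments are explicit constants in $s$ and $n$. In particular, one does not need the full strength of Definition~\ref{def0b}; the estimate holds for all $d$.
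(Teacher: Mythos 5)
Your proof is correct and is exactly the paper's argument: the paper derives this proposition "immediately" from Proposition~\ref{1st and 2nd moments of pairwise distance} together with Chebyshev's inequality, just as you do, using that the mean $2s$ and variance $\frac{-8s^{2}+8(n-1)s}{n+1}$ are constants independent of $d$. Your additional remark that the bound is uniform in $d$ is a valid (if slightly stronger-than-needed) observation.
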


\subsection{Results of the normalized PCA projection}\label{subsec8}

The above computations and analysis show that for the low-sample-size original point clouds affected by high-dimensional noise, the geometric information of the original point clouds will be lost after applying the normalized PCA. This is because the moments of the pairwise distance after projection do not retain information about the original pairwise distance. Nevertheless, the normalized PCA does mitigate the curse of dimensionality on persistence diagrams from Proposition~\ref{asymptomatic behavior of pairwise distance of reduced noise}. We can now formulate our main results.

\begin{theorem}\label{bottleneckdistofreducedPD}
Let $P$ and $P'$ be the original and the observed point clouds in $\mathbb{R}^{d}$, respectively. Let $\hat{P}$ denote the compressed point cloud of $P'$ by normalized PCA. Then 
$$d_{B}(\bar{{\rm D}}(P), \bar{{\rm D}}(\hat{P})) = O_{\mathbb{P}}(1)$$ 
holds as $d\rightarrow \infty$.
\end{theorem}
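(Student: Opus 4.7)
The plan is to reduce the statement to Theorem~\ref{PCAPDlimit}, which has already absorbed the difficult perturbation-theoretic and Weingarten-calculus content of the section. Theorem~\ref{PCAPDlimit} tells us that
$$d_{B}(\bar{{\rm D}}(P), \bar{{\rm D}}(\hat{P})) = d_{B}(\bar{{\rm D}}(P), \bar{{\rm D}}(\hat{E})) + o_{\mathbb{P}}(1)$$
as $d\to\infty$. By the addition rule $o_{\mathbb{P}}(1) + O_{\mathbb{P}}(1) = O_{\mathbb{P}}(1)$ of Proposition~\ref{prop0a}, it therefore suffices to prove that $d_{B}(\bar{{\rm D}}(P), \bar{{\rm D}}(\hat{E})) = O_{\mathbb{P}}(1)$.

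For this I would apply the triangle inequality with $\bar{{\rm D}}(\{\cdot\})$ as the intermediate object:
$$d_{B}(\bar{{\rm D}}(P), \bar{{\rm D}}(\hat{E})) \leq d_{B}(\bar{{\rm D}}(P), \bar{{\rm D}}(\{\cdot\})) + d_{B}(\bar{{\rm D}}(\{\cdot\}), \bar{{\rm D}}(\hat{E})).$$
The first summand depends only on the deterministic original point cloud $P$, and is therefore a fixed constant (in particular, $O_{\mathbb{P}}(1)$). For the second summand I would invoke the Rips stability bound (Theorem~\ref{lem0b}) applied to the correspondence $\Corr: \hat{E} \rightrightarrows \{x_{0}\}$ sending each $\hat{e}_{i}$ to a single point $x_{0}$; its distortion equals $\max_{i<j}\|\hat{e}_{i}-\hat{e}_{j}\|$, yielding
$$d_{B}(\bar{{\rm D}}(\{\cdot\}), \bar{{\rm D}}(\hat{E})) \leq \max_{\substack{i,j\in [n] \\ i<j}}\|\hat{e}_{i}-\hat{e}_{j}\|.$$

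Proposition~\ref{asymptomatic behavior of pairwise distance of reduced noise} gives $\|\hat{e}_{i}-\hat{e}_{j}\| = O_{\mathbb{P}}(1)$ for each individual pair, and since the sample size $n$ is fixed, Proposition~\ref{prop0d} lifts this $O_{\mathbb{P}}(1)$ control to the maximum over the finitely many pairs. Re-assembling all the estimates through the calculus of Proposition~\ref{prop0a} closes the argument. No genuinely new obstacle appears at this last step — the real difficulty was resolved earlier, in the asymptotically infinite divergence of the minimum eigengap (Theorem~\ref{corethm}), which powers the reduction from $\hat{P}$ to $\hat{E}$ via matrix perturbation, and in the Weingarten-calculus moment computation of Proposition~\ref{1st and 2nd moments of pairwise distance}, which controls the pairwise distances in $\hat{E}$.
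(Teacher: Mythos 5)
Your proposal is correct and follows essentially the same route as the paper: reduce to $d_{B}(\bar{{\rm D}}(P),\bar{{\rm D}}(\hat{E}))$ via Theorem~\ref{PCAPDlimit}, then conclude from the Rips stability bound, Proposition~\ref{asymptomatic behavior of pairwise distance of reduced noise}, and the $O_{\mathbb{P}}$ calculus. The only (harmless) difference is in the last estimate: the paper bounds $d_{B}(\bar{{\rm D}}(P),\bar{{\rm D}}(\hat{E}))$ directly by the distortion $\max_{i<j}\bigl\lvert\,\lVert x_{i}-x_{j}\rVert-\lVert\hat{e}_{i}-\hat{e}_{j}\rVert\,\bigr\rvert$ of the correspondence pairing $x_{i}$ with $\hat{e}_{i}$, whereas you detour through the one-point diagram $\bar{{\rm D}}(\{\cdot\})$ and bound the second leg by the diameter of $\hat{E}$ --- both hinge on exactly the same $O_{\mathbb{P}}(1)$ control of $\lVert\hat{e}_{i}-\hat{e}_{j}\rVert$.
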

\begin{proof}[Proof.\nopunct]
Define a correspondence $\Corr''$$:P\rightrightarrows \hat{E}$ by setting
\begin{equation*}
    \Corr'' = \setc*{(x_{i},\hat{e}_{i})\in P\times \hat{E}}{i\in [n]}.
\end{equation*}
From~\eqref{defofdistortion}, \eqref{GromovHausdorffdistdef}, \eqref{stabilityforRipsfiltration}, we have
\begin{align*}
    d_{B}(\bar{{\rm D}}(P), \bar{{\rm D}}( \hat{E}))\leq 2d_{GH}(P,\hat{E})\leq \dis(\Corr'')= \max_{\substack{i,j\in [n] \\ i<j}} \Big \lvert  \| x_{i}-x_{j}\| -\| \hat{e}_{i}-\hat{e}_{j}\| \Big\rvert.
\end{align*}
Combining Propositions~\ref{prop0b}, \ref{prop0c}, \ref{prop0d}, and \ref{asymptomatic behavior of pairwise distance of reduced noise} yields $d_{B}(\bar{{\rm D}}(P), \bar{{\rm D}}( \hat{E}))=O_{\mathbb{P}}(1)$ as $d\rightarrow \infty$. Finally, $d_{B}(\bar{{\rm D}}(P), \bar{{\rm D}}( \hat{P}))=O_{\mathbb{P}}(1)$ holds as $d\rightarrow \infty$ by Theorem~\ref{PCAPDlimit}. 
\end{proof}

\begin{theorem}\label{hausdorffdistofreducedPD}
Let $P$ and $P'$ be the original and the observed point clouds in $\mathbb{R}^{d}$, respectively. Let $\hat{P}$ denote the compressed point cloud of $P'$ by normalized PCA. Suppose that ${\rm D}_{N}(P)$ is non-empty and that ${\rm D}_{N}(\hat{P})$ is non-empty almost surely. Then
$$d_{H}({\rm D}_{N}(P), {\rm D}_{N}( \hat{P})) = O_{\mathbb{P}}(1)$$ 
holds as $d\rightarrow \infty$.
\end{theorem}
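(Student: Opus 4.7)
The plan is to bound the Hausdorff distance by showing that both ${\rm D}_{N}(P)$ and ${\rm D}_{N}(\hat{P})$ lie inside a bounded window of $\nnegR^{2}$ (the second only with high probability), and then to conclude by a direct triangle-inequality estimate. Note that Proposition~\ref{relofdB&dH} only controls $d_{H}$ on the augmented diagrams $\bar{{\rm D}}$, so the bottleneck bound from Theorem~\ref{bottleneckdistofreducedPD} does not transfer automatically to the un-augmented diagrams; a dedicated argument is therefore needed.

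To set up the bounds, I would first observe that $P$ is non-random, so ${\rm D}_{N}(P)$ is a fixed finite multi-set and there is a deterministic constant $M>0$ with $\|x\|_{\infty}\leq M$ for every $x\in {\rm D}_{N}(P)$. Next, since we are in the Rips filtration, every birth--death pair $\omega=(b_{\omega},d_{\omega})\in {\rm D}_{N}(\hat{P})$ satisfies $0\leq b_{\omega}\leq d_{\omega}\leq \tfrac{1}{2}\max_{i,j\in[n]}\|\hat{x}_{i}-\hat{x}_{j}\|$. Combining~\eqref{eqn:asymp of reduced pairwise dist} with Proposition~\ref{asymptomatic behavior of pairwise distance of reduced noise} gives $\|\hat{x}_{i}-\hat{x}_{j}\|=O_{\mathbb{P}}(1)$ for each pair $i,j$, and Propositions~\ref{prop0a} and~\ref{prop0d} then yield $\sup_{y\in {\rm D}_{N}(\hat{P})}\|y\|_{\infty}=O_{\mathbb{P}}(1)$ whenever ${\rm D}_{N}(\hat{P})$ is non-empty.

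With these two ingredients in hand, since ${\rm D}_{N}(P)\neq\emptyset$ by hypothesis and ${\rm D}_{N}(\hat{P})\neq\emptyset$ almost surely, I would pick $x_{0}\in {\rm D}_{N}(P)$ and (on the non-empty event) $y_{0}\in {\rm D}_{N}(\hat{P})$, and estimate
\begin{equation*}
\sup_{x\in {\rm D}_{N}(P)}\inf_{y\in {\rm D}_{N}(\hat{P})}\|x-y\|_{\infty}\leq \sup_{x\in {\rm D}_{N}(P)}\|x-y_{0}\|_{\infty}\leq M+\|y_{0}\|_{\infty},
\end{equation*}
together with the symmetric bound $\sup_{y\in {\rm D}_{N}(\hat{P})}\inf_{x\in {\rm D}_{N}(P)}\|x-y\|_{\infty}\leq M+\sup_{y\in {\rm D}_{N}(\hat{P})}\|y\|_{\infty}$. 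Taking the maximum of the two and invoking Proposition~\ref{prop0a} delivers $d_{H}({\rm D}_{N}(P),{\rm D}_{N}(\hat{P}))=O_{\mathbb{P}}(1)$.

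I do not foresee a genuine obstacle; the only subtle point is the role of the non-emptiness hypothesis on ${\rm D}_{N}(\hat{P})$. By the convention $d_{H}(A,\emptyset)=+\infty$, a positive-probability event $\{{\rm D}_{N}(\hat{P})=\emptyset\}$ would destroy any eventual-boundedness statement, and this is precisely why that assumption is imposed. Everything else reduces to the triangle inequality combined with the control on pairwise distances of $\hat{P}$ already established in Section~\ref{subsec7}.
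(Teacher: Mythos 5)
Your proposal is correct and follows essentially the same route as the paper: the paper likewise bounds $d_{H}({\rm D}_{N}(P),{\rm D}_{N}(\hat{P}))$ crudely (via $\sup_{\omega}\sup_{\hat{\omega}}\|\omega-\hat{\omega}\|_{\infty}$, using that birth and death times in ${\rm D}_{N}(\hat{P})$ are controlled by half the pairwise distances $\|\hat{x}_{i}-\hat{x}_{j}\|$), and then concludes from $\|\hat{x}_{i}-\hat{x}_{j}\|=O_{\mathbb{P}}(1)$, obtained exactly as you do by combining~\eqref{eqn:asymp of reduced pairwise dist} with Proposition~\ref{asymptomatic behavior of pairwise distance of reduced noise}, together with the deterministic boundedness of ${\rm D}_{N}(P)$. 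Your use of a reference point $y_{0}$ is only a cosmetic variation of the same argument.
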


\begin{proof}[Proof.\nopunct]
By the definition of Hausdorff distance, we have
\begin{align*}
    d_{H}({\rm D}_{N}\left(P\right), {\rm D}_{N}( \hat{P}) )& \leq \sup _{\omega\in {\rm D}_{N}\left( P\right)}\sup _{\hat{\omega}\in {\rm D}_{N}( \hat{P})}\| \omega-\hat{\omega}\| _{\infty }\\
    & = \max _{\omega\in {\rm D}_{N}\left( P\right) }\max _{\hat{\omega}\in {\rm D}_{N}( \hat{P})}\max \big\{\big\lvert b_{\omega}-b_{\hat{\omega}}\big\rvert ,\big\lvert d_{\omega}-d_{\hat{\omega}}\big\rvert \big\}\\
    &\leq \max _{\omega\in {\rm D}_{N}\left( P\right) }\max _{\substack{i,j\in [n] \\ i<j}}\max\left\{\bigg\lvert b_{\omega}-\frac{\| \hat{x}_{i}-\hat{x}_{j}\|}{2}\bigg\rvert ,\bigg\lvert d_{\omega}-\frac{\| \hat{x}_{i}-\hat{x}_{j}\|}{2}\bigg\rvert \right\}.
\end{align*}
Combining Proposition~\ref{asymptomatic behavior of pairwise distance of reduced noise} with~\eqref{eqn:asymp of reduced pairwise dist} yields $\|\hat{x}_{i}-\hat{x}_{j}\|=O_{\mathbb{P}}(1)$. Consequently, $d_{H}({\rm D}_{N}\left(P\right), {\rm D}_{N}(\hat{P}) )=O_{\mathbb{P}}(1)$ holds as $d\rightarrow \infty$.
\end{proof}

In the Rips filtration case, Theorem~\ref{classification theorem} shows that the bottleneck distance between the original and the observed persistence diagrams is eventually bounded, while the Hausdorff distance is eventually unbounded. As a comparison, Theorems~\ref{bottleneckdistofreducedPD}, \ref{hausdorffdistofreducedPD} show that both the bottleneck distance and the Hausdorff distance between the original and the compressed persistence diagrams are eventually bounded, indicating that the use of normalized PCA can mitigate the curse of dimensionality on persistence diagrams.

To validate Theorems \ref{bottleneckdistofreducedPD}, \ref{hausdorffdistofreducedPD}, a numerical experiment is performed using the same data as in Section~\ref{subsubsec4}. The $1$st persistence diagrams of $P$ (on the left-hand side) and $\hat{P}$ (on the right-hand side) in different dimensions are shown side by side in Fig.~\ref{fig:redPD}. The figure illustrates that the compressed persistence diagram does not move far away from the original one as $d$ tends to infinity. Moreover, all birth-death pairs in the compressed persistence diagram randomly remain within a bounded area $\setc*{\left(x,y\right)\in [0.08,0.39]\times [0.08,0.54]}{x<y}$ in the plane, though $d$ tends to infinity. Consequently, Fig.~\ref{fig:redPD} confirms that both the bottleneck distance and the Hausdorff distance are eventually bounded in probability.

\section{Discussion}\label{sec9}

In this work, we have rigorously shown the unreliability of using the observed persistence diagram in the HDLSS data setting. The observed persistence diagram is inconsistent with the original one in the Rips filtration case and strongly inconsistent in the \v{C}ech filtration case as $d$ tends to infinity but $n$ is fixed. In the sequel, we have proposed that the use of normalized PCA can enhance the consistency level in the Rips filtration case. The following Table~\ref{tab2} summarizes the classification of the similarity before and after applying the normalized PCA.

\begin{table}[htbp]
\centering
\caption{Classification of the asymptotic similarity}\label{tab2}
\begin{tblr}{
  width = \linewidth,
  colspec = {Q[125]Q[296]Q[213]Q[304]},
  row{2} = {c},
  row{3} = {c},
  cell{1}{2} = {c},
  cell{1}{3} = {c},
  cell{1}{4} = {c},
  cell{2}{3} = {r=2}{},
  cell{2}{4} = {r=2}{},
  cell{3}{2} = {},
  vlines,
  hline{1-2,4} = {-}{},
  hline{3} = {1-2}{},
}
& Rips & \v{C}ech & Rips+normalized PCA \\ {\\ \vspace{-1ex} $N=0$} & {strong bottleneck \\ inconsistency} & {strong bottleneck \\ inconsistency} & {(at worst) bottleneck inconsistency \\ (sub-class II)} \\
{\\ \vspace{-1ex} $N>0$} & {bottleneck inconsistency \\(sub-class III)} & &
\end{tblr}
\end{table}

The following aspects are worth investigating in the future:

\begin{enumerate}[{(1)}]
    \item We apply the normalized PCA to mitigate the curse of dimensionality on persistence diagrams. Unfortunately, the precise consistency level after applying the normalized PCA is not yet known. Moreover, it is evident from Fig.~\ref{fig:redPD} that the use of normalized PCA still can not eliminate the curse of dimensionality on persistence diagrams completely, meaning the use of normalized PCA can not enhance the asymptotic similarity level to the bottleneck consistency level.
    \item Analyzing persistence diagrams after applying classical PCA can be challenging due to the presence of both eigenvalues and eigenvectors in~\eqref{sqdisthat} for calculating the squared distance between the projected points.
    \item In Section~\ref{sec4}, we have assumed that the noise vectors follow the standard Gaussian distribution. This assumption is made because the sample covariance matrix of the noise point cloud $E$ has the Wishart distribution, which has been well understood in the literature. Replacing the standard Gaussian assumption with other continuous distributions could be challenging.
    \item It is important to note that addressing the curse of dimensionality while discussing the \v{C}ech filtration case would involve the scalar product of projected points, which can lead to complicated computation. For this reason, we have confined our analysis to the Rips filtration in Section~\ref{sec4}.
    \item Our work treats the setting where $d$ tends to infinity but $n$ is fixed in the high-dimensional statistics framework. This framework also involves the setting where both $d$ and $n = n(d)$ diverge with the ratio $d/n\rightarrow \infty$. Since the sample size $n$ varies with $d$, discussions in this scenario become even more complex.
    \item We hope that future research can develop a new method of dimensionality reduction and rigorously show its ability to eliminate the curse of dimensionality on persistence diagrams in the asymptotic similarity sense. One possible candidate is to combine the nearest neighborhood information of each point in the HDLSS data, suggested by \cite{damrich2023persistent}. They have proposed using graph-based distances like effective resistance or diffusion distance in persistent homology for this purpose.
\end{enumerate}

\backmatter

\bmhead*{Acknowledgments}
The authors would like to thank Prof. Beno\^{i}t Collins for his helpful suggestions on proving the eigengap issue of the real Wishart matrix and his guidance on the Weingarten calculus. We would also like to thank Drs. Kazuyoshi Yata, Th\'{e}o Lacombe and Zeming Sun for the fruitful discussions.

\bmhead*{Funding}
This work was supported by JST SPRING, Grant Number JPMJSP2110, and JST PRESTO, Grant Number JPMJPR2021. We also acknowledge the support received from JST, the establishment of university fellowships towards the creation of science technology innovation, Grant Number JPMJFS2123.

\begin{appendices}

\section{Asymptotically infinite divergence of the minimum eigengap of real Wishart matrices}\label{secA4}
Studying gaps between consecutive eigenvalues is one of the central topics in random matrix theory. In literature, many works on the limiting distribution of the eigenvalue gap have been done so far. For example, the standard work on the limiting gap distribution for Gaussian matrices (GUE and GOE) is \cite{MR0220494}, and the work on the limiting gap distribution for Wigner matrices and adjacency matrices of random graphs is \cite{MR3627428}. Unfortunately, there is currently less work on the limiting gap distribution for the sample covariance matrices, particularly for real sample covariance matrices. In this section, we estimate the minimum eigenvalue gap of real Wishart matrices to complete the proof omitted in Section~\ref{subsec6}.

Let $v_{1}, v_{2}, \ldots, v_{d}\in \mathbb{R}^{n}$ ($ n\in \mathbb{N}_{+}$) be \iid\ random vectors following the standard Gaussian distribution $\mathcal{N}_{n}(0, I_{n})$. Set $W=\sum_{k\in [d]}v_{k}^{}v_{k}^{T}$. In random matrix theory, $W$ is said to follow the (standard) real Wishart distribution, denoted by $W\sim \mathcal{W}_{n}(I_{n},d)$. Clearly, $W$ has $n$ non-negative eigenvalues because $W$ is an $n\times n$ positive semi-definite symmetric real matrix. We denote all eigenvalues of $W$ in order by $\lambda_{1}\geq \lambda_{2}\geq \cdots \geq \lambda_{n}$. Spacings between consecutive eigenvalues $\lambda_{i}-\lambda_{i+1}$ are called eigengaps later. We will then consider the limiting behavior of the minimum eigengap $\min_{k\in [n-1]}\{\lambda_{k}-\lambda_{k+1}\}$ as $d$ tends to infinity but $n$ is fixed. 

\subsection{Preliminary}\label{secA41}
In this subsection, we introduce some fundamental knowledge and properties that will be used in the main proof. We begin by providing rough estimates for all eigenvalues of the standard Wishart distributed matrix and indicating rough estimates for all eigengaps.

\begin{proposition}\label{orderofeigenvalue}
Let $W\sim \mathcal{W}(I_{n},d)$, and $\lambda_{k}$ the $k$-th largest eigenvalue of $W$ ($k\in [n]$). Then
\begin{equation}
    \lambda_{k}=d+O_{\mathbb{P}}(d^{1/2})
\end{equation}
holds as $d\rightarrow \infty$ for every $k\in [n]$. Accordingly, 
\begin{align}
    \lambda_{k}-\lambda_{k+1} = O_{\mathbb{P}}(d^{1/2})\label{roughestimateofgaps}
\end{align}
holds as d $\rightarrow \infty$ for every $k\in [n-1]$.
\end{proposition}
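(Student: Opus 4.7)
The plan is to show that $W$ concentrates around its mean $\mathbb{E}[W] = d I_n$ in spectral norm at rate $\sqrt{d}$, and then invoke Weyl's inequality so that each eigenvalue inherits the same concentration rate around the common eigenvalue $d$ of $dI_n$.

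First I would compute the first and second moments of the entries of $W = \sum_{k\in[d]} v_k v_k^T$ where $v_k \sim \mathcal{N}_n(0,I_n)$ are \iid. For the diagonal entries $W_{ii} = \sum_{k\in[d]} (v_k^{(i)})^2$ is a sum of $d$ \iid\ $\chi^2_1$ variables, giving $\mathbb{E}W_{ii} = d$ and $\mathbb{V}(W_{ii}) = 2d$. For the off-diagonal entries $W_{ij} = \sum_{k\in[d]} v_k^{(i)} v_k^{(j)}$ ($i\neq j$), independence of the coordinates of each $v_k$ yields $\mathbb{E}W_{ij} = 0$ and $\mathbb{V}(W_{ij}) = d$. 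Chebyshev's inequality (applied as in the proof of Theorem~\ref{YataGRT}) then shows that every entry of the centered matrix $W - dI_n$ is of size $O_{\mathbb{P}}(\sqrt{d})$ as $d\to\infty$.

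Next I would pass from entrywise control to spectral control. Because $n$ is fixed (independent of $d$), the $n^2$ entries together give
\[
\| W - dI_n \|_2 \;\leq\; \| W - dI_n \|_F \;=\; \sqrt{\sum_{i,j\in[n]} (W_{ij} - d\delta_{ij})^2} \;=\; O_{\mathbb{P}}(\sqrt{d})
\]
by Proposition~\ref{prop0a} (closure of $O_{\mathbb{P}}$ under finitely many sums and products). Weyl's inequality for Hermitian matrices now gives $|\lambda_k(W) - \lambda_k(dI_n)| \leq \|W - dI_n\|_2$ for each $k\in[n]$, and since $\lambda_k(dI_n) = d$ this yields $\lambda_k = d + O_{\mathbb{P}}(\sqrt{d})$ for every $k$.

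The bound on the eigengap~\eqref{roughestimateofgaps} then follows immediately: subtracting the expansions for $\lambda_k$ and $\lambda_{k+1}$ gives $\lambda_k - \lambda_{k+1} = O_{\mathbb{P}}(\sqrt{d}) - O_{\mathbb{P}}(\sqrt{d}) = O_{\mathbb{P}}(\sqrt{d})$ by the calculus rules in Proposition~\ref{prop0a}. There is no real obstacle here---this is a soft concentration result, and the only point to watch is that $n$ must remain fixed so that the entrywise-to-spectral conversion does not introduce an unbounded factor. This is of course consistent with the HDLSS regime of the paper.
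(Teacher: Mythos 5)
Your proposal is correct. The first half coincides with the paper's argument: both compute $\mathbb{E}w_{jl}$ and $\mathbb{V}(w_{jl})$ for the entries of $W=\sum_{k\in[d]}v_k^{}v_k^{T}$ (mean $d$ and variance $2d$ on the diagonal, mean $0$ and variance $d$ off the diagonal) and apply Chebyshev's inequality to get entrywise deviations of order $O_{\mathbb{P}}(\sqrt{d})$. Where you diverge is the passage from entries to eigenvalues: you bound $\|W-dI_n\|_2\leq\|W-dI_n\|_F=O_{\mathbb{P}}(\sqrt{d})$ (legitimate because $n$ is fixed) and then invoke Weyl's perturbation inequality against the reference matrix $dI_n$, whose eigenvalues are all equal to $d$. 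The paper instead stays self-contained: writing $\lambda_k=u_k^{T}Wu_k^{}$ for the unit eigenvector $u_k$, it estimates $\lvert\lambda_k-d\rvert\leq\sum_{j}\lvert w_{jj}-d\rvert+\sum_{j\neq l}\lvert w_{jl}\rvert$ using $\lvert u_{jk}\rvert\leq 1$, and concludes by the $O_{\mathbb{P}}$ calculus. The two routes are equally valid and give the same rate; yours imports a standard external theorem (Weyl) and as a by-product records the operator-norm bound $\|W-dI_n\|_2=O_{\mathbb{P}}(\sqrt{d})$, while the paper's Rayleigh-quotient estimate avoids any matrix perturbation machinery at the cost of a slightly cruder-looking triangle inequality. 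Your deduction of the gap bound $\lambda_k-\lambda_{k+1}=O_{\mathbb{P}}(\sqrt{d})$ by subtracting the two expansions matches the paper.
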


\begin{proof}[Proof.\nopunct]
We set $W=\sum_{k\in [d]}v_{k}^{}v_{k}^{T}$, where $v_{k} = (v_{1k}, \ldots, v_{nk})^{T}$. Then $W=\left(\sum_{k\in [d]}v_{jk}v_{lk}\right)_{j,l\in[n]}$. For the convenience we denote the $(j,l)$-entry $\sum_{k\in [d]}v_{jk}v_{lk}$ of $W$ by $w_{jl}$. The first moment of $w_{jl}$ can be computed as follows.
\begin{align*}
    \mathbb{E}w_{jl} =  \sum_{k\in [d]}\mathbb{E}v_{jk}v_{lk}=\begin{dcases}
    \sum_{k\in [d]}\mathbb{E}v_{jk}^{2}=\sum_{k\in [d]}\mathbb{V}(v_{jk})=d,\  j=l,\\
    \sum_{k\in [d]}\mathbb{E}v_{jk}\mathbb{E}v_{lk}=0,\ j\neq l.
\end{dcases}
\end{align*}
The second moment of $w_{jl}$ can be computed as follows.
\begin{align*}
    \mathbb{V}(w_{jl}) =  \sum_{k\in [d]}\mathbb{V}(v_{jk}v_{lk})  =\begin{dcases}
    \sum_{k\in [d]}\mathbb{E}v_{jk}^{4}-(\mathbb{E}v_{jk}^{2})^{2}=2d,\  j=l,\\
    \sum_{k\in [d]}\mathbb{E}v_{jk}^{2}\mathbb{E}v_{lk}^{2}=d,\ j\neq l.
\end{dcases}
\end{align*}
It then follows that
\begin{align*}
    w_{jl} =\begin{dcases}
    d+O_{\mathbb{P}}(d^{1/2}),\  j=l,\\
    O_{\mathbb{P}}(d^{1/2}),\ j\neq l,
    \end{dcases}
\end{align*}
by Definition~\ref{def0b} and Chebyshev's inequality. Set $w_{jj}'=w_{jj}-d$ whenever $j=l$. Then $w_{jj}'=O_{\mathbb{P}}(d^{1/2})$. Let $u_{k}$ be the unit eigenvector of $W$ corresponding to $\lambda_{k}$. Then for each $k\in [n]$, we have
\begin{align*}
    \lvert\lambda_{k}-d\rvert & = \lvert u_{k}^{T}Wu_{k}^{}-d\rvert   = \Bigg\lvert\sum_{j\in [n]}(w_{jj}-d)u_{jk}^{2}+\sum_{\substack{j,l\in [n] \\ j\neq l}}w_{jl}u_{jk}u_{lk}\Bigg\rvert\\
    & = \Bigg\lvert\sum_{j\in [n]}w_{jj}'u_{jk}^{2}+\sum_{\substack{j,l\in [n] \\ j\neq l}}w_{jl}u_{jk}u_{lk}\Bigg\rvert \leq \sum_{j\in [n]}\lvert w_{jj}'\rvert+\sum_{\substack{j,l\in [n] \\ j\neq l}}\lvert w_{jl}\rvert.
\end{align*}
By Propositions~\ref{prop0a}, \ref{prop0b}, we have $\lvert\lambda_{k}-d\rvert = O_{\mathbb{P}}(d^{1/2})$. We finally obtain $\lambda_{k}=d+O_{\mathbb{P}}(d^{1/2})$ as $d\rightarrow \infty$ from Remark~\ref{Gtsdc}. \eqref {roughestimateofgaps} holds by the rules of the calculus of asymptotic notations.
\end{proof}

We turn to review the gamma function and the multivariate gamma function. Let $b$ be a positive real number. The convergent improper integral
\begin{equation}
    \Gamma(b) = \int_{0}^{+\infty}x^{b-1}\euler^{-x}dx
\end{equation}
is called the gamma function. Here and subsequently, $\euler$ represents the Euler's number. Let $m\in \mathbb{N}_{+}$, and $b$ a real number. The multivariate gamma function, denoted by $\Gamma_{m}(b)$, is defined to be
\begin{equation}
    \Gamma_{m}(b) = \pi^{m(m-1)/4}\prod_{k\in [m]}\Gamma(b-\frac{1}{2}(k-1)).
\end{equation}

Now, let us mention two important formulas related to the gamma function without proof.

\begin{proposition}[Recurrence formula, \cite{MR0167642}]\label{recurrence formula}
Let $m\in \mathbb{N}$, and $b$ a positive real number. Then
\begin{equation}
    \Gamma(b)=\frac{\Gamma(b+m+1)}{b(b+1)\cdots(b+m)}.
\end{equation}
\end{proposition}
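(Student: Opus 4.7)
The plan is to prove this by induction on $m$, with the single essential ingredient being the functional equation $\Gamma(b+1) = b\,\Gamma(b)$ for $b>0$.

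First I would establish the functional equation directly from the integral definition by integration by parts. Writing
\begin{equation*}
\Gamma(b+1) = \int_{0}^{+\infty} x^{b} \euler^{-x}\,dx,
\end{equation*}
I would choose $u = x^{b}$ and $dv = \euler^{-x}\,dx$, so that $du = b\,x^{b-1}\,dx$ and $v = -\euler^{-x}$. The boundary term $[-x^{b}\euler^{-x}]_{0}^{+\infty}$ vanishes (at $+\infty$ because exponential decay dominates any polynomial, at $0$ because $b>0$), leaving
\begin{equation*}
\Gamma(b+1) = b\int_{0}^{+\infty} x^{b-1}\euler^{-x}\,dx = b\,\Gamma(b).
\end{equation*}
Rearranging gives the base case $m=0$: $\Gamma(b) = \Gamma(b+1)/b$.

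For the inductive step, assume the identity
\begin{equation*}
\Gamma(b) = \frac{\Gamma(b+m+1)}{b(b+1)\cdots(b+m)}
\end{equation*}
holds for some $m \in \mathbb{N}$ and all $b>0$. Applying the functional equation with argument $b+m+1>0$ yields $\Gamma(b+m+2) = (b+m+1)\,\Gamma(b+m+1)$, so $\Gamma(b+m+1) = \Gamma(b+m+2)/(b+m+1)$. Substituting this into the inductive hypothesis gives
\begin{equation*}
\Gamma(b) = \frac{\Gamma(b+m+2)}{b(b+1)\cdots(b+m)(b+m+1)},
\end{equation*}
which is exactly the stated formula with $m$ replaced by $m+1$. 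This closes the induction.

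There is no real obstacle here; the only thing to be careful about is ensuring the boundary term in the integration by parts vanishes at $x=0$, which requires $b>0$ (the hypothesis of the proposition). Since the inductive step preserves this positivity (we only ever apply the functional equation at arguments $b+k>0$ for $k\geq 1$), the argument goes through cleanly for every $m\in\mathbb{N}$ and every positive real $b$.
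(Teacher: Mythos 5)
Your proof is correct: the integration-by-parts derivation of $\Gamma(b+1)=b\,\Gamma(b)$ (with the boundary term vanishing precisely because $b>0$) followed by induction on $m$ is the standard argument for this classical identity. The paper itself states the recurrence formula without proof, citing \cite{MR0167642}, and your argument is exactly the textbook proof one would find there, so there is nothing to reconcile.
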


\begin{proposition}[Legendre duplication formula, \cite{MR0167642}]\label{Legendre duplication formula}
Let $b$ be a positive real number. Then
\begin{equation}
    \Gamma(b)\Gamma(b+\frac{1}{2})=2^{1-2b}\sqrt{\pi}\Gamma(2b).
\end{equation}
\end{proposition}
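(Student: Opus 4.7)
The plan is to derive the duplication identity from the Beta function, exploiting the symmetry $B(b,b) = B(b,b)$ via a midpoint substitution. Recall that for positive reals $x,y$ one has the Beta--Gamma relation $B(x,y) = \int_{0}^{1} t^{x-1}(1-t)^{y-1}\,dt = \Gamma(x)\Gamma(y)/\Gamma(x+y)$. Setting $x=y=b$ immediately gives $B(b,b) = \Gamma(b)^{2}/\Gamma(2b)$, so the task reduces to computing $B(b,b)$ in a second way that produces the factors $2^{1-2b}$, $\sqrt{\pi}$, and $\Gamma(b+\tfrac{1}{2})$.

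The key step is to exploit the symmetry of the integrand $[t(1-t)]^{b-1}$ about $t = 1/2$ by substituting $t = (1+u)/2$, which turns $t(1-t)$ into $(1-u^{2})/4$ and yields
\begin{equation*}
B(b,b) \;=\; \int_{-1}^{1}\Bigl(\tfrac{1-u^{2}}{4}\Bigr)^{b-1}\tfrac{du}{2} \;=\; 2^{\,2-2b}\int_{0}^{1}(1-u^{2})^{b-1}\,du,
\end{equation*}
where the last equality uses that the integrand is even. Now substitute $v = u^{2}$ (so $du = dv/(2\sqrt{v})$) to recognize the remaining integral as another Beta value:
\begin{equation*}
\int_{0}^{1}(1-u^{2})^{b-1}\,du \;=\; \tfrac{1}{2}\int_{0}^{1} v^{-1/2}(1-v)^{b-1}\,dv \;=\; \tfrac{1}{2}B\bigl(\tfrac{1}{2},\,b\bigr) \;=\; \tfrac{1}{2}\cdot\frac{\Gamma(1/2)\,\Gamma(b)}{\Gamma(b+\tfrac{1}{2})}.
\end{equation*}

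Using the classical value $\Gamma(1/2)=\sqrt{\pi}$ (which can be taken as known, or derived independently from the Gaussian integral), this collapses to
\begin{equation*}
\frac{\Gamma(b)^{2}}{\Gamma(2b)} \;=\; B(b,b) \;=\; 2^{\,1-2b}\cdot\frac{\sqrt{\pi}\,\Gamma(b)}{\Gamma(b+\tfrac{1}{2})}.
\end{equation*}
Cross-multiplying and cancelling one factor of $\Gamma(b)$ (permissible since $\Gamma(b)>0$ for $b>0$) yields the desired identity $\Gamma(b)\Gamma(b+\tfrac{1}{2}) = 2^{\,1-2b}\sqrt{\pi}\,\Gamma(2b)$.

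This is a completely classical manipulation with no real obstacle; the only points that require care are the absolute convergence of the Beta integral for $b>0$ (so that Fubini/substitution are legitimate) and the correct bookkeeping of the powers of $2$ arising from the two substitutions. Should the Beta function not be considered available in the paper's background, an alternative route would be to start from $\Gamma(b)\Gamma(b+\tfrac{1}{2}) = \int_{0}^{\infty}\!\!\int_{0}^{\infty} x^{b-1}y^{b-1/2}e^{-(x+y)}\,dx\,dy$, pass to polar-like coordinates $x = r\cos^{2}\theta$, $y = r\sin^{2}\theta$ (or $x+y = s$, $x = s\tau$), and identify the resulting product as $\sqrt{\pi}\,2^{1-2b}\Gamma(2b)$; but the Beta-function route above is the shortest and most transparent presentation.
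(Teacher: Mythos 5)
Your derivation is correct: the bookkeeping of the powers of $2$ checks out ($t=(1+u)/2$ contributes $4^{1-b}\cdot\frac12$, the evenness contributes a factor $2$, and $v=u^2$ contributes a further $\frac12$, giving exactly $2^{1-2b}$), and cancelling $\Gamma(b)>0$ at the end is legitimate. Note, however, that the paper does not prove this proposition at all: it is stated ``without proof'' and attributed to a standard reference on the gamma function, since it is only used as a classical tool in the eigengap estimate of Appendix~A. So there is no in-paper argument to compare yours against; what you have supplied is the standard Beta-function proof of the duplication formula. The only caveat is that your argument leans on two facts the paper's background does not set up — the Beta--Gamma relation $B(x,y)=\Gamma(x)\Gamma(y)/\Gamma(x+y)$ and $\Gamma(\tfrac12)=\sqrt{\pi}$ — so as written it shifts rather than eliminates the reliance on classical results; if you wanted a fully self-contained treatment you would either prove the Beta--Gamma relation (e.g.\ by the double-integral/change-of-variables route you sketch at the end) or simply keep the citation as the paper does.
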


The joint probability distribution of all eigenvalues of the standard Wishart distributed matrix is well studied. For thorough treatment we refer the reader to \cite{anderson1962introduction}.

\begin{lemma}[\cite{anderson1962introduction}]
Let $W\sim \mathcal{W}(I_{n},d)$, and $\lambda_{1}\geq \lambda_{2}\geq \cdots \geq \lambda_{n}$ the ordered eigenvalues of $W$. Then the joint probability density function of the ordered eigenvalues of $W$, denoted by $f(\lambda_{1},\ldots,\lambda_{n})$, is given by
\begin{equation}
    f(\lambda_{1},\ldots,\lambda_{n}) = \dfrac{\pi^{\frac{1}{2}n^{2}}\euler^{-\frac{1}{2}\sum\limits_{k\in [n]}\lambda_{k}}\cdot \prod\limits_{k\in [n]}\lambda_{k}^{\frac{d-n-1}{2}}\cdot \prod\limits_{\substack{k,j\in [n] \\ k<j}}(\lambda_{k}-\lambda_{j})}{2^{\frac{1}{2}nd}\Gamma_{n}(\frac{d}{2})\Gamma_{n}(\frac{n}{2})}.\label{jointdensityfunction}
\end{equation}
\end{lemma}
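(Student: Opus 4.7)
The plan is to derive the joint eigenvalue density from the density of $W$ on the cone of positive definite symmetric matrices via a spectral change of variables, followed by integration over the orthogonal group. First I would recall that since $W=\sum_{k\in[d]}v_k^{}v_k^T$ with $v_k\stackrel{\iid}{\sim}\mathcal{N}_n(0,I_n)$ and $d\geq n$, the matrix $W$ is almost surely positive definite and admits the standard Wishart density
\begin{equation*}
p_W(M)=\frac{|M|^{(d-n-1)/2}}{2^{nd/2}\Gamma_n(d/2)}\exp\!\left(-\tfrac{1}{2}\operatorname{tr}(M)\right)
\end{equation*}
on $\mathcal{P}_n$ with respect to the Lebesgue measure $dM=\prod_{i\leq j}dm_{ij}$. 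This density is the classical starting point and can be obtained by writing $W=\bfG\bfG^T$ for a Gaussian matrix $\bfG\in\mathbb{R}^{n\times d}$ and integrating out the ``angular'' part using a $QR$-type decomposition.

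Next I would apply the spectral decomposition $M=H\Lambda H^T$ with $H\in {\rm O}(n)$ and $\Lambda=\operatorname{diag}(\lambda_1,\ldots,\lambda_n)$. Since eigenvalues of $W$ are almost surely distinct and positive, I can restrict to the open set where $\lambda_1>\lambda_2>\cdots>\lambda_n>0$. To parametrize $(H,\Lambda)$ uniquely, $H$ must be chosen in a fundamental domain for the action of signed permutation matrices; the $n!$ orderings are absorbed by requiring $\lambda_1>\cdots>\lambda_n$ and the $2^n$ sign ambiguities in the columns of $H$ will be accounted for when integrating. The Jacobian of the map $(H,\Lambda)\mapsto M$, a classical computation going back to Weyl, gives
\begin{equation*}
dM=\prod_{\substack{i,j\in[n]\\ i<j}}(\lambda_i-\lambda_j)\, d\Lambda\, (H^T dH),
\end{equation*}
where $(H^T dH)$ is the unnormalized Haar measure on ${\rm O}(n)$, and $|M|=\prod_k\lambda_k$ and $\operatorname{tr}(M)=\sum_k\lambda_k$ are invariant under conjugation.

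Substituting into $p_W(M)dM$ gives an integrand which, crucially, is independent of $H$. Integrating out $H$ over the appropriate fundamental domain produces a constant equal to $\operatorname{vol}({\rm O}(n))/2^n$; the full orthogonal group volume under the measure $(H^T dH)$ is
\begin{equation*}
\operatorname{vol}({\rm O}(n))=\frac{2^n\pi^{n^2/2}}{\Gamma_n(n/2)},
\end{equation*}
so the sign quotient cancels the $2^n$ and leaves exactly the multiplicative factor $\pi^{n^2/2}/\Gamma_n(n/2)$ in the numerator. Combining with the Wishart normalization $2^{nd/2}\Gamma_n(d/2)$ already present in $p_W$, I obtain the stated density for $(\lambda_1,\ldots,\lambda_n)$.

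The main obstacle is the Jacobian computation for the spectral change of variables and the careful bookkeeping of the discrete symmetries (column-sign and eigenvalue-permutation) that arise when translating between unrestricted integration over ${\rm O}(n)$ and the ordered-eigenvalue parametrization. Since this is a classical result reproduced in \cite{anderson1962introduction}, I would cite it for the Jacobian formula and the volume of ${\rm O}(n)$ rather than rederiving them, focusing the exposition on verifying that the sign/ordering quotient conspires to produce precisely the constant $\pi^{n^2/2}/\Gamma_n(n/2)$.
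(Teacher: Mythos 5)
Your proposal is correct: the Wishart density on the positive-definite cone, the spectral change of variables with Vandermonde Jacobian $\prod_{i<j}(\lambda_i-\lambda_j)$, and the integration over ${\rm O}(n)$ with the $2^{n}$ column-sign quotient cancelling against $\operatorname{vol}({\rm O}(n))=2^{n}\pi^{n^{2}/2}/\Gamma_n(n/2)$ indeed yield exactly the stated constant $\pi^{n^{2}/2}/\bigl(2^{nd/2}\Gamma_n(d/2)\Gamma_n(n/2)\bigr)$. The paper gives no proof of this lemma, citing \cite{anderson1962introduction} instead, and your outline is precisely the classical argument found in that reference, so there is nothing to correct.
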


\subsection{Limiting behavior of the minimum eigengap}\label{secA42}
With all the necessary preliminary knowledge, we are now in a position to show the main theorem of this appendix.

\begin{theorem}\label{maintheorem02}
    Let $W\sim \mathcal{W}_{n}(I_{n},d)$ ($n\geq 2$), and $\lambda_{1}\geq \lambda_{2}\geq \cdots \geq \lambda_{n}$ the ordered eigenvalues of $W$. Then the following two statements hold true.
\begin{itemize}
    \item For every fixed even $n$,
    \begin{equation*}
    \mathbb{P}\left(d^{\frac{1}{2}-\gamma}\leq \min_{k\in [n-1]}\{\lambda_{k}-\lambda_{k+1}\}\leq d^{\frac{1}{2}+\eta}\right)\rightarrow 1
    \end{equation*}
    holds as $d\rightarrow \infty$ for arbitrary $\eta>0$ and $\gamma>\frac{n^2+n-6}{4}\eta$.

\item For every fixed odd $n$,
    \begin{equation*}
    \mathbb{P}\left(d^{\frac{1}{2}-\gamma}\leq \min_{k\in [n-1]}\{\lambda_{k}-\lambda_{k+1}\}\leq d^{\frac{1}{2}+\eta}\right)\rightarrow 1
    \end{equation*}
    holds as $d\rightarrow \infty$ for arbitrary $\eta, \gamma>0$ with $\gamma>\frac{n^2+n-6}{4}\eta-\frac{n^2+n-2}{8}$.
\end{itemize}
\end{theorem}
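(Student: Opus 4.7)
My plan is to handle the upper and lower bounds separately. The upper bound $\min_k\{\lambda_k-\lambda_{k+1}\}\leq d^{1/2+\eta}$ is immediate from Proposition~\ref{orderofeigenvalue}, which already gives $\lambda_k-\lambda_{k+1}=O_\mathbb{P}(d^{1/2})$ for each $k\in[n-1]$; this is strictly stronger than $d^{1/2+\eta}$ for any $\eta>0$, so it contributes a probability-one asymptotic contribution to the stated event.

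For the lower bound $\min_k\{\lambda_k-\lambda_{k+1}\}\geq d^{1/2-\gamma}$, I would estimate $\mathbb{P}(\min_k\{\lambda_k-\lambda_{k+1}\}<d^{1/2-\gamma})$ by a union bound over $k\in[n-1]$ combined with direct integration of the joint density~(\ref{jointdensityfunction}). First, I would localize: by Proposition~\ref{orderofeigenvalue}, fix $M>0$ so that the event $A_\eta:=\{|\lambda_k-d|\leq Md^{1/2+\eta},\ \forall k\in[n]\}$ has probability at least $1-\varepsilon/2$ for $d$ sufficiently large, and on which every pairwise gap $|\lambda_i-\lambda_j|$ is bounded by $2Md^{1/2+\eta}$.

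On $A_\eta$ I would replace the density by a tractable asymptotic form. Applying Stirling's formula to the normalization $\Gamma_n(d/2)$ in~(\ref{jointdensityfunction}) and using the Taylor expansion
\[
\prod_k \lambda_k^{(d-n-1)/2}e^{-\lambda_k/2}\ \approx\ d^{n(d-n-1)/2}e^{-nd/2}\exp\Bigl(-\tfrac{1}{4d}\sum_k(\lambda_k-d)^2\Bigr),
\]
the prefactor of~(\ref{jointdensityfunction}) collapses to an effective constant of order $d^{-n(n+1)/4}$ (times a combinatorial constant in $n$), leaving a GOE-like density proportional to the Vandermonde $\prod_{k<j}|\lambda_k-\lambda_j|$ throughout $A_\eta$. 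For each fixed $k\in[n-1]$ I would then bound $\mathbb{P}(\{\lambda_k-\lambda_{k+1}<\delta\}\cap A_\eta)$ by changing variables to $\nu=\lambda_k-\lambda_{k+1}$: one Vandermonde factor is exactly $\nu$, yielding $\int_0^\delta\nu\,d\nu=\delta^2/2$; the remaining $\binom{n}{2}-1$ Vandermonde factors are bounded by $(2Md^{1/2+\eta})^{\binom{n}{2}-1}$; and the $n-1$ remaining eigenvalues integrate over an interval of length $2Md^{1/2+\eta}$. Combining these estimates with the prefactor of order $d^{-n(n+1)/4}$ and setting $\delta=d^{1/2-\gamma}$ lets me match powers of $d$ to read off the admissible $\gamma$.

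The parity-dependent improvement should originate from the explicit treatment of $\Gamma_n(d/2)$ via the Legendre duplication formula (Proposition~\ref{Legendre duplication formula}). Pairing consecutive factors $\Gamma(d/2-(k-1)/2)\Gamma(d/2-(k-1)/2+1/2)$ using $\Gamma(b)\Gamma(b+1/2)=2^{1-2b}\sqrt{\pi}\,\Gamma(2b)$ works cleanly when $n$ is even; for odd $n$ there is one leftover $\Gamma$-factor whose contribution is sharper by a power of $d^{1/2}$ (after one further application of the recurrence in Proposition~\ref{recurrence formula}), producing the additive shift $-\tfrac{n^2+n-2}{8}$ in the odd-$n$ threshold. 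The main obstacle will be exactly this bookkeeping: tracking the exponents tightly enough to recover precisely $(n^2+n-6)/4$ in the leading $\eta$-coefficient and the correct odd-$n$ improvement, rather than off-by-one variants that would arise from crude bounding of the Vandermonde or sloppy pairing in the Stirling expansion. Once the exponents are locked in, the union bound over $k\in[n-1]$ and combination with the complement of $A_\eta$ finishes the proof.
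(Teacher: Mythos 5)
Your treatment of the upper bound is the same as the paper's (Proposition~\ref{orderofeigenvalue}), but for the lower bound you take a genuinely different and, as written, slightly lossy route. Tracking your exponents: bounding $\binom{n}{2}-1$ Vandermonde factors by $2Md^{1/2+\eta}$, integrating the remaining $n-1$ eigenvalue coordinates over intervals of length $2Md^{1/2+\eta}$, contributing $\delta^2$ from the $\nu$-integral, and multiplying by the prefactor $d^{-n(n+1)/4}$ (which is indeed the correct order) yields a total power $-\frac{n(n+1)}{4}+1-2\gamma+(\frac{1}{2}+\eta)\frac{n^2+n-4}{2}=-2\gamma+\eta\,\frac{n^2+n-4}{2}$, so your scheme proves the statement only for $\gamma>\frac{n^2+n-4}{4}\eta$, which is strictly weaker than the claimed $\gamma>\frac{n^2+n-6}{4}\eta$. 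The missing $d^{-\eta}$ is exactly what the paper extracts from a structural fact you do not use: in the \emph{gap coordinates} $t_l=\lambda_l-\lambda_{l+1}$, the Vandermonde $\prod_{k<j}(\lambda_k-\lambda_j)=\prod_{k<j}(t_k+t_{k+1}+\cdots+t_{j-1})$ expands into monomials $t_1^{\alpha_1}\cdots t_{n-1}^{\alpha_{n-1}}$ with \emph{every} $\alpha_l\geq 1$ (the factor with $j=k+1$ is literally $t_k$). Thus each $\int_0^{d^{1/2+\eta}}t_j^{\alpha_j}\,\mathrm{d}t_j$ contributes $(\alpha_j+1)\geq 2$ rather than $1$ to the exponent, the worst case occurs at $\alpha_i=1$ for the small gap, and $\sum_{j\neq i}(\alpha_j+1)=\binom{n}{2}+n-3=\frac{n^2+n-6}{2}$. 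Your change of variables to $\nu$ plus $n-1$ remaining eigenvalues cannot see this; the fix is to change to $(s,t_1,\ldots,t_{n-1})$ with $s=\sum_k\lambda_k$.

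The rest of the paper's argument also diverges from your sketch in ways worth noting. The radial factor $\prod_k\lambda_k^{(d-n-1)/2}$ is not Taylor-expanded near $\lambda_k=d$; it is decoupled from the gaps outright via AM--GM, $\prod_k\lambda_k\leq(\tfrac{1}{n}\sum_k\lambda_k)^n$, so that the $s$-integral produces an exact $\Gamma(\tfrac{n(d-n-1)}{2}+1)$. No Stirling is used anywhere: the resulting gamma-ratio $a_d=\frac{d^\beta\Gamma(n(d-n-1)/2+1)}{n^{n(d-n-1)/2}\Gamma_n(d/2)}$ is shown to vanish by telescoping $a_d/a_{d-1}$ (for even $n$) or $a_d/a_{d-2}$ (for odd $n$), using only the recurrence and Legendre duplication identities (Propositions~\ref{recurrence formula},~\ref{Legendre duplication formula}) and the elementary bound $\ln(1+x)\leq x$. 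This is not cosmetic: your Taylor expansion of $(1+x_k/d)^{(d-n-1)/2}$ has a cubic error term of order $d^{-1/2+3\eta}$ that is controllable only for $\eta<1/6$, whereas the paper's argument applies for all $\eta>0$. Finally, the parity shift $-\frac{n^2+n-2}{8}$ does not arise from a leftover Stirling factor as you suggest; it is an artifact of telescoping in steps of two rather than one, which replaces the comparison constant $\frac{(n-1)(n+2)}{4}$ by $\frac{(n-1)(n+2)}{2}$ against $\beta$. Your weaker threshold would still suffice for the downstream Theorem~\ref{corethm}, but it does not prove Theorem~\ref{maintheorem02} as stated.
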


\begin{proof}[Proof.\nopunct]
We first observe that for every $k\in [n-1]$ and any $\eta>0$,
\begin{equation*}
    \frac{\lambda_{k}-\lambda_{k+1}}{d^{\frac{1}{2}+\eta}} = O_{\mathbb{P}}(d^{-\eta}) = o_{\mathbb{P}}(1)
\end{equation*}
holds as $d\rightarrow \infty$ from Propositions~\ref{prop0a}, \ref{orderofeigenvalue}. By definition,
\begin{equation*}
    \mathbb{P}\left(\lambda_{k}-\lambda_{k+1}\leq d^{\frac{1}{2}+\eta}\right)\rightarrow 1
\end{equation*}
holds as $d\rightarrow \infty$. Fix $i\in [n-1]$. Now it is sufficient to show that
\begin{equation*}
    \mathbb{P}\left(\lambda_{i}-\lambda_{i+1}\leq d^{\frac{1}{2}-\gamma},\ \max_{\substack{j\in [n-1] \\ j\neq i}}\{\lambda_{j}-\lambda_{j+1}\}\leq d^{\frac{1}{2}+\eta}\right)\rightarrow 0
\end{equation*}
as $d\rightarrow \infty$, then $\mathbb{P}\left(d^{\frac{1}{2}-\gamma}\leq \lambda_{i}-\lambda_{i+1}\right)\rightarrow 1$ immediately follows.

Let
\begin{align*}
    \scalebox{0.94}{$\Delta = \setc*{(\lambda_{1},\ldots,\lambda_{n})\in\mathbb{R}^{n}}{0\leq \lambda_{i}-\lambda_{i+1}\leq d^{\frac{1}{2}-\gamma},0\leq \lambda_{j}-\lambda_{j+1}\leq d^{\frac{1}{2}+\eta} \left(j\in [n]\setminus\{i\}\right)}.$}
\end{align*}
Noticing the joint probability density function of eigenvalues in~\eqref{jointdensityfunction}, we have
{\small \begin{align}
    &\ \ \ \  \mathbb{P}\left(\lambda_{i}-\lambda_{i+1}\leq d^{\frac{1}{2}-\gamma},\ \max_{\substack{j\in [n-1] \\ j\neq i}}\{\lambda_{j}-\lambda_{j+1}\}\leq d^{\frac{1}{2}+\eta}\right)\nonumber\\
    &= \frac{\pi^{\frac{1}{2}n^{2}}}{2^{\frac{1}{2}nd}\Gamma_{n}(\frac{d}{2})\Gamma_{n}(\frac{n}{2})}\int_{\Delta} \euler^{-\frac{1}{2}\sum\limits_{k\in [n]}\lambda_{k}}\prod_{k\in [n]}\lambda_{k}^{\frac{d-n-1}{2}}\prod_{\substack{k,j\in [n] \\ k<j}}(\lambda_{k}-\lambda_{j}) \textrm{d}\lambda_{1} \cdots \textrm{d}\lambda_{n}\nonumber\\
    & \leq \frac{\pi^{\frac{1}{2}n^{2}}}{2^{\frac{1}{2}nd}\Gamma_{n}(\frac{d}{2})\Gamma_{n}(\frac{n}{2})}\int_{\Delta} \euler^{-\frac{1}{2}\sum\limits_{k\in [n]}\lambda_{k}}\left(\frac{1}{n}\sum_{k\in [n]}\lambda_{k}\right)^{\frac{n(d-n-1)}{2}}\hspace{-15pt}\prod_{\substack{k,j\in [n] \\ k<j}}(\lambda_{k}-\lambda_{j}) \textrm{d}\lambda_{1} \cdots \textrm{d}\lambda_{n}. \label{Yjkxfjk}
\end{align}}
The last inequality holds due to the inequality of arithmetic and geometric means. We change variables in \eqref{Yjkxfjk} by letting $s = \sum_{k\in [n]}\lambda_{k}$ and $t_{j} = \lambda_{j}-\lambda_{j+1}$ for every $j\in [n-1]$. Then \eqref{Yjkxfjk} becomes
\begin{align*}
     &\quad\ \mathbb{P}\left(\lambda_{i}-\lambda_{i+1}\leq d^{\frac{1}{2}-\gamma},\ \max_{\substack{j\in [n-1] \\ j\neq i}}\{\lambda_{j}-\lambda_{j+1}\}\leq d^{\frac{1}{2}+\eta}\right)\\
     &\leq \frac{\pi^{\frac{1}{2}n^{2}}\cdot \lvert \textrm{det} J\rvert }{n^{\frac{n(d-n-1)}{2}}2^{\frac{1}{2}nd}\Gamma_{n}(\frac{d}{2})\Gamma_{n}(\frac{n}{2})}\int_{\Delta'} \euler^{-\frac{s}{2}}s^{\frac{n(d-n-1)}{2}}p(t_{1},\ldots,t_{n-1}) \textrm{d}s\textrm{d}t_{1} \cdots \textrm{d}t_{n-1}.
\end{align*}
Here $J=\frac{\partial(\lambda_{1},\lambda_{2},\ldots,\lambda_{n})}{\partial(s,t_{1},\ldots,t_{n-1})}$ is the Jacobian matrix of the change of variables, $p(t_{1},\ldots,t_{n-1})$ denotes the polynomial which is the finite sum of monomials $t_{1}^{\alpha_{1}}\cdots t_{n-1}^{\alpha_{n-1}}$ ($\alpha_{1},\ldots, \alpha_{n-1}\geq 1$, $\alpha_{1}+\cdots+\alpha_{n-1}=\binom{n}{2}$), and the integral domain $\Delta$ becomes 
\begin{equation*}
    \scalebox{0.88}{$\Delta' = \setc*{(s,t_{1},\ldots,t_{n-1})\in\mathbb{R}^{n}}{0\leq t_{i}\leq d^{\frac{1}{2}-\gamma},\ 0\leq t_{j}\leq d^{\frac{1}{2}+\eta} \left(j\in [n]\setminus\{i\}\right),s\geq \sum\limits_{k\in [n-1]}kt_{k}}$}.
\end{equation*}
In order to show the probability tends to $0$, we are left with the task of showing
\begin{equation*}
    \frac{\pi^{\frac{1}{2}n^{2}}\cdot \lvert \textrm{det} J\rvert }{n^{\frac{n(d-n-1)}{2}}2^{\frac{1}{2}nd}\Gamma_{n}(\frac{d}{2})\Gamma_{n}(\frac{n}{2})}\int_{\Delta'} \euler^{-\frac{s}{2}}s^{\frac{n(d-n-1)}{2}}t_{1}^{\alpha_{1}}\cdots t_{n-1}^{\alpha_{n-1}} \textrm{d}s\textrm{d}t_{1} \cdots \textrm{d}t_{n-1}\rightarrow 0
\end{equation*}
as $d\rightarrow \infty$ for every summand $t_{1}^{\alpha_{1}}\cdots t_{n-1}^{\alpha_{n-1}}$ in $p(t_{1},\ldots,t_{n-1})$.

By Tonelli's theorem, we have
{\small \begin{align*}
    &\quad\ \frac{\pi^{\frac{1}{2}n^{2}}\lvert \textrm{det} J\rvert }{n^{\frac{n(d-n-1)}{2}}2^{\frac{1}{2}nd}\Gamma_{n}(\frac{d}{2})\Gamma_{n}(\frac{n}{2})}\int_{\Delta'} \euler^{-\frac{s}{2}}s^{\frac{n(d-n-1)}{2}}t_{1}^{\alpha_{1}}\cdots t_{n-1}^{\alpha_{n-1}} \textrm{d}s\textrm{d}t_{1} \cdots \textrm{d}t_{n-1} \notag \\
    & \leq \frac{\pi^{\frac{1}{2}n^{2}}\lvert \textrm{det} J\rvert }{n^{\frac{n(d-n-1)}{2}}2^{\frac{1}{2}nd}\Gamma_{n}(\frac{d}{2})\Gamma_{n}(\frac{n}{2})}\int_{0}^{d^{\frac{1}{2}-\gamma}}\hspace{-15pt}t_{i}^{\alpha_{i}}\textrm{d}t_{i}\left(\prod_{\substack{j\in [n-1] \\ j\neq i}}\hspace{-1pt}\int_{0}^{d^{\frac{1}{2}+\eta}}\hspace{-15pt}t_{j}^{\alpha_{j}}\textrm{d}t_{j}\right)\int_{0}^{+\infty}\hspace{-10pt} \euler^{-\frac{s}{2}}s^{\frac{n(d-n-1)}{2}}\textrm{d}s\notag\\
    & = \frac{\pi^{\frac{1}{2}n^{2}} \lvert \textrm{det} J\rvert \cdot 2^{\frac{n(d-n-1)}{2}+1}\cdot \Gamma(\frac{n(d-n-1)}{2}+1)}{n^{\frac{n(d-n-1)}{2}}2^{\frac{1}{2}nd}\Gamma_{n}(\frac{d}{2})\Gamma_{n}(\frac{n}{2})}\cdot \frac{(d^{\frac{1}{2}-\gamma})^{\alpha_{i}+1}\cdot (d^{\frac{1}{2}+\eta})^{\sum_{j\neq i}(\alpha_{j}+1)}}{\prod\limits_{k\in [n-1]}(\alpha_{k}+1)}\notag\\
    & = \frac{\pi^{\frac{1}{2}n^{2}}\lvert \textrm{det} J\rvert d^{(\frac{1}{2}+\eta)\sum_{j\neq i}(\alpha_{j}+1)+(\frac{1}{2}-\gamma)(\alpha_{i}+1)} \Gamma(\frac{n(d-n-1)}{2}+1)}{2^{\frac{(n-1)(n+2)}{2}}n^{\frac{n(d-n-1)}{2}}\Gamma_{n}(\frac{n}{2})\Gamma_{n}(\frac{d}{2})\prod\limits_{k\in [n-1]}(\alpha_{k}+1)}\notag\\
    & \leq \frac{\pi^{\frac{1}{2}n^{2}}\lvert \textrm{det} J\rvert }{2^{\frac{(n-1)(n+2)}{2}}\Gamma_{n}(\frac{n}{2})\prod\limits_{k\in [n-1]}(\alpha_{k}+1)}\cdot \frac{d^{(\frac{1}{2}+\eta)\cdot \frac{n^2+n-2}{2}+2\left(-\gamma-\eta\right)}\Gamma(\frac{n(d-n-1)}{2}+1)}{n^{\frac{n(d-n-1)}{2}}\Gamma_{n}(\frac{d}{2})}.
\end{align*}
}
The last inequality holds because
\begin{align*}
    &\ \ \ \ (\frac{1}{2}+\eta)\sum_{\substack{j\in [n-1] \\ j\neq i}}(\alpha_{j}+1)+(\frac{1}{2}-\gamma)(\alpha_{i}+1)\\
    & = (\frac{1}{2}+\eta)\sum_{j\in [n-1]}(\alpha_{j}+1)+\left(-\gamma-\eta\right)(\alpha_{i}+1)\\
    & \leq (\frac{1}{2}+\eta)\cdot \frac{n^2+n-2}{2}+2\cdot \left(-\gamma-\eta\right).
\end{align*}
What is left is to show that
\begin{equation*}
    \frac{d^{(\frac{1}{2}+\eta)\cdot \frac{n^2+n-2}{2}+2\left(-\gamma-\eta\right)}\Gamma(\frac{n(d-n-1)}{2}+1)}{n^{\frac{n(d-n-1)}{2}}\Gamma_{n}(\frac{d}{2})}\rightarrow 0
\end{equation*}
as $d\rightarrow \infty$. For simplicity, we write $\beta = (\frac{1}{2}+\eta)\cdot \frac{n^2+n-2}{2}+2\cdot \left(-\gamma-\eta\right)$ and
\begin{equation*}
    a_{d} = \frac{d^{\beta}\Gamma(\frac{n(d-n-1)}{2}+1)}{n^{\frac{n(d-n-1)}{2}}\Gamma_{n}(\frac{d}{2})}.
\end{equation*}
We proceed to show $\lim_{d\rightarrow \infty}a_{d}=0$ by cases.

For even $n$, we evaluate $a_{d}/a_{d-1}$. By Proposition~\ref{recurrence formula}, we have
\begin{align*}
    \frac{a_{d}}{a_{d-1}} & = \left(\frac{d}{d-1}\right)^{\beta}\cdot \frac{\Gamma(\frac{n(d-n-1)}{2}+1)}{\Gamma(\frac{n(d-n-2)}{2}+1)}\cdot \frac{n^{\frac{n(d-n-2)}{2}}}{n^{\frac{n(d-n-1)}{2}}}\cdot \frac{\Gamma_{n}(\frac{d-1}{2})}{\Gamma_{n}(\frac{d}{2})}\\
    & = \left(\frac{d}{d-1}\right)^{\beta}\cdot\prod_{k=0}^{\frac{n}{2}-1}\left(\frac{n(d-n-2)}{2}+1+k\right)\cdot n^{-\frac{n}{2}}\cdot \frac{\prod\limits_{k\in [n]}\Gamma(\frac{d-1}{2}-\frac{1}{2}(k-1))}{\prod\limits_{k\in [n]}\Gamma(\frac{d}{2}-\frac{1}{2}(k-1))}\\
    & = \left(\frac{d}{d-1}\right)^{\beta}\cdot\frac{\prod\limits_{k=0}^{\frac{n}{2}-1}\left(\frac{n(d-n-2)}{2}+1+k\right)}{n^{\frac{n}{2}}}\cdot \frac{\Gamma(\frac{d}{2}-\frac{n}{2})}{\Gamma(\frac{d}{2})}\\
    & = \left(\frac{d}{d-1}\right)^{\beta}\cdot\frac{\prod\limits_{k=0}^{\frac{n}{2}-1}\left(\frac{n(d-n-2)}{2}+1+k\right)}{\prod\limits_{k=0}^{\frac{n}{2}-1} \left(\frac{nd}{2}-\frac{n^2}{2}+nk\right)}.
\end{align*}
For each $0\leq k\leq \frac{n}{2}-1$, we write $A_{k}=\frac{nd}{2}-\frac{n^2}{2}+nk$ and $B_{k}=\frac{n(d-n-2)}{2}+1+k$.  Notice that $B_{0}<\cdots<B_{\frac{n}{2}-1}<A_{0}<\cdots<A_{\frac{n}{2}-1}$. Taking the logarithm of $a_{d}/a_{d-1}$ yields
\begin{align}
    \ln{\frac{a_{d}}{a_{d-1}}} & = \beta\ln{\left(1+\frac{1}{d-1}\right)}+\sum_{k=0}^{\frac{n}{2}-1}\ln{\left(1-\frac{A_{k}-B_{k}}{A_{k}}\right)}\notag\\
    & \leq \beta\cdot \frac{1}{d-1}-\sum_{k=0}^{\frac{n}{2}-1}\frac{A_{k}-B_{k}}{A_{k}}\label{firstineq3.2}\\
    & \leq \beta\cdot\frac{1}{d-1}-\sum_{k=0}^{\frac{n}{2}-1}\frac{A_{k}-B_{k}}{A_{\frac{n}{2}-1}}\notag\\
    & = \beta\cdot\frac{1}{d-1}-\frac{(n-1)\cdot(1+\frac{n}{2})\cdot \frac{n}{2}\cdot \frac{1}{2}}{\frac{n}{2}}\frac{1}{d-2}\notag\\
    & \leq  \left[\beta-\frac{(n-1)(n+2)}{4}\right]\frac{1}{d-2}.\label{finalineq3.3}
\end{align}
The first inequality~\eqref{firstineq3.2} comes from the inequality: $\ln{(1+x)}\leq x$ whenever $x\geq -1$. Let us denote the coefficient
\begin{equation*}
    \beta-\frac{(n-1)(n+2)}{4}
\end{equation*}
in \eqref{finalineq3.3} by $\Theta$. We wish $\Theta<0$, i.e.,
\begin{equation*}
    (\frac{1}{2}+\eta)\cdot \frac{n^2+n-2}{2}+2\cdot \left(-\gamma-\eta\right)-\frac{(n-1)(n+2)}{4}<0.
\end{equation*}
Consequently, we obtain
\begin{align*}
    \gamma>\frac{n^2+n-6}{4}\eta.
\end{align*}
It then follows that
\begin{align}
    a_{d} & = \euler^{\ln{\frac{a_{d}}{a_{d-1}}\cdot \frac{a_{d-1}}{a_{d-2}}\cdot\ \cdots\ \cdot \frac{a_{n+2}}{a_{n+1}}\cdot a_{n+1}}}\notag\\
    & = \euler^{\ln{\frac{a_{d}}{a_{d-1}}}+\ln{\frac{a_{d-1}}{a_{d-2}}}+\cdots+\ln{ \frac{a_{n+2}}{a_{n+1}}}+\ln{a_{n+1}}}\notag\\
    &\leq a_{n+1}\euler^{\Theta\cdot (\frac{1}{d-2}+\frac{1}{d-3}+\cdots+\frac{1}{n})}.\label{harmonicseries}
\end{align}
Hence $\lim_{d\rightarrow \infty}a_{d}=0$. Therefore, $\lim_{d\rightarrow \infty}a_{d}=0$ holds for even $n$.

For odd $n$, we evaluate $a_{d}/a_{d-2}$. By Propositions~\ref{recurrence formula}, \ref{Legendre duplication formula}, we have
\begin{align*}
    \frac{a_{d}}{a_{d-2}} & = \left(\frac{d}{d-2}\right)^{\beta}\cdot \frac{\Gamma(\frac{n(d-n-1)}{2}+1)}{\Gamma(\frac{n(d-n-3)}{2}+1)}\cdot \frac{n^{\frac{n(d-n-3)}{2}}}{n^{\frac{n(d-n-1)}{2}}}\cdot \frac{\Gamma_{n}(\frac{d-2}{2})}{\Gamma_{n}(\frac{d}{2})}\\
    & = \left(\frac{d}{d-2}\right)^{\beta}\cdot\prod\limits_{k=0}^{n-1}\left(\frac{n(d-n-3)}{2}+1+k\right)\cdot n^{-n}\cdot \frac{\prod\limits_{k\in [n]}\Gamma(\frac{d-2}{2}-\frac{1}{2}(k-1))}{\prod\limits_{k\in [n]}\Gamma(\frac{d}{2}-\frac{1}{2}(k-1))}\\
    & = \left(\frac{d}{d-2}\right)^{\beta}\cdot\frac{\prod\limits_{k=0}^{n-1}\left(\frac{n(d-n-3)}{2}+1+k\right)}{n^{n}}\cdot \frac{2^{(n+2)-d}\cdot \Gamma(d-(n+1))}{2^{2-d}\cdot \Gamma(d-1)}\\
    & = \left(\frac{d}{d-2}\right)^{\beta}\cdot\frac{\prod\limits_{k=0}^{n-1}\left(\frac{n(d-n-3)}{2}+1+k\right)}{\prod\limits_{k=0}^{n-1} \left(\frac{nd}{2}-\frac{n^2}{2}+\frac{n(k-1)}{2}\right)}.
\end{align*}
For each $0\leq k\leq n-1$, we write $A_{k}=\frac{nd}{2}-\frac{n^2}{2}+\frac{n(k-1)}{2}$ and $B_{k}=\frac{n(d-n-3)}{2}+1+k$.  Notice that $B_{0}<\cdots<B_{n-1}=A_{0}<\cdots<A_{n-1}$. Taking the logarithm of $a_{d}/a_{d-2}$ yields
\begin{align}
    \ln{\frac{a_{d}}{a_{d-2}}} & = \beta\ln{\left(1+\frac{1}{d-2}\right)}+\sum_{k=0}^{n-1}\ln{\left(1-\frac{A_{k}-B_{k}}{A_{k}}\right)}\notag\\
    & \leq \beta\cdot\frac{1}{d-2}-\sum_{k=0}^{n-1}\frac{A_{k}-B_{k}}{A_{k}}\notag\\
    & \leq \beta\cdot\frac{1}{d-2}-\sum_{k=0}^{n-1}\frac{A_{k}-B_{k}}{A_{n-1}}\notag\\
    & = \left[\beta-\frac{(n-1)(n+2)}{2}\right]\frac{1}{d-2}.\label{finalineq3.6}
\end{align}
Let us denote the coefficient
\begin{equation*}
    \beta-\frac{(n-1)(n+2)}{2}
\end{equation*}
in \eqref{finalineq3.6} by $\Theta'$. We wish $\Theta'<0$, i.e.,
\begin{equation*}
    (\frac{1}{2}+\eta)\cdot \frac{n^2+n-2}{2}+2\cdot \left(-\gamma-\eta\right)-\frac{(n-1)(n+2)}{2}<0.
\end{equation*}
Consequently, we obtain
\begin{align*}
    \gamma>\frac{n^2+n-6}{4}\eta-\frac{n^2+n-2}{8}.
\end{align*}
Analysis similar to that in the even $n$ case shows that $\lim_{d\rightarrow \infty}a_{d}=0$. Consequently, $\lim_{d\rightarrow\infty}a_{d}=0$ holds for odd $n$, and the theorem follows.
\end{proof}

The following theorem is a straightforward consequence of Theorem~\ref{maintheorem02}, indicating that the minimum eigengap diverges to infinity with high probability.

\begin{theorem}[Asymptotically infinite divergence of eigengaps]\label{maintheorem01}
\microtypesetup{protrusion=true,expansion=true}
Let $W\sim \mathcal{W}_{n}(I_{n},d)$ ($n\geq 2$), and $\lambda_{1}\geq \lambda_{2}\geq \cdots \geq \lambda_{n}$ the ordered eigenvalues of $W$. Then for every constant $C>0$,
\begin{equation*}
\mathbb{P}\left(\min_{k\in [n-1]}\{\lambda_{k}-\lambda_{k+1}\}>C\right)\rightarrow 1
\end{equation*}
holds as $d\rightarrow \infty$ but $n$ is fixed.
\end{theorem}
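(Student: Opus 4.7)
The plan is to derive Theorem~\ref{maintheorem01} as an essentially immediate corollary of Theorem~\ref{maintheorem02}. That earlier theorem already establishes something strictly stronger, namely a polynomial lower bound $d^{1/2 - \gamma}$ on the minimum eigengap that holds with probability tending to one, for a suitable choice of the exponents $\eta, \gamma > 0$. All that remains is to select $\gamma$ strictly less than $1/2$ so that $d^{1/2-\gamma} \to \infty$, which will swamp any fixed constant $C$ once $d$ is sufficiently large.

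Concretely, I would split into the two parity cases of $n$. For even $n \geq 2$, the admissibility condition in Theorem~\ref{maintheorem02} is $\gamma > \tfrac{n^2 + n - 6}{4}\eta$. Choose $\eta$ small, for instance $\eta = \tfrac{1}{n^2 + n}$, so that the lower threshold $\tfrac{n^2 + n - 6}{4(n^2 + n)}$ on $\gamma$ is bounded by $1/4$; then any $\gamma$ with $\tfrac{n^2+n-6}{4(n^2+n)} < \gamma < 1/2$ satisfies both the hypothesis of Theorem~\ref{maintheorem02} and the requirement $\gamma < 1/2$. For odd $n \geq 3$, the condition reads $\gamma > \tfrac{n^2 + n - 6}{4}\eta - \tfrac{n^2 + n - 2}{8}$, which is satisfied by every positive $\gamma$ once $\eta$ is taken small enough, so again we can fix some $\gamma \in (0, 1/2)$.

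With such $(\eta, \gamma)$ chosen, Theorem~\ref{maintheorem02} supplies
\begin{equation*}
\mathbb{P}\Bigl(\min_{k \in [n-1]} \{\lambda_k - \lambda_{k+1}\} \geq d^{1/2 - \gamma}\Bigr) \longrightarrow 1 \quad \text{as } d \to \infty.
\end{equation*}
Given any constant $C > 0$, pick $D_C$ large enough that $d^{1/2 - \gamma} > C$ for all $d > D_C$; this is possible since $1/2 - \gamma > 0$. Then for $d > D_C$ the inclusion of events
\begin{equation*}
\Bigl\{ \min_{k \in [n-1]} \{\lambda_k - \lambda_{k+1}\} \geq d^{1/2 - \gamma} \Bigr\} \subseteq \Bigl\{ \min_{k \in [n-1]} \{\lambda_k - \lambda_{k+1}\} > C \Bigr\}
\end{equation*}
forces the probability on the right to tend to one as well, which is the claim.

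The genuine work has already been done inside Theorem~\ref{maintheorem02} via the joint density of Wishart eigenvalues and the ratio estimate on $a_d = d^\beta \Gamma(\tfrac{n(d-n-1)}{2}+1)/(n^{n(d-n-1)/2}\Gamma_n(d/2))$; consequently there is no substantive obstacle at this stage. The only points to watch are that the admissible ranges of $(\eta, \gamma)$ in the two parity cases are indeed non-empty when intersected with $\gamma < 1/2$, and that the exponent $1/2 - \gamma$ be kept strictly positive so that the lower bound $d^{1/2-\gamma}$ actually diverges. Both checks are elementary algebra, as outlined above.
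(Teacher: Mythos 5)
Your proof is correct and follows exactly the route the paper intends: it states Theorem~\ref{maintheorem01} as a ``straightforward consequence'' of Theorem~\ref{maintheorem02} without elaboration, and your argument—choosing $(\eta,\gamma)$ with $0<\gamma<1/2$ admissible in each parity case, then noting that the lower bound $d^{1/2-\gamma}$ eventually dominates any fixed $C$—is precisely the verification that is being taken for granted.
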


\microtypesetup{protrusion=false,expansion=false}

To validate our results, we conduct the numerical experiment of computing the minimum eigengap of the real Wishart matrix $W\sim \mathcal{W}(I_{n},d)$ with $d$ ranges in $[150,2000]$ and $n=100$. At each $d$, we repeat the process of randomly generating a real Wishart matrix $20{,}000$ times and record the minimum eigengap for each generated matrix, and then take the average value. The final result is shown in Fig.~\ref{fig:Eigengap}, which confirms that the minimum eigengap has a high probability of tending to infinity as the degree of freedom $d$ increases. Fig.~\ref{fig:Eigengap_curve_fitting} is the curve fitting result. The red curve in Fig.~\ref{fig:Eigengap_curve_fitting} has the approximate expression
\begin{equation*}
    0.0353324372\sqrt{d-100.690024}-0.000162906231.
\end{equation*}
This indicates that the minimum eigengap of a real Wishart matrix has a high probability of increasing with the order around $\sqrt{d}$.

\renewcommand{\thefigure}{\arabic{figure}}
\setcounter{figure}{2}

\begin{figure}[ht]
\centering
\includegraphics[width=1\textwidth]{"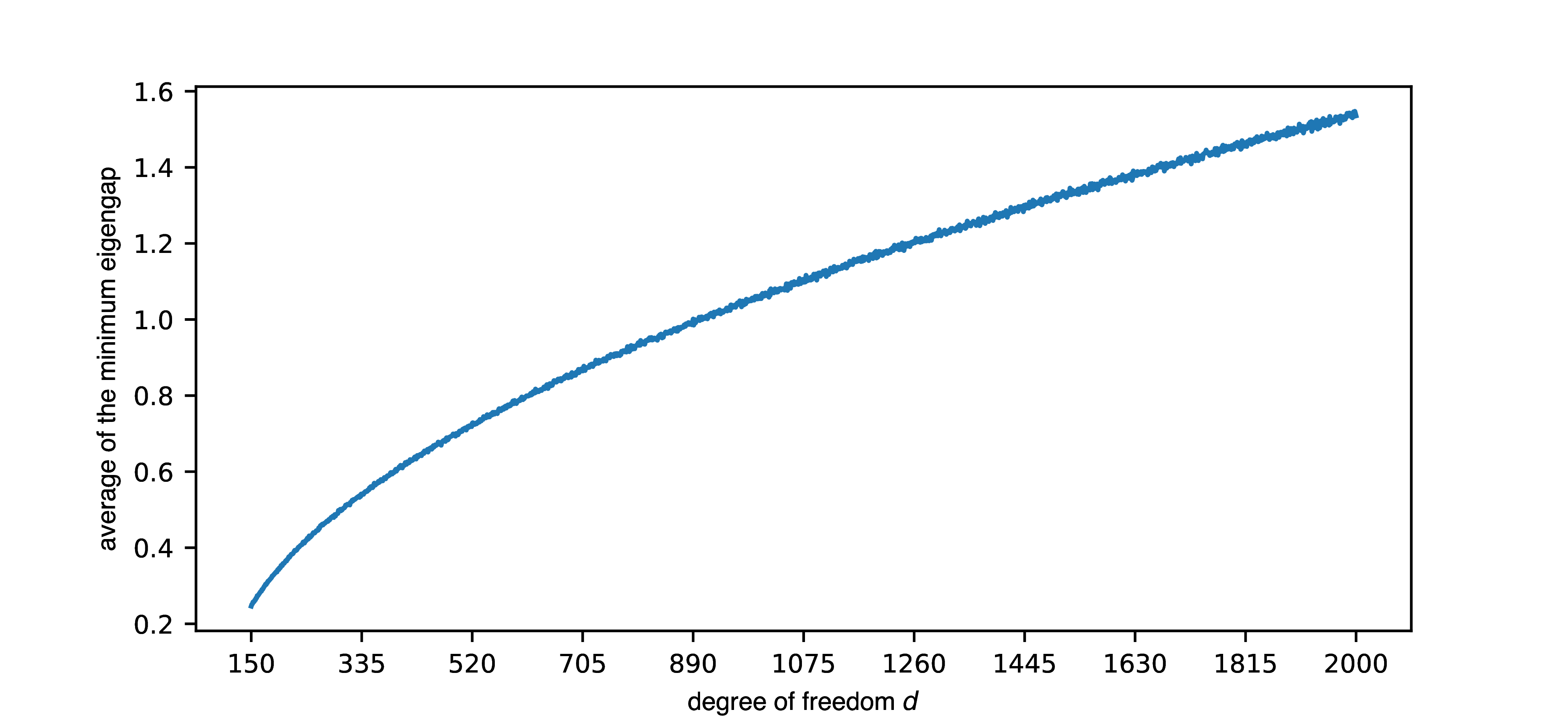"}
\caption{The minimum eigengap of the real Wishart matrix $W\sim \mathcal{W}(I_{100},d)$ with $d$ ranges in $[150,2000]$.}\label{fig:Eigengap}
\end{figure}

\begin{figure}[ht]
\centering
\includegraphics[width=1\textwidth]{"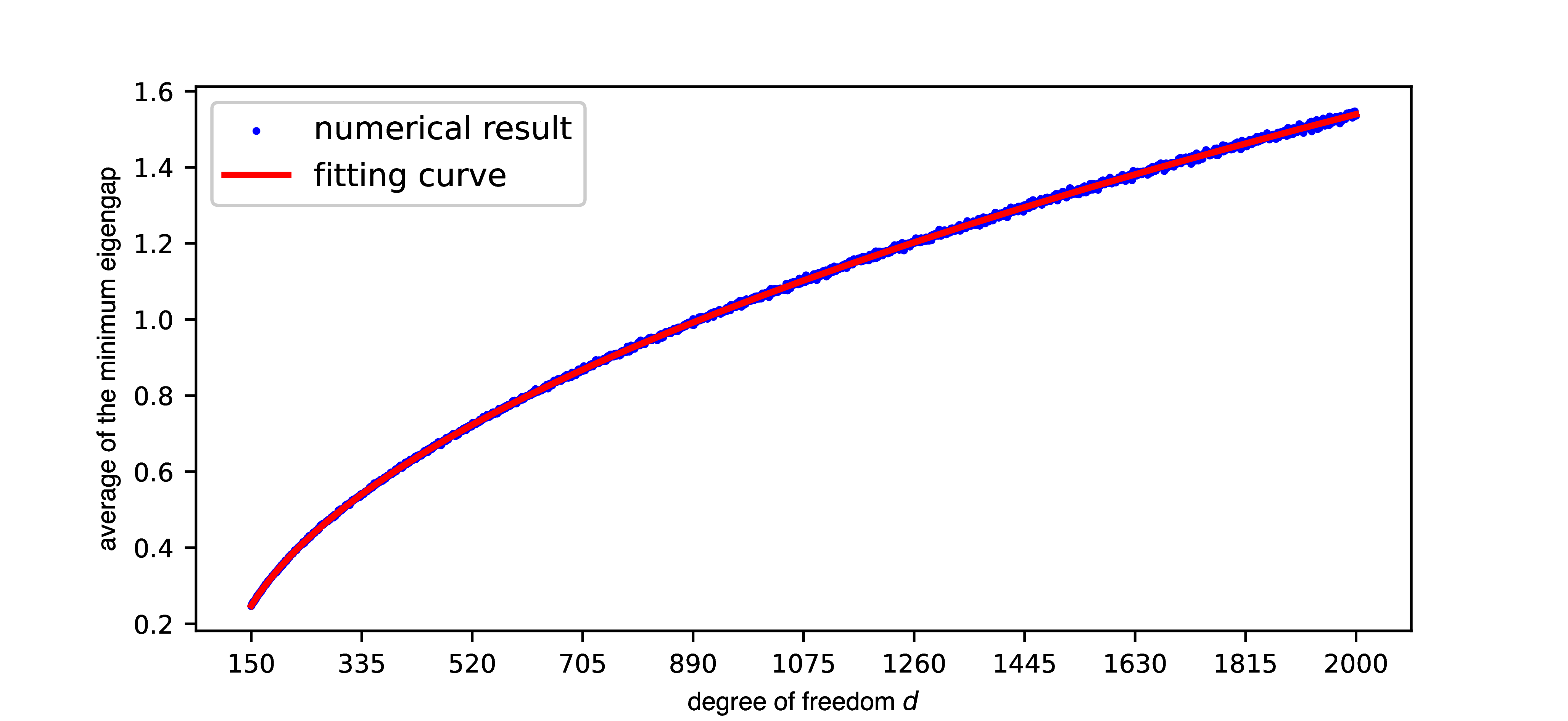"}
\caption{The curve fitting result. The blue dots represent the minimum eigengap averaged over 20{,}000 randomly generated Wishart matrices and the red curve is the fitting curve obtained by the fitting model $x\sqrt{d-y}+z$. Here $x= 0.0353324372$, $y=100.690024$, and $z=-0.000162906231$, respectively.}\label{fig:Eigengap_curve_fitting}
\end{figure}

\section{Weingarten calculus}\label{secA5}
In this section, we first introduce the general setup of Weingarten calculus and some related results. Next, we will restrict our attention to the orthogonal groups used in the main text. Finally, we will provide a complete proof of Proposition~\ref{1st and 2nd moments of pairwise distance}. For a fuller discussion of Weingarten calculus, we refer the reader to the survey~\cite{MR4415894}. Throughout this section, we will denote by $\mathbb{C}$ the set of all complex numbers, $\overline{z}$ the complex conjugate of a given complex number $z$, and $A^{*}$ the conjugate transpose of a given matrix 
$A$.

\subsection{General setup and results}\label{secA51}

\begin{definition}\label{Weingarten integrals}
    Let $G$ be a compact group, and $\mathcal{H}$ a finite-dimensional $\mathbb{C}$-Hilbert space with the orthonormal basis $\varepsilon_{1}, \ldots, \varepsilon_{N}$. Given a unitary representation $(\mathcal{H},U)$ of $G$, i.e., a continuous group homomorphism $U: G\to \textrm{U}(\mathcal{H})$ from $G$ to the unitary group of $\mathcal{H}$, let $U_{ij}: G\to \mathbb{C}$ be the corresponding matrix element functionals by $U_{ij}(\textsl{g})=\langle \varepsilon_{i}, U(\textsl{g})\varepsilon_{j}\rangle$ ($i,j\in [N]$). The integrals
    \begin{equation*}
        I_{\textbf{i}\textbf{j}} = \int_{G}\prod_{x\in [\textsl{k}]}U_{\textbf{i}(x)\textbf{j}(x)}(\textsl{g})\textrm{d}\textsl{g}
    \end{equation*}
    with respect to the Haar probability measure are called the Weingarten integrals of the unitary representation $(\mathcal{H}, U)$. Here $\mathbf{i}, \mathbf{j}$ are functions from $[\textsl{k}]$ to $[N]$.
\end{definition}

By using the tensor product notation, we can reformulate the Weingarten integrals $I_{\mathbf{i}\mathbf{j}}$. Let $\mathbf{i}, \mathbf{j}$ be two functions from $[\textsl{k}]$ to $[N]$. They can also be written as the multi-index forms: $\mathbf{i}=(\mathbf{i}(1),\ldots,\mathbf{i}(\textsl{k}))$, $\mathbf{j}=(\mathbf{j}(1),\ldots,\mathbf{j}(\textsl{k}))$. In what follows, we will use the terminology $(\mathbf{i},\mathbf{j})$-entry of $U^{\otimes \textsl{k}}(\textsl{g})$, denoted by $U_{\mathbf{i}\mathbf{j}}^{\otimes \textsl{k}}(\textsl{g})$, to represent the entry
\begin{align*}
    U_{\mathbf{i}(1)\mathbf{j}(1)}(\textsl{g})\cdots U_{\mathbf{i}(\textsl{k})\mathbf{j}(\textsl{k})}(\textsl{g}) =  \big\langle \varepsilon_{\mathbf{i}}, U^{\otimes \textsl{k}}(\textsl{g})\varepsilon_{\mathbf{j}}\big\rangle
\end{align*}
in the matrix $U^{\otimes \textsl{k}}(\textsl{g})$. Here $\varepsilon_{\mathbf{i}} = \varepsilon_{\mathbf{i}(1)}\otimes \cdots \otimes \varepsilon_{\mathbf{i}(\textsl{k})}$ is the orthonormal basis of $\mathcal{H}^{\otimes \textsl{k}}$. Then $I_{\mathbf{i}\mathbf{j}}$ can be written as
\begin{align*}
    I_{\mathbf{i}\mathbf{j}} = \int_{G}U_{\mathbf{i}\mathbf{j}}^{\otimes \textsl{k}}(\textsl{g})\textrm{d}\textsl{g} = \int_{G}\big\langle \varepsilon_{\mathbf{i}}, U^{\otimes \textsl{k}}(\textsl{g})\varepsilon_{\mathbf{j}}\big\rangle\textrm{d}\textsl{g} = \bigg\langle \varepsilon_{\mathbf{i}}, \int_{G} U^{\otimes \textsl{k}}(\textsl{g})\textrm{d}\textsl{g}\cdot \varepsilon_{\mathbf{j}}\bigg\rangle.
\end{align*}
Denote $\int_{G} U^{\otimes \textsl{k}}(\textsl{g})\textrm{d}\textsl{g}$ by $\mathscr{P}$. It is easily seen that the basic problem of Weingarten calculus is thus equivalent to computing the matrix elements of $\mathscr{P}\in \End \mathcal{H}^{\otimes \textsl{k}}$.

By the invariance of the Haar measure, we see at once that the operator $\mathscr{P}$ is self-adjoint and idempotent. Then $\mathscr{P}$ orthogonally projects $\mathcal{H}^{\otimes \textsl{k}}$ onto its image $\Img \mathscr{P}$. Furthermore, one can show that $\Img \mathscr{P}$ is the space of $G$-invariant tensors in $\mathcal{H}^{\otimes \textsl{k}}$, i.e.,
\begin{equation*}
    \Img \mathscr{P} = \setc*{t\in \mathcal{H}^{\otimes \textsl{k}}}{\forall \textsl{g} \in G, U^{\otimes \textsl{k}}(\textsl{g})t=t}.
\end{equation*}
From this, we see that evaluating the Weingarten integral
is determined by a basis (not necessarily orthonormal) of $\Img \mathscr{P}$.

Assume that $a_{1}, \ldots, a_{m}$ 
is a basis (not necessarily orthonormal) of $\Img \mathscr{P}$. Let
\begin{align}
\mathscr{P}\varepsilon_{\mathbf{j}} = \sum_{t\in [m]}\kappa_{t}a_{t}.\label{werdf}
\end{align}
Then
\begin{align}
    \begin{pmatrix}
\big\langle a_{1},\mathscr{P}\varepsilon_{\mathbf{j}}\big\rangle \\ \vdots \\
\big\langle a_{m},\mathscr{P}\varepsilon_{\mathbf{j}}\big\rangle\\
\end{pmatrix}
= \begin{pmatrix}
\langle a_{1},a_{1}\rangle & \langle a_{1},a_{2} \rangle & \ldots & \langle a_{1},a_{m}\rangle\\ \langle a_{2},a_{1}\rangle & \langle a_{2},a_{2} \rangle & \ldots & \langle a_{2},a_{m}\rangle \\ \vdots & \vdots & & \vdots\\
\langle a_{m},a_{1}\rangle & \langle a_{m},a_{2} \rangle & \ldots & \langle a_{m},a_{m}\rangle\\
\end{pmatrix}
\begin{pmatrix}
\kappa_{1} \\ \vdots \\
\kappa_{m}\\
\end{pmatrix}.\label{asbjkdv1}
\end{align}
Denote the matrix $(\langle a_{x},a_{y}\rangle)_{x,y\in [m]}$ by $\textrm{W}$ in \eqref{asbjkdv1}. $\textrm{W}$ is actually the Gram matrix of $a_{1}, \ldots, a_{m}$. Since $\mathscr{P}$ is self-adjoint and idempotent, $\big\langle a_{t},\mathscr{P}\varepsilon_{\mathbf{j}}\big\rangle = \big\langle \mathscr{P}a_{t},\varepsilon_{\mathbf{j}}\big\rangle = \langle a_{t},\varepsilon_{\mathbf{j}}\rangle = \overline{\langle \varepsilon_{\mathbf{j}},a_{t}\rangle}$ for every $t\in [m]$. If $\textrm{W}$ is invertable, then
\begin{align}
\begin{pmatrix}
\kappa_{1} \\ \vdots \\
\kappa_{m}\\
\end{pmatrix} = \textrm{W}^{-1}
    \begin{pmatrix}
\overline{\langle \varepsilon_{\mathbf{j}},a_{1}\rangle} \\ \vdots \\
\overline{\langle \varepsilon_{\mathbf{j}},a_{m}\rangle}\\
\end{pmatrix}.\label{asbjkdv2}
\end{align}
On the other hand, $a_{t} = \sum_{\mathbf{i}: [\textsl{k}]\to [N]}\langle \varepsilon_{\mathbf{i}},a_{t}\rangle \varepsilon_{\mathbf{i}}$ for every $t\in [m]$. Set $A = (\langle \varepsilon_{\mathbf{l}},a_{t}\rangle)_{t\in [m], \mathbf{l}: [\textsl{k}]\to [N]}$. Obviously, $ \textrm{W} = A^{*}A$. By~\eqref{werdf}, \eqref{asbjkdv2}, we have
\begin{align*}
    I_{\mathbf{i}\mathbf{j}} = \big\langle \varepsilon_{\mathbf{i}}, \mathscr{P} \varepsilon_{\mathbf{j}}\big\rangle = \bigg\langle \varepsilon_{\mathbf{i}}, \sum_{t\in [m]}\kappa_{t}a_{t}\bigg\rangle = \sum_{t\in [m]}\kappa_{t}\big\langle \varepsilon_{\mathbf{i}}, a_{t}\big\rangle = \prescript{}{\mathbf{i}}{A}\cdot \textrm{W}^{-1}\cdot A^{*}_{\mathbf{j}}.
\end{align*}
Here $\prescript{}{\mathbf{i}}{A}$ denotes the $\mathbf{i}$-th row of $A$ and $A^{*}_{\mathbf{j}}$ denotes the $\mathbf{j}$-th column of $A^{*}$. Consequently,
\begin{align*}
     \mathscr{P} = A\textrm{W}^{-1}A^{*} = A(A^{*}A)^{-1} A^{*}.
\end{align*}
Summarizing, we are able to present the following fundamental theorem of the Weingarten calculus.

\begin{theorem}[\cite{MR4415894}]\label{the fundamental theorem of Weingarten calculus}
    Given a unitary representation $(\mathcal{H},U)$ of $G$. Let $\textsl{k}$, $N\in \mathbb{N}_{+}$. Then
\begin{align*}
\int_{G}\prod_{x\in [\textsl{k}]}U_{\mathbf{i}(x)\mathbf{j}(x)}(\textsl{g})\textrm{d}\textsl{g} = \sum\limits_{x,y\in [m]}A^{}_{\mathbf{i}x}(\textrm{W}^{-1})^{}_{xy}A^{*}_{y\mathbf{j}}
\end{align*}
holds for functions $\mathbf{i}, \mathbf{j}: [\textsl{k}]\to [N]$. Here $m$ is the dimension of the space of $G$-invariant tensors in $\mathcal{H}^{\otimes \textsl{k}}$.
\end{theorem}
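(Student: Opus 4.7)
The plan is to formalize the derivation sketched in the paragraphs just preceding the theorem, which already outlines the key structural steps. First I would reinterpret the Weingarten integral as a matrix entry of the operator
$$\mathscr{P} = \int_{G} U^{\otimes \textsl{k}}(\textsl{g})\,\textrm{d}\textsl{g} \in \End \mathcal{H}^{\otimes \textsl{k}},$$
namely $I_{\mathbf{i}\mathbf{j}} = \langle \varepsilon_{\mathbf{i}}, \mathscr{P}\varepsilon_{\mathbf{j}}\rangle$. The bulk of the work is then to describe $\mathscr{P}$ explicitly.

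Next I would establish two structural properties of $\mathscr{P}$: it is self-adjoint and idempotent. Self-adjointness follows because $U(\textsl{g})^{*}=U(\textsl{g}^{-1})$ (unitarity) together with invariance of the Haar measure under $\textsl{g}\mapsto \textsl{g}^{-1}$. Idempotency $\mathscr{P}^{2}=\mathscr{P}$ follows from left-invariance of the Haar measure, applied after a Fubini interchange in $\mathscr{P}^{2}=\int_{G}\!\int_{G}U^{\otimes \textsl{k}}(\textsl{g})U^{\otimes \textsl{k}}(\textsl{h})\,\textrm{d}\textsl{g}\,\textrm{d}\textsl{h}$. Consequently $\mathscr{P}$ is the orthogonal projection onto its image. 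A short averaging argument then identifies $\Img \mathscr{P}$ with the space of $G$-invariant tensors: $\mathscr{P}t=t$ whenever $t$ is invariant (the integrand is constantly $t$), and conversely $U^{\otimes \textsl{k}}(\textsl{h})\mathscr{P}t = \mathscr{P}t$ for every $\textsl{h}$ by left-invariance.

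With this in hand, I would fix a basis $a_{1},\dots,a_{m}$ of $\Img \mathscr{P}$ and write the coordinate matrix $A=(\langle \varepsilon_{\mathbf{l}}, a_{t}\rangle)_{t,\mathbf{l}}$, so that the Gram matrix is $\textrm{W}=A^{*}A$. Linear independence of $a_{1},\dots,a_{m}$ makes $A$ injective on $\mathbb{C}^{m}$, so $\textrm{W}=A^{*}A$ is invertible. Then any orthogonal projection onto $\Img \mathscr{P}$ has the closed form $\mathscr{P}=A(A^{*}A)^{-1}A^{*}$: indeed, $A(A^{*}A)^{-1}A^{*}$ acts as the identity on $\Img \mathscr{P}=\Img A$ and annihilates $(\Img A)^{\perp}=\ker A^{*}$, so it agrees with $\mathscr{P}$. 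Reading off the $(\mathbf{i},\mathbf{j})$-entry of this factorization gives
$$I_{\mathbf{i}\mathbf{j}} = \langle \varepsilon_{\mathbf{i}}, \mathscr{P}\varepsilon_{\mathbf{j}}\rangle = \sum_{x,y\in [m]} A_{\mathbf{i}x}(\textrm{W}^{-1})_{xy}A^{*}_{y\mathbf{j}},$$
which is the claimed formula.

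The only subtlety, and the step I expect to be the main obstacle to state cleanly, is the identification $\Img \mathscr{P}=\mathcal{H}^{\otimes \textsl{k},G}$ (the space of invariant tensors), since this requires justifying that $U^{\otimes \textsl{k}}(\textsl{h})$ commutes with the Haar integral; this is a standard consequence of left-invariance of the Haar measure together with the fact that $G$ is compact so that the integral converges unconditionally on a finite-dimensional space. Everything else is linear algebra: invertibility of the Gram matrix of a basis, and uniqueness of the orthogonal projection onto a finite-dimensional subspace.
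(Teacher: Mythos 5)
Your proposal is correct and follows essentially the same route as the paper: the paper's own derivation (in the paragraphs preceding the theorem) likewise identifies the integral with an entry of $\mathscr{P}=\int_{G}U^{\otimes \textsl{k}}(\textsl{g})\,\textrm{d}\textsl{g}$, shows $\mathscr{P}$ is the orthogonal projection onto the invariant tensors, and expresses it through a basis via the Gram matrix $\textrm{W}=A^{*}A$, arriving at $\mathscr{P}=A(A^{*}A)^{-1}A^{*}$. Your only addition is to justify explicitly the invertibility of $\textrm{W}$ (full column rank of $A$), a point the paper states conditionally, so the two arguments are essentially identical.
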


\subsection{Orthogonal group case}\label{orthogroupcase}

From now on, we let the compact group $G$ be the orthogonal group ${\rm O}\left( N\right)$. In this case, we take $\mathcal{H} = \mathbb{C}^{N}$ with the canonical basis $\varepsilon_{1}, \ldots, \varepsilon_{N}$, and $U(\textsl{g}) = \textsl{g}$, namely the matrix multiplication operator on $\mathcal{H}$. We first compute the Weingarten integrals when $\textsl{k}=2$, i.e., computing the integrals
\begin{align*}
\int_{{\rm O}\left( N\right)}\textsl{g}_{\textbf{i}(1)\textbf{j}(1)} \textsl{g}_{\textbf{i}(2)\textbf{j}(2)}\textrm{d}\textsl{g}.
\end{align*}

By Schur-Weyl duality, one can show that $\Img\mathscr{P}$ is a $1$-dimensional subspace with the basis $a_{1} = \sum_{i\in [N]}\varepsilon_{i}\otimes \varepsilon_{i}$. Hence the Gram matrix $\textrm{W} = (\langle a_{1},a_{1}\rangle) = (N)$ and its inverse $\textrm{W}^{-1} = (1/N)$.

\begin{proposition}\label{k=2}
    Let $N\in \mathbb{N}_{+}$ and $m,t,k\in [N]$. Then
\begin{align*}
\int_{{\rm O}\left( N\right)}\textsl{g}_{mk} \textsl{g}_{tk}\textrm{d}\textsl{g} = \begin{dcases}
\frac{1}{N},\ m=t,\\
0,\ m\neq t.
\end{dcases}
\end{align*}
\end{proposition}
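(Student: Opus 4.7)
The plan is to apply Theorem~\ref{the fundamental theorem of Weingarten calculus} directly with $\textsl{k}=2$, using the explicit basis and Gram matrix already identified in the paragraph preceding the proposition.

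First I would set up the indices to match the notation of Definition~\ref{Weingarten integrals}: take the functions $\mathbf{i}, \mathbf{j} : [2] \to [N]$ defined by $\mathbf{i}(1) = m$, $\mathbf{i}(2) = t$, $\mathbf{j}(1) = \mathbf{j}(2) = k$, so that the integrand $\textsl{g}_{mk}\textsl{g}_{tk}$ equals $U_{\mathbf{i}(1)\mathbf{j}(1)}(\textsl{g}) U_{\mathbf{i}(2)\mathbf{j}(2)}(\textsl{g})$. Recall from the excerpt that the dimension of $\Img\mathscr{P}$ is $m=1$ with basis $a_1 = \sum_{i\in[N]} \varepsilon_i \otimes \varepsilon_i$ and Gram matrix $\mathrm{W} = (N)$, hence $\mathrm{W}^{-1} = (1/N)$.

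Next I would compute the matrix $A = (\langle \varepsilon_{\mathbf{l}}, a_1\rangle)_{\mathbf{l}:[2]\to[N]}$. For any $\mathbf{l}$, the inner product expands as
\begin{equation*}
\langle \varepsilon_{\mathbf{l}(1)} \otimes \varepsilon_{\mathbf{l}(2)}, \, \textstyle\sum_{i\in[N]} \varepsilon_i \otimes \varepsilon_i\rangle = \sum_{i\in [N]} \delta_{\mathbf{l}(1),i}\,\delta_{\mathbf{l}(2),i} = \delta_{\mathbf{l}(1),\mathbf{l}(2)}.
\end{equation*}
In particular, $A_{\mathbf{i},1} = \delta_{m,t}$ and $A^{*}_{1,\mathbf{j}} = \overline{A_{\mathbf{j},1}} = \delta_{k,k} = 1$.

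Substituting into the formula of Theorem~\ref{the fundamental theorem of Weingarten calculus} gives
\begin{equation*}
\int_{{\rm O}(N)} \textsl{g}_{mk}\,\textsl{g}_{tk}\,\mathrm{d}\textsl{g} = A_{\mathbf{i},1}\,(\mathrm{W}^{-1})_{11}\,A^{*}_{1,\mathbf{j}} = \delta_{m,t}\cdot \frac{1}{N}\cdot 1,
\end{equation*}
which is exactly the claimed value. There is no real obstacle here: since the excerpt has already established the dimension of the invariant subspace, exhibited a basis, and inverted the (scalar) Gram matrix, the proof reduces to two elementary inner-product evaluations and a plug-in. The only thing to be careful about is keeping the multi-index bookkeeping consistent, which is why I would state the identification $(\mathbf{i},\mathbf{j}) = ((m,t),(k,k))$ explicitly before invoking the theorem.
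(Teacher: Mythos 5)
Your proposal is correct and follows exactly the route the paper intends: the paper states the proposition immediately after exhibiting the one-dimensional invariant space with basis $a_{1}=\sum_{i\in[N]}\varepsilon_{i}\otimes\varepsilon_{i}$ and Gram matrix $\textrm{W}=(N)$, leaving the plug-in to Theorem~\ref{the fundamental theorem of Weingarten calculus} implicit. Your explicit evaluation $\langle\varepsilon_{\mathbf{l}},a_{1}\rangle=\delta_{\mathbf{l}(1),\mathbf{l}(2)}$ and the resulting $\delta_{m,t}\cdot\frac{1}{N}$ is precisely that omitted bookkeeping, so there is nothing to add.
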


Now, we turn to compute the Weingarten integrals when $\textsl{k}=4$, i.e., computing the integrals
\begin{align*}
\int_{{\rm O}\left( N\right)}\textsl{g}_{\textbf{i}(1)\textbf{j}(1)} \textsl{g}_{\textbf{i}(2)\textbf{j}(2)}\textsl{g}_{\textbf{i}(3)\textbf{j}(3)} \textsl{g}_{\textbf{i}(4)\textbf{j}(4)}\textrm{d}\textsl{g}.
\end{align*}
By Schur-Weyl duality, one can show that $\Img\mathscr{P}$ is a $3$-dimensional subspace with the basis $a_{1} = \sum_{i,i'\in [N]}\varepsilon_{i}\otimes \varepsilon_{i}\otimes \varepsilon_{i'}\otimes \varepsilon_{i'}$, $a_{2} = \sum_{i,i'\in [N]}\varepsilon_{i}\otimes \varepsilon_{i'}\otimes \varepsilon_{i}\otimes \varepsilon_{i'}$, and $a_{3} = \sum_{i,i'\in [N]}\varepsilon_{i}\otimes \varepsilon_{i'}\otimes \varepsilon_{i'}\otimes \varepsilon_{i}$. Then the Gram matrix 
\begin{align*}
    \textrm{W} = \begin{pmatrix}
\langle a_{1},a_{1}\rangle & \langle a_{1},a_{2} \rangle & \langle a_{1},a_{3}\rangle\\ \langle a_{2},a_{1}\rangle & \langle a_{2},a_{2} \rangle & \langle a_{2},a_{3}\rangle \\
\langle a_{3},a_{1}\rangle & \langle a_{3},a_{2} \rangle & \langle a_{3},a_{3}\rangle\\
\end{pmatrix} = 
\begin{pmatrix}
N^{2} & N & N\\ N & N^{2} & N \\
N & N & N^{2}\\
\end{pmatrix}
\end{align*}
and its inverse
\begin{align*}
    \textrm{W}^{-1} = \begin{pmatrix}
\frac{N+1}{N(N-1)(N+2)} & \frac{-1}{N(N-1)(N+2)} & \frac{-1}{N(N-1)(N+2)}\\ \frac{-1}{N(N-1)(N+2)} & \frac{N+1}{N(N-1)(N+2)} & \frac{-1}{N(N-1)(N+2)} \\
\frac{-1}{N(N-1)(N+2)} & \frac{-1}{N(N-1)(N+2)} & \frac{N+1}{N(N-1)(N+2)}\\
\end{pmatrix}
\end{align*}
exist whenever $N\geq 2$.

\begin{proposition}\label{k=4 case1}
    Let $N\geq 2$ and $m,t,k,q\in [N]$. Then
\begin{align*}
\int_{{\rm O}\left( N\right)}\textsl{g}_{mk}^{2} \textsl{g}_{tq}^{2}\textrm{d}\textsl{g} = \begin{dcases}
\frac{3}{N(N+2)},\ k=q, m=t,\\
\frac{1}{N(N+2)},\ k=q, m\neq t,\\
\frac{1}{N(N+2)},\ k\neq q, m=t,\\
\frac{N+1}{N(N-1)(N+2)},\ k\neq q, m\neq t.
\end{dcases}
\end{align*}
\end{proposition}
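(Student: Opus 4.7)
The plan is to apply Theorem~\ref{the fundamental theorem of Weingarten calculus} directly with $\textsl{k}=4$, using the multi-indices $\mathbf{i}=(m,m,t,t)$ and $\mathbf{j}=(k,k,q,q)$ that encode $\textsl{g}_{mk}^{2}\textsl{g}_{tq}^{2}$, together with the basis $a_{1},a_{2},a_{3}$ of $\Img \mathscr{P}$ and the explicit inverse $\mathrm{W}^{-1}$ already written down in Subsection~\ref{orthogroupcase}. Since everything needed is in place, the argument reduces to identifying which entries of the row vector $\prescript{}{\mathbf{i}}{A}$ and the column vector $A^{*}_{\mathbf{j}}$ vanish.

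First I would record the pairing numbers $\langle \varepsilon_{\mathbf{i}},a_{r}\rangle$ for the specific index $\mathbf{i}=(m,m,t,t)$. Inspecting the definitions of $a_{1},a_{2},a_{3}$ gives $\langle \varepsilon_{\mathbf{i}},a_{1}\rangle=1$ unconditionally, while $\langle \varepsilon_{\mathbf{i}},a_{2}\rangle$ and $\langle \varepsilon_{\mathbf{i}},a_{3}\rangle$ both equal $1$ when $m=t$ and $0$ otherwise. The same holds verbatim for $\mathbf{j}=(k,k,q,q)$ with $k,q$ in the role of $m,t$. Thus $\prescript{}{\mathbf{i}}{A}=(1,\mathbbm{1}_{m=t},\mathbbm{1}_{m=t})$ and $A^{*}_{\mathbf{j}}=(1,\mathbbm{1}_{k=q},\mathbbm{1}_{k=q})^{T}$, so the Weingarten integral is a weighted sum of entries of $\mathrm{W}^{-1}$ depending only on which of the two conditions $m=t$ and $k=q$ hold.

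Next I would evaluate the quadratic form $\prescript{}{\mathbf{i}}{A}\,\mathrm{W}^{-1}A^{*}_{\mathbf{j}}$ case by case. When $m=t$ and $k=q$, all nine entries of $\mathrm{W}^{-1}$ contribute, yielding $3(N+1)/[N(N-1)(N+2)]-6/[N(N-1)(N+2)]=3/[N(N+2)]$. When exactly one of the conditions holds (say $k=q$, $m\neq t$), only the first row of $\mathrm{W}^{-1}$ contributes, giving $(N+1-2)/[N(N-1)(N+2)]=1/[N(N+2)]$; the other mixed case is identical by the symmetry $(m,t)\leftrightarrow(k,q)$ (equivalently, transposition of $\mathrm{W}^{-1}$). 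When neither condition holds, only the $(1,1)$-entry contributes, giving $(N+1)/[N(N-1)(N+2)]$.

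There is no genuine obstacle; the main point of care is simply matching the combinatorial patterns of the tensors $a_{1},a_{2},a_{3}$ against the specific index $(m,m,t,t)$, and checking that the telescoping $3(N+1)-6=3(N-1)$ and $(N+1)-2=N-1$ cancel the factor $(N-1)$ in the denominator whenever at least one equality among the indices is present. Assembling the four cases into a single displayed formula then yields the stated proposition.
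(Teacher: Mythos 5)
Your proposal is correct and follows exactly the route the paper intends: apply Theorem~\ref{the fundamental theorem of Weingarten calculus} with $\textsl{k}=4$, $\mathbf{i}=(m,m,t,t)$, $\mathbf{j}=(k,k,q,q)$, the pairing basis $a_{1},a_{2},a_{3}$, and the displayed $\mathrm{W}^{-1}$; your pairing vectors $(1,\mathbbm{1}_{m=t},\mathbbm{1}_{m=t})$ and $(1,\mathbbm{1}_{k=q},\mathbbm{1}_{k=q})$ and the four resulting evaluations of the quadratic form all check out. The paper leaves this computation implicit after stating $\mathrm{W}^{-1}$, so your write-up simply supplies the omitted details of the same argument.
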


\begin{proposition}\label{k=4 case2}
    Let $N\geq 2$ and $m,m',t,t',k,q\in [N]$. Suppose $(m,t)\neq (m',t')$. Then
\begin{align*}
\int_{{\rm O}\left( N\right)}\textsl{g}_{mk}\textsl{g}_{tq} \textsl{g}_{m'k}\textsl{g}_{t'q}\textrm{d}\textsl{g} = \begin{dcases}
\frac{1}{N(N+2)},\ k=q, m=t, m'=t', m\neq m',\\
\frac{1}{N(N+2)},\ k=q, m=t', t=m', m\neq t,\\
\frac{-1}{N(N-1)(N+2)},\ k\neq q, m=t, m'=t', m\neq m',\\
\frac{-1}{N(N-1)(N+2)},\ k\neq q, m=t', t=m', m\neq t,\\
0,\ otherwise.
\end{dcases}
\end{align*}
\end{proposition}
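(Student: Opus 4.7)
The plan is to apply Theorem~\ref{the fundamental theorem of Weingarten calculus} with $\textsl{k}=4$ to $\mathbf{i}=(m,t,m',t')$ and $\mathbf{j}=(k,q,k,q)$, recycling the basis $a_{1},a_{2},a_{3}$ of the space of $\mathrm{O}(N)$-invariants in $(\mathbb{C}^{N})^{\otimes 4}$ and the inverse Gram matrix $\mathrm{W}^{-1}$ displayed in the paragraph preceding Proposition~\ref{k=4 case1}. This is precisely the same framework already used in the proof of Proposition~\ref{k=4 case1}, only with a more refined index pattern.

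First, reading off the coefficients from the explicit expansions of $a_{1},a_{2},a_{3}$ yields
\begin{equation*}
(A_{\mathbf{i}1},A_{\mathbf{i}2},A_{\mathbf{i}3})=(\delta_{mt}\delta_{m't'},\ \delta_{mm'}\delta_{tt'},\ \delta_{mt'}\delta_{tm'}),
\end{equation*}
and similarly $(A^{*}_{1\mathbf{j}},A^{*}_{2\mathbf{j}},A^{*}_{3\mathbf{j}})^{T}=(\delta_{kq},\,1,\,\delta_{kq})^{T}$. Second, since $\mathrm{W}^{-1}=(a-b)I_{3}+bJ_{3}$ with $a=\frac{N+1}{N(N-1)(N+2)}$, $b=\frac{-1}{N(N-1)(N+2)}$, and $J_{3}$ the $3\times 3$ all-ones matrix, the integral collapses to
\begin{equation*}
(a-b)\sum_{x=1}^{3}A_{\mathbf{i}x}A^{*}_{x\mathbf{j}}\,+\,b\Bigl(\sum_{x=1}^{3}A_{\mathbf{i}x}\Bigr)\Bigl(\sum_{y=1}^{3}A^{*}_{y\mathbf{j}}\Bigr),
\end{equation*}
so everything reduces to evaluating two small sums of indicators in each listed case.

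For the case analysis, the crucial observation is that the hypothesis $(m,t)\neq(m',t')$ forces $\delta_{mm'}\delta_{tt'}=0$, killing $A_{\mathbf{i}2}$ identically. Moreover $A_{\mathbf{i}1}=1$ is equivalent to ``$m=t$ and $m'=t'$'' (which under the hypothesis forces $m\neq m'$), while $A_{\mathbf{i}3}=1$ is equivalent to ``$m=t'$ and $t=m'$'' (which under the hypothesis forces $m\neq t$); the two scenarios are disjoint because one requires $m=t$ and the other $m\neq t$. In the first scenario $(A_{\mathbf{i}1},A_{\mathbf{i}2},A_{\mathbf{i}3})=(1,0,0)$ and in the second $(0,0,1)$; pairing either of these with $(1,1,1)^{T}$ when $k=q$ or with $(0,1,0)^{T}$ when $k\neq q$, the simplified formula immediately returns $a+2b=\frac{1}{N(N+2)}$ and $b=\frac{-1}{N(N-1)(N+2)}$, respectively, reproducing the four nonzero values. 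All remaining index configurations leave $A_{\mathbf{i}\bullet}=(0,0,0)$, accounting for the ``otherwise'' value $0$.

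The computation is essentially mechanical once Theorem~\ref{the fundamental theorem of Weingarten calculus} and the explicit $\mathrm{W}^{-1}$ are available; the only real task is the bookkeeping above, and the only potential pitfall is to confirm that the two nonzero scenarios are disjoint and exhaust every configuration in which the integrand fails to vanish, which follows at once by matching the three indicator constraints against the standing hypothesis $(m,t)\neq(m',t')$.
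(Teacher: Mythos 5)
Your computation is correct and is exactly the derivation the paper intends: Proposition~\ref{k=4 case2} is stated as a direct consequence of Theorem~\ref{the fundamental theorem of Weingarten calculus} with the basis $a_{1},a_{2},a_{3}$ and the displayed $\mathrm{W}^{-1}$, and your indicator bookkeeping (in particular noting that $(m,t)\neq(m',t')$ kills the $a_{2}$-coefficient and that the two surviving scenarios are disjoint) fills in precisely the mechanical step the paper omits. The evaluated constants $a+2b=\frac{1}{N(N+2)}$ and $b=\frac{-1}{N(N-1)(N+2)}$ match the stated values, so nothing further is needed.
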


\subsection{Complete proof of Proposition~\ref{1st and 2nd moments of pairwise distance}}\label{secA53}
In this final subsection, we complete the computation in the proof of Proposition~\ref{1st and 2nd moments of pairwise distance}, utilizing the formulas derived in the previous subsection.

\begin{proof}[Proof of Proposition \ref{1st and 2nd moments of pairwise distance}.\nopunct]
It remains to compute $\mathbb{V}\|\hat{e}_{i}-\hat{e}_{j}\|^{2}$. From~\eqref{dahsj}, \eqref{eqofpairwisedistance}, we have
\begin{align*}
    \left(\frac{1}{n-1}\|\hat{e}_{i}-\hat{e}_{j}\|^{2}\right)^{2}& = \left[\sum\limits_{k\in [s]}\left(u_{ik}^{E}-u_{jk}^{E}\right) ^{2}\right]^{2}\\
    & = \sum\limits_{(k,q)\in [s]^{2}}\left(u_{ik}^{E}-u_{jk}^{E}\right)^{2}\left(u_{iq}^{E}-u_{jq}^{E}\right)^{2}\notag \\
    & = \sum\limits_{(k,q)\in [s]^{2}}\left(\sum_{m\in [n-1]}C_{m}^{ij}g_{mk}\cdot \sum_{t\in [n-1]}C_{t}^{ij}g_{tq}\right)^{2}\notag \\
    & = \sum\limits_{(k,q)\in [s]^{2}}\left(\sum_{(m,t)\in [n-1]^{2}}C_{m}^{ij}C_{t}^{ij}g_{mk}g_{tq}\right)^{2}\notag \\
    & \begin{aligned}
        = \sum\limits_{(k,q)\in [s]^{2}}\Bigg[\sum_{(m,t)\in [n-1]^{2}}\left(C_{m}^{ij}\right)^{2}\left(C_{t}^{ij}\right)^{2}g_{mk}^{2}g_{tq}^{2}\\
        +\sum_{(m,t)\neq (m',t')}C_{m}^{ij}C_{t}^{ij}C_{m'}^{ij}C_{t'}^{ij}g_{mk}g_{tq}g_{m'k}g_{t'q}\Bigg]\notag 
    \end{aligned}\\
    & = \mathbf{I}+\mathbf{II}+\mathbf{III}+\mathbf{IV},
\end{align*}
where 
\begin{align*}
    \mathbf{I} & = \sum_{k\in [s]}\sum_{(m,t)\in [n-1]^{2}}\left(C_{m}^{ij}\right)^{2}\left(C_{t}^{ij}\right)^{2}g_{mk}^{2}g_{tk}^{2},\\
    \mathbf{II} & = \sum_{\substack{k,q\in [s] \\ k\neq q}}\sum_{(m,t)\in [n-1]^{2}}\left(C_{m}^{ij}\right)^{2}\left(C_{t}^{ij}\right)^{2}g_{mk}^{2}g_{tq}^{2},\\
    \mathbf{III} & = \sum\limits_{k\in [s]}\sum_{(m,t)\neq (m',t')}C_{m}^{ij}C_{t}^{ij}C_{m'}^{ij}C_{t'}^{ij}g_{mk}g_{tk}g_{m'k}g_{t'k},\\
    \mathbf{IV} & = \sum_{\substack{k,q\in [s] \\ k\neq q}}\sum_{(m,t)\neq (m',t')}C_{m}^{ij}C_{t}^{ij}C_{m'}^{ij}C_{t'}^{ij}g_{mk}g_{tq}g_{m'k}g_{t'q}.
\end{align*}
Letting $N = n-1$ in Proposition~\ref{k=4 case1} yields
   \begin{align*}
    & \mathbb{E}\mathbf{I}  = \mathbb{E}\left[\sum_{k\in [s]}\sum_{m\in [n-1]}\left(C_{m}^{ij}\right)^{4}g_{mk}^{4}+\sum_{k\in [s]}\sum_{\substack{m,t\in [n-1] \\ m\neq t}}\left(C_{m}^{ij}\right)^{2}\left(C_{t}^{ij}\right)^{2}g_{mk}^{2}g_{tk}^{2}\right]\\
    & = \frac{3s}{(n-1)(n+1)}\sum_{m\in [n-1]}\left(C_{m}^{ij}\right)^{4}+\frac{s}{(n-1)(n+1)}\sum_{\substack{m,t\in [n-1] \\ m\neq t}}\left(C_{m}^{ij}\right)^{2}\left(C_{t}^{ij}\right)^{2}
   \end{align*}
and 
  \begin{align*}  
    & \mathbb{E}\mathbf{II}  = \mathbb{E}\left[\sum_{\substack{k,q\in [s] \\ k\neq q}}\sum_{m\in [n-1]}\left(C_{m}^{ij}\right)^{4}g_{mk}^{2}g_{mq}^{2}+\sum_{\substack{k,q\in [s] \\ k\neq q}}\sum_{\substack{m,t\in [n-1] \\ m\neq t}}\left(C_{m}^{ij}\right)^{2}\left(C_{t}^{ij}\right)^{2}g_{mk}^{2}g_{tq}^{2}\right]\\
    & = \frac{s^{2}-s}{(n-1)(n+1)}\sum_{m\in [n-1]}\hspace{-2.5pt}\left(C_{m}^{ij}\right)^{4}+\frac{n(s^2-s)}{(n-1)(n-2)(n+1)}\sum_{\substack{m,t\in [n-1] \\ m\neq t}}\hspace{-2.5pt}\left(C_{m}^{ij}\right)^{2}\left(C_{t}^{ij}\right)^{2}.
  \end{align*}
Similarly, letting $N = n-1$ in Proposition~\ref{k=4 case2} yields
\begin{align*}
    & \mathbb{E}\mathbf{III}  = \mathbb{E}\left[\sum\limits_{k\in [s]}\Bigg(\sum_{\substack{m=t\\ m'=t'\\ m\neq m'}}+\sum_{\substack{m=t'\\ t=m'\\ m\neq t}}\Bigg)C_{m}^{ij}C_{t}^{ij}C_{m'}^{ij}C_{t'}^{ij}g_{mk}g_{tk}g_{m'k}g_{t'k}\right]\\
    & = \mathbb{E}\left[2\sum\limits_{k\in [s]}\sum_{\substack{m,m'\in [n-1] \\ m\neq m'}}\left(C_{m}^{ij}\right)^{2}\left(C_{m'}^{ij}\right)^{2}g_{mk}^{2}g_{m'k}^{2}\right]\\
    & = \frac{2s}{(n-1)(n+1)}\sum_{\substack{m,m'\in [n-1] \\ m\neq m'}}\left(C_{m}^{ij}\right)^{2}\left(C_{m'}^{ij}\right)^{2}
\end{align*}
and
\begin{align*}
    & \mathbb{E}\mathbf{IV}  = \mathbb{E}\left[\sum_{\substack{k,q\in [s] \\ k\neq q}}\Bigg(\sum_{\substack{m=t\\ m'=t'\\ m\neq m'}}+\sum_{\substack{m=t'\\ t=m'\\ m\neq t}}\Bigg)C_{m}^{ij}C_{t}^{ij}C_{m'}^{ij}C_{t'}^{ij}g_{mk}g_{tq}g_{m'k}g_{t'q}\right]\\
    & = \mathbb{E}\left[2\sum_{\substack{k,q\in [s] \\ k\neq q}}\sum_{\substack{m,m'\in [n-1] \\ m\neq m'}}\left(C_{m}^{ij}\right)^{2}\left(C_{m'}^{ij}\right)^{2}g_{mk}g_{mq}g_{m'k}g_{m'q}\right]\\
    & = \frac{-2(s^2-s)}{(n-1)(n-2)(n+1)}\sum_{\substack{m,m'\in [n-1] \\ m\neq m'}}\left(C_{m}^{ij}\right)^{2}\left(C_{m'}^{ij}\right)^{2}.
\end{align*}
Hence
\begin{align*}
    & \mathbb{E}\left(\frac{1}{n-1}\|\hat{e}_{i}-\hat{e}_{j}\|^{2}\right)^{2}  = \mathbb{E}\mathbf{I}+\mathbb{E}\mathbf{II}+\mathbb{E}\mathbf{III}+\mathbb{E}\mathbf{IV}\\
    & = \frac{s^{2}+2s}{(n-1)(n+1)}\left[\sum_{m\in [n-1]}\left(C_{m}^{ij}\right)^{4}+ \sum_{\substack{m,t\in [n-1] \\ m\neq t}}\left(C_{m}^{ij}\right)^{2}\left(C_{t}^{ij}\right)^{2}\right]\\
    & = \frac{s^{2}+2s}{(n-1)(n+1)}\left[\sum_{m\in [n-1]}\left(C_{m}^{ij}\right)^{2}\right]^{2}\\
    & = \frac{4s^{2}+8s}{(n-1)(n+1)}.
\end{align*}
Here, we use $\sum_{m\in [n-1]}(C_{m}^{ij})^{2} = 2$ as seen in~\eqref{eqn: the explicit form of sum_square_C^ij_m}. On the other hand, we recall the reader that $\mathbb{E}\|\hat{e}_{i}-\hat{e}_{j}\|^{2} = 2s$ from the first half of the proof of Proposition~\ref{1st and 2nd moments of pairwise distance}. Consequently,
\begin{align*}
    \mathbb{V}\|\hat{e}_{i}-\hat{e}_{j}\|^{2} & = \mathbb{E}\|\hat{e}_{i}-\hat{e}_{j}\|^{4}-\left(\mathbb{E}\|\hat{e}_{i}-\hat{e}_{j}\|^{2}\right)^{2}\\
    & = (n-1)^{2}\cdot \frac{4s^{2}+8s}{(n-1)(n+1)}-4s^{2}\\
    & = \frac{-8s^{2}+8(n-1)s}{n+1}. \tag*{\qedhere}
\end{align*}
\end{proof}
\end{appendices}

\bibliography{sn-bibliography}

\begin{thebibliography}{35}
\providecommand{\natexlab}[1]{#1}
\providecommand{\url}[1]{\texttt{#1}}
\providecommand{\urlprefix}{URL }

\bibitem[{Abramowitz and Stegun(1964)}]{MR0167642}
Abramowitz, M., Stegun, I.A.: Handbook of {M}athematical {F}unctions {W}ith {F}ormulas, {G}raphs, and {M}athematical {T}ables.
\newblock National Bureau of Standards Applied Mathematics Series, No. 55. U. S. Government Printing Office, Washington, D.C. (1964)

\bibitem[{Anderson(2003)}]{anderson1962introduction}
Anderson, T.W.: An {I}ntroduction to {M}ultivariate {S}tatistical {A}nalysis.
\newblock Wiley Series in Probability and Statistics. Wiley-Interscience, Hoboken, 3rd edn. (2003)

\bibitem[{Bishop(2006)}]{MR2247587}
Bishop, C.M.: Pattern {R}ecognition and {M}achine {L}earning.
\newblock Information Science and Statistics. Springer, New York (2006).
\newblock \url{https://doi.org/10.1007/978-0-387-45528-0}

\bibitem[{Bukkuri et~al.(2021)Bukkuri, Andor, and Darcy}]{bukkuri2021applications}
Bukkuri, A., Andor, N., Darcy, I.K.: Applications of topological data analysis in oncology.
\newblock Front. Artif. Intell. \textbf{4}, 659037 (2021).
\newblock \url{https://www.frontiersin.org/articles/10.3389/frai.2021.659037/full}

\bibitem[{Burago et~al.(2001)Burago, Burago, Burago, Ivanov, Ivanov, and Ivanov}]{burago2001course}
Burago, D., Burago, I.D., Burago, Y., Ivanov, S., Ivanov, S.V., Ivanov, S.A.: A {C}ourse in {M}etric {G}eometry, \textit{Graduate Studies in Mathematics}, vol.~33.
\newblock American Mathematical Society, Providence (2001).
\newblock \url{http://dx.doi.org/10.1090/gsm/033}

\bibitem[{Carlsson(2009)}]{MR2476414}
Carlsson, G.: Topology and data.
\newblock Bull. Amer. Math. Soc. (N.S.) \textbf{46}(2), 255--308 (2009).
\newblock \url{https://doi.org/10.1090/S0273-0979-09-01249-X}

\bibitem[{Chazal et~al.(2014)Chazal, De~Silva, and Oudot}]{chazal2014persistence}
Chazal, F., De~Silva, V., Oudot, S.: Persistence stability for geometric complexes.
\newblock Geom. Dedic. \textbf{173}(1), 193--214 (2014).
\newblock \url{https://doi.org/10.1007/s10711-013-9937-z}

\bibitem[{Cohen-Steiner et~al.(2007)Cohen-Steiner, Edelsbrunner, and Harer}]{MR2279866}
Cohen-Steiner, D., Edelsbrunner, H., Harer, J.: Stability of persistence diagrams.
\newblock Discrete Comput. Geom. \textbf{37}(1), 103--120 (2007).
\newblock \url{https://doi.org/10.1007/s00454-006-1276-5}

\bibitem[{Collins et~al.(2022)Collins, Matsumoto, and Novak}]{MR4415894}
Collins, B., Matsumoto, S., Novak, J.: The {W}eingarten calculus.
\newblock Notices Amer. Math. Soc. \textbf{69}(5), 734--745 (2022).
\newblock \url{https://doi.org/10.1090/noti2474}

\bibitem[{Damrich et~al.(2023)Damrich, Berens, and Kobak}]{damrich2023persistent}
Damrich, S., Berens, P., Kobak, D.: Persistent homology for high-dimensional data based on spectral methods.
\newblock arXiv preprint arXiv:2311.03087  (2023).
\newblock \url{https://doi.org/10.48550/arXiv.2311.03087}

\bibitem[{Durrett(2010)}]{DU04}
Durrett, R.: {P}robability: {T}heory and {E}xamples.
\newblock Cambridge University Press, New York, 4th edn. (2010)

\bibitem[{Edelsbrunner and Harer(2010)}]{edelsbrunner2010computational}
Edelsbrunner, H., Harer, J.: Computational {T}opology: {A}n {I}ntroduction.
\newblock American Mathematical Society, Providence (2010).
\newblock \url{https://doi.org/10.1090/mbk/069}

\bibitem[{Edelsbrunner et~al.(2002)Edelsbrunner, Letscher, and Zomorodian}]{MR1949898}
Edelsbrunner, H., Letscher, D., Zomorodian, A.: Topological persistence and simplification.
\newblock Discrete Comput. Geom. \textbf{28}(4), 511--533 (2002).
\newblock \url{https://doi.org/10.1007/s00454-002-2885-2}

\bibitem[{Eldridge et~al.(2018)Eldridge, Belkin, and Wang}]{eldridge2018unperturbed}
Eldridge, J., Belkin, M., Wang, Y.: Unperturbed: spectral analysis beyond {D}avis-{K}ahan.
\newblock In \emph{Proceedings of Algorithmic Learning Theory}, pp. 321--358. PMLR (2018).
\newblock \url{https://proceedings.mlr.press/v83/eldridge18a.html}

\bibitem[{Ellis et~al.(2019)Ellis, Lesnick, Henselman-Petrusek, Keller, and Cohen}]{ellis2019feasibility}
Ellis, C.T., Lesnick, M., Henselman-Petrusek, G., Keller, B., Cohen, J.D.: Feasibility of topological data analysis for event-related f{MRI}.
\newblock Netw. Neurosci. \textbf{3}(3), 695--706 (2019).
\newblock \url{https://doi.org/10.1162/netn_a_00095}

\bibitem[{Fan et~al.(2018)Fan, Wang, and Zhong}]{fan2018eigenvector}
Fan, J., Wang, W., Zhong, Y.: An {$\ell_\infty$} eigenvector perturbation bound and its application to robust covariance estimation.
\newblock J. Mach. Learn. Res. \textbf{18}(207), 1--42 (2018).
\newblock \url{http://jmlr.org/papers/v18/16-140.html}

\bibitem[{Flores-Bautista and Thomson(2023)}]{Flores-Bautista2023.07.28.551057}
Flores-Bautista, E., Thomson, M.: Unraveling cell differentiation mechanisms through topological exploration of single-cell developmental trajectories.
\newblock bioRxiv  (2023).
\newblock \url{https://www.biorxiv.org/content/early/2023/07/30/2023.07.28.551057}

\bibitem[{Hall et~al.(2005)Hall, Marron, and Neeman}]{hall2005geometric}
Hall, P., Marron, J.S., Neeman, A.: Geometric representation of high dimension, low sample size data.
\newblock J. R. Stat. Soc. Ser. B Stat. Methodol. \textbf{67}(3), 427--444 (2005).
\newblock \url{https://doi.org/10.1111/j.1467-9868.2005.00510.x}

\bibitem[{Hao et~al.(2018)Hao, Kim, Kim, and Kang}]{HaoKimKimKang2018}
Hao, J., Kim, Y., Kim, T.K., Kang, M.: Pasnet: pathway-associated sparse deep neural network for prognosis prediction from high-throughput data.
\newblock BMC Bioinformatics \textbf{19}(1), 510 (2018).
\newblock \url{https://doi.org/10.1186/s12859-018-2500-z}

\bibitem[{Hiraoka et~al.(2016)Hiraoka, Nakamura, Hirata, Escolar, Matsue, and Nishiura}]{hiraoka2016hierarchical}
Hiraoka, Y., Nakamura, T., Hirata, A., Escolar, E.G., Matsue, K., Nishiura, Y.: Hierarchical structures of amorphous solids characterized by persistent homology.
\newblock Proc. Natl. Acad. Sci. \textbf{113}(26), 7035--7040 (2016).
\newblock \url{https://www.pnas.org/doi/10.1073/pnas.1520877113}

\bibitem[{Hotelling(1933)}]{Hotelling1933AnalysisOA}
Hotelling, H.: Analysis of a complex of statistical variables into principal components.
\newblock J. Educ. Psychol. \textbf{24}, 498--520 (1933).
\newblock \url{https://doi.org/10.1037/h0071325}

\bibitem[{Imoto et~al.(2022)Imoto, Nakamura, Escolar, Yoshiwaki, Kojima, Yabuta, Katou, Yamamoto, Hiraoka, and Saitou}]{imoto2022resolution}
Imoto, Y., Nakamura, T., Escolar, E.G., Yoshiwaki, M., Kojima, Y., Yabuta, Y., Katou, Y., Yamamoto, T., Hiraoka, Y., Saitou, M.: Resolution of the curse of dimensionality in single-cell {RNA} sequencing data analysis.
\newblock Life Sci. Alliance \textbf{5}(12) (2022).
\newblock \url{https://doi.org/10.26508/lsa.202201591}

\bibitem[{Lee et~al.(2020)Lee, Hyeon, and Hwang}]{Lee2020}
Lee, J., Hyeon, D.Y., Hwang, D.: Single-cell multiomics: technologies and data analysis methods.
\newblock Exp. Mol. Med. \textbf{52}(9), 1428--1442 (2020).
\newblock \url{https://doi.org/10.1038/s12276-020-0420-2}

\bibitem[{Loughrey et~al.(2021)Loughrey, Fitzpatrick, Orr, and Jurek-Loughrey}]{loughrey2021topology}
Loughrey, C.F., Fitzpatrick, P., Orr, N., Jurek-Loughrey, A.: The topology of data: opportunities for cancer research.
\newblock Bioinformatics \textbf{37}(19), 3091--3098 (2021).
\newblock \url{https://academic.oup.com/bioinformatics/article/37/19/3091/6329825}

\bibitem[{Mahmud et~al.(2019)Mahmud, Fu, Huang, and Masud}]{MahmudFuHuangMasud2019}
Mahmud, M.S., Fu, X., Huang, J.Z., Masud, M.A.: High-dimensional limited-sample biomedical data classification using variational autoencoder.
\newblock In \emph{Data Mining}, pp. 30--42. Springer (2019).
\newblock \url{https://doi.org/10.1007/978-981-13-6661-1_3}

\bibitem[{Mehta(1967)}]{MR0220494}
Mehta, M.L.: Random {M}atrices and the {S}tatistical {T}heory of {E}nergy {L}evels.
\newblock Academic Press, New York (1967)

\bibitem[{Nguyen et~al.(2017)Nguyen, Tao, and Vu}]{MR3627428}
Nguyen, H., Tao, T., Vu, V.: Random matrices: tail bounds for gaps between eigenvalues.
\newblock Probab. Theory Related Fields \textbf{167}(3-4), 777--816 (2017).
\newblock \url{https://doi.org/10.1007/s00440-016-0693-5}

\bibitem[{Obayashi(2021)}]{obayashi2022HomCloud}
Obayashi, I.: {HomCloud: a data analysis software based on persistent homology, Version~3.2.1} (2021).
\newblock \url{https://homcloud.dev/}

\bibitem[{O'Rourke et~al.(2018)O'Rourke, Vu, and Wang}]{o2018random}
O'Rourke, S., Vu, V., Wang, K.: Random perturbation of low rank matrices: Improving classical bounds.
\newblock Linear Algebra Appl. \textbf{540}, 26--59 (2018).
\newblock \url{https://doi.org/10.1016/j.laa.2017.11.014}

\bibitem[{Torshin and Rudakov(2020)}]{torshin2020topological}
Torshin, I.Y., Rudakov, K.V.: Topological data analysis in materials science: the case of high-temperature cuprate superconductors.
\newblock Pattern Recognit. Image Anal. \textbf{30}, 264--276 (2020).
\newblock \url{https://doi.org/10.1134/S1054661820020157}

\bibitem[{Van~der Vaart(1998)}]{MR1652247}
Van~der Vaart, A.W.: Asymptotic {S}tatistics, \textit{Cambridge Series in Statistical and Probabilistic Mathematics}, vol.~3.
\newblock Cambridge University Press, Cambridge (1998).
\newblock \url{https://doi.org/10.1017/CBO9780511802256}

\bibitem[{Vu(2011)}]{vu2011singular}
Vu, V.: Singular vectors under random perturbation.
\newblock Random Struct. Algorithms \textbf{39}(4), 526--538 (2011).
\newblock \url{https://doi.org/10.1002/rsa.20367}

\bibitem[{Yata and Aoshima(2009)}]{MR2568176}
Yata, K., Aoshima, M.: P{CA} consistency for non-{G}aussian data in high dimension, low sample size context.
\newblock Comm. Statist. Theory Methods \textbf{38}(16-17), 2634--2652 (2009).
\newblock \url{https://doi.org/10.1080/03610910902936083}

\bibitem[{Yata and Aoshima(2020)}]{yata2020geometric}
Yata, K., Aoshima, M.: Geometric consistency of principal component scores for high-dimensional mixture models and its application.
\newblock Scand. J. Stat. \textbf{47}(3), 899--921 (2020).
\newblock \url{https://doi.org/10.1111/sjos.12432}

\bibitem[{Yu et~al.(2015)Yu, Wang, and Samworth}]{yu2015useful}
Yu, Y., Wang, T., Samworth, R.J.: A useful variant of the {D}avis-{K}ahan theorem for statisticians.
\newblock Biometrika \textbf{102}(2), 315--323 (2015).
\newblock \url{https://doi.org/10.1093/biomet/asv008}

\end{thebibliography}

\end{document}